\newcommand\footnoteref[1]{\protected@xdef\@thefnmark{\ref{#1}}\@footnotemark}
\date{}
\def\beq{\begin{equation}}
	\def\eeq{\end{equation}}
\def\tW{\widetilde W}
\def\det{\textrm{det}\ }
\newcommand\restr[2]{{ 
  \left.\kern-\nulldelimiterspace 
  #1 
  \vphantom{\big|} 
  \right|_{#2} 
  }}
\newcommand{\Z}{{\mathbb Z}}
\newcommand{\R}{{\mathbb R}}
\newcommand{\N}{{\mathbb N}}
\newcommand{\TT}{{\mathbb{T}^2}}
\newcommand{\TTf}{{\mathbb{T}_f^2}}
\newcommand{\loc}{{\operatorname{loc}}}
\newcommand{\cA}{{\mathcal A}}
\newcommand{\cC}{{\mathcal C}}
\newcommand{\cB}{{\mathcal B}}
\newcommand{\cL}{{\mathcal L}}
\newcommand{\cM}{{\mathcal M}}
\newcommand{\cN}{{\mathcal N}}
\newcommand{\cP}{{\mathcal P}}
\newcommand{\cW}{{\mathcal W}}
\newcommand{\cR}{{\mathcal R}}
\newcommand{\eps}{\varepsilon}
\newcommand{\card}{\operatorname{card}}
\newcommand{\eqdef}{\stackrel{\scriptscriptstyle\textrm def.}{=}}
\newcommand{\Leb}{\textrm{Leb}}
\newcommand{\tix}{\tilde{x}}
\newcommand{\ta}{\tilde{a}}
\newcommand{\tb}{\tilde{b}}
\newcommand{\tif}{\tilde{f}}
\newcommand{\tm}{\tilde{\mu}}
\newcommand{\w}{\widetilde{\cW}}
\newcommand{\tpitchfork}{%
	\vbox{
		\baselineskip\z@skip
		\lineskip-.52ex
		\lineskiplimit\maxdimen
		\m@th
		\ialign{##\crcr\hidewidth\smash{$-$}\hidewidth\crcr$\pitchfork$\crcr}
	}%
}
\newtheorem{theorem}{Theorem}[section]
\newtheorem{corollary}{Corollary}
\newtheorem{maintheorem}{Theorem}
\newtheorem{lemma}[theorem]{Lemma}
\newtheorem{proposition}{Proposition}
\newtheorem{problem}{Problem}
\theoremstyle{definition}
\newtheorem{definition}[theorem]{Definition}
\newtheorem{remark}{Remark}
\newtheorem*{notation}{Notation}
\newtheorem{example}{Example}
\begin{document}

\title[$\mathbf{\textit{U}}$-Gibbs rigidity for PH endomorphisms on surfaces]{$\mathbf{\textit{U}}$-Gibbs measure rigidity for partially hyperbolic endomorphisms on surfaces}

\author[Marisa Cantarino and Bruno Santiago]{Marisa Cantarino and Bruno Santiago}
\address{}
\email{}

\date{\today}

\begin{abstract} 
We prove that, for a $C^2$ partially hyperbolic endomorphism of the 2-torus which is strongly transitive, given an ergodic $u$-Gibbs measure that has positive center Lyapunov exponent and has full support, then either the map is special (has only one unstable direction per point), or the measure is the unique absolutely continuous invariant measure. We can apply this result in many settings, in particular obtaining uniqueness of $u$-Gibbs measures for every non-special perturbation of irreducible linear expanding maps of the torus with simple spectrum. This gives new open sets of partially hyperbolic systems displaying a unique $u$-Gibbs measure.
\end{abstract}

\maketitle

\tableofcontents

\section{Introduction}
\label{sec:intro}

Measure rigidity is expressed by the following general idea: when analyzing some set of invariant measures for a family of dynamical systems, one gets uniqueness or a very restrict subset of measures by imposing some condition on the family. The most basic example is given by rotations of the circle: the restriction to irrational rotations makes the set of invariant measures turn into a singleton, for in the irrational case the system is uniquely ergodic, and all rotations preserve Lebesgue measure. Measure rigidity results are fundamental in many applications of dynamics, including problems in number theory, see for instance \cite{EKL} or the related survey \cite{venkatesh} and the references therein.

Along the last decades, measure rigidity results have been explored by several authors. In homogeneous dynamics some impressive accomplishments have been obtained, such as Furstenberg's proof of the unique ergodicity of the horocycle flow in constant negative curvature \cite{Furst}, which was deeply generalized in the series of works by Marina Ratner in the 90's \cite{Rat1}, \cite{Rat2}, \cite{Rat3}, \cite{Rat4}. She proved measure classification results for measures invariant under unipotent flows (the action of a unipotent subgroup). After that, in another landmark series of works, Yves Benoist and Jean-François Quint proved \cite{BenoistQuintI,BenoistQuintII,BenoistQuintIII} measure rigidity results for stationary measures of a Zariski dense random walk on homogeneous spaces.

These works were the inspirations in Teichmüller dynamics for the remarkable work of Alex Eskin and Maryam Mirzakhani \cite{EskinMirzakhani} about group actions on moduli spaces. In \cite{EskinMirzakhani}, they developed the now called ``factorization method'' to prove measure rigidity for ergodic measures that are invariant under the action of the upper triangular subgroup of $\operatorname{SL}_2(\Z)$. This method was after used back in the homogeneous setting by Eskin and Elon Lindenstrauss \cite{EskinLind}, where they generalized the results of Benoist--Quint.

In smooth dynamics, Aaron Brown and Federico Rodriguez-Hertz \cite{BRH} used many ideas from \cite{EskinMirzakhani} and \cite{BenoistQuintI} to classify stationary measures for random iterations of $C^2$ surface diffeomorphisms. This result has been generalized to higher dimensions by Brown--Rodriguez-Hertz in collaboration with Simion Filip and Eskin in the recent preprint \cite{befh}. It is worth to remark that the concept of uniform expansion plays a prominent role in this generalization, see \cite{chung} where the author relates this concept with \cite{BRH}.

In \cite{Obata} Davi Obata translated the result and the ideas of \cite{BRH} to the setting of 4-dimensional partially hyperbolic diffeomorphisms of skew-product type. In Obata's work, classification of stationary measures is translated as \emph{classification of $u$-Gibbs measures}. These are measures which are smooth along unstable manifolds. Recall that stationary measures can be regarded as ``smooth'' along \emph{symbolic} unstable manifolds. The result of \cite{Obata} was used in \cite{CPO} to obtain open sets of partially hyperbolic Anosov systems displaying uniqueness of $u$-Gibbs measures. 

For $C^{\infty}$ partially hyperbolic systems with one-dimensional center, in \cite{Katz} Asaf Katz developed the machinery of the factorization method of \cite{EskinMirzakhani} and proved a measure rigidity result for $u$-Gibbs measures satisfying a \emph{quantitative non-integrability (QNI)} condition. Using Katz's result, Artur Avila, Sylvain Crovisier, Alex Eskin, Rafael Potrie, Amie Wilkinson and Zhiyan Zhang, in an ongoing project \cite{ACEPWZ}, announced that, for $C^{\infty}$ partially hyperbolic Anosov diffeomorphisms with expanding center in dimension 3, either one has that the stable and the unstable bundle jointly integrate or the system displays a unique $u$-Gibbs measure.  

A different approach was taken in the work \cite{ALOS}, by Obata with Sébastien Alvarez, Martin Leguil and Bruno Santiago. Assuming regularity of stable holonomies, they took ideas from \cite{BRH,EskinMirzakhani,EskinLind} to show that for $C^2$ partially hyperbolic Anosov diffeomorphisms, with expanding center and close to volume preserving, in dimension three, either the stable and the unstable bundle jointly integrate or there exists a unique fully supported $u$-Gibbs measure. In particular, the result of \cite{ALOS} does not depend on the QNI condition of Katz. 

In this paper we aim to explore the connections between partially hyperbolic three dimensional diffeomorphisms and two dimensional partially hyperbolic \emph{endomorphisms} in order translate the ideas of \cite{ALOS} to the \emph{non-invertible} setting. We obtain measure rigidity of fully supported $u$-Gibbs measures for non-special partially hyperbolic and strongly transitive two dimensional endomorphisms. It is worth remarking that, contrarily to \cite{ALOS}, we do not assume uniform expansion along the center, and thus our result is also a generalization of \cite{ALOS}. Also, Katz in \cite{Katz} do not assume unifom expansion on the center bundle, nonetheless his arguments depend havily on high regularity of the dynamics, while our paper only assumes $C^2$. Moreover, we obtain that the measure is in fact absolutely continuous and physical, and thus our result dialogues with \cite{Tsujii}: instead of a generic transversality condition, we assume a mild qualitative transversality (being non-special) and positivity of center exponent in order to promote a $u$-Gibbs meas0ure to be physical. Notice that the full support assumption is automatically satisfied once one knows that every unstable curve has a dense orbit. Thus, our main result reduces the measure rigidity problem to a topological problem. Also, by a result announced in \cite{ACEPWZ} in many situations we known that this topological condition is satisfied. In particular, we obtain open sets of partially hyperbolic endomorphisms displaying a unique $u$-Gibbs measure.  

\subsection{Statement of the main result}
\label{sec:intro-statement}
We give here quick definitions of the objects appearing in our main result. For a comprehensive treatment, see Section~\ref{sec:ph-exp}. Denote by $\operatorname{End}^r(\TT)$ the set $C^r$ local diffeomorphisms $f:\TT\to\TT$. We say that an element $f\in\operatorname{End}^1(\TT)$ is \emph{partially hyperbolic} if there exist $\sigma>1$, $\ell\in\N$ and a continuous family $x\in\TT\mapsto\cC^u_x$ of \emph{unstable cones}: for each $x\in\TT$, $\cC^u_x$ is a closed cone in $T_x\TT$ such that 
\begin{enumerate}
    \item[(i)] $Df^{\ell}(x)\cC^u_x\subset\operatorname{Int}(\cC^u_{f^{\ell}(x)})\cup\{0\}$
    \item[(ii)] $\|Df^{\ell}(x)v\|\geq\sigma\|v\|$, for every $v\in\cC^u_x$.
\end{enumerate}

An endomorphism can be seen as a kind of dynamical system for which the future is deterministic but the past is random. In this way, for a given point $x\in\TT$ there exist uncountably many different sequences $\tix=\{x_i\}_{i\in\Z}$ satisfying $x_0=x$ and $f(x_i)=x_{i+1}$ for all $i \in \Z$. When $f\in\operatorname{End}^1(\TT)$ is partially hyperbolic, for each such an \emph{orbit} $\tix=\{x_i\}_{i\in\Z}$ there exist two line bundles $E^c(\tix)\oplus E^u(\tix)$, defined along the sequence, and invariant under the differential $Df$. The bundle $E^u(\tix)$ is contained in the unstable cone $\cC^u_x$ while the \emph{center bundle} $E^c(\tix)$ is transverse to it. In particular, $E^u$ is uniformly expanded and $E^c$ is \emph{dominated} by $E^u$: vectors in $E^c$, whenever they are expanded they do so (exponentially) slower than the vectors in fast unstable bundle $E^u$. 

An important fact is that the center bundle is entirely deterministic, for it does not depend on the past, i.e. $E^c(\tix)=E^c(\ta)$ whenever $a_0=x_0$. The unstable bundle, on the contrary, in general changes with the past orbit. If, for a given partially hyperbolic $f\in\operatorname{End}^1(\TT)$, the unstable bundle satisfies $E^u(\tix)=E^u(\ta)$ whenever $a_0=x_0$, we say that $f$ is a\emph{special} endomorphism. We observe that being non-special is a $C^1$ open and $C^r$ dense condition (see \cite{CostaMicena2022} for $r=1$ and a local argument and \cite{he2017accessibility} for a general argument in the volume preserving case) among partially hyperbolic $f\in\operatorname{End}^r(\TT)$.  

Given an ergodic $f$-invariant measure $\mu$, we define its \emph{center exponent} by
\[
L^c(\mu)\eqdef\int\log\|Df(x)|_{E^c}\|d\mu(x).
\]
Notice that, since the center bundle is deterministic, the above is well defined. 

When $f$ is non-special the bundle $E^u$ does not integrate into an $f$-invariant foliation. Nevertheless, we can construct unstable manifolds using the \emph{natural extension} (see Section~\ref{sec:inv-lim}): given an orbit $\tix=\{x_i\}_{i\in\Z}$ one can apply the graph transform method along the sequence $\{x_i\}_{i<0}$ to construct an unstable manifold $\cW^u(\tix)$ attached to the orbit $\tix$ (see \cite{QXZ2009}). We say that an ergodic $f$-invariant probability measure $\mu$ is a $u$-\emph{Gibbs measure} if the conditionals of $\mu$ along unstable manifolds are absolutely continuous with respect to the inner Lebesgue measure of the leaves (see Section~\ref{sec:inv-lim-ph}). 

Finally, we say that $f\in\operatorname{End}^1(\TT)$ is \textit{strongly transitive} if for every open set $U\subset\TT$ there exists an integer $N>0$ such that {$\cup_{n = 0}^{N}f^n(U)=\TT$}. We can now state our main result.

\begin{maintheorem}
	\label{teo:main.rigidez}
Let $f$ be a $C^2$ partially hyperbolic strongly transitive endomorphism of $\TT$. Let $\mu$ be an ergodic $u$-Gibbs measure of $f$ with positive center exponent. Assume that $f$ is not special and that $\operatorname{supp}(\mu)=\TT$. Then, $\mu$ is the unique absolutely continuous invariant measure for $f$. 
\end{maintheorem}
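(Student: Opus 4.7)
The plan is to adapt the exponential-drift / $Y$-criterion strategy of Brown--Rodriguez-Hertz \cite{BRH}, as implemented for three-dimensional partially hyperbolic diffeomorphisms in \cite{ALOS}, to the non-invertible surface setting. Lift $f$ to its natural extension $\tif:\widetilde{\TT}\to\widetilde{\TT}$ and $\mu$ to the corresponding ergodic $\tif$-invariant measure $\tm$; on $\widetilde{\TT}$ the unstable bundle integrates to a genuine foliation $\w^u$, and the $u$-Gibbs hypothesis translates into absolute continuity of the conditionals of $\tm$ along $\w^u$-plaques. The non-special assumption then supplies the transversality that, in the invertible three-dimensional case, is furnished by non-joint integrability of $E^s\oplus E^u$: at points $x\in\TT$ with two preimage histories $\tix,\ta$ satisfying $E^u(\tix)\neq E^u(\ta)$, the local unstable curves $\cW^u(\tix)$ and $\cW^u(\ta)$ cross transversally at $x$. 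Since this condition is $f$-invariant, open and nonempty (non-special), strong transitivity together with $\operatorname{supp}(\mu)=\TT$ implies that it holds $\mu$-almost everywhere.

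The heart of the argument is a coupling/synchronization step. Taking two nearby histories over a common base point with distinct unstable directions, follow their $\tif$-iterates in a chart that straightens $\w^u$, so that the drift between the two orbits can be decomposed into a component along $\w^u$ and a component transverse to $\w^u$ (essentially along the center direction on $\TT$). The key quantitative input is domination together with positivity of the center exponent $L^c(\mu)>0$: the transverse displacement grows at a strictly slower exponential rate than the unstable displacement, so one can freely rescale the unstable component to unit size while keeping a bounded, nontrivial transverse component in the limit. Passing to a limit along Pesin-regular times and using the absolute continuity of the $u$-Gibbs conditionals produces an extra invariance of $\tm$ under a non-trivial family of translations transverse to $\w^u$; projecting to $\TT$, this invariance combines with the absolutely continuous unstable conditionals to force $\mu$ to be absolutely continuous with respect to Lebesgue on $\TT$.

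Uniqueness of the absolutely continuous invariant measure then follows by a standard Hopf-type argument: any other $f$-invariant absolutely continuous probability measure $\nu$ is automatically $u$-Gibbs, and its ergodic components satisfy the hypotheses of the first part (positive center exponent and full support, the latter by strong transitivity), so each such component is again a fully supported absolutely continuous $u$-Gibbs measure; ergodicity and strong transitivity then force $\nu=\mu$. The main obstacle I expect is the drift step itself: lacking stable manifolds, one cannot match orbits by contraction from the past as in \cite{ALOS}, so the entire synchronization must be carried out on $\widetilde{\TT}$ using a careful choice of histories, and the Pesin-theoretic estimates along the merely dominated bundle $E^c$ must be established on the natural extension and then transported to the base in a way that remains robust under the non-injectivity of $f$.
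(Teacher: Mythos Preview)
Your high-level strategy is correct and is essentially the paper's approach: lift to the natural extension, use the non-special hypothesis together with strong transitivity and full support to obtain transversality of unstable directions along fibers $\tm$-almost everywhere (the paper proves a 0--1 law for this in Section~\ref{sec:0-ou-1}), and run an Eskin--Mirzakhani/Brown--Rodriguez-Hertz drift argument via $Y$-configurations to obtain extra invariance of the measure transverse to the unstable lamination.

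However, you misidentify the main technical obstacle. The fibers $\Sigma(\tix)=\pi^{-1}(\pi(\tix))$ of the natural extension \emph{do} play the role of local stable manifolds and \emph{do} contract forward under $\tif$, so the matching along the ``stable'' direction works exactly as in \cite{ALOS}; indeed it is simpler, because the symbolic/ultrametric structure of the fibers streamlines the 0--1 law. The genuine new difficulty relative to \cite{ALOS} is that the center direction is only \emph{non-uniformly} expanding (positive Lyapunov exponent rather than uniform expansion), so the stopping times defining the $Y$-configurations, the synchronization estimates, and the matching argument must all be carried out with Lyapunov norms on Lusin sets rather than with the background metric; this is where the bulk of the paper's work in Sections~\ref{sec:pesin} and \ref{sec:proof-main-lemma}--\ref{sec:matching} goes. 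You also omit two indispensable pieces of machinery: the Kalinin--Katok normal-form coordinates on center and center-unstable leaves, and the leaf-wise quotient measures $\hat\nu^c_{\tix}$ on $\R$ (Section~\ref{sec:normal}), which are the objects on which the extra invariance is actually established. The endgame is not invariance under translations directly, but invariance (up to scalar) under a family of small affine maps with controlled slope and displacement $\asymp\eps$; one then invokes the Kalinin--Katok lemma (Lemma~\ref{lem:kksmall}) to force $\hat\nu^c_{\tix}$ to be Lebesgue, hence $\mu$ to be SRB and thus absolutely continuous.
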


We do not know if a map $f$ satisfying the assumptions of our main result is \emph{mostly expanding} in the sense of \cite{AndVas}, i.e., if the existence of \emph{one fully supported} $u$-Gibbs measure with positive center exponent would imply that every $u$-Gibbs measure have positive center exponent. We also don't know whether  the results of \cite{AndVas} hold true in the non-invertible setting, although we believe that both answers are yes. Regardless, our result can be seen also as giving a new criteria for existence and uniqueness of absolutely continuous invariant probability measures for partially hyperbolic endomorphisms in $\TT$ (see Corollary~\ref{cor:main-dynamicallymininmal} below), going beyond the measure rigidity.

We also remark that one cannot drop the full support and transitivity assumptions simultaneously. Indeed, consider the product of $f: \mathbb{S}^1 \to \mathbb{S}^1$ a north pole-south pole diffeomorphism and $g: \mathbb{S}^1 \to \mathbb{S}^1$ a uniformly expanding map with stronger expansion than $f$. This system is partially hyperbolic, not transitive, and the Lebesgue measure on the circle $\{N\} \times \mathbb{S}^1$ given by $\delta_N \times Leb_{\mathbb{S}^1}$ has expanding center exponent and it is $u$-Gibbs. We can perturb this system to a non-special one which has the same features (as in Section \ref{subsec:provasexe3}), being also not transitive. This phenomenon can not occur on the uniformly expanding case. Nonetheless, even a uniformly expanding (therefore strongly transitive) system can have u-Gibbs measures that do not have full support, as we explore in Section \ref{sec:ex}.

\subsection{Some applications}
\label{sec:ex}

The most important particular case of Theorem~\ref{teo:main.rigidez} concerns the case in which every unstable curve has a dense forward orbit. More precisely, we say that a partially hyperbolic element $f\in\operatorname{End}^1(\TT)$ has \emph{dynamically minimal unstable leaves} if, for every unstable manifold $\gamma=\cW^u(\tix)$, one has $\overline{\cup_{n\in\N}f^n(\gamma)}=\TT$. Given an ergodic, $f$-invariant $u$-Gibbs measure $\mu$, for almost every $x\in\operatorname{supp}(\mu)$ there exists at least one orbit $\tix=\{x_i\}_{i\in\Z}$, with $x_0=x$, so that the associated unstable manifold $\cW^u(\tix)$ is contained in the support of $\mu$. In particular, if $f$ has dynamically minimal unstable leaves then every $u$-Gibbs measure is fully supported. Therefore, we deduce from our main result that

\begin{corollary}
    \label{cor:main-dynamicallymininmal}
Let $f\in\operatorname{End}^2(\TT)$ be partially hyperbolic, strongly transitive with dynamically minimal unstable leaves. Assume that $f$ is non-special. Then, if there exists a $u$-Gibbs measure $\mu$ with positive center exponent, then $\mu$ is the unique absolutely continuous invariant measure of $f$.
\end{corollary}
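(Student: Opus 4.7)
The plan is to reduce the corollary directly to Theorem~\ref{teo:main.rigidez}. Since in this paper a $u$-Gibbs measure is by definition ergodic, the only hypothesis of Theorem~A which is not already explicit in the statement of the corollary is the full support condition $\operatorname{supp}(\mu)=\TT$. Once this is established, the conclusion follows by a direct application of Theorem~A, so the entire work of the proof is to verify the full support.

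To do this I would proceed in two short steps. First, as recalled in the paragraph preceding the corollary, a standard consequence of the definition of a $u$-Gibbs measure (absolute continuity of the conditionals along unstable manifolds combined with the fact that $\operatorname{supp}(\mu)$ is closed) is that for $\mu$-almost every $x$ there exists at least one orbit $\tix=\{x_i\}_{i\in\Z}$ with $x_0=x$ such that the associated unstable leaf $\cW^u(\tix)$ is entirely contained in $\operatorname{supp}(\mu)$. Fix any such leaf $\gamma=\cW^u(\tix)$.

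Second, I would note that $\operatorname{supp}(\mu)$ is forward-$f$-invariant: if $x\in\operatorname{supp}(\mu)$ and $U$ is an open neighbourhood of $f(x)$, then $f^{-1}(U)$ is an open neighbourhood of $x$, and by $f$-invariance of $\mu$ we get $\mu(U)=\mu(f^{-1}(U))>0$, hence $f(x)\in\operatorname{supp}(\mu)$. Consequently $\bigcup_{n\geq 0}f^n(\gamma)\subseteq\operatorname{supp}(\mu)$, and invoking the dynamical minimality of the unstable leaves yields
\[
\TT=\overline{\bigcup_{n\geq 0}f^n(\gamma)}\subseteq\operatorname{supp}(\mu),
\]
so $\operatorname{supp}(\mu)=\TT$. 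With this in hand every hypothesis of Theorem~\ref{teo:main.rigidez} is verified, and we conclude that $\mu$ is the unique absolutely continuous invariant probability measure of $f$.

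In other words, there is no genuine obstacle here: all the technical content is absorbed into the proof of Theorem~A, while the corollary is just a repackaging in which the topological hypothesis of dynamical minimality is used to automatically promote any $u$-Gibbs measure to a fully supported one.
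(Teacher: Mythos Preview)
Your proof is correct and follows exactly the approach sketched in the paper: the paragraph preceding the corollary already records that the support of a $u$-Gibbs measure contains an entire unstable leaf, and then dynamical minimality (together with the closed, forward-invariant nature of $\operatorname{supp}(\mu)$, which you spell out explicitly) forces $\operatorname{supp}(\mu)=\TT$, so Theorem~\ref{teo:main.rigidez} applies directly.
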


This Corollary reduces the measure rigidity problem of $u$-Gibbs measures with positive center exponent to the topological problem of proving dynamical minimality of unstable leaves. 

\subsubsection{Perturbations of an irreducible linear expanding map with simple spectrum} An interesting case in which Corollary~\ref{cor:main-dynamicallymininmal} can be applied concerns perturbation of the induced map $f_B:\TT\to\TT$ where $B\in\operatorname{SL}_2(\Z)$ has two irrational eigenvalues $\sigma^u>\sigma^c>1$ (see Example~\ref{ex:2}). This map is partially hyperbolic, strongly transitive and special. In fact, $f_B$ is \emph{uniformly expanding} in the sense that $\min_{x\in\TT}\|Df(x)|_{E^c}\|>1$. It is a well known fact that this is an open condition which implies that every perturbation $f$ of $f_B$ is topologically conjugate: there exists a continuous change of coordinates $h\in\operatorname{Homeo}(\TT)$ under which $f$ and $f_B$ are indistinguishable, i.e. $h\circ f=f_B\circ h$.    
A corollary of the work announced by \cite{ACEPWZ} is the following result.

\begin{theorem}[\cite{ACEPWZ}]
Consider $f\in\operatorname{End}^2(\TT)$ a partially hyperbolic and uniformly expanding map which is topologically conjugate to $f_B$. Then, $f$ has dynamically minimal unstable leaves.
\end{theorem}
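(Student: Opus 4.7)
The plan is to transfer the density of unstable leaves from $f_B$ to $f$ via the topological conjugacy $h\in\operatorname{Homeo}(\TT)$ satisfying $h\circ f = f_B\circ h$. First I would observe that the unstable leaves of $f_B$ are themselves dense in $\TT$: since $f_B$ is special with constant unstable bundle equal to the $\sigma^u$-eigenspace $\R v^u$, its unstable leaves project to translates of the line $\R v^u$ in $\TT$; and because $B\in\operatorname{SL}_2(\Z)$ is irreducible with simple real spectrum, $v^u$ has irrational slope, so by Kronecker's theorem each such line is dense in $\TT$.

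Next I would lift $h$ to the natural extensions, producing a homeomorphism $\tilde h:\tilde{\TT}_f\to\tilde{\TT}_{f_B}$ defined by $\tilde h(\{x_i\}_{i\in\Z})=\{h(x_i)\}_{i\in\Z}$. This $\tilde h$ conjugates $\tilde f$ to $\tilde f_B$ and intertwines the natural projections to $\TT$, namely $\pi_{f_B}\circ\tilde h = h\circ\pi_f$.

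The key step would then be to argue that $\tilde h$ sends the unstable lamination of $\tilde f$ onto that of $\tilde f_B$. In each natural extension the unstable leaf through $\tix$ admits a purely dynamical characterization as the set of orbits $\tilde y$ satisfying $d(\tilde f^{-n}(\tilde y),\tilde f^{-n}(\tix))\to 0$ as $n\to\infty$, where $d$ is the product-topology distance on the inverse limit. Because $\tilde h$ is uniformly continuous and intertwines the two dynamics, this characterization is manifestly preserved, yielding $\tilde h(\cW^u(\tix))=\cW^u(\tilde h(\tix))$. Projecting to the base, the identity $\pi_{f_B}\circ\tilde h = h\circ\pi_f$ then gives $h(\cW^u(\tix))=\cW^u(\tilde h(\tix))\subset\TT$, an unstable leaf of $f_B$, which is dense by the first step. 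Since $h$ is a homeomorphism, $\cW^u(\tix)$ itself is dense in $\TT$, and a fortiori $\overline{\bigcup_{n\ge 0}f^n(\cW^u(\tix))}=\TT$.

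The main obstacle is the dynamical characterization of the unstable lamination in the non-invertible setting. For invertible uniformly hyperbolic systems this characterization is classical, but here one must use that $f$ is uniformly expanding to identify the unstable lamination constructed by the graph transform of \cite{QXZ2009} with the set of orbits whose past history asymptotically coincides with $\tix$ in the natural extension. Verifying this identification carefully, and checking that the relevant exponential rates interact properly with the chosen metric on the inverse limit, is the delicate point on which the rest of the argument rests.
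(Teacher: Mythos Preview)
The paper does not prove this theorem; it is quoted as a result from the announced work \cite{ACEPWZ}, with no argument given. So there is no proof in the paper to compare against, and I can only evaluate your proposal on its own merits.

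Your argument has a genuine gap that is more fundamental than the obstacle you flag at the end. The dynamical characterization you propose for the unstable leaf --- the set of $\tilde y$ with $d(\tif^{-n}(\tilde y),\tif^{-n}(\tix))\to 0$ --- does \emph{not} single out $\w^u(\tix)$ in this setting. Since $f$ is \emph{uniformly expanding}, both $E^u$ and $E^c$ are expanded, so every direction contracts under backward iteration; hence your condition is satisfied by every $\tilde y$ in the full center-unstable leaf $\w^{cu}(\tix)$ (the path-connected component of $\TTf$ through $\tix$), not just by points of the \emph{strong} unstable leaf $\w^u(\tix)$. The strong unstable leaf is distinguished from the center leaf only by the \emph{rate} of backward contraction (governed by $\sigma^u$ versus $\sigma^c$), and exponential rates are not preserved by a topological conjugacy. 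So while $\tilde h$, being a homeomorphism of $\TTf$, certainly sends path components to path components and hence $\w^{cu}$-leaves to $\w^{cu}$-leaves, there is no reason it should respect the finer strong-unstable lamination. Your ``key step'' therefore fails, and the conclusion you would actually obtain --- density of $\pi(\w^{cu}(\tix))$ --- is trivial, since each $\w^{cu}$-leaf already projects onto all of $\TT$.

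This is precisely why the result attributed to \cite{ACEPWZ} is nontrivial: establishing dynamical minimality of the \emph{strong} unstable leaves for a non-special perturbation of $f_B$ cannot be reduced to the soft topological argument you outline, and requires the substantially deeper techniques of that work.
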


\begin{remark}
The above result can be proved using the $s$-transversality argument from \cite{ACW}. Indeed, a sketch of the proof goes as follows: if $f$ is special then by an argument from \cite{yu} the conjugacy with $f_B$ preserves the unstable foliation and the minimality follows. If $f$ is not special, then one shows that $s$-transversality always hold. This argument is called Rafael Potrie's trick in a talk by Amie Wilkinson, available online\footnote{\url{https://www.youtube.com/watch?v=J672NR5dwzc}}. Once $s$-transversality is established, \cite{ACW} shows that dynamical minimality must hold.
\end{remark}

Combining this with Corollary~\ref{cor:main-dynamicallymininmal} we obtain the following.

\begin{corollary}
    \label{cor:main-irredutivel}
Let $f\in\operatorname{End}^2(\TT)$ be a partially hyperbolic and uniformly expanding map which is topologically conjugate to $f_B$. If $f$ is not special, then $f$ admits a unique $u$-Gibbs measure which is the unique absolutely continuous invariant measure.   
\end{corollary}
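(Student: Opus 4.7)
The plan is to verify every hypothesis of Corollary~\ref{cor:main-dynamicallymininmal} for the given $f$, apply that corollary to a concrete $u$-Gibbs measure, and then bootstrap to uniqueness within the whole class of $u$-Gibbs measures. Partial hyperbolicity and the non-special hypothesis on $f$ are given directly. Strong transitivity is purely topological: writing $h\circ f=f_B\circ h$ for the conjugacy, if $U\subset\TT$ is open then $h(U)$ is open and, since $f_B$ is uniformly expanding on $\TT$ and therefore strongly transitive (any small ball expands under iterates of $f_B$ until it covers $\TT$), there is $N$ with $\bigcup_{n=0}^N f_B^n(h(U))=\TT$; applying $h^{-1}$ and using $f^n=h^{-1}\circ f_B^n\circ h$ gives $\bigcup_{n=0}^N f^n(U)=\TT$. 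Finally, dynamical minimality of unstable leaves for $f$ is precisely the content of the theorem cited from \cite{ACEPWZ}.

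Next I would produce a $u$-Gibbs measure $\mu$ and check that it has positive center exponent. Existence is standard for any $C^2$ partially hyperbolic endomorphism: fix an unstable manifold $\cW^u(\tix)$, push forward normalised Lebesgue on it by $f^n$, Cesàro-average in $n$, extract a weak-$*$ limit, and take an ergodic component; bounded distortion of $Df$ along the unstable direction guarantees that the conditionals of the limiting measure along local unstable manifolds are absolutely continuous. The key observation is that uniform expansion of $f$ gives $\min_{x\in\TT}\|Df(x)|_{E^c}\|>1$, so for every $f$-invariant probability measure $\nu$
\[
L^c(\nu) \;=\; \int \log\|Df(x)|_{E^c}\|\,d\nu(x) \;\geq\; \log\min_{x\in\TT}\|Df(x)|_{E^c}\| \;>\; 0.
\]
In particular $L^c(\mu)>0$, and Corollary~\ref{cor:main-dynamicallymininmal} yields that $\mu$ is the unique absolutely continuous invariant measure of $f$.

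To promote this to uniqueness of the $u$-Gibbs measure itself, let $\mu'$ be any $u$-Gibbs measure of $f$. The display above forces $L^c(\mu')>0$, so Corollary~\ref{cor:main-dynamicallymininmal} applies to $\mu'$ and identifies $\mu'$ with the unique absolutely continuous invariant measure of $f$. Hence $\mu'=\mu$, and $\mu$ is simultaneously the unique $u$-Gibbs measure and the unique absolutely continuous invariant measure of $f$.

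There is no real obstruction internal to this argument; it is a combination of three pieces, of which two are externally supplied (the topological dynamical-minimality theorem of \cite{ACEPWZ} and Corollary~\ref{cor:main-dynamicallymininmal}, which itself rests on Theorem~\ref{teo:main.rigidez}). The only conceptual point worth flagging is that uniform expansion of $f$ makes the positive center exponent assumption a free by-product for \emph{every} invariant measure; this is exactly what allows the passage from uniqueness of the absolutely continuous invariant measure to uniqueness of the entire class of $u$-Gibbs measures.
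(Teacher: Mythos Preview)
Your proposal is correct and follows the same approach as the paper: combine the dynamical minimality result of \cite{ACEPWZ} with Corollary~\ref{cor:main-dynamicallymininmal}. The paper's argument is the one-line observation that these two ingredients combine; you have additionally spelled out the verification of strong transitivity via the conjugacy and the bootstrap from uniqueness of the absolutely continuous invariant measure to uniqueness among all $u$-Gibbs measures (using that uniform expansion forces $L^c(\nu)>0$ for every invariant $\nu$), both of which the paper leaves implicit.
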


In particular, this gives open sets of partially hyperbolic endomorphisms on $\TT$ displaying a unique $u$-Gibbs measure. Although this is a clean and direct application, we believe that the combination of the results and, most of all, the techniques developed in \cite{ACEPWZ} with Corollary~\ref{cor:main-dynamicallymininmal} will have even more consequences, going beyond the uniformly expanding scenario. An evidence of this guess is that in the case of three dimensional partially hyperbolic diffeomorphisms, some results giving uniqueness of $u$-Gibbs states will be given in \cite{ACEPWZ}, notably in the smooth setting where \cite{Katz} can be used. 

\subsubsection{The reducible case} Considering now a matrix $A\in\operatorname{SL}_2(\Z)$ with two distinct integer eigenvalues, both larger than $1$, then the main obstruction to the application of our main result lies in the lack of knowledge about the behavior of unstable leaves. For in this case a phenomenon not present in the invertible setting appears: the existence of \emph{compact} unstable leaves. Indeed, we demonstrate in Example~\ref{ex:3} that one can build a non-special perturbation of the induced map $f_A$ which presents compact unstable leaves with non-dense orbit. This example, despite of being uniformly expanding, partially hyperbolic, {strongly transitive} and not special, displays infinitely $u$-Gibbs measures. These claims are proved in Section~\ref{subsec:provasexe3}. 

More generally, we do not know what to expect as an answer to the following problem.

\begin{problem}
\label{prob:redutivel}
Let $f\in\operatorname{End}^2(\TT)$ be a partially hyperbolic map isotopic to a linear expanding map $f_A$, where the matrix $A$ has integer eigenvalues $\sigma^u\geq\sigma^c>1$. Do we have that every unstable curve is either compact or has a dense orbit?
\end{problem}

Notice that the work of Martin Andersson and Wagner Ranter \cite{AW} shows that, if we assume further than the assumptions of Problem~\ref{prob:redutivel} that 
\[
\det Df(x)>\sigma^u,\:\:\:\textrm{for every}\:\:\:x\in\TT,
\]
then $f$ is locally eventually onto (for all $U \subseteq M$ non-empty open set there is $N \in \N$ such that $f^N(U) = M$), and in particular strongly transitive. Even under this stronger assumption, we do not know the answer to Problem~\ref{prob:redutivel}.

\subsection{An ideia of the Eskin--Mirzakhani scheme}
Let us give an overview of the argument we employ to prove Theorem~\ref{teo:main.rigidez}. Consider a pair $(f,\mu)$ satisfying the assumptions of Theorem~\ref{teo:main.rigidez}. First, one reduces the problem of demonstrating the absolute continuity of the measure to a very precise measure rigidity problem. In our setting, we do this by showing that the measure has the \emph{SRB property} on the inverse limit space (see Definition~\ref{def:srb}). Since we are not in a homogeneous setting we need to use \emph{normal form coordinates} in order to linearize the dynamics along \emph{center unstable manifolds} (see Section~\ref{sec:normal}). This permits, among other things, to give a precise meaning to the statement that the $u$-Gibbs property is equivalent to ``\textit{the measure is invariant by translations along unstable manifolds}''. Thus if we quotient the measure along unstable manifolds (see Section~\ref{sec:leafwise}) we obtain a family of measures on the one-dimensional center manifolds, which by normal forms are mapped onto $\R$. Therefore we obtain a family of measures $\hat{\nu}^c_{\tix}$ on the real line, attached to almost every point $\tix$ on the inverse limit, so that our $u$-Gibbs measure $\mu$ has the SRB property (and thus is absolutely continuous) if, and only if, the measures $\hat{\nu}^c_{\tix}$ are invariant by translations (Lemma \ref{lem:temqueserlebesgue}).  Moreover, this family inherits the \emph{starting invariance of our $u$-Gibbs measure $\mu$}: they (virtually) do not change when we move the base point $\tix$ to some $\ta \in \w^u(\tix)$. Moreover, from $x$ to $f(x)$ the measure is multiplied by $\|Df(x)|_{E^c}\|$ (see Lemma~\ref{lem:basicmoves} for a precise statement).

With proper adaptations, necessary in each context, this reduction of the measure rigidity problem to the problem of obtaining some kind of extra invariance along the direction of weak expansion is present in all previous works \cite{ALOS,EskinMirzakhani,BRH,BenoistQuintI}. In this part of the proof, our contribution, comparing with \cite{ALOS} and \cite{Katz}, is to observe that it works also when one only assumes low regularity and non-uniform expansion along the center. Recall that the maps considered in \cite{ALOS} are Anosov an thus present uniform expansion along the center. Also, one can compare with \cite{Katz} where the high smoothness is used from the start to obtain normal forms, contrarily to our work (and \cite{ALOS}) where we only need the map to be $C^2$.  

Once the reduction is completed, the factorization method of Eskin--Mirzakhani can roughly be described, following the scheme in Figure \ref{fig:EMscheme}, as follows (see Section~\ref{sec:proof-main-lemma2} where we implement the scheme in precise terms).

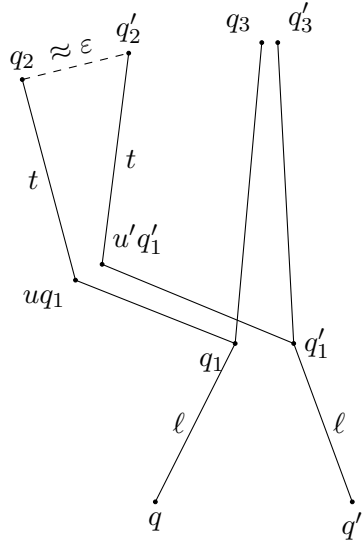
\begin{figure}[ht]

\begin{tikzpicture}[scale=.7]

\coordinate (q) at (-1.7,0); 
\coordinate (q_1) at (-0.2,3);
\coordinate (q_2) at (-4.2,8);
\coordinate (q_3) at (0.3,8.7); 
\coordinate (q') at (2,0);
\coordinate (q'_1) at (0.9,3);
\coordinate (q'_2) at (-2.2,8.5);
\coordinate (q'_3) at (0.6,8.7);
\coordinate (uq_1) at (-3.2,4.2);
\coordinate (u'q'_1) at (-2.7,4.5);

\draw (q) -- (q_1) node[pos=0.5,left]{$\ell$};
\draw (q') -- (q'_1) node[pos=0.5,right]{$\ell$};
\draw (q_1) -- (q_3) node[pos=0.5,left]{$t(\ell)$};
\draw (q'_1) -- (q'_3) node[pos=0.5,right]{$t(\ell)$};
\draw (uq_1) -- (q_2) node[pos=0.5,left]{$\tau(\ell)$};
\draw (q'_2) -- (u'q'_1) node[pos=0.5,right]{$\tau(\ell)$};
\draw (q_1) -- (uq_1);
\draw (q'_1) -- (u'q'_1);

\draw[dashed] (q_2) -- (q'_2) node[pos=0.5,above, sloped]{$\approx \varepsilon$};

\filldraw (q) circle (1pt) node[below]{$q$};
\filldraw (q') circle (1pt) node[below]{$q'$};
\filldraw (q_1) circle (1pt) node[below left]{$q_1$};
\filldraw (q'_1) circle (1pt) node[right]{$q'_1$};
\filldraw (q_2) circle (1pt) node[above]{$q_2$};
\filldraw (q'_2) circle (1pt) node[above]{$q'_2$};
\filldraw (q_3) circle (1pt) node[above left]{$q_3$};
\filldraw (q'_3) circle (1pt) node[above right]{$q'_3$};
\filldraw (uq_1) circle (1pt) node[below left]{$uq_1$};
\filldraw (u'q'_1) circle (1pt) node[above right]{$uq'_1$};
\end{tikzpicture}
\caption{\label{fig:EMscheme} Illustration of Eskin--Mirzakhani scheme.}
\end{figure}

\begin{itemize}
   \item On a ``large set'', pick points $q$ and $q'$ in the support of the measure such that $q$ and $q'$ are in the same stable leaf (which, in our case, means the fibers of the inverse limit space) with controlled distance.
    \item The ``randomness'' of the unstable direction along stable leaves ensures that $E^u(q)$ is uniformly transverse to $E^u(q')$. Thus, one has a uniform control on the center distance between $q$ and $q'$.
    \item Iterating forward by some time $\ell$ and moving to points $uq_1\in\w^u(q_1)$ and $uq_1^{\prime}\in\w^u(q^{\prime}_1)$ (thus, ``changing the future'') one gets an exponential estimate for the center distance between $uq_1$ and $uq_1^{\prime}$ of the form $e^{-c\ell}$, for some $c=c(q_1,q_1^{\prime},uq_1,uq_1^{\prime})>0$. 
    \item Iterating forward by the new future for some time $\tau(\ell)$ we find points $q_2$ and $q_2^{\prime}$ which are \emph{almost} on the same center-unstable leaf and whose center distance is approximately $\eps$.
    \item Using the \emph{basic moves} of quotient measures (see Lemma~\ref{lem:basicmoves}), one can find a time $t(\ell)$ so that the points $q_3=f^{t(\ell)}(q_1)$ and $q_3^{\prime}=f^{t(\ell)}(q_1^{\prime})$ satisfy 
    \[
    \hat{\nu}^c_{q_2}\propto B_*\hat{\nu}^c_{q_3}\:\:\:\textrm{and}\:\:\:\hat{\nu}^c_{q_2^{\prime}}\propto \hat{B}_*\hat{\nu}^c_{q_3^{\prime}},
    \]
    where $B$ and $\hat{B}$ are linear maps of the real line with uniform slope (independent of $\ell$), and the notation $\propto$ means that the measures are proportional (see Section \ref{sec:normal-basicmoves}). Indeed, notice that from $uq_1$ to $q_2$ the measure changes by a linear factor of slope $\|Df^{\tau(\ell)}(uq_1)|_{E^c}\|$, thus it suffices to find $t=t(\ell)$ so that $\|Df^t(q_1)|_{E^c}\|$ is almost of the same size than $\|Df^{\tau(\ell)}(uq_1)|_{E^c}\|$, and a similar procedure has to be done in the ``$q_1^{\prime}$ side'' of the picture.  
    \item Since $q_3$ is exponentially close to $q_3^{\prime}$, if all points on the picture belong to a compact \emph{Lusin set} where the family $\hat{\nu}^c_{\tix}$ varies continuously we deduce that 
    \[
    \hat{\nu}^c_{q_2}\propto\widetilde{B}_*\hat{\nu}^c_{q_2^{\prime}}\:\:\:\textrm{up to an error}\:\:\:\delta(\ell)\to_{\ell\to+\infty} 0.
    \]
    where $\widetilde{B}:\R\to\R$ is a linear map, with uniformly bounded slope
    \item Thus, by taking limits, we find two points $q_{\infty}$ and $q_{\infty}^{\prime}$ in the same center leaf, distant apart of about $\eps$, so that their corresponding quotient measures change by a linear factor with uniformly bounded slope. This implies that $\hat{\nu}^c_{q_{\infty}}$ has the necessary additional invariance (see Lemma~\ref{lem:main-lemma}). 
\end{itemize}

In the last point of the argument, the fact that we have $\dim E^c=1$ plays a crucial role, since we need the affine structures given by normal forms. This is one of the key difficulties for generalizing our result to higher dimensions. 

As in \cite{ALOS}, we actually implement a modification of the above sketch where the points $uq_1$ and $uq_1^{\prime}$ are replaced by small unstable curves intersecting the good Lusin sets (see Lemma~\ref{lem:matching}). Moreover, as we already pointed out, the main technical difference of our argument in comparison with \cite{ALOS} is that we work with non-uniform expansion along the center. This makes the \emph{matching argument} (see Proposition 11.8 of \cite{ALOS}) much more complicated, for we need to be extremely careful with several constants which are no longer uniform (depending only on $f$) but rather they are uniform on Lusin sets. For this reason, in Lemma~\ref{lem:matching} we work with two different Lusin sets. It is also worth to remark that our argument makes more clear the meaning of ``factorization'' in our context: notice that the \emph{stopping times} $\tau(\ell)$ and $t(\ell)$ depend on the exponential rate $c=c(q_1,q_1^{\prime},uq_1,uq_1^{\prime})$. In our case, we can prove that actually $c$ depends only on $q_1$ and thus, in order to have \emph{uniform estimations} (like the \emph{synchronization of stopping times} in Proposition~\ref{prop:sync})    we only need that the points on the left side of the picture lie in a good Lusin set $\cL_1$. This allows us to use $\cL_1$ to control the estimates and a larger set $\cL_2\supset\cL_1$ to require continuity properties.   

\subsection{Structure of the paper}
In Section~\ref{sec:ph-exp} we introduce more comprehensively the dynamical systems we work with: partially hyperbolic endomorphisms in dimension two. We also develop a number of important tools from Pesin Theory, notably Lyapunov norms and distortion estimates. In Section~\ref{sec:normal} we discuss the generalization of the results from \cite{ALOS} on normal forms and the construction of quotient measures $\hat{\nu}^c_{\tix}$ to our setting. The proof of our main result starts at Section~\ref{sec:transv}, where we give a different and simpler proof of the $0-1$ law of \cite{ALOS,BRH}, taking advantage of the symbolic structure of stable leaves. Sections~\ref{sec:proof-main-lemma} and \ref{sec:matching} are devoted to the development of the concept of $Y$-configurations from \cite{EskinMirzakhani,EskinLind} (also used in \cite{Katz,ALOS}) in our setting. The main technical point of our argument (Lemma~\ref{lem:matching}) is proved at the end of Section~\ref{sec:matching}. Finally, in Section~\ref{sec:proof-main-lemma2} we implement the Eskin--Mirzakhani scheme as in \cite{ALOS}, using the conclusion of Lemma~\ref{lem:matching}.

\subsection*{Acknowledgments} B.S. thanks Rafael Potrie who suggested the problem in October 2022 at IMPA, and also thanks Sylvain Crovisier for important remarks. We both thank Sébastien Alvarez for useful discussions. M.C. thanks Cristina Lizana for useful conversations at an earlier stage of this work. B.S. was supported by \emph{Conselho Nacional de Densenvolvimento Cinetífico e Tecnológico (CNPQ)} via the grants \emph{Bolsa PQ 313363/2021-8} and \emph{Bolsa PDE 401318/2022-2} and by \emph{Fundação de Amparao a Pesquisa do Estado do Rio de Janeiro (FAPERJ)} via the grants \emph{JCNE E-26/201.411/2021} and \emph{JPF E-26/210.344/2022}. B.S. was partially supported by \emph{Coordenação de Aperfeiçoamento de Pessoal de Nível Superior (CAPES) - Finance code 001}. M.C. was supported by \emph{Fundação de Amparao a Pesquisa do Estado do Rio de Janeiro (FAPERJ)} via the grant \emph{JPF E-26/210.344/2022} and the Australian Research Council (ARC).

\section{Partially hyperbolic endomorphisms on surfaces}
\label{sec:ph-exp}

In this section we give complete definitions and state some properties of the kind of dynamical systems we consider in this work. We present in detail the inverse limit space and how one can translate the theory of three dimensional partially hyperbolic diffeomorphisms to the non-invertible setting. We present a number of examples and study their $u$-Gibbs measures. At the end, we develop the theory of Lyapunov norms in our setting which is a convenient way of measuring the amount of expansion along the center direction. This tool plays a significant role in our argument.

\subsection{Inverse Limit}
\label{sec:inv-lim}

One of the key features of our work is to explore the connections between partially hyperbolic diffeomorphisms in dimension three, with a decomposition $E^s\oplus E^c\oplus E^u$, and partially hyperbolic endomorphisms in dimension two. In this section we describe the object which allows to properly formulate a dictionary between these classes of systems.

\subsubsection{Abstract construction}
Consider $(X, d)$ a compact metric space and a map $f: X \to X$ that is continuous and surjective. Then $f$ has an invertible extension $\tilde{f}: X_f \to X_f$, that is, there exists $\pi: X_f \to X$ continuous and surjective such that $\pi \circ \tilde{f} = f \circ \pi$. This extension is called \textit{inverse limit} or \textit{natural extension} and it is defined as follows.
\begin{itemize}
    \item $X_f = \{\Tilde{x} = (x_k) \in X^\mathbb{Z}: x_{k+1} = f(x_k) \mbox{, } \forall k \in \mathbb{Z}\}$,
    \item $(\Tilde{f}(\Tilde{x}))_k = (x_{k+1})$ $\forall k \in \mathbb{Z}$ and $\forall \Tilde{x} \in \Tilde{X}$,
    \item $\pi: X_f \to X$ is the projection on the 0th coordinate.
\end{itemize}

Given $\tix,\tilde{y}\in X_f$ we define $n(\tix,\tilde{y}) \eqdef \max \{n\in\Z\; : \; x_i=y_i,\:\textrm{for all}\:|i|<n\}$. Then, by defining
\[
d(\tix,\tilde{y})\eqdef 2^{-n(\tix,\tilde{y})},
\]
we obtain a metric in $X_f$ which makes it a compact metric space. 
\begin{remark}
    \label{lem:ultramen}
The space $(X_f,d)$ is in fact an \emph{ultrametric space}, in the sense that it holds the improved triangle inequality: $d(\tix,\tilde{y})\leq\max\{d(\tix,\tilde{z}),d(\tilde{z},\tilde{y})\}$, for every $\tix,\tilde{y},\tilde{z}\in X_f$. 
\end{remark}

Notice that the shift map $\tilde{f}$ and the projection $\pi$ are both continuous. If $f$ is invertible, then $\pi$ is a homeomorphism between $X_f$ and $X$. We consider in this work only the case in which $f$ is not invertible.

One can prove that the metric $d$ generates the same topology as the \textit{open cylinders} given by
$$[A_{-k}, \cdots A_0] \eqdef \{\tilde{x} \in X_f : x_i \in A_i \mbox{ for } i \in \{-k,...0\} \},$$
where $A_i \subseteq X$ are open sets. The same cylinders generate the Borel $\sigma$-algebra for $(X_f, \tilde{d})$. 

All measures considered in this work are over the Borel $\sigma$-algebra for the given space and, given a space $X$, we denote by $\cM(X)$ the probability measures on $X$, endowed with the weak$^*$ topology. Given a discrete time dynamical system $f:X\to X$ we denote by $\cM_f(X)$ the subset of $f$-invariant probability measures. Finally, we denote by $\cM^{\operatorname{erg}}_f(X)$ the ergodic elements in $\cM_f(X)$.  

It is easy to check that if $\tilde{\mu}$ is a $\tilde{f}$-invariant measure on $X_f$, then $\pi_*\tilde{\mu}$ is a $f$-invariant measure on $X$. Since $\pi$ is surjective, for all such $\mu$, there is a $\tilde{\mu}$ that projects on it. Actually, $\pi_*: \mathcal{M}_{\tilde{f}}(X_f) \to \mathcal{M}_f(X)$ is a bijection, as proved for instance in \cite[Proposition I.3.1]{QXZ2009}.

\begin{proposition}
    \label{prop:qxz-mu}
    Let $(X,d)$ be a compact metric space and $f: X \to X$ continuous. For any $f$-invariant probability measure $\mu$ on $X$, there is a unique $\tilde{f}$-invariant probability measure $\tilde{\mu}$ on $X_f$ such that $\pi_* \tilde{\mu} = \mu$.
\end{proposition}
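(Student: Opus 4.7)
The plan is to identify $X_f$ with the inverse limit of the system $X\xleftarrow{f}X\xleftarrow{f}X\xleftarrow{f}\cdots$ and then construct $\tilde{\mu}$ by invoking the Kolmogorov extension theorem. For each $n\geq 0$, let $\rho_n\colon X_f\to X$ denote the projection $\rho_n(\tilde{x})=x_{-n}$. The defining relation $x_{-n}=f(x_{-n-1})$ gives $f\circ\rho_{n+1}=\rho_n$, so $(X_f,\{\rho_n\}_{n\geq 0})$ realizes this inverse limit in the category of compact metric (hence Polish) spaces, with $\pi=\rho_0$.

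Under this identification, the hypothesis $f_*\mu=\mu$ says precisely that the constant family $\mu_n=\mu$ on each copy of $X$ forms a compatible projective system of Borel probability measures. The Kolmogorov extension theorem for inverse limits of Polish probability spaces then produces a unique Borel probability measure $\tilde{\mu}$ on $X_f$ such that $(\rho_n)_*\tilde{\mu}=\mu$ for every $n\geq 0$. Equivalently, $\tilde{\mu}$ is the unique extension to the Borel $\sigma$-algebra of the pre-measure defined on cylinders by
\[
\tilde{\mu}\bigl([A_{-k},\ldots,A_0]\bigr)\;=\;\mu\bigl(A_{-k}\cap f^{-1}(A_{-k+1})\cap\cdots\cap f^{-k}(A_0)\bigr);
\]
the only thing to check is that $f$-invariance of $\mu$ makes this assignment consistent on refinements (adding a coordinate $A_{-(k+1)}=X$), after which Carathéodory extension handles the passage to the full Borel $\sigma$-algebra generated by the cylinders.

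To verify $\tilde{f}$-invariance I would use the identity $\pi\circ\tilde{f}^{-n}=\rho_n$ (immediate from $(\tilde{f}^{-n}\tilde{x})_0=x_{-n}$) together with $\pi\circ\tilde{f}=f\circ\pi$. These give $(\rho_n)_*(\tilde{f}_*\tilde{\mu})=\mu$ for every $n\geq 0$, so $\tilde{f}_*\tilde{\mu}$ satisfies the same defining property as $\tilde{\mu}$ and uniqueness in the extension theorem forces $\tilde{f}_*\tilde{\mu}=\tilde{\mu}$; the identity $\pi_*\tilde{\mu}=\mu$ is automatic from $\pi=\rho_0$. Uniqueness of $\tilde{\mu}$ among $\tilde{f}$-invariant probabilities projecting to $\mu$ follows the same way: any such $\tilde{\nu}$ satisfies $(\rho_n)_*\tilde{\nu}=\pi_*\tilde{f}^{-n}_*\tilde{\nu}=\pi_*\tilde{\nu}=\mu$ for every $n$, and is therefore equal to $\tilde{\mu}$ by Kolmogorov uniqueness. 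There is no genuine obstacle here beyond this standard application of abstract measure theory; the argument uses nothing about the partially hyperbolic structure.
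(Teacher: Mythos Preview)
Your argument is correct and is the standard route to this fact. Note, however, that the paper does not actually prove this proposition: it is stated with a citation to \cite[Proposition I.3.1]{QXZ2009}, and the only comment offered is the sentence immediately following, that ``uniqueness on the above result is given by the fact that $\tilde{\mu}$ is defined by the values it attributes to cylinders.'' Your proof via the Kolmogorov extension theorem on the inverse limit, together with the observation that $\rho_n=\pi\circ\tilde f^{-n}$ forces any $\tilde f$-invariant lift to have the same marginals, is exactly the argument the cited reference carries out and is consistent with the paper's remark on uniqueness.
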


The uniqueness on the above result is given by the fact that $\tilde{\mu}$ is defined by the values it attributes to cylinders. 

The topological structure of $(X_f, \tilde{d})$ is quite interesting if $f$ is not invertible. If $X$ is connected and $f$ is a self-covering map with positive degree, then, for each $x \in X$, $\pi^{-1}(x)$ is compact, totally disconnected and has no isolated points, thus it is a Cantor set.

\begin{remark}
Additionally, if $X$ is a manifold (it suffices that $X$ is locally simply connected), then $(X_f, X, \pi, \Sigma)$ is a fiber bundle, where $\Sigma$ is a Cantor set \cite[Theorem 6.5.1]{AH1994}. Therefore, even if $X$ is a manifold and $f$ is differentiable, $X_f$ does not have a manifold structure. Nevertheless, we can pullback the differential structure from $X$ to $X_f$ and define $D\tilde{f}_{\tilde{x}}: T_{\tilde{x}}X_f \to T_{\tilde{f}(\tilde{x})}X_f$ as $D\tilde{f}_{\tilde{x}} = Df_x$. This derivative is uniformly bounded as a function of $\tilde{x}$. Using this tangent structure, we can see that each path connected component of $X_f$ is diffeomorphic to the universal covering of $X$ and that $\tilde{f}$ is invertible and as regular as $f$ restricted to a given path connected component. Another topological feature of $X_f$ is given by the following result.
\end{remark}

\begin{notation}
	Along this text, we use the notation $$\Sigma(\tilde{x}) \eqdef \pi^{-1}(\pi(\tilde{x}))$$ to refer to the fiber of $\tilde{x}$ on $(X_f, X, \pi, \Sigma)$.
\end{notation}

\begin{proposition}
    \label{prop:supp-mu}
    For any $f$-invariant probability measure $\mu$ on $X$, we have $\operatorname{supp}(\tilde{\mu})=X_f$ if and only if $\operatorname{supp}(\mu)=X$.
\end{proposition}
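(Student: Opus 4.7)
The plan is to prove the two implications separately, exploiting the fact (recorded in the excerpt) that the open cylinders $[A_{-k},\ldots,A_0]$ generate the topology of $X_f$, together with the $\tilde f$-invariance of $\tilde\mu$ and the identity $\pi_*\tilde\mu=\mu$.

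For the direction $\operatorname{supp}(\tilde\mu)=X_f \Rightarrow \operatorname{supp}(\mu)=X$, given any non-empty open $U\subset X$, the set $\pi^{-1}(U)$ is open by continuity of $\pi$ and non-empty by surjectivity of $\pi$; therefore $\mu(U)=\tilde\mu(\pi^{-1}(U))>0$, which shows $\operatorname{supp}(\mu)=X$.

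For the converse direction it suffices to show that every non-empty open cylinder $V=[A_{-k},\ldots,A_0]$ has positive $\tilde\mu$-measure. The key move is to collapse the temporal chain defining $V$ into a single event in the base. Since $x_{i+1}=f(x_i)$, one checks that $\tilde x\in V$ if and only if $x_{-k}\in C$, where
\[
C \eqdef A_{-k}\cap f^{-1}(A_{-k+1})\cap\cdots\cap f^{-k}(A_0);
\]
equivalently $\tilde f^{-k}(\tilde x)\in\pi^{-1}(C)$, i.e.\ $V=\tilde f^{k}(\pi^{-1}(C))$. Applying $\tilde f$-invariance of $\tilde\mu$ and then $\pi_*\tilde\mu=\mu$ we obtain
\[
\tilde\mu(V)=\tilde\mu(\pi^{-1}(C))=\mu(C).
\]
The set $C$ is open (a finite intersection of open sets) and non-empty (its non-emptiness is equivalent to that of $V$, by the same compatibility argument). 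Under the hypothesis $\operatorname{supp}(\mu)=X$ this yields $\mu(C)>0$ and hence $\tilde\mu(V)>0$.

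I do not see a real obstacle here: the only idea required is that invariance of $\tilde\mu$ lets a finite chain of open conditions on consecutive coordinates be rewritten as a single open condition on a single coordinate of the base, after which the basis property of the cylinders finishes the argument. The main technical care is simply to make sure one uses the cylinders with consecutive indices $-k,\ldots,0$ exactly as the excerpt defines them, so that the expression for $C$ (involving only forward iterates of $f$) makes sense without appealing to any non-invertible preimage structure beyond taking $f^{-j}$ of open sets.
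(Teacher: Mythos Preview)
Your proof is correct and follows essentially the same approach as the paper: both use $\tilde f$-invariance of $\tilde\mu$ to reduce a cylinder condition to a single open condition in the base, where full support of $\mu$ applies. Your version is actually more explicit than the paper's, which argues by contradiction and only asserts that ``the topological structure of the inverse limit space implies that there is $k<0$ such that $\tilde f^k(A)$ contains $\pi^{-1}(U)$''; your identity $[A_{-k},\ldots,A_0]=\tilde f^{k}(\pi^{-1}(C))$ is precisely what makes that assertion work.
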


\begin{proof}
    If $\operatorname{supp}(\tilde{\mu})=X_f$, then $\operatorname{supp}(\mu)=X$ is a direct consequence of $\mu = \pi_* \tilde{\mu}$.

    If $\operatorname{supp}(\mu)=X$ and we suppose that $\operatorname{supp}(\tilde{\mu}) \neq X_f$, then there is an open set $A \subseteq X_f$ such that $\tilde{\mu}(A) = 0$. Since $\tilde{\mu}$ is $\tilde{f}$-invariant, this implies that $\tilde{\mu}(\tilde{f}^k(A)) = 0$ for all $k \in \mathbb{Z}$. But the topological structure of the inverse limit space implies that there is $k < 0$ such that $\tilde{f}^k(A)$ contains $\pi^{-1}(U)$ for some open $U \in X$, a contradiction.
\end{proof}

\subsubsection{Topological dynamics on $X_f$}

Recall that a continuous map $f: X \to X$ is said to be {strongly transitive} if for all $U \subseteq X$ non-empty open set there is $N \in \N$ such that {$\cup_{n=0}^N f^n(U) = X$}. Every uniformly expanding map on a compact manifold is {strongly transitive}, thus it is transitive. The following simple result and its corollary are fundamental in allowing us to have ``quantitative'' information about points which have ``many'' different unstable directions (Lemma \ref{l.contracaomaravilhosa}).

\begin{lemma}
  \label{lem:preorbitasdensas}
Let $f:X\to X$ be a {strongly transitive} transformation. Then, for each $x\in X$ the set $\{f^{-n}(x);n\in\N\}$ is dense in $X$.
\end{lemma}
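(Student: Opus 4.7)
The statement is essentially a direct unpacking of the definition of strong transitivity, so the proof will be very short. My plan is to argue by showing that the set of preimages intersects every non-empty open set of $M$.

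Fix $x \in M$, and let $U \subset M$ be an arbitrary non-empty open set. I want to produce some $n \in \N$ and some $y \in U$ with $f^n(y) = x$, which will exhibit a point of $f^{-n}(x)$ inside $U$.

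The key step is to invoke strong transitivity directly on $U$. By definition, there exists an integer $N = N(U) > 0$ such that
\[
\bigcup_{n=0}^{N} f^n(U) = M.
\]
Since $x \in M$, this forces $x \in f^{n_0}(U)$ for at least one index $n_0 \in \{0,1,\dots,N\}$. Choosing a preimage $y \in U$ with $f^{n_0}(y) = x$ yields $y \in U \cap f^{-n_0}(x)$, so in particular
\[
U \cap \bigcup_{n \in \N} f^{-n}(x) \neq \emptyset.
\]
As $U$ was arbitrary, the set $\{f^{-n}(x) : n \in \N\}$ meets every non-empty open subset of $M$ and is therefore dense.

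There is no real obstacle here: the only subtle point is making sure one reads ``strongly transitive'' with the convention that the union starts at $n=0$ (as stated in the paper), which guarantees that $x$ itself can be reached, and that we do not need to worry about the case $x \in U$ separately. The argument does not use any regularity of $f$, any dimension of $M$, nor any partial hyperbolicity, so it applies verbatim in the general setting stated.
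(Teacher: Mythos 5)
Your proof is correct and follows exactly the same route as the paper: apply strong transitivity to the arbitrary open set $U$, note that $x$ must lie in some $f^{n_0}(U)$ with $0\le n_0\le N$, and pull back a preimage of $x$ into $U$. No issues.
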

\begin{proof}
Let $U\subset X$ be any open set. Then, there exists $N>0$ such that $\cup_{k=0}^N f^k(U)=X$. In particular, there exist some $y\in U$ and $k \leq N$ such that $f^k(y)=x$, which means that $f^{-k}(x)\cap U\neq\emptyset$. 
\end{proof}

\begin{corollary}
    \label{cor:perorbitascdensas}
Let $f:X\to X$ be a {strongly transitive} transformation. Then, for each $\tix\in X_f$ the set $\{\tif^{-n}(\Sigma(\tix));n\in\N\}$ is dense in $X_f$.    
\end{corollary}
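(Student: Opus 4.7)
The plan is to reduce density of preimages of the fiber $\Sigma(\tix)$ in $M_f$ to the density statement for preimages of $\pi(\tix) \in M$ provided by Lemma~\ref{lem:preorbitasdensas}, using the cylinder description of the topology of $M_f$. The point is that a point $\tilde{y} \in M_f$ lies in $\tif^{-n}(\Sigma(\tix))$ exactly when $f^n(y_0) = \pi(\tix)$, so the task is purely about $f$-preimages once we descend to the zeroth coordinate inside a basic cylinder.

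First, fix a nonempty open set $\widetilde{U} \subseteq M_f$ and choose inside it a nonempty basic cylinder $C = [A_{-k}, \ldots, A_0]$, together with some $\tilde{z} \in C$. By continuity of $f^j$ at $z_{-k}$ for $j = 0, \ldots, k$, one can find an open set $V \subseteq A_{-k}$ containing $z_{-k}$ such that $f^j(V) \subseteq A_{-k+j}$ for each $j$; then for any $w \in V$, the finite string $(w, f(w), \ldots, f^k(w))$ sits inside $A_{-k} \times \cdots \times A_0$.

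Next, set $x \eqdef \pi(\tix)$. Using surjectivity of $f$ (automatic in our setting, since endomorphisms of $\TT$ are self-coverings of a compact connected manifold, and also implicit in the construction of $M_f$), pick some $x' \in f^{-k}(x)$ and apply Lemma~\ref{lem:preorbitasdensas} to $x'$: its preorbit is dense in $M$, so there exist $w \in V$ and $m \in \N$ with $f^m(w) = x'$. Setting $y_i \eqdef f^{k+i}(w)$ for $-k \leq i \leq 0$ places $y_i \in A_i$ by the choice of $V$, and moreover $f^m(y_0) = f^{m+k}(w) = f^k(x') = x$.

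Finally, extend these finitely many coordinates to a bi-infinite sequence $\tilde{y} \in M_f$: the indices $j > 0$ are forced by $y_j = f^j(y_0)$, and for $j < -k$ one chooses any backward $f$-orbit, which exists by surjectivity. The resulting $\tilde{y}$ lies in $C \subseteq \widetilde{U}$ and satisfies $\pi(\tif^m(\tilde{y})) = x$, so $\tilde{y} \in \widetilde{U} \cap \tif^{-m}(\Sigma(\tix))$, which proves density. The only subtle point in this plan is the translation between the cylinder-generated topology of $M_f$ and the metric topology on $M$, which is already granted by the earlier observation that cylinders $[A_{-k}, \ldots, A_0]$ form a basis for the topology of $M_f$; given that, the argument is a direct lift of Lemma~\ref{lem:preorbitasdensas}.
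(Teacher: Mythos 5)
Your argument is correct: reducing to a basic cylinder, shrinking its leftmost coordinate set $V$ so that $f^j(V)\subseteq A_{-k+j}$, and then applying Lemma~\ref{lem:preorbitasdensas} to a point $x'\in f^{-k}(x)$ (so that the exponent $m+k$ stays nonnegative) is exactly the intended lift of the lemma to $M_f$; the paper states the corollary without proof, and your write-up supplies the omitted details faithfully. No gaps.
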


\subsection{Partial hyperbolicity on the inverse limit}
Consider $\operatorname{End}^r(\TT)$ the set of $C^r$ local diffeomorphisms on $\TT$. Let $f\in\operatorname{End}^1(\TT)$ be a partially hyperbolic endomorphism. We denote by $\TTf$ the associated inverse limit space as constructed above. The cone field condition of partial hyperbolicity for $f$ is equivalent to the following \cite{QXZ2009,HirschPughShub}.

\begin{lemma}
    \label{lem:defideph}
There exists a continuous non-trivial splitting  $T_{\tix}\TTf=E^c(\tix)\oplus E^u(\tix)$ on the inverse limit space, for some choice of background Riemannian metric on $\TT$, such that the continuous functions
\[
\lambda^{*}_{\tix}\eqdef\|D\tilde{f}(\tix)|_{E^{*}}\|,\:\:\:\textrm{for}\:\: * \in \{c,u\},
\]
satisfy the following inequalities:
\begin{enumerate}
\item$\lambda^c_{\tix}<\lambda^u_{\tix}$, for every $\tix\in\TTf$;
\item  $\lambda^u_{\tix}>1$, for  every $\tix\in\TTf$.
\end{enumerate}
\end{lemma}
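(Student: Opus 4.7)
The proof establishes an equivalence between the cone-field formulation of partial hyperbolicity and the existence of a $D\tif$-invariant dominated splitting on the inverse limit. The non-trivial direction is to produce the splitting from the cone field, so I concentrate on that and at the end briefly indicate the converse.

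For the unstable bundle, fix $\tix = (x_k)_{k \in \Z} \in \TTf$ and consider, in $T_{x_0}\TT$, the nested sequence of cones
$$\cC_n(\tix) \eqdef Df^{n\ell}(x_{-n\ell})\bigl(\cC^u_{x_{-n\ell}}\bigr).$$
The strict invariance $Df^{\ell}(\cC^u) \subset \operatorname{Int}(\cC^u)$ gives $\cC_{n+1}(\tix) \subset \operatorname{Int}(\cC_n(\tix))$, and combined with the uniform expansion $\|Df^{\ell} v\| \geq \sigma \|v\|$ inside the cone, this is the classical criterion under which the induced action of $Df^{\ell}$ on the projective bundle contracts the $\cC_n(\tix)$ uniformly; their projective diameter shrinks exponentially. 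I would set $E^u(\tix) \eqdef \bigcap_n \cC_n(\tix)$. The bundle is $Df^{\ell}$-invariant by construction and continuous in $\tix$: the ultrametric structure of $\TTf$ (Remark~\ref{lem:ultramen}) implies that $\cC_n(\tix)$ depends only on the block $(x_{-n\ell}, \ldots, x_0)$, hence is locally constant in $\tix$ on a neighborhood whose size is controlled by $n$, and the exponential shrinkage then promotes this to uniform continuity of the limit. Full $Df$-invariance is standard: any $Df^{\ell}$-invariant continuous dominated splitting is automatically $Df$-invariant, since $Df$ commutes with $Df^{\ell}$ and the splitting prescribed by an invariant expanding cone is unique.

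For the center bundle, I would pick a continuous cone field $\widehat{\cC}^c$ complementary to $\cC^u$. The strict cone invariance implies the dual statement $Df^{\ell}(x)^{-1}\bigl(\widehat{\cC}^c_{f^{\ell}(x)}\bigr) \subset \widehat{\cC}^c_x$, because any vector outside $\cC^u_{f^{\ell}(x)}$ cannot be the $Df^{\ell}(x)$-image of a vector in $\cC^u_x$. Iterating produces a nested family $\cD_n(x) \eqdef Df^{n\ell}(x)^{-1}\bigl(\widehat{\cC}^c_{f^{n\ell}(x)}\bigr)$ whose projective diameter shrinks exponentially by the same domination argument. Setting $E^c(x) \eqdef \bigcap_n \cD_n(x)$ yields a one-dimensional subspace of $T_x \TT$ that depends only on the forward orbit of $x$, hence only on $\pi(\tix)$; it is continuous, $Df$-invariant and transverse to $E^u(\tix)$ for every $\tix \in \pi^{-1}(x)$. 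Setting $E^c(\tix) \eqdef E^c(\pi(\tix))$ realizes the deterministic center bundle anticipated in the statement. To obtain the pointwise one-step inequalities $\lambda^c_{\tix} < \lambda^u_{\tix}$ and $\lambda^u_{\tix} > 1$, I would replace the background metric by an adapted Mather-type metric (a finite Birkhoff average of the ambient norm on each bundle), which converts the $\ell$-step estimates given by the cone field into the asserted one-step inequalities; this is exactly the role of the phrase ``for some choice of background Riemannian metric'' in the statement. For the converse implication, a fixed-opening projective neighborhood of the compact set $\{E^u(\tix) : \tix \in \pi^{-1}(x)\} \subset \mathbb{P}(T_x \TT)$ provides a continuous cone field invariant and uniformly expanding under a suitable iterate $Df^{\ell}$, using continuity of the splitting and compactness of the fibers.

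The main obstacle I anticipate is the continuity of $\tix \mapsto E^u(\tix)$, because the construction depends on the full backward orbit and nearby points of $\TTf$ produce genuinely distinct sequences of pullback cones; the ultrametric structure is the right mechanism, since the shared past of nearby points forces their first $n$ cones to coincide for large $n$, and exponential shrinkage upgrades this stratified local constancy to uniform continuity. A secondary but important check is that $E^c(x)$ is genuinely transverse to every $E^u(\tix)$ over $x$; this follows because $E^c(x)$ lies in the interior of $\widehat{\cC}^c_x$ while $E^u(\tix)$ lies in the interior of $\cC^u_x$, and these two cones are disjoint away from the origin by construction.
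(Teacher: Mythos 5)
Your argument is correct and is precisely the standard cone-criterion proof that the paper itself does not write out, delegating it instead to the cited references \cite{QXZ2009,HirschPughShub}: push forward the strictly invariant unstable cones along the past orbit and contract their projective diameter to get $E^u(\tix)$, identify the deterministic center as the intersection of the backward-pulled complementary cones $\cD_n(x)$ (which indeed depend only on the forward orbit), and pass to an adapted (Mather) metric to convert the $\ell$-step estimates into the pointwise inequalities $\lambda^c_{\tix}<\lambda^u_{\tix}$ and $\lambda^u_{\tix}>1$. Two small points worth tightening: the closed cones $\cC^u_x$ and $\widehat{\cC}^c_x$ share their boundary, so transversality should be read off from the compact containment $Df^{\ell}(x)^{-1}\bigl(\widehat{\cC}^c_{f^{\ell}(x)}\setminus\{0\}\bigr)\subset\operatorname{Int}\widehat{\cC}^c_x$ rather than from disjointness of the cones themselves; and continuity of $\tix\mapsto E^u(\tix)$ should be checked for the cylinder (product) topology on $\TTf$, where the relevant past coordinates of nearby orbits are merely close, not equal, so ``locally constant'' should be replaced by ``uniformly close for the first $n$ cones'', after which the exponential shrinkage argument goes through unchanged.
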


\begin{notation}
    To simplify calculations involving derivatives along trajectories, we use the following notation
    \[
    \lambda^{*}_{\tix}(n)\eqdef\|D\tif^n(\tix)|_{E^{*}}\|,
    \]
    and 
    \[
    d^{n}_{\tix}\eqdef\frac{\lambda^u_{\tix_{-n}}(n)}{\lambda^c_{\tix_{-n}}(n)},
    \]
    for every $n\in\Z$
\end{notation}

Moreover, as a consequence of \cite{CostaMicena2022}, we have that the center bundle $E^c$ descends to an $f$-invariant bundle on $\TT$.

\begin{lemma}
    \label{lem:centralcte}
Let $f\in\operatorname{End}^1(\TT)$ be partially hyperbolic. Then, for every $\tilde{y}\in\Sigma(\tix)$ one has $E^c(\tilde{y})=E^c(\tix)$.
\end{lemma}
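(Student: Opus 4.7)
The plan is to show that $E^c(\tix)$ admits a characterization in terms of the forward orbit of $x_0:=\pi(\tix)$ alone; the conclusion then follows because any $\tilde{y}\in\Sigma(\tix)$ has by definition the same forward orbit as $\tix$.

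First, I would build a continuous center cone field $\cC^c$ pointwise complementary to the unstable cone field $\cC^u$ which is strictly backward invariant under $Df$: for every $y\in\TT$,
\[
Df^{\ell}(y)^{-1}\,\cC^c_{f^{\ell}(y)}\subset\operatorname{Int}(\cC^c_y)\cup\{0\}.
\]
After possibly replacing $f$ by an iterate so that $\ell=1$, a natural choice is $\cC^c_y:=\overline{T_y\TT\setminus\cC^u_y}$ (or a slight shrinking thereof to obtain strict containment). Backward invariance then follows by applying the linear isomorphism $Df(y)^{-1}$ to both sides of $Df(y)\cC^u_y\subset\operatorname{Int}(\cC^u_{f(y)})$ and passing to complements.

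Next, using the nested cone-contraction argument of Alekseev--Brin under the domination $\lambda^c_{\tix}<\lambda^u_{\tix}$, I would show that the intersection
\[
\bigcap_{n\geq 0}\bigl(Df^n(x_0)\bigr)^{-1}\cC^c_{x_n},\qquad x_n:=f^n(x_0),
\]
is a nested sequence of closed cones whose projective widths tend to zero, collapsing onto a single one-dimensional subspace of $T_{x_0}\TT$, which one checks coincides with $E^c(\tix)$. The containment $E^c(\tix)\subset\bigl(Df^n(x_0)\bigr)^{-1}\cC^c_{x_n}$ for every $n\geq 0$ is immediate from the $Df$-invariance $Df^n(x_0)E^c(\tix)=E^c(\tif^n\tix)$ together with the transversality of the center bundle to $\cC^u$; the reverse containment is forced by the cone-contraction, which singles out a unique line.

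Since the right-hand side depends only on $(x_n)_{n\geq 0}$, that is, only on $x_0=\pi(\tix)$, any $\tilde{y}\in\Sigma(\tix)$ produces the same intersection and hence $E^c(\tilde{y})=E^c(\tix)$. The main technical point is the cone contraction step: one must upgrade the nested inclusion of cones into an effective collapse onto a line, which requires a quantitative use of the gap $\lambda^u-\lambda^c>0$. This is standard for endomorphisms and is carried out in \cite{CostaMicena2022}, already cited in the statement; the rest is formal manipulation of cone fields together with the observation that only the forward orbit of $x_0$ enters the construction.
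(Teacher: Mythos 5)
Your argument is correct, and it is essentially the standard route: the paper itself gives no proof of this lemma, deferring entirely to \cite[Lemma 2.5]{CostaMicena2022}, whose argument likewise rests on the fact that the dominated bundle of a dominated splitting is determined by the forward orbit alone. Your cone-field version is sound as written: complementation of the strictly invariant unstable cone already yields strict backward invariance of $\cC^c$ (no shrinking needed), $E^c(\tix)\subset\bigl(Df^n(x_0)\bigr)^{-1}\cC^c_{x_n}$ follows from the asserted transversality of $E^c$ to $\cC^u$, and the uniform gap in $\lambda^c_{\tix}<\lambda^u_{\tix}$ (Lemma~\ref{lem:defideph}, plus compactness of $\TTf$) gives the quantitative projective collapse onto a single line depending only on $(x_n)_{n\geq 0}$.
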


\begin{remark}
	\label{rem:uniq-c}
	In fact, if $f: X \to X$ is a local diffeomorphism with a $Df$-invariant dominated splitting $T_{\tilde{x}}X = E(\tilde{x}) \oplus F(\tilde{x})$, with $E \prec F$, then $E(x)$ does not depend on $\tilde{x} \in \pi^{-1}(x)$. A proof of this fact for partially hyperbolic endomorphisms is given on \cite[Lemma 2.5]{CostaMicena2022}.
\end{remark}

The unstable bundle $E^u$, on the other hand, in general is not constant along fibers of the inverse limit space. 

\begin{definition}
 We say that a partially hyperbolic $f\in\operatorname{End}^1(\TT)$ is \emph{special} if, for every $\tix\in\TTf$ and every $\tilde{y}\in\Sigma(\tix)$ one has $E^u(\tix)=E^u(\tilde{y})$.   
\end{definition}

\subsubsection{Some examples} To get a more concrete view of the variety of behaviors one can encounter among partially hyperbolic elements $f\in\operatorname{End}^1(\TT)$, we discuss below some simple examples.

\begin{example}
    \label{ex:1}
    
    The simplest example is the reducible one given by the product of two expanding maps $3x \pmod{1}$ and $2x \pmod{1}$ in the circle $\mathbb{S}^1 = \R/\Z$, the first expansion dominating the second one. Equivalently, consider $f_A: \mathbb{T}^2 \to \mathbb{T}^2$ as the transformation induced by the matrix $A = \left(\begin{matrix} 
3 & 0\\
0 & 2
\end{matrix}\right)$.
This example is special, since it is linear, and each unstable/center leaf is a circle. It is worth to remark that it presents a behavior that \emph{is not present} on invertible partially hyperbolic Anosov maps on the 3-torus, for in this case the integer matrix $A$ should have $\vert \det A \vert = 1$ and the system is not reducible.
\end{example}

\begin{example}
    \label{ex:2}
Consider $f_B: \mathbb{T}^2 \to \mathbb{T}^2$ as the function induced by the matrix $B = \left(\begin{matrix} 
3 & 1\\
1 & 2
\end{matrix}\right)$, with eigenvalues $\dfrac{5+ \sqrt{5}}{2} > \dfrac{5- \sqrt{5}}{2} > 1$. We have that $f_B$ is special and each unstable/center leaf is a dense line tangent to the corresponding eigenvector of $B$ on the tangent space.
\end{example}

Both simple examples presented above are special. However, most partially hyperbolic endomorphisms are not special. More precisely, the set of non-special partially hyperbolic endomorphisms on a manifold $X$ is $C^1$ open and dense \cite[Theorem B]{CostaMicena2022}. We remark that their result requires additional assumptions because they are considering systems with three non-trivial directions in the splitting and they are proving that both the center and the unstable directions are not unique for a given point. Nonetheless, the result holds in particular in dimension two as follows.

\begin{theorem}[\cite{CostaMicena2022}]
    Consider $PHE(\TT)$ as the set of all partially hyperbolic endomorphisms $f: \TT \to \TT$ with $Df$-invariant splitting $T_{\tilde{x}}M = E^c(x) \oplus E^u(\tilde{x})$. Then there is a $C^1$ open and dense subset $\mathcal{U} \subseteq PHE(\TT)$ such that every map in $U$ is non-special.
\end{theorem}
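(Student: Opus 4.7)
The plan is to prove openness and density of the non--u-special condition as two separate statements.

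For openness, I would use the continuous dependence of the unstable bundle $E^u$ on both the orbit in the inverse limit and the map, with respect to the $C^1$ topology. If $f$ is non--u-special, fix orbits $\tilde{u},\tilde{v}\in\TTf$ with $u_0=v_0$, $u_{-1}\neq v_{-1}$ and $E^u_f(\tilde{u})\neq E^u_f(\tilde{v})$, and let $\theta_0>0$ denote the angle between these two lines. For any $g$ sufficiently $C^1$-close to $f$ I select orbits $\tilde{u}^g,\tilde{v}^g$ under $g$ by choosing preimages close to those of $\tilde{u},\tilde{v}$, which is possible because preimages of a local diffeomorphism vary smoothly under $C^1$ perturbation. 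The graph-transform construction of $E^u$ is uniformly contracting on the projectivized unstable cone, so it yields unstable directions close to $E^u_f(\tilde{u}), E^u_f(\tilde{v})$; their angle is still at least $\theta_0/2$, and $g$ is non--u-special.

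For density, given a u-special $f\in PHE(\TT)$, I would construct a $C^1$-small perturbation that breaks the coincidence of unstable bundles. A non-invertible partially hyperbolic endomorphism of $\TT$ has topological degree $d\ge 2$, so there exists $x\in\TT$ with distinct preimages $x^1,x^2\in f^{-1}(x)$ (if $f$ is a diffeomorphism the statement is trivial). Fix past orbits $\tix^{1},\tix^{2}\in\TTf$ with $(\tix^{i})_{-1}=x^i$ and common zero coordinate $x$; u-speciality of $f$ gives $E^u_f(\tix^{1})=E^u_f(\tix^{2})$. Choose a small neighborhood $U$ of $x^1$ disjoint from $\{x,x^2\}$ and from the first $N$ past iterates of $x^1$, for $N$ large to be fixed, and build a $C^1$-small bump perturbation $g$ of $f$ supported in $U$, with $g(x^1)=x$ and $D_{x^1}g$ differing from $D_{x^1}f$ by a small rotation that does not preserve the line $E^u_f(\tif^{-1}(\tix^{1}))$. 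Then $E^u_g(\tix^{1}_g)=D_{x^1}g\bigl(E^u_g(\tif^{-1}(\tix^{1}_g))\bigr)$ differs from $E^u_f(\tix^{1})$ by a small but nonzero angle, while $E^u_g(\tix^{2}_g)$ remains essentially equal to $E^u_f(\tix^{2})$ because $g=f$ on the past orbit of $x^2$. Thus $g$ is non--u-special, and since partial hyperbolicity is $C^1$-open, $g\in PHE(\TT)$ provided the perturbation is small enough.

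The main obstacle I anticipate lies in the density argument: the negative orbit of $x^1$ under $f$ may return to $U$ infinitely often, so I cannot simply claim that the past is unchanged. The fix is to exploit the exponential contraction of the graph transform for $E^u$: choosing $N$ so large that the first $N$ backward iterates of $x^1$ stay outside $U$, the contribution to $E^u_g(\tif^{-1}(\tix^{1}_g))$ from any later visit of the past orbit to $U$ is of order $\lambda^{-N}$, where $\lambda>1$ is the unstable expansion rate. This error is negligible compared to the prescribed first-order change in $D_{x^1}g$, so the shift of the unstable direction at $x^1$ survives. Combined with a standard transversality argument ensuring that the chosen rotation acts nontrivially on $E^u_f(\tif^{-1}(\tix^{1}))$, this is the technical heart of the argument.
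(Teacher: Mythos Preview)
The paper does not give a self-contained proof of this theorem; it is cited from \cite[Theorem~B]{CostaMicena2022}, and only the density half is illustrated via the explicit construction in Example~\ref{ex:3}. Your overall scheme---openness via continuity of $E^u$ under $C^1$ perturbation, density via a bump perturbation supported near one preimage---is the right one and matches the spirit of that construction.

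There is, however, a genuine gap in your density argument. You assert that $E^u_g(\tix^{2}_g)$ stays essentially equal to $E^u_f(\tix^{2})$ ``because $g=f$ on the past orbit of $x^2$'', but you never arranged for $U$ to avoid that past orbit; it may enter $U$ infinitely often, and then the very return problem you flag for $x^1$ arises for $x^2$ as well. Your $\lambda^{-N}$ fix would have to be run on both sides, and you would still need to check that the two small errors do not conspire to cancel the rotation you inserted at $x^1$. This is repairable but not addressed.

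The paper's Example~\ref{ex:3} bypasses all of this with a cleaner observation: since $f$ is a self-covering, distinct $f$-preimages of any point are at least $\tau>0$ apart, so if the perturbation support $R$ has diameter less than $\tau/2$, then \emph{every} point of $\TT$ has at least one past orbit that avoids $R$ entirely. Along such an orbit $E^u_g=E^u_f$ exactly, with no error term. One then compares, at the fixed point $p$, the constant orbit $\hat p=(\dots,p,p,\dots)$ (giving $E^u_g(\hat p)=E^u_A$) with the orbit $\tilde p=\tif(\tilde q)$ obtained by first following an $R$-avoiding past of $q$ and then applying $g$ once through $R$ (giving $E^u_g(\tilde p)=Dg(q)E^u_A\neq E^u_A$). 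This removes the need for any quantitative decay estimate and for non-periodicity assumptions on the chosen preimages.
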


The proof of denseness is achieved by, starting with a special partially hyperbolic endomorphism, perturbing it to generate new directions, as we do for completeness with the following example.

\begin{example}
    \label{ex:3}

Recall that $f_A$ from Example \ref{ex:1} is a special partially hyperbolic uniformly expanding endomorphism with the strong unstable direction $E^u_A$ being horizontal at every point, and the center direction $E^c_A$ being vertical at every point. 

 Consider $p = (0, 0)$ the unique fixed point of $f_A$ and $q=(2/3,1/2) \in f_A^{-1}(\{p\})$. Since $f_A$ is a self covering map, there is $\tau > 0$ such that, if $f_A(y) = f_A(z)$ and $y \neq z$, then $d(y,z) > \tau$. Take $\delta \in (0, \tau/2)$ and consider $a \in \mathbb{R}$ such that the square $R = [-a,a]^2 \in \mathbb{R}^2$, when projected under $\exp_q: T_q\TT \to \TT$, is contained in $B_{\delta}(q)$. Making an abuse of notation by identifying $R$ and $\exp_q(R)$ we define
 $$\varphi(x,y) = \begin{cases}
 \left(x, y + \varepsilon \; a \; \psi_1\left( \dfrac{x}{a} \right) \psi_2\left( \dfrac{y}{a} \right) \right) & \mbox{ if } (x,y) \in R\\
 \operatorname{Id}    & \mbox{ otherwise, } 
 \end{cases}$$
 where $\varepsilon > 0$ is as small as we want and $\psi_1: \mathbb{R} \to [-2,2]$ and $\psi_2: \mathbb{R} \to [0,1]$ are $C^\infty$ functions satisfying
 \begin{itemize}
     \item $\operatorname{supp} \psi_i \subseteq [-1,1]$;
     \item $\psi_1(1/2) = 1$ and $\psi_1(-1/2) = -1$;
     \item $\psi_1(-1) = \psi_1(0) = \psi_1(1) = 0$;
     \item $\restr{\psi_1}{[-1/2,1/2]}$ is linear;
     \item $\restr{\psi_2}{[-1/2,1/2]} \equiv 1$.
 \end{itemize}

 We have that
 $$D\varphi_{(x,y)} = \begin{pmatrix}
1 & 0 \\
\varepsilon \; \psi_1'\left( \dfrac{x}{a} \right) \psi_2\left( \dfrac{y}{a} \right)  & 1 + \varepsilon \; \psi_1\left( \dfrac{x}{a} \right) \psi_2'\left( \dfrac{y}{a} \right)
\end{pmatrix},$$
in $R$, which is easily checked to be invertible for all $(x,y)$, provided that $\varepsilon$ is sufficiently small. 

Additionally, $\varphi$ is $C^\infty$ and satisfies $\varphi(q) = q$, $\restr{\varphi}{\overline{R^\complement}} = \operatorname{\operatorname{Id}}$, $\varphi(R) = R$ and
 $$D\varphi_{q} \cdot E^u_A(q) = \begin{pmatrix}
1 & 0 \\
2 \varepsilon & 1 
\end{pmatrix}\begin{pmatrix}
\alpha \\
0
\end{pmatrix} = \begin{pmatrix}
\alpha \\
2 \varepsilon \alpha
\end{pmatrix} \neq E^u_A(p),$$
meaning that $\varphi$ does not preserve the unstable directions of $f_A$.
 
 Let $f\eqdef f_A\circ\varphi$. Since $f$ is $C^1$ close to $f_A$, we have that $f$ is a partially hyperbolic uniformly expanding endomorphism. Additionally, since $(1,0)$ is an eigenvector for $D\varphi_{(x,y)}$, $f$ has the same central manifolds as $f_A$. The same argument as in \cite{CostaMicena2022} gives us that $f$ is non-special as follows.

 By the definition of $\delta$, we have that any point $x\in\TT$ has a past orbit $\tix\in\Sigma(x)$ such that $x_{-k}\notin R$ for every $k\in\N$. Thus, along this orbit we have that $f=f_A$, which implies that $E^u(\tix)=E^u_A$.	In particular, there exists $\tilde{q}\in\Sigma(q)$ such that $E^u(\tilde{q})$ is horizontal. Now, by invariance this implies that for $\tilde{p}= \tilde{f}(\tilde{q}) = (...q_{-1}q_0,p,p,p,...)$ we have that
 \[
 E^u(\tilde{p})=Df(q)E^u(\tilde{q})\neq E^u_A.
 \]
 However, for $\hat{p}=(...p,p,p,...)$ one has that $f=f_A$ all along this fixed orbit and thus $E^u(\hat{p})=E^u_A$. This proves that $p$ has more than one unstable direction, and thus $f$ is not u-special.

\end{example}

Similarly, we can do a perturbation of Example \ref{ex:2}.

\begin{example}
    \label{ex:4}
    We can repeat the construction from the previous example with $q = \left( \dfrac{2}{5}, \dfrac{4}{5} \right)$, and by using $E^c_B(q)$ and $E^u_B(q)$ as coordinates on $T_q\TT$ to define $\varphi$.
    Then, for all sufficiently small $\varepsilon > 0$, there is a non-special partially hyperbolic uniformly expanding endomorphism $g$ that is $C^1$-close to $f_B$ from Example \ref{ex:2} and having the same center leaves.
\end{example}

\subsubsection{Invariant foliations}
Let $f\in\operatorname{End}^r(\TT)$ be partially hyperbolic and let $\tif:\TTf\to\TTf$ denote its lift to the inverse limit space. By viewing the direction of the fibers as a stable direction and the ``manifold direction'' as a center-unstable direction, we can see the dynamics on the inverse limit space as partially hyperbolic as follows.

\begin{definition}
    \label{defi:variedadecentroinstavel}
Given $\tix\in\TTf$ we define its center-unstable manifold $\w^{cu}(\tix)$ as being the path-connected component of $\TTf$ containing $\tix$.    
\end{definition}

As we mentioned above, one can show that each center-unstable manifold $\w^{cu}(\tix)$ is homeomorphic to the universal cover $\R^2$ of $\TT$. Notice that $\tif$ acts permuting the components. Thus, $\{\w^{cu}(\tix)\}_{\tix\in\TTf}$ is an $\tif$-invariant lamination of the space $\TTf$. This lamination is analogous to the center unstable foliation of a three dimensional partially hyperbolic diffeomorphism. Indeed, notice that the fibers $\Sigma(\tix)$ are contracted by the action of $\tif$.

\begin{remark}
    If $\tilde{y}\in\Sigma(\tix)$ then the definition of the metric in $\TTf$ immediately implies $d(\tif^n(\tix),\tif^n(\tilde{y}))\leq 2^{-n}$.
\end{remark}

Thus the fibers $\Sigma$ of the space $\TTf$ play the same role as local stable manifolds in three dimensional partially hyperbolic diffeomorphisms, and they are ``transverse'' to the center-unstable manifolds $\w^{cu}$. Clearly, there is a notorious difference for here the fibers are totally disconnected compact spaces, and not embedded curves as in the case of diffeomorphisms. 

Nonetheless, the graph transform method applies to the dynamics of $\tif$ on $\TTf$ and one can show the following (see \cite{HirschPughShub,QXZ2009}).

\begin{proposition}
    \label{prop:varieadeinstavel}
Given $f\in\operatorname{End}^r(\TT)$ partially hyperbolic, there exists a one-dimensional unstable lamination $\w^u$ of the space $\TTf$ by $\tif$-invariant immersed (on $\w^{cu}(\tix)$) $C^r$ curves, varying continuously, which are everywhere tangent to $E^u$. 
\end{proposition}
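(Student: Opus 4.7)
The plan is to construct local unstable manifolds $\w^u_{\loc}(\tix)$ via the graph transform along backward orbits in the natural extension, and then to globalize by forward iteration under $\tif$. Because $\tif$ is a homeomorphism of $\TTf$ and the splitting $E^c \oplus E^u$ is dominated with $E^u$ the faster direction, the Hirsch--Pugh--Shub unstable-manifold machinery should adapt without substantive change. Concretely, I would fix $\tix \in \TTf$, work in each $\w^{cu}(\tif^{-n}\tix) \cong \R^2$ using exponential coordinates at $x_{-n} = \pi(\tif^{-n}\tix)$ adapted to the splitting $E^c(\tif^{-n}\tix) \oplus E^u(\tif^{-n}\tix)$, and introduce the Banach space $\cG(\tif^{-n}\tix)$ of $C^r$ graphs $\phi \colon I_u \to E^c(\tif^{-n}\tix)$ on a fixed small interval $I_u \subset E^u(\tif^{-n}\tix)$ around $0$, with $\phi(0) = 0$, derivative in the unstable cone, and $\|\phi\|_{C^r} \leq M$ for some constant $M$ chosen with care.

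Since $f$ is a local diffeomorphism preserving and expanding the unstable cone, pushing the graph of $\phi \in \cG(\tif^{-n}\tix)$ forward by $\tif$ and restricting to $I_u$ at $\tif^{-n+1}\tix$ yields a well-defined map $\Gamma_n \colon \cG(\tif^{-n}\tix) \to \cG(\tif^{-n+1}\tix)$. A classical computation shows that $\Gamma_n$ contracts slopes by the factor $\lambda^c_{\tif^{-n+1}\tix}/\lambda^u_{\tif^{-n+1}\tix}$, which by domination and compactness of $\TTf$ is uniformly bounded by some $\kappa < 1$. Consequently the slopes of the compositions $\Gamma_1 \circ \cdots \circ \Gamma_n\,\phi^{(n)}$ are at most $\kappa^n L$, so their $C^0$ norms on $I_u$ are at most $\kappa^n L \,|I_u|$, and the sequence is Cauchy in $C^1$ with a limit $\phi^{\infty} \in \cG(\tix)$ independent of the initial choices $\phi^{(n)}$. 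The graph of $\phi^{\infty}$ is my definition of $\w^u_{\loc}(\tix)$; its $C^r$ regularity follows from the Hirsch--Pugh--Shub $C^r$ section theorem applied to the bundle of spaces $\cG(\cdot)$ over $\TTf$, since the $\Gamma_n$ are fiberwise contractions with $C^r$ estimates propagating from the $C^r$ smoothness of $f$. Tangency of $\w^u_{\loc}(\tix)$ to $E^u(\tix)$ at $\tix$ is a direct consequence of the slope contraction, and the invariance $\tif(\w^u_{\loc}(\tif^{-1}\tix)) \supset \w^u_{\loc}(\tix)$ is immediate from the construction. The global leaf is then $\w^u(\tix) \eqdef \bigcup_{n \geq 0}\tif^n(\w^u_{\loc}(\tif^{-n}\tix)) \subset \w^{cu}(\tix)$, yielding an immersed $C^r$ curve as required.

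The main subtlety, and the step I expect to be hardest, is the continuous dependence of $\w^u_{\loc}(\tix)$ on $\tix \in \TTf$: the base space has Cantor-like fibers, and two ultrametrically close points in $\TTf$ typically live in \emph{different} center-unstable components with different unstable bundles, so the leaves have no common ambient space in which to be compared pointwise. The resolution is that all data feeding the graph transform (the splitting, the local chart representations of $\tif$, and the rates $\lambda^c_{\cdot}, \lambda^u_{\cdot}$) vary continuously on the compact space $\TTf$ by Lemma~\ref{lem:defideph}, so the convergence $\Gamma_1 \circ \cdots \circ \Gamma_n \to \phi^{\infty}$ is uniform in $\tix$. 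Transferring this uniform convergence to the natural topology on laminations of $\TTf$ (e.g. via continuity of the chart maps in $\tix$ and compactness of $\TTf$) then yields the continuous variation of the leaves and completes the construction of the lamination $\w^u$.
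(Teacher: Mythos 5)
The paper does not actually prove this proposition: it simply observes that the graph transform method applies on $\TTf$ and cites \cite{HirschPughShub,QXZ2009}. Your proposal reconstructs precisely the argument those references carry out (graph transform along backward orbits in the natural extension, fiberwise contraction, $C^r$ section theorem for regularity, uniformity over the compact base for continuity), so in spirit you are doing the same thing the paper delegates, and your identification of the continuity of leaves over the Cantor-fibered base as the delicate point is apt.

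One step, as written, is literally wrong and needs the standard repair. You claim that the slopes of $\Gamma_1\circ\cdots\circ\Gamma_n\,\phi^{(n)}$ are at most $\kappa^n L$ and hence the $C^0$ norms are at most $\kappa^n L\,|I_u|$; taken at face value this says the limit graph is the zero graph, i.e.\ that $\w^u_{\loc}(\tix)$ is the exponential image of a segment of $E^u(\tix)$, which is false for nonlinear $f$. What the classical computation actually gives is that $\Gamma_n$ contracts the \emph{distance between two candidate graphs} (in the $C^0$ or Lipschitz metric) by a factor controlled by $\lambda^c/\lambda^u<1$; the Cauchy property then comes from comparing $\Gamma_1\cdots\Gamma_n\,\phi^{(n)}$ with $\Gamma_1\cdots\Gamma_n\bigl(\Gamma_{n+1}\phi^{(n+1)}\bigr)$, both images of graphs at time $-n$. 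Relatedly, $C^0$ convergence of graphs does not by itself give $C^1$ convergence; you need the separate cone-contraction argument for tangent directions (which also gives tangency to $E^u$ at \emph{every} point of the leaf, not only at $\tix$, as the proposition requires). With these corrections your construction is the standard and correct one.
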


The existence of a center foliation is a more delicate issue. In our setting, we have the following consequence of \cite[Theorem B]{HallHammerlindl22}.

\begin{proposition}
    \label{prop:notannulus}
Let $f\in\operatorname{End}^r(\TT)$ be a strongly transitive partially hyperbolic map. Then, there exists a one-dimensional (center) lamination $\w^c$ of the space $\TTf$ by $C^r$ immersed curves, varying continuously, which are everywhere tangent to $E^c$. Moreover, this foliation descends to an $f$-invariant foliation of $\TT$.
\end{proposition}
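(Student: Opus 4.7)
The plan is to deduce this by first constructing the center foliation on $\TT$ and then lifting it to $\TTf$ via the projection $\pi$. The authors have conveniently reduced the problem to checking the hypotheses of \cite[Theorem B]{HallHammerlindl22} (which is a Frobenius-type integrability statement for the center bundle of a partially hyperbolic endomorphism on a surface, modulo a topological obstruction usually phrased in terms of an invariant annulus).

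First I would observe that, by Lemma \ref{lem:centralcte} (equivalently Remark \ref{rem:uniq-c}), the center bundle $E^c(\tix)$ does not depend on the past orbit, so it descends to a genuine continuous $Df$-invariant line bundle $E^c$ on $\TT$. This puts us in the exact setting of \cite{HallHammerlindl22}. I would then invoke \cite[Theorem B]{HallHammerlindl22} to obtain a $C^r$, $f$-invariant, one-dimensional foliation $\cF^c$ of $\TT$ tangent to $E^c$, \emph{provided} the topological obstruction in that theorem does not occur. The core of the argument is therefore to rule out this obstruction, which is the existence of an $f$-invariant open annulus $A \subsetneq \TT$ (or a Reeb-like component whose boundary is an invariant annulus). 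This is precisely where the hypothesis of strong transitivity enters: if such a proper open set $A$ satisfied $f(A) \subseteq A$ (or $f^{-1}(A) \subseteq A$ after passing to a power), then strong transitivity applied to any non-empty open set $U \subset \TT\setminus\overline{A}$ would force $\bigcup_{n=0}^{N} f^n(U) = \TT$ for some $N$, contradicting the forward-invariance of $A$. Thus, strong transitivity excludes any forward-invariant proper open annulus, which is the precise input needed to apply \cite[Theorem B]{HallHammerlindl22}. I would spell this out in a short lemma before quoting the theorem.

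With $\cF^c$ constructed on $\TT$, I would then lift it to the inverse limit. Recall that each center-unstable leaf $\w^{cu}(\tix)$ is path-connected (by Definition \ref{defi:variedadecentroinstavel}) and in fact homeomorphic to the universal cover of $\TT$; in particular the restriction $\pi|_{\w^{cu}(\tix)}:\w^{cu}(\tix) \to \TT$ is a local homeomorphism which, combined with the tangent structure pulled back from $\TT$, is a $C^r$ local diffeomorphism along smooth curves tangent to $E^u\oplus E^c$. For each $\tix \in \TTf$ I would then define
\[
\w^c(\tix) \eqdef \text{path-component of $\tix$ in } \pi|_{\w^{cu}(\tix)}^{-1}\bigl(\cF^c(\pi(\tix))\bigr).
\]
This gives a one-dimensional lamination of $\TTf$ by $C^r$ immersed curves tangent to $E^c(\tix)$, since $\cF^c$ is tangent to $E^c$ on $\TT$ and the derivative of $\tif$ is defined fiberwise via $Df$. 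Invariance under $\tif$ follows from $f$-invariance of $\cF^c$ together with $\pi\circ\tif = f\circ\pi$.

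The remaining point is continuity of the family $\{\w^c(\tix)\}$. Since $\cF^c$ is a continuous foliation on $\TT$ and $\w^{cu}(\tix)$ varies continuously in the Hausdorff topology of compact subsets of $\TTf$ (the natural extension is a continuous fiber bundle), the pullback construction above inherits continuity. The main obstacle I anticipate is, as already mentioned, to verify cleanly that strong transitivity rules out the annulus obstruction in \cite{HallHammerlindl22}; everything after that is bookkeeping between $\TT$ and $\TTf$ using the fact that the tangent and differential structures on $\TTf$ are defined pointwise through $\pi$.
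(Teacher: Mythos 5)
Your overall route is the same as the paper's: the paper gives no argument at all for this proposition beyond asserting that it is a consequence of \cite[Theorem B]{HallHammerlindl22}, and the label \texttt{notannulus} indicates that, exactly as you guessed, the point is that strong transitivity excludes the periodic center annulus obstruction of Hall--Hammerlindl; the descent/lifting bookkeeping between $\TT$ and $\TTf$ is as you describe.

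There is, however, one genuine slip precisely in the step you yourself identify as the core of the argument. You take $U\subset\TT\setminus\overline{A}$, apply strong transitivity to get $\bigcup_{n=0}^{N}f^n(U)=\TT$, and claim this contradicts the forward invariance of $A$. It does not: forward invariance of $A$ only constrains the orbits of points \emph{in} $A$, and nothing prevents forward images of $U$ from entering $A$. The contradiction should instead be extracted from $A$ itself (or from the full orbit of the periodic annulus): if $f(A)\subseteq A$ then $\bigcup_{n=0}^{N}f^n(A)\subseteq A\subsetneq\TT$ for every $N$, contradicting strong transitivity applied to the non-empty open set $A$; and if the obstruction is an annulus with $f^k(A)=A$, one applies the same reasoning to the forward-invariant open set $B=\bigcup_{j=0}^{k-1}f^j(A)$, using that $B$ omits the boundary center circles of the annuli and hence is a proper open subset of $\TT$. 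With that correction the argument goes through; the rest of your write-up (well-definedness of $E^c$ on $\TT$ via Lemma~\ref{lem:centralcte}, and the lift of $\cF^c$ through $\pi|_{\w^{cu}(\tix)}$, which is the universal covering projection) is fine.
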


In this work, we use consistently the following notation: $\w^{*}(\tix)$ , $* \in \{u, c, cu \}$, is the leaf through the point $\tix\in\TTf$, while $\cW^{*}(\tix)\eqdef\pi(\w^u(\tix))\subset\TT$.

\subsubsection{The unstable foliation of Example~\ref{ex:3}}
\label{subsec:provasexe3}
The unstable foliation of Example~\ref{ex:1} descends to a foliation by (horizontal) compact circles, while the center foliation is the foliation by vertical circles. In Example~\ref{ex:3}, as the map is not special the unstable foliation on $\TTf$ does not descends to a foliation of $\TT$, while the center foliation remains the same as that of $f_A$.

Moreover, we can choose the parameter $a>0$ small enough so that there exists some (meager) compact set $K\subset\mathbb{S}^1$, which is forward invariant under $x\mapsto 2x\mod 1$, and $(\mathbb{S}^1\times K)\cap R=\emptyset$. Thus, after perturbation the set $\Gamma=\mathbb{S}^1\times K$ remains invariant under the perturbed map $f$, as $f=f_A$ restricted to $\Gamma$. In particular, every horizontal unstable curve through a point $x\in\Gamma$ has non-dense forward orbit. Moreover, we have that there are points $\tix\in \TTf$ such that the set $\cup_{i\in\Z}\cW^u(x_i)$ is not dense in $\TT$. Indeed, for all $x \in \Gamma$, there is $\tix\in\Sigma(x)$ such that $x_{-k}\in \Gamma$ for every $k\in\N$, which implies that $E^u_f(\tix) = E^u_A$. And the uniform expansion of $f$ allows us to make this choice of past orbit coherently along the compact curves in $\Gamma$, in the sense that $\cW^u(x_i) \subseteq \Gamma$ for all $i \leq 0$.

The behavior of the foliation in this example is still not well understood. We believe that every unstable curve which is not horizontal at some place should have a dense orbit, but this claim has not yet been confirmed. 

\subsection{$u$-Gibbs measures for endomorphisms}
\label{sec:inv-lim-ph}

We are now able to define u-Gibbs measures on the non-invertible setting. Informally speaking, a u-Gibbs measure is a measure that is ``well distributed'' with respect to an unstable foliation. Since, in general, there is no unstable foliation on the ambient manifold, the definition passes through the unstable leaves of the inverse limit.

To make this concept of a ``well distributed'' measure with respect to a foliation precise, we first introduce how we decompose the measure with respect to a ``nice partition'', with the following definition and theorem stated specifically for our context.

Let $f\in\operatorname{End}^r(\TT)$ be a partially hyperbolic endomorphism. 

\begin{definition}
    \label{def:measurable-part}
    A partition $\mathcal{P}$ of $\TTf$ is a \emph{measurable} (or \emph{countably generated}) partition with respect to $\tilde{\mu}$ if there is $M_0 \subseteq \TTf$ with $\tilde{\mu}(M_0) = 1$ and a countable family $\{A_i\}_{i \in \N}$ of measurable sets such that, given $P \in \mathcal{P}$, there is $\{P_i\}_{i \in \N}$ with $P_i \in \{A_i, A_i^\complement\}$ such that $P = \bigcap\limits_{i \in \N} P_i$ restricted to $M_0$.
\end{definition}

In other words, every element of the partition can be generated with the intersections of a countable family of measurable sets or their complements. For such partitions the following result holds.

\begin{theorem}[Rokhlin disintegration]
    If $\mathcal{P}$ is a measurable partition for $\TTf$ and $\tilde{\mu}$, then $\tilde{\mu}$ has a disintegration on a family of conditional measures $\{\tilde{\mu}_P\}_{P \in \mathcal{P}}$, that is, for all $\phi: \TTf \to \R$ continuous
    \begin{enumerate}
        \item $P \mapsto \int \phi d\tilde{\mu}_P$ is measurable;
        \item $\tilde{\mu}_P(P) = 1$ for $\hat{\mu}$-almost every $P \in \mathcal{P}$;
        \item $\int \phi d\tilde{\mu} = \int_\mathcal{P} \int_P \phi  d\tilde{\mu}_P d\hat{\mu} $.
    \end{enumerate}    Here, $\hat{\mu}$ denotes the \emph{transverse} measure given by $\hat{\mu}(\mathcal{Q}) = (p_{\mathcal{P}})_*\mu(\mathcal{Q}) = \mu(p_{\mathcal{P}}^{-1}(\mathcal{Q}))$, where $p_{\mathcal{P}}: \TTf \to \mathcal{P}$ is the projection assigning for each point $x \in\TTf$ the element of $\mathcal{P}$ containing $x$.
\end{theorem}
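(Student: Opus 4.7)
The plan is to establish the conclusion by the classical approximation argument of Rokhlin, exploiting the generating family $\{A_i\}_{i\in\N}$ whose existence is encoded in the hypothesis that $\mathcal{P}$ is measurable. First I would define an increasing sequence of finite partitions $\mathcal{P}_n$ of $\TTf$, where $\mathcal{P}_n$ is generated by $A_1,\dots,A_n$; then every atom of $\mathcal{P}$ is (up to the $\tilde{\mu}$-null set outside $M_0$) an intersection $\bigcap_n Q_n$ of a nested chain of atoms $Q_n\in\mathcal{P}_n$. For $x\in M_0$ write $[x]_n$ for the atom of $\mathcal{P}_n$ containing $x$ and define the normalized restriction $\tilde{\mu}^{(n)}_x := \tilde{\mu}|_{[x]_n}/\tilde{\mu}([x]_n)$, which makes sense off a $\tilde{\mu}$-null set.

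Next I would observe that for each bounded measurable $\phi:\TTf\to\R$, the function $E_n\phi(x):=\int \phi\,d\tilde{\mu}^{(n)}_x$ is a uniformly bounded martingale with respect to the filtration generated by $\{\mathcal{P}_n\}$. Doob's martingale convergence theorem then yields a $\tilde{\mu}$-a.e.\ pointwise limit $E_\infty\phi(x)$, and the identity $\int E_n\phi\,d\tilde{\mu}=\int\phi\,d\tilde{\mu}$ passes to the limit by dominated convergence. Since $\TTf$ is compact metric, $C(\TTf)$ admits a countable dense subset $\mathcal{D}$. On the set $\Omega\subset M_0$ of full $\tilde{\mu}$-measure where $E_n\phi(x)$ converges simultaneously for every $\phi\in\mathcal{D}$, the functional $\phi\mapsto E_\infty\phi(x)$ extends by density to a positive linear functional on $C(\TTf)$ of norm one, which by the Riesz representation theorem is integration against a Borel probability measure $\tilde{\mu}_x$. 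Setting $\tilde{\mu}_P := \tilde{\mu}_x$ for any $x\in P\cap\Omega$ yields the desired family on a $\hat{\mu}$-full measure set of atoms.

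To conclude, one verifies the three properties in turn. The measurability of $P\mapsto\int\phi\,d\tilde{\mu}_P$ is immediate because $E_n\phi$ is $\sigma(\mathcal{P}_n)$-measurable for every $n$, so the pointwise limit is $\sigma(\mathcal{P})$-measurable. The integration formula follows by applying the identity $\int E_n\phi\,d\tilde{\mu}=\int\phi\,d\tilde{\mu}$ to $\phi\in C(\TTf)$, passing to the limit, and rewriting the left-hand side in terms of the pushforward $\hat{\mu}=(p_\mathcal{P})_*\tilde{\mu}$. Finally, for the concentration property $\tilde{\mu}_P(P)=1$, the same martingale argument applied to $\phi=\mathbf{1}_{A_i}$ gives $E_n\mathbf{1}_{A_i}(x)=\mathbf{1}_{A_i}(x)$ for all $n\geq i$, hence $\tilde{\mu}_x(A_i)=\mathbf{1}_{A_i}(x)$ for every $i$ and every $x$ in a conull set.

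The main obstacle will be this last step: one must argue that the countably many a.e.\ equalities $\tilde{\mu}_x(A_i)=\mathbf{1}_{A_i}(x)$ can be arranged to hold simultaneously, so that $\tilde{\mu}_x$ is concentrated on $\bigcap_i\{A_i\text{ or }A_i^\complement\}=[x]_\mathcal{P}$, the atom of $\mathcal{P}$ containing $x$. This is precisely where the hypothesis that $\mathcal{P}$ is countably generated plays an essential role, since the countable intersection of conull sets remains conull; without countability one could not rule out that the constructed measure $\tilde{\mu}_x$ charges a set lying outside $[x]_\mathcal{P}$.
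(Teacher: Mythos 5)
The paper does not prove this statement: it is the classical Rokhlin disintegration theorem, quoted as a known result (the standard references being Rokhlin's original work or, e.g., the treatment in \cite{QXZ2009}), so there is no in-paper argument to compare against. Your proposal reproduces the standard classical proof --- finite approximating partitions generated by $A_1,\dots,A_n$, the martingale $E_n\phi$, Doob convergence on a countable dense subset of $C(\TTf)$, and Riesz representation --- and the overall structure is correct.

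One technical point deserves tightening. Your measure $\tilde{\mu}_x$ is produced by the Riesz representation theorem from the limit functional on \emph{continuous} functions, whereas the concentration property requires evaluating $\tilde{\mu}_x$ on the generating sets $A_i$, which in Definition~\ref{def:measurable-part} are merely Borel. The martingale identity $E_n\mathbf{1}_{A_i}(x)=\mathbf{1}_{A_i}(x)$ for $n\ge i$ lives a priori in the world of conditional expectations, not of the Riesz measures; to transfer it you must show that $\tilde{\mu}_x(B)=\lim_n E_n\mathbf{1}_B(x)$ for $\tilde{\mu}$-a.e.\ $x$, for each fixed Borel set $B$. This follows from inner/outer regularity of Borel measures on the compact metric space $\TTf$ (sandwich $\mathbf{1}_B$ between continuous functions up to sets of small measure) together with a monotone class argument, but it is exactly the step where the construction could silently fail if omitted, so it should be spelled out rather than folded into ``the same martingale argument.'' With that addition, and your (correct) observation that the countably many resulting conull sets can be intersected, the proof is complete.
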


\begin{remark}
    \label{rem:desintegranafibra}
The partition $\{\Sigma(\tix)\}_{\tix\in\TTf}$ into fibers, being a decomposition of the space into a disjoint union of compact sets, is easily seen to be an example of a measurable partition. Given a probability measure $\mu\in\cM(\TT)$ we denote by $\{\mu_x\}_{x\in\TT}$ the conditional measures of its lift $\tm$  with respect to this partition.
\end{remark}

A special case of measurable partition for which the above theorem applies is given by $u$-subordinate partitions.

\begin{definition}
	\label{def:u-subord}
	A measurable partition $\mathcal{P}$ of $\TTf$ is \emph{u-subordinate} with respect to $\mu$ if, for $\tilde{\mu}$-almost every $\tilde{x} \in \TTf$, the atom $\mathcal{P}(\tilde{x})$ satisfies
	\begin{enumerate}
		\item $\restr{\pi}{\mathcal{P}(\tilde{x})}$ is a bijection onto its image;
		\item there is a $C^1$ embedded submanifold $U_{\tilde{x}} \subseteq \TT$ with dimension equal to $\dim E^u$, such that $U_{\tilde{x}} \subseteq \cW^u(\tilde{x})$, $\pi(\mathcal{P}(\tilde{x})) \subseteq U_{\tilde{x}}$ and $\pi(\mathcal{P}(\tilde{x}))$ contains a open neighborhood of $\pi(\tilde{x})$ in $U_{\tilde{x}}$.
	\end{enumerate}
\end{definition}

The proof of the existence of u-subordinate partitions for non-invertible maps is given in \cite[\S IX.2.2]{QXZ2009}.

\begin{definition}
	\label{def:u-gibbs}
	An $f$-invariant measure $\mu$ is said to be \emph{$u$-Gibbs} if, for every u-subordinate partition $\mathcal{P}$ for $\mu$, we have 
	$$\pi_*\tilde{\mu}^{\mathcal{P}}_{\tilde{x}} \ll \Leb^u_{\tilde{x}}$$
	for $\tilde{\mu}$-almost every $\tilde{x} \in\TTf$, where $\{\tilde{\mu}^{\mathcal{P}}_{\tilde{x}}\}$ is the disintegration of $\tilde{\mu}$ with respect to $\mathcal{P}$ and $\Leb^u_{\tilde{x}}$ is the Lebesgue measure on $U_{\tilde{x}}$.
\end{definition}

\begin{remark}
    \label{rem:supp-u-sat}
Given $\mu$ a u-Gibbs measure, if $\tilde{\mu}$ is the unique lift to the inverse limit space, then it follows directly from the definition that $\tix \in \operatorname{supp} \tilde{\mu}$ implies $\w^u(\tilde{x}) \subseteq \operatorname{supp} \tilde{\mu}$. In other words, the support of $\tilde{\mu}$ is $u$-saturated.
\end{remark}

\subsubsection{u-Gibbs measures for the examples}
\label{sec:u-gibbs-ex}

Example \ref{ex:1} has many u-Gibbs measures: for any $\nu$ a ($2x \pmod 1$)-invariant measure on $\mathbb{S}^1$, $\operatorname{Leb}_{\mathbb{S}^1} \times \nu$ is a u-Gibbs measure. Since the doubling map $2x \pmod 1$ is semi-conjugated to a full shift in two symbols, we know this system has infinitely many u-Gibbs measures. This is a particular case because the system is reducible.

In contrast, Example \ref{ex:2} has only $\operatorname{Leb}_{\mathbb{T}^2}$ as a u-Gibbs measure. A brief sketch of this argument goes as follows: up to a constant, the density of $\tm$ along some unstable leaf $\w^u(\tix)$ is given by
\[
\lim_{n\to-\infty}\frac{\lambda^u_{\tilde{y}}(n)}{\lambda^u_{\tilde{x}}(n)}.
\]
Since the map is linear, this shows that the density is constant, and thus the conditional measure is invariant by translations along leaves. The invariance of conditional measures by translations along leaves implies invariance of the measure $\mu$ under the unity flow along leaves. However, the unity flow along leaves is a Diophantine flow, which is uniquely ergodic. One deduces that $\mu=\Leb_{\TT}$.

Both Examples \ref{ex:1} and \ref{ex:2} are special, and one might wonder how the situation changes after perturbation. Example~\ref{ex:3} gives us a non-special endomorphism, close to $f_A$, which still have infinitely many $u$-Gibbs measures. Indeed, as we discussed above in Section~\ref{subsec:provasexe3}, the example can be build with a forward invariant set $\Gamma=\mathbb{S}^1\times K$ which is made of horizontal compact unstable leaves. Moreover, $K$ can be chosen so that it contains infinitely many $x\mapsto 2x\mod 1$ periodic orbits. Given $x_1,...,x_k$ such a periodic orbit, the measure
\[
\frac{1}{k}\sum_{i=1}^k\delta_{x_i}\times\Leb_{\mathbb{S}^1}
\]
is a $u$-Gibbs measure for $f$. 

On the other hand, Example~\ref{ex:4} is also non-special, but since we can apply Corollary~\ref{cor:main-dynamicallymininmal}, we deduce that the map has only one $u$-Gibbs measure which is the unique absolutely continuous invariant measure. 

\subsubsection{Uniform density estimate}
For a Lebesgue measurable set $A\subset\widetilde{\cW}^u(\tix)$ we denote, for simplicity, $|A|\eqdef\Leb^u_{\tix}(A)$ the Lebesgue measure of $A$. Given a $u$-Gibbs measure, the conditional measures are ``comparable'' with respect to Lebesgue, but only up to a measurable function. To be precise, the density $\dfrac{\tilde{\mu}_{\tilde{x}}}{\Leb^u_{\tix}}$ is a measurable function. By restricting to compact sets of large measure, we get a uniform comparison, as made clear by the next result. 

\begin{lemma}
\label{lem:densidadeuniforme}
Let $f\in\operatorname{End}^r(\TT)$ be partially hyperbolic and $\mu$ be an ergodic $u$-Gibbs measure for $f$. Consider $\{\tm^u_{\tix}\}$ a family of conditional measures with respect to a $u$-subordinate partition. Then, given $\delta>0$, there exists a compact Lusin set $\cL\subset\TTf$, with $\tm(\cL)>1-\delta$, for which there exists a constant $C=C(\cL)$ such that for every subset $A\subset\w^u_1(\tix)$, for $\tix\in\cL$, one has $C^{-1}|A|\leq\tm^u_{\tix}(A)\leq C|A|$.
\end{lemma}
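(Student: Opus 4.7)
The plan is to combine the Rokhlin disintegration of the $u$-Gibbs measure with the standard distortion estimate for unstable Jacobians along unstable manifolds, and then apply Lusin's theorem to promote a pointwise density bound to a uniform one on a set of large measure.

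First, I would fix a $u$-subordinate partition $\mathcal{P}$ whose atoms are the local unstable plaques of diameter at least that of the unit ball $\w^u_1(\tix)$ (this can be arranged because unstable leaves are immersed $C^r$ curves by Proposition~\ref{prop:varieadeinstavel}, so one can refine a given $u$-subordinate partition until the plaques contain $\w^u_1(\tix)$ for $\tm$-a.e.\ $\tix$ in a set of arbitrarily large measure). By Definition~\ref{def:u-gibbs} one then has a measurable density $\rho_{\tix}(\tilde{y}) \eqdef d\tm^u_{\tix}/d\Leb^u_{\tix}(\tilde{y})$ on each such plaque. The key ingredient is that, since $f$ is $C^2$, a direct computation with the transfer cocycle yields the usual convergent-product representation
\[
\rho_{\tix}(\tilde{y}) \;=\; \frac{1}{Z(\tix)}\,\prod_{n=1}^{\infty}\frac{\lambda^u_{\tif^{-n}(\tix)}}{\lambda^u_{\tif^{-n}(\tilde{y})}},
\]
whose logarithm has exponentially decreasing tails because $\log\lambda^u$ is Hölder and the backward orbits of $\tix$ and $\tilde{y}\in\w^u_1(\tix)$ get exponentially close at rate $\sigma^{-1}$ under $\tif^{-1}$. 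In particular, $\rho_{\tix}$ admits a strictly positive continuous representative and satisfies a uniform distortion bound
\[
D^{-1} \;\leq\; \frac{\rho_{\tix}(\tilde{y})}{\rho_{\tix}(\tilde{z})} \;\leq\; D
\qquad\text{for all }\tilde{y},\tilde{z}\in\w^u_1(\tix),
\]
with $D>0$ depending only on $f$.

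Next, I would observe that the map $\tix\mapsto \rho_{\tix}(\tix)$ is measurable (joint measurability of $(\tix,\tilde{y})\mapsto\rho_{\tix}(\tilde{y})$ follows from the infinite-product formula, which is a pointwise limit of continuous functions), and strictly positive $\tm$-a.e. Therefore, for any $\delta>0$, Lusin's theorem together with a standard exhaustion argument produces a compact set $\cL\subset\TTf$ with $\tm(\cL)>1-\delta$ on which $\tix\mapsto\rho_{\tix}(\tix)$ is continuous, hence bounded above by some $M$ and below by some $m>0$. Combining this with the distortion bound, we obtain
\[
(mD^{-1}) \;\leq\; \rho_{\tix}(\tilde{y}) \;\leq\; (MD)
\qquad\text{for every }\tix\in\cL,\ \tilde{y}\in\w^u_1(\tix).
\]
Integrating this inequality over an arbitrary measurable $A\subset\w^u_1(\tix)$ gives exactly the claimed bound with $C \eqdef \max\{MD,(mD^{-1})^{-1}\}$.

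The only real subtlety is the continuity (and positivity) of the density along each unstable plaque, i.e.\ the existence of the continuous representative and of the uniform distortion constant $D$. This is where the $C^2$ hypothesis on $f$ is essential: it provides the Hölder regularity of $\log\lambda^u$ along $\w^u$-leaves that makes the infinite product in the display above converge uniformly in $\tilde{y}\in\w^u_1(\tix)$. Everything else is soft measure-theoretic bookkeeping based on Rokhlin disintegration and Lusin's theorem; the restriction to the unit ball $\w^u_1(\tix)$ is what allows $D$ to be independent of $\tix$, since the backward contraction rate on $\w^u$ depends only on the global partial hyperbolicity constants, not on $\tix$.
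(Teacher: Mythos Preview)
Your proposal is correct and follows essentially the same route as the paper: both arguments write the conditional density as the classical infinite product $\prod_{n\geq 1}\lambda^u_{\tif^{-n}(\tix)}/\lambda^u_{\tif^{-n}(\tilde{y})}$ times a measurable normalization, and then invoke Lusin's theorem to obtain uniform upper and lower bounds on a set of measure $>1-\delta$. The only cosmetic difference is that you separate the uniform distortion constant $D$ (depending only on $f$) from the normalization $\rho_{\tix}(\tix)$ and apply Lusin to the latter real-valued function, whereas the paper applies Lusin directly to the function-valued assignment $\tix\mapsto\rho^u_{\tix}$; your decomposition is arguably a bit more explicit, but the content is identical.
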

\begin{proof}
From the definition of $u$-Gibbs measure, it follows that for $\tm$-almost every $\tix\in\TTf$ there exists a density function $\rho^u_{\tix}$ such that 
\[
\tm^u_{\tix}(A)=\int_A\rho^u_{\tix}(t)d\Leb^u_{\tix}(t).
\]
Moreover, up to a positive constant depending measurably on $\tix$, we have that the density $\rho^u_{\tix}(t)$, $t=\pi(\tilde{y})$, is given by
\[
\rho^u_{\tix}(t) = \lim_{n\to-\infty}\frac{\lambda^u_{\tilde{x}}(n)}{\lambda^u_{\tilde{y}}(n)}.
\]
By Lusin's theorem, given $\delta>0$, there exists a compact set $\cL$ with measure $\tm(\cL)>1-\delta$ such that the assignment $\tix\in\cL\mapsto\rho^u_{\tix}$ is continuous. In particular, there exists a constant $C$ such that $ C\geq\rho^u_{\tix}(t)\geq C^{-1}$, for every $\tix\in\cL$ and every $t=\pi(\tilde{y})$, where $\tilde{y}$ belongs to the atom containing $\tix$. 
\end{proof}

\begin{notation}
Since uniform estimates such as the one above are abundant in this paper, we use repeatedly the following notation: given $a,b,C>0$ we say that $a\asymp_Cb$ provided that $C^{-1}b\leq a\leq Cb$.
\end{notation}

\subsection{Pesin Theory in the partially hyperbolic setting}

\label{sec:pesin}
In this section we consider $f:\TT\to\TT$ a partially hyperbolic endomorphism and $\mu\in\cM_f^{\operatorname{erg}}(\TT)$ an ergodic invariant probability measure. 
\subsubsection{Center Lyapunov exponent}
The (pointwise) center Lyapunov exponent of $f$ is defined for all points in the $f$-invariant regular set $\Gamma$, where $\Gamma$ has full $\mu$-measure for any $f$-invariant measure $\mu$. It is given by
$$L^c(x) = \lim_{n \to \infty} \dfrac{1}{n} \log \Vert Df^n(x) v \Vert, \mbox{ where } v \in E^c(x).$$

The integrated center Lyapunov exponent with respect to $\mu$ is given by
\begin{align*}
    L^c(\mu) &= \int L^c(x) d\mu = \int \lim_{n \to \infty} \dfrac{1}{n} \log \Vert Df(x) v \Vert d\mu, \mbox{ where } v \in E^c(x)\\
    &= \int \log \vert \restr{Df}{E^c} \vert d\mu,
\end{align*}
using the fact that $E^c$ is one dimensional and Birkhoff ergodic theorem. Since $L^c(x)$ is $f$-invariant and since we work only with ergodic measures, then $L^c(x) =  L^c(\mu)$ for $\mu$-almost every point and \textit{non-uniform expansion} is given by $L^c(x) > 0$ for $\mu$-almost every point.

The differential structure of $f$ pulled back to $\TTf$ allows us to have an Oseledec's theorem for the inverse limit space and the Lyapunov exponents are the same as in the original manifold and they do not depend on the past orbit, that is, they are constant along $\Sigma(\tix)$, see \cite[Proposition I.3.5]{QXZ2009}. In Section \ref{sec:pesin} we explore Pesin theory further.

From now on, we consider a fixed ergodic $f$-invariant measure $\mu$. We also assume that the center Lyapunov exponent of $\mu$ satisfies $L^c(\mu)>0$. In this setting, we fix $0<\lambda<L^c(\mu)<\hat{\lambda}$. Then, there exists a measurable function $C:\TTf\to[1,+\infty)$ such that  
\[
C(\tix)^{-1}e^{\lambda n}\leq\lambda^c_{\tix}(n)\leq C(\tix)e^{\hat{\lambda} n},
\]
for every $n>0$ and 
\[
C(\tix)^{-1}e^{\lambda n}\geq\lambda^c_{\tix}(n)\geq C(\tix)e^{\hat{\lambda} n},
\]
for every $n<0$. Moreover, the function $C$ has slow exponential growth: 
\[
C(\tilde{f}^n(\tix))\leq e^{\varepsilon n} C(\tix),
\]
where $\varepsilon=(\hat{\lambda}-\lambda)/2$. 

\subsubsection{Pesin center manifolds}

We now briefly recall the properties of the Pesin center manifolds in our setting. There exists a pair of measurable functions $r:\TTf\to (0,1)$ and $\gamma:\TTf\to [1,+\infty)$ such that whenever $\tilde{y},\tilde{z}\in \w^c_{r(\tix)}(\tix)$ then 
\begin{equation}
\label{e.pesincentermanifold}
d(\tif^{-n}(\title{y}),\tif^{-n}(\tilde{z})\leq\gamma(\tix)e^{-n\lambda}d(\tilde{y},\tilde{z})
\end{equation}

\begin{notation}
    We use the notation $\w^c$ for the global center manifold given by the partially hyperbolic splitting and $\tW^c_{\loc}$ for the Pesin manifolds where we observe expansion, which in this case are local manifolds contained on $\w^c$ for almost every point, that is: $\tW^c_{\operatorname{loc}}(\tix)=\w^c_{r(\tix)}(\tix)$.
\end{notation}

The \emph{global Pesin manifolds}, which we denote by $\tW^c(\tix)$, are defined by the classical formula
\[
\tW^c(\tix)\eqdef\bigcup_{n=0}^{+\infty}\tif^n\left(\tW^c_{\loc}(\tix_{-n})\right).
\]
Thus $\tW^c(\tix)\subseteq\w^c(\tix)\subset\w^{cu}(\tix)$ is an immersed line. As, a priory, we assume no uniform expansion along $E^c$, the inclusion $\tW^c(\tix)\subseteq\w^c(\tix)$ could be strict. We define the \emph{global Pesin center unstable manifolds} by 
\[
\tW^{cu}(\tix)=\bigcup_{\tilde{y}\in \tW^c(\tix)}\w^u(\tix).
\]
As in \cite{ALOS} Lemma 2.4, we have the following

\begin{lemma}
    \label{lem:coerencia}
For every $\tilde{y}\in \tW^c(\tix)$ it holds that $\tW^c(\tix)\cap\w^u(\tilde{y})=\{\tilde{y}\}$.  
\end{lemma}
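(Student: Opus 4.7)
The plan is to argue by contradiction, exploiting the two very different backward-contraction rates one has along $\w^u$ and along $\tW^c$. Suppose there were $\tilde z \in \tW^c(\tix) \cap \w^u(\tilde y)$ with $\tilde z \ne \tilde y$. The $\tif$-invariance of both $\w^u$ and $\tW^c$ then gives
\[
\tif^{-n}(\tilde z) \in \tW^c(\tif^{-n}(\tix)) \cap \w^u(\tif^{-n}(\tilde y)), \qquad \text{for every } n \geq 0.
\]
Since $E^u$ is uniformly expanded by a factor of at least $\sigma>1$ (Lemma~\ref{lem:defideph}), the unstable arc joining $\tif^{-n}(\tilde y)$ to $\tif^{-n}(\tilde z)$ has length decaying at the rate $\sigma^{-n}$; in particular $d(\tif^{-n}(\tilde y),\tif^{-n}(\tilde z)) \to 0$ exponentially.

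I would next check that, for all $n$ large enough, both iterates belong to the local Pesin center manifold of $\tif^{-n}(\tix)$. By the very definition of $\tW^c$ as the forward saturation of local Pesin manifolds, there exists $n_0$ with $\tif^{-n_0}(\tilde y),\,\tif^{-n_0}(\tilde z)\in \tW^c_{\loc}(\tif^{-n_0}(\tix))$. Iterating \eqref{e.pesincentermanifold} backwards from time $n_0$ yields the exponential decay
\[
d(\tif^{-n}(\tilde y),\tif^{-n}(\tix)) \leq C\, e^{-(n-n_0)\lambda}, \qquad d(\tif^{-n}(\tilde z),\tif^{-n}(\tix)) \leq C\, e^{-(n-n_0)\lambda},
\]
while the slow exponential control of the Pesin constants forces $r(\tif^{-n}(\tix)) \geq r_0\, e^{-\eps n}$ for some $\eps<\lambda$. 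Consequently, for all sufficiently large $n$ one has both $\tif^{-n}(\tilde y)$ and $\tif^{-n}(\tilde z)$ sitting inside the single $C^1$ embedded arc $\tW^c_{\loc}(\tif^{-n}(\tix))$, which is everywhere tangent to $E^c$.

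To close the argument, I would invoke the uniform transversality between $E^c$ and $E^u$ afforded by the continuous splitting: there is $\alpha_0>0$ such that $\angle(E^c(\tilde w),E^u(\tilde w)) \geq \alpha_0$ for every $\tilde w \in \TTf$. A standard local graph/exponential chart argument then produces a uniform $\rho_0>0$ such that any $C^1$ arc tangent to $E^c$ and any $C^1$ arc tangent to $E^u$ which pass through a common point cannot meet a second time inside the $\rho_0$-ball around that point. Since $d(\tif^{-n}(\tilde y),\tif^{-n}(\tilde z)) < \rho_0$ for $n$ large, the two distinct intersection points of $\tW^c_{\loc}(\tif^{-n}(\tix))$ and $\w^u(\tif^{-n}(\tilde y))$ would have to lie in such a $\rho_0$-ball, which is impossible. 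Hence $\tilde y = \tilde z$.

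The main obstacle I anticipate is the second step: carefully balancing the Pesin exponential contraction rate $\lambda$ against the subexponential shrinkage rate $\eps$ of the Lyapunov chart radius, so as to secure that the global Pesin iterates genuinely land inside the \emph{uniform} local Pesin center manifold at $\tif^{-n}(\tix)$, rather than just converging to its basepoint within the much larger global $\tW^c$. Once that inclusion is in place, the transversality step is purely local and mirrors the corresponding argument in \cite[Lemma 2.4]{ALOS}.
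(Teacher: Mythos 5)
Your argument is correct and is essentially the proof the paper relies on (it defers to \cite[Lemma 2.4]{ALOS}): iterate backwards so that the two candidate intersection points are joined simultaneously by a short arc tangent to $E^u$ and a short arc of the local Pesin center manifold tangent to $E^c$, then rule out a double intersection by the uniform transversality of the two bundles. The point you flag as the main obstacle — that the temperedness rate $\eps$ of the Pesin radius must be taken smaller than the backward contraction rate $\lambda$ so that both backward orbits land in a single local Pesin arc — is indeed the only delicate step, and is handled exactly as you describe.
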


\subsubsection{The SRB property}

Consider now a measurable partition $\cP^{cu}$ subordinate to the Pesin center unstable foliation $\tW^{cu}(\tix)$. This means a measurable partition whose atoms are entirely contained in a Pesin center-unstable manifold and project bijectively onto measurable subsets of $\TT$ which contain a ball. More precisely

\begin{definition}
	\label{def:subord}
	A measurable partition $\mathcal{P}$ of $\TTf$ is \emph{cu-subordinate} with respect to $\mu$ if, for $\tilde{\mu}$-almost every $\tilde{x} \in \TTf$, the atom $\mathcal{P}(\tilde{x})$ satisfies
	\begin{enumerate}
		\item $\restr{\pi}{\mathcal{P}(\tilde{x})}$ is a bijection onto its image;
		\item there is an open set $U_{\tilde{x}} \subseteq \TT$ such that  $\pi(\mathcal{P}(\tilde{x}))\supseteq U_{\tix}$.
	\end{enumerate}
\end{definition}

The construction of subordinate partitions in \cite{LedStr} applies in our case. 

\begin{definition}
\label{def:srb}
    We say that $\mu$ is an SRB measure if there exists a cu-subordinate partition $\cP$ such that for $\tm$-almost every $\tix$ one has $\pi_{*}\tm^{\cP}_{\tix}<<\Leb_{\TT}$.
\end{definition}

\begin{lemma}
    \label{lem:srbendo}
Every SRB measure is absolutely continuous with respect to $\Leb_{\TT}$.
\end{lemma}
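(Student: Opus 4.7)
The plan is a direct disintegration argument: transfer the absolute continuity of the conditional measures $\pi_*\tm^{\cP}_{\tix}$ to the global measure $\mu$ via the Rokhlin formula, exploiting that $\mu=\pi_*\tm$ and that $\pi$ is a bijection on each atom of the cu-subordinate partition.

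Concretely, let $\cP$ be the cu-subordinate partition supplied by the definition of SRB, and let $A\subset\TT$ be any Borel set with $\Leb_{\TT}(A)=0$. First I would use the identity $\mu(A)=\tm(\pi^{-1}(A))$ coming from Proposition~\ref{prop:qxz-mu}. Then, applying the Rokhlin disintegration theorem to the partition $\cP$, one gets
\[
\tm(\pi^{-1}(A))=\int \tm^{\cP}_{\tix}\bigl(\pi^{-1}(A)\bigr)\,d\hat{\mu}(\cP(\tix)).
\]
Since $\tm^{\cP}_{\tix}$ is supported on $\cP(\tix)$ and, by item (1) of Definition~\ref{def:subord}, $\restr{\pi}{\cP(\tix)}$ is a bijection onto its image, one has
\[
\tm^{\cP}_{\tix}\bigl(\pi^{-1}(A)\bigr)=\tm^{\cP}_{\tix}\bigl(\pi^{-1}(A)\cap\cP(\tix)\bigr)=\pi_*\tm^{\cP}_{\tix}\bigl(A\cap \pi(\cP(\tix))\bigr)\leq \pi_*\tm^{\cP}_{\tix}(A).
\]
The SRB hypothesis says that $\pi_*\tm^{\cP}_{\tix}\ll\Leb_{\TT}$ for $\tm$-almost every $\tix$, so $\pi_*\tm^{\cP}_{\tix}(A)=0$ almost surely. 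Integrating gives $\mu(A)=\tm(\pi^{-1}(A))=0$, proving $\mu\ll\Leb_{\TT}$.

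There is essentially no hard step here; the only mild point to check is that $\tix\mapsto \tm^{\cP}_{\tix}(\pi^{-1}(A))$ is measurable so that the Rokhlin integration formula applies, which is part of the Rokhlin disintegration theorem stated earlier in the paper, and that the null set where $\pi_*\tm^{\cP}_{\tix}\ll\Leb_{\TT}$ fails does not affect the integral. Both are routine, so the lemma follows immediately from the definition.
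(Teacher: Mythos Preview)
Your argument is correct and follows essentially the same route as the paper: both use the Rokhlin disintegration of $\tm$ along the cu-subordinate partition $\cP$, the bijectivity of $\pi$ on atoms, and the absolute continuity of $\pi_*\tm^{\cP}_{\tix}$ to kill any $\Leb_{\TT}$-null set. The paper phrases it as a contradiction and adds an unnecessary localization to a small ball, whereas your direct version is cleaner; the underlying idea is identical.
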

\begin{proof}
Let us assume by contradiction that $\mu$ is not absolutely continuous with respect to $\Leb_{\TT}$. Then, there exists a Borel measurable set $A\subset\TT$ with $\mu(A)>0$ but $\Leb_{\TT}(A)=0$. Then, $\widetilde{A}\eqdef\pi^{-1}(A)$ satisfies $\tm(\widetilde{A})>0$. By the regularity of $\mu$ we may assume that $A$ is compact and contained in an open set $U=B_r(x)$, for some $x\in A$, such that $\pi|_{\pi^{-1}(U)\cap\w^{cu}(\tilde{y})}$ is a bijection for every $\tilde{y}\in\pi^{-1}(x)$. We may also choose the point $x$ so that for $\mu_x$ almost every $\tilde{y}\in\pi^{-1}(x)$ the measure $\pi_*\tm^{\cP}_{\tilde{y}}$ is absolutely continuous with respect to $\Leb_{\TT}$. In particular, we have that
\[
\tm^{\cP}_{\tilde{y}}(\widetilde{A})=0,
\]
for $\mu_x$ almost every $\tilde{y}\in\pi^{-1}(x)$. However, this implies that 
\[
\mu(A)=\tm(\widetilde{A})=\int\tm^{\cP}_{\tilde{y}}(\widetilde{A})d\mu_x(\tilde{y})=0,
\]
which gives a contradiction.
\end{proof}

\subsubsection{Distortion estimates}

Given $\delta>0$, we can choose a compact set $\cL\subset\TTf$ with $\tilde{\mu}(\cL)>1-\delta$ and such that the functions $C,r,\gamma$ are continuous when restricted to $\cL$. For this section, we fix $\delta>0$ and $\cL$. 

The following result is a classical consequence of uniform expansion along unstable manifolds or center Pesin manifolds.

\begin{lemma}
\label{lem:bounded-dist}
Let $\varphi: \TTf \to \R$ be a H\"older continuous function. There exists constants $C_0>0$ and $C=C(\cL)>0$ such that
\begin{enumerate}
	\item  if $\tilde{y} \in \w^u_1(\tix)$ then for every $n>0$, 
	 $$\left\vert \sum_{l=0}^{n-1} \varphi(\tilde{f}^{-l}(\tix)) - \varphi(\tilde{f}^{-l}(\tilde{y}))  \right\vert \leq C_0;$$
	 
	\item if $\tilde{f}^{l}(\tilde{y}) \in \w^u_{1}(\tilde{f}^{l}(\tix))$ for $l \in \{0, \cdots, n-1 \}$ then  $$\left\vert \sum_{l=0}^{n-1} \varphi(\tilde{f}^l(\tix)) - \varphi(\tilde{f}^l(\tilde{y}))  \right\vert \leq C_0;$$
    \item if $\tilde{y} \in \tW^c_{r(\tix)}(\tix)$ and $\tix \in \cL$, then for every $n > 0$,
        $$\left\vert \sum_{l=0}^{n-1} \varphi(\tilde{f}^{-l}(\tix)) - \varphi(\tilde{f}^{-l}(\tilde{y}))  \right\vert \leq C.$$
    \item if $\tilde{f}^{l}(\tilde{y}) \in \tW^c_{r(\tilde{f}^{l}(\tix))}(\tilde{f}^{l}(\tix))$ for $l \in \{0, \cdots, n-1 \}$ and $\tilde{f}^{n}(\tix) \in \cL$ then 
        $$\left\vert \sum_{l=0}^{n-1} \varphi(\tilde{f}^l(\tix)) - \varphi(\tilde{f}^l(\tilde{y}))  \right\vert \leq C.$$
\end{enumerate}
\end{lemma}

\begin{proof}
    We write the proof for (2) and the center leaf, the others being analogous. If $\tix \in \cL$ and $\tilde{y} \in \tW^c_{r(\tix)}(\tix)$, then
$$d(\tilde{f}^{-n}(\tilde{x}), \tilde{f}^{-n}(\tilde{y})) \leq \gamma_0 e^{-n\lambda} d(\tilde{x}, \tilde{y})$$
for all $n \in \mathbb{N}$, where $\gamma_0=\max \gamma|_{\cL}$.

We have that
$$\left\vert \sum_{l=0}^{n-1} \varphi(\tilde{f}^{-l}(\tix)) - \varphi(\tilde{f}^{-l}(\tilde{y}))  \right\vert \leq K \sum_{l=0}^{n-1} d(\tilde{f}^{-l}(\tix), \tilde{f}^{-l}(\tilde{y}))^{\alpha},$$
where $\varphi$ is $\alpha$-H\"older with constant $K$. This is less or equal to
\[
K \gamma_0 d(\tilde{x}, \tilde{y}) \sum_{l=0}^{n-1} e^{-l\lambda} \leq K \gamma_0 \operatorname{diam}(\TTf) \sum_{l=0}^{\infty} e^{-l\lambda} = C.\qedhere
\] 
\end{proof}

We immediately have the following result.

\begin{lemma}
    \label{lem:distor-unstable}
There exists a constant $C=C(f)>1$ such that if $\tif^n(\tilde{y})\in \cW^u_{1}(\tif^n(\tix))$ then for every $\ell\leq n$
\[
\frac{\lambda^u_{\tix}(\ell)}{\lambda^u_{\tilde{y}}(\ell)}\asymp_{C}1.
\]
\end{lemma}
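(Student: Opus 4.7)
The plan is to reduce the statement to a direct application of Lemma~\ref{lem:bounded-dist}(2) applied to the Hölder potential
\[
\varphi(\tilde{z}) \eqdef \log \|D\tilde{f}(\tilde{z})|_{E^u}\|.
\]
First I would check that $\varphi$ is Hölder continuous on $\TTf$. Since $f$ is $C^2$, the map $\tilde{z} \mapsto \log\|Df(\pi(\tilde{z}))|_{L}\|$ is Lipschitz in the line $L \subset T_{\pi(\tilde{z})}\TT$ (as a function on the projectivization). The unstable bundle $\tilde{z} \mapsto E^u(\tilde{z})$ is Hölder continuous on $\TTf$ by the standard argument for dominated splittings via the graph transform (the domination $\lambda^c < \lambda^u$ from Lemma~\ref{lem:defideph} plus $C^2$ regularity of $f$). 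Composing these yields the Hölder property of $\varphi$.

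Next, for a fixed $\ell \leq n$, I would use the cocycle identity
\[
\log \lambda^u_{\tilde{x}}(\ell) - \log \lambda^u_{\tilde{y}}(\ell) = \sum_{j=0}^{\ell-1}\bigl(\varphi(\tif^j(\tix)) - \varphi(\tif^j(\tilde{y}))\bigr),
\]
so it suffices to obtain a uniform bound on the right-hand side. To invoke Lemma~\ref{lem:bounded-dist}(2) (with $n$ there replaced by $\ell$), I need $\tif^j(\tilde{y}) \in \w^u_1(\tif^j(\tix))$ for every $j \in \{0,\dots,\ell-1\}$. This follows from the hypothesis that $\tif^n(\tilde{y}) \in \w^u_1(\tif^n(\tix))$ together with the fact that, since $E^u$ is uniformly expanded forwards, unstable distances contract under backward iteration: indeed there exists $\sigma > 1$ (coming from the cone condition of partial hyperbolicity) such that for every $k \geq 0$ one has
\[
d_{\w^u}\bigl(\tif^{n-k}(\tix),\tif^{n-k}(\tilde{y})\bigr) \leq \sigma^{-k} \, d_{\w^u}\bigl(\tif^n(\tix),\tif^n(\tilde{y})\bigr) \leq \sigma^{-k},
\]
so in particular the unstable distance at time $j = n-k$ is bounded by $1$ for every $0 \leq j \leq n$, hence for $j \in \{0,\dots,\ell-1\}$.

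Applying Lemma~\ref{lem:bounded-dist}(2) to $\varphi$ gives a constant $C_0 = C_0(f) > 0$, independent of $\ell$ and of the pair $(\tix,\tilde{y})$, such that
\[
\Bigl|\log \lambda^u_{\tix}(\ell) - \log \lambda^u_{\tilde{y}}(\ell)\Bigr| \leq C_0,
\]
and setting $C \eqdef e^{C_0}$ yields the desired estimate $\lambda^u_{\tix}(\ell)/\lambda^u_{\tilde{y}}(\ell) \asymp_C 1$. I do not expect a serious obstacle here: the only mildly delicate point is the Hölder regularity of $E^u$ on the inverse-limit space (as opposed to on $\TT$, where $E^u$ is not even well-defined), but this is standard for dominated splittings and the Hölder exponent entering Lemma~\ref{lem:bounded-dist} is absorbed in the final constant.
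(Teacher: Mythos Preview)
Your proposal is correct and is exactly the argument the paper has in mind: the paper merely says ``We immediately have the following result'' right after Lemma~\ref{lem:bounded-dist}, and you have spelled out the details of that immediacy (the Hölder potential $\varphi=\log\lambda^u$, the cocycle identity, and the observation that backward iterates of the unit unstable segment stay inside the unit unstable segment so that the hypothesis of Lemma~\ref{lem:bounded-dist}(2) holds for all $j\leq\ell\leq n$).
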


Also, as a direct consequence of our choice of $\mathcal{L}$, we have similar estimates for the center direction.

\begin{lemma}
    \label{lem:distorlusin}
Given a Lusin set $\cL$ there exists a constant $C=C(\cL)>1$ such that if $\tif^n(\tix)\in\cL$ and $\tif^n(\tilde{y})\in \tW^c_{r(\tif^n(\tix))}(\tif^n(\tix))$ then for every $\ell\leq n$
\[
\frac{\lambda^c_{\tix}(\ell)}{\lambda^c_{\tilde{y}}(\ell)}\asymp_{C}1.
\]
\end{lemma}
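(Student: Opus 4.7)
The plan is to prove this as a direct corollary of Lemma~\ref{lem:bounded-dist}(2) applied to the function $\varphi(\tix) = \log\|Df(\tix)|_{E^c(\tix)}\|$. The first step is to verify that $\varphi$ is H\"older continuous. Since $f$ is $C^2$ and the splitting $T\TTf = E^c \oplus E^u$ is dominated (Lemma~\ref{lem:defideph}), the center bundle $E^c$ is H\"older continuous on $\TTf$ by the classical regularity theorem for dominated invariant sections; combined with the fact that $Df$ is $C^1$, this gives H\"older regularity of $\varphi$. By the one-dimensionality of $E^c$ and the chain rule,
\[
\log \frac{\lambda^c_{\tix}(\ell)}{\lambda^c_{\tilde{y}}(\ell)} \;=\; \sum_{l=0}^{\ell-1}\bigl(\varphi(\tif^l(\tix)) - \varphi(\tif^l(\tilde{y}))\bigr),
\]
so the desired estimate $\lambda^c_{\tix}(\ell)/\lambda^c_{\tilde{y}}(\ell) \asymp_C 1$ is equivalent to a uniform bound on this partial Birkhoff sum.

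The core of the argument is the exponential contraction property \eqref{e.pesincentermanifold} anchored at the ``good'' point $\tif^n(\tix) \in \cL$. Setting $\tilde{p} = \tif^n(\tix)$ and $\tilde{q} = \tif^n(\tilde{y})$, the hypothesis $\tilde{q} \in \tW^c_{r(\tilde{p})}(\tilde{p})$ together with \eqref{e.pesincentermanifold} yields, for every $0 \leq m \leq n$,
\[
d(\tif^{-m}(\tilde{p}),\tif^{-m}(\tilde{q})) \;\leq\; \gamma(\tilde{p})\, e^{-m\lambda}\, d(\tilde{p},\tilde{q}).
\]
Substituting $l = n - m$ and using that $\tif^n(\tix)\in\cL$ forces $\gamma(\tilde{p}) \leq \gamma_0 := \max_{\cL}\gamma < \infty$, this becomes
\[
d(\tif^l(\tix),\tif^l(\tilde{y})) \;\leq\; \gamma_0\, e^{-(n-l)\lambda}\, \operatorname{diam}(\TTf),\qquad l \in \{0,\ldots,n\}.
\]

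Using the H\"older constant $K$ and exponent $\alpha$ of $\varphi$, each term of the Birkhoff sum is bounded by $K\gamma_0^\alpha\,\operatorname{diam}(\TTf)^\alpha\, e^{-(n-l)\lambda\alpha}$. Summing over $l \in \{0,\ldots,\ell-1\} \subseteq \{0,\ldots,n-1\}$ and changing variables to $k = n - l \geq 1$ produces a tail of the convergent geometric series $\sum_{k\geq 1} e^{-k\lambda\alpha}$, and hence a finite constant $C = C(\cL)$ depending only on $\gamma_0$, $K$, $\alpha$, $\lambda$, and $\operatorname{diam}(\TTf)$. Exponentiating gives the claimed $\asymp_C 1$ bound for every $\ell \leq n$.

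The only non-routine input is the H\"older regularity of $\varphi$, which is classical but worth noting. A potential pitfall would be to try to invoke Lemma~\ref{lem:bounded-dist}(2) verbatim without verifying that the intermediate iterates $\tif^l(\tilde{y})$ lie in the Pesin local manifolds $\tW^c_{r(\tif^l(\tix))}(\tif^l(\tix))$; however, this subtlety is bypassed by the direct telescoping estimate above, which only uses the ambient contraction \eqref{e.pesincentermanifold} anchored at the good endpoint $\tif^n(\tix) \in \cL$, never requiring the intermediate points to themselves sit inside a local Pesin block of prescribed size.
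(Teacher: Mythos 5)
Your proof is correct and follows essentially the route the paper intends, namely the one sketched in Lemma~\ref{lem:bounded-dist}: telescope $\log\lambda^c$ into a Birkhoff sum of the H\"older cocycle $\varphi=\log\|Df|_{E^c}\|$ and control it with the backward exponential contraction \eqref{e.pesincentermanifold} anchored at the good endpoint $\tif^n(\tix)\in\cL$, where $\gamma\leq\gamma_0(\cL)$. Your explicit remark that the direct telescoping estimate bypasses the (stronger) hypothesis of Lemma~\ref{lem:bounded-dist}(2) on the intermediate iterates is a welcome clarification of a point the paper leaves implicit.
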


\subsection{The Lyapunov norm}

Given $v\in E^c(\tix)\setminus\{0\}$ and $0< \lambda < L^c(\mu)$ we define
\[
\|v\|_{L,\tix}\eqdef \left(\sum_{k\leq 0}\|D\tif^k(\tix)v\|^2e^{-2k\lambda}\right)^{1/2},
\]
which is called the \emph{Lyapunov norm of $v$}. Observe that all terms in the series are non-negative, but in principle the series could diverge. The lemma below guarantees that this is not the case by providing a measurable upper bound with respect to the background Riemannian norm. 

\begin{lemma}
    \label{lem:lyapunovriemmann}
There exists a measurable function $\xi:\TTf\to [1,+\infty)$ such that for $\tilde{\mu}$-almost every $\tix\in\TTf$ and every $v\in E^c(\tix)$ it holds
$\|v\|\leq\|v\|_{L,\tix}\leq\xi(\tix)\|v\|$.
\end{lemma}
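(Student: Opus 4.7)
The plan is to prove the lower bound trivially by extracting a single term from the series, and the upper bound by exploiting non-uniform expansion along $E^c$ to bound the partial sums by a convergent geometric series.

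For the lower bound, I simply observe that the $k=0$ summand equals $\|D\tif^0(\tix)v\|^2 e^0 = \|v\|^2$, and every other term is nonnegative. Hence
\[
\|v\|_{L,\tix}^2 \geq \|v\|^2,
\]
giving $\|v\|_{L,\tix}\geq\|v\|$ with no need for a measurable constant.

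For the upper bound, the key step is to gain a small margin on the backward contraction rate along $E^c$. I would fix an intermediate rate $\lambda'$ with $\lambda<\lambda'<L^c(\mu)$. By Oseledets applied to the one-dimensional bundle $E^c$ (together with the slow exponential growth of the measurable constant $C$), there exists a measurable function $C'\colon\TTf\to[1,+\infty)$ such that for every $k\leq 0$ and every $v\in E^c(\tix)$,
\[
\|D\tif^{k}(\tix)v\|\ \leq\ C'(\tix)\,e^{\lambda' k}\,\|v\|.
\]
This is obtained from the forward estimate $\lambda^c_{\tif^{k}\tix}(-k)\geq C(\tif^{k}\tix)^{-1}e^{\lambda'(-k)}$ (valid for any $\lambda'<L^c(\mu)$ after adjusting the measurable constant) by inverting and using $C(\tif^{k}\tix)\leq e^{\delta|k|}C(\tix)$ for a $\delta$ chosen so small that $\lambda'-\delta$ still exceeds $\lambda$.

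Plugging this bound into the definition of the Lyapunov norm gives
\[
\|v\|_{L,\tix}^2\ \leq\ C'(\tix)^2\|v\|^2\sum_{k\leq 0} e^{2(\lambda'-\lambda)k}
\ =\ \frac{C'(\tix)^2\|v\|^2}{1-e^{-2(\lambda'-\lambda)}},
\]
where the geometric sum converges because $\lambda'-\lambda>0$. Setting
\[
\xi(\tix)\ \eqdef\ \max\!\left\{1,\ \frac{C'(\tix)}{\sqrt{1-e^{-2(\lambda'-\lambda)}}}\right\},
\]
I obtain a measurable function $\xi\colon\TTf\to[1,+\infty)$ with $\|v\|_{L,\tix}\leq\xi(\tix)\|v\|$ for $\tm$-a.e.\ $\tix$, which is the claim.

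The only nontrivial point is the derivation of the sharper backward contraction rate $\lambda'>\lambda$, which is standard Pesin theory: the Oseledets constant displayed in Section~\ref{sec:pesin} controls backward iterates only at rate $\lambda$, and one must either choose $\lambda$ in the Lyapunov norm strictly less than the rate provided by Oseledets, or (equivalently) absorb an extra $\delta$ of slow growth of $C$ into the exponent. Apart from this bookkeeping, the argument is routine, and the measurability of $\xi$ is inherited directly from that of $C'$.
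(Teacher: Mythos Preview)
Your proof is correct and follows essentially the same route as the paper's: the lower bound comes from the $k=0$ term, and the upper bound comes from choosing an intermediate rate $\lambda'$ with $\lambda<\lambda'<L^c(\mu)$ (the paper writes this as $\chi-\eps$), invoking the Oseledets/Pesin constant at that rate, and summing the resulting geometric series $\sum_{k\le 0}e^{2(\lambda'-\lambda)k}$. Your explicit remark that the constant $C$ from Section~\ref{sec:pesin} is stated only at rate $\lambda$ and must be upgraded via its slow exponential growth is a point the paper handles implicitly by simply introducing a new function $\hat{C}$ at rate $\chi-\eps$.
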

\begin{proof}
Denote, for simplicity, $\chi\eqdef L^c(\mu)$. Choose $0<\eps<\chi-\lambda$. Then, there exists a measurable function $\hat{C}:\TTf\to[1,+\infty)$ such that 
\[
\hat{C}(\tix)^{-1}e^{k(\chi-\eps)}\geq\lambda^c_{\tix}(k)\geq \hat{C}(\tix)e^{k(\chi+\varepsilon)},\:\:\textrm{for all}\:\:k<0.
\]
This implies that 
\[
\|D\tif^k(\tix)v\|^2=\lambda^c_{\tix}(k)^2\|v\|^2\leq \hat{C}(\tix)^{-2}e^{2k(\chi-\eps)}\|v\|^2.
\]
Thus defining $\xi(\tix)\eqdef \left(\hat{C}(\tix)^{-2}\sum_{k\leq 0}e^{2k(\chi-\eps-\lambda)}\right)^{1/2}$ we obtain
\[
\|v\|^2_{L,\tix}=\sum_{k\leq 0}\|D\tif^k(\tix)v\|^2e^{-2k\lambda}\leq\xi(\tix)^2\|v\|^2. \qedhere
\]
\end{proof}

\subsubsection{Growth estimate for norm $\|.\|_{L,\tix}$}

When we measure the length of a vector $v\in E^c(\tix)$ using the norm $\|.\|_{L,\tix}$, we see expansion in a uniform fashion. The drawback is that the assignment $x\mapsto \|.\|_{L,\tix}$ is only measurable. In the sequel we use the notation $\tix_n=\tif^n(\tix)$.

\begin{lemma}
    \label{lem:crescelyapunovcresce}
There exists a constant $\sigma=\sigma(f)>1$ such that for $\tilde{\mu}$ a.e. $\tix\in\TTf$ and every $v\in E^c(\tix)$ it holds 
\[
e^{n\lambda}\|v\|_{L,\tix}\leq\|D\tif^n(\tix)v\|_{L,\tix_n}\leq e^{n\sigma}\|v\|_{L,\tix},\:\:\:\textrm{for every}\:\:n>0
\]
\end{lemma}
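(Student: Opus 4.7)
The plan is to reduce everything to a direct manipulation of the series defining the Lyapunov norm, using the identity $D\tif^k(\tix_n) = D\tif^{n+k}(\tix)\circ(D\tif^n(\tix))^{-1}$ in the form of an index shift. Concretely, writing $w\eqdef D\tif^n(\tix)v$, applying the definition of $\|\cdot\|_{L,\tix_n}$ and reindexing $j=n+k$ gives
\[
\|w\|_{L,\tix_n}^2 \;=\; \sum_{k\le 0}\|D\tif^{n+k}(\tix)v\|^2 e^{-2k\lambda} \;=\; e^{2n\lambda}\sum_{j\le n}\|D\tif^{j}(\tix)v\|^2 e^{-2j\lambda}.
\]
Splitting the last sum into $j\le 0$ and $1\le j\le n$, the first piece is exactly $\|v\|_{L,\tix}^2$, so
\[
\|w\|_{L,\tix_n}^2 \;=\; e^{2n\lambda}\|v\|_{L,\tix}^2 \;+\; e^{2n\lambda}\sum_{j=1}^{n}\|D\tif^{j}(\tix)v\|^2 e^{-2j\lambda}.
\]

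The lower bound is then immediate: dropping the non-negative second summand yields $\|w\|_{L,\tix_n}\ge e^{n\lambda}\|v\|_{L,\tix}$, which is the left inequality in the statement. For the upper bound, I would use the uniform $C^1$ estimate $\|D\tif(y)\|\le e^{\Lambda}$ with $\Lambda\eqdef\log\max_{x\in\TT}\|Df(x)\|<\infty$ (a bound coming only from $f$, not from $\tix$). This gives $\|D\tif^{j}(\tix)v\|\le e^{j\Lambda}\|v\|$ for $j\ge 0$, so
\[
\sum_{j=1}^{n}\|D\tif^{j}(\tix)v\|^2 e^{-2j\lambda} \;\le\; \|v\|^2\sum_{j=1}^{n} e^{2j(\Lambda-\lambda)} \;\le\; C\, e^{2n(\Lambda-\lambda)}\|v\|^2,
\]
where $C$ depends only on $\Lambda-\lambda$. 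Using the trivial comparison $\|v\|\le\|v\|_{L,\tix}$ from Lemma~\ref{lem:lyapunovriemmann} and combining the two pieces, we get $\|w\|_{L,\tix_n}^2\le (1+C)\,e^{2n\Lambda}\|v\|_{L,\tix}^2$.

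It remains to choose $\sigma$. Since $\Lambda$ and $C$ depend only on $f$ and on the fixed parameter $\lambda$, picking any constant $\sigma>1$ with $e^{\sigma}\ge \sqrt{1+C}\,e^{\Lambda}$ produces a uniform bound $\|D\tif^n(\tix)v\|_{L,\tix_n}\le e^{n\sigma}\|v\|_{L,\tix}$ for every $n\ge 1$, completing both inequalities. The argument is essentially one calculation and I do not foresee a real obstacle; the only thing to be careful about is that the upper bound cannot be made to match the sharp rate $e^{n\lambda}$ because the extra sum genuinely records the growth of the Riemannian norm and not the Lyapunov one, but this is harmless since the statement only asks for \emph{some} exponential upper bound depending on $f$.
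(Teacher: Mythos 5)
Your proof is correct and follows essentially the same route as the paper's: the same index shift $j=k+n$ in the defining series, the same split into the tail $j\le 0$ (which gives the sharp lower bound $e^{2n\lambda}\|v\|_{L,\tix}^2$) and the finite block $1\le j\le n$, which is controlled by the uniform bound on $\|Df\|$ and absorbed via $\|v\|\le\|v\|_{L,\tix}$. Your bookkeeping of the constant in the finite block is in fact slightly cleaner than the paper's (which uses the cruder estimate $n<e^{n\xi}$), but the argument is the same.
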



\begin{proof}
Take $v\in E^c(\tix)\setminus\{0\}$ and $n>0$. Then,
\[
\|D\tif^n(\tix)v\|^2_{L,\tix_n}=\sum_{k\leq 0}\|D\tif^k(\tif^n(\tix))D\tif^n(\tix)v\|^2e^{-2\lambda k}=\sum_{k\leq 0}\|D\tif^{n+k}(\tix)v\|^2e^{-2\lambda k}.
\]
By making the change $\ell\rightarrow k+n$ in the sum, we obtain
\begin{align*}
\|D\tif^n(\tix)v\|^2_{L,\tix_n}&=\sum_{\ell\leq 0}\|D\tif^{\ell}(\tix)v\|^2e^{-2\lambda(\ell-n)}+\sum_{\ell=1}^{n-1}\|D\tif^{\ell}(\tix)v\|^2e^{-2\lambda(\ell-n)}\nonumber \\
&\eqdef I+II.\nonumber
\end{align*}

The first term on the right-hand side above is equal to 
\[
I=e^{2\lambda n}\sum_{\ell\leq 0}\|D\tif^\ell(\tix)v\|^2e^{-2\lambda\ell}=e^{2\lambda n}\|v\|^2_{L,\tix}.
\]
As the second term is non-negative, this gives immediately the lower bound. For the upper bound, take $\xi>\max\{\log\|D\tif(\tix)\|;\tix\in\TTf\}>1$. Notice that $n<e^{n\xi}$ for every $n>0$. By definition, for every $\ell=1,...,n$ we have 
\[
\|D\tif^\ell(\tix)v\|^2\leq e^{2\xi\ell}\|v\|^2\leq e^{2\xi n}\|v\|^2.
\]
This allows us to estimate the second term II above by
\begin{align*}
  II&\leq\|v\|^2\sum_{\ell=1}^{n-1}e^{2\xi n}e^{-2\lambda(\ell-n)}\leq\|v\|^2\sum_{\ell=1}^{n-1}e^{4\xi n}e^{-2\lambda \ell}\nonumber\\
    &\leq e^{4\xi n}\|v\|^2\sum_{\ell=1}^{n-1}e^{-2\lambda\ell}\leq ne^{4\xi n}\|v\|^2\nonumber\\
    &\leq e^{5\xi n}\|v\|^2\leq e^{5\xi n}\|v\|_{L,\tix}^2\nonumber. 
\end{align*}
    
Thus
\[
I+II\leq(e^{2\lambda n}+e^{5\xi n})\|v\|_{L,\tix}\leq e^{7\xi n}\|v\|^2_{L,\tix}.
\]
This establishes the upper bound and completes the proof. 
\end{proof}

\subsubsection{$u$-saturation of the well behaved set for $\|.\|_{L,\tix}$}

\begin{lemma}
\label{lem:saturalyapunov}
    If $\tix$ is $\tilde{\mu}$-regular and if $\tix^u\in\tilde{\cW}^u(\tix)$, then $0<\|v\|_{L,\tix^u}<+\infty$ for every $v\in E^c(\tix^u)$. The same holds for every $\tilde{y}\in\tW^c_{r(\tix)}(\tix)$.
\end{lemma}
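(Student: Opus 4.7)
The strategy is to reduce the statement to the finiteness of the Lyapunov norm at the base point $\tix$ itself, which is already guaranteed by Lemma \ref{lem:lyapunovriemmann}. The transfer of finiteness to the nearby points $\tix^u$ and $\tilde y$ is then a bounded distortion argument applied to the Hölder continuous cocycle
\[
\varphi(\tilde z)\eqdef\log\|D\tif(\tilde z)|_{E^c}\|.
\]
Note that $\varphi$ is Hölder on $\TTf$: by Lemma \ref{lem:centralcte} the subbundle $E^c$ descends to $\TT$, where it is Hölder by standard partial hyperbolicity, and $Df$ is Lipschitz because $f$ is $C^2$.

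Because $E^c$ is one-dimensional, for any $\tilde z\in\TTf$ and any unit $v\in E^c(\tilde z)$,
\[
\|v\|_{L,\tilde z}^2 \;=\; 1 \;+\; \sum_{n\geq 1}\lambda^c_{\tilde z}(-n)^{2}\,e^{2n\lambda},
\qquad \log\lambda^c_{\tilde z}(-n)=-\sum_{l=1}^{n}\varphi(\tif^{-l}(\tilde z)).
\]
Hence it suffices to prove that there exist constants $C_1=C_1(\tix,\tix^u)$ and $C_2=C_2(\tix,\tilde y)$, independent of $n$, with
\[
\lambda^c_{\tix^u}(-n)\asymp_{C_1}\lambda^c_{\tix}(-n),\qquad \lambda^c_{\tilde y}(-n)\asymp_{C_2}\lambda^c_{\tix}(-n),
\]
for every $n\geq 0$. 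Indeed, once these are established, comparing series term-by-term yields $\|v\|_{L,\tix^u}^2\leq C_1^{2}\|w\|_{L,\tix}^2<+\infty$ (and analogously for $\tilde y$) by Lemma \ref{lem:lyapunovriemmann}, while the strict positivity $\|v\|_{L,\tix^u}\geq\|v\|>0$ is immediate from the $k=0$ term in the defining series.

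For $\tix^u\in\tilde\cW^u(\tix)$, I first choose $n_0\geq 0$ large enough so that $\tif^{-n_0}(\tix^u)\in\tilde\cW^u_1(\tif^{-n_0}(\tix))$, which is possible because the unstable leaf contracts under backward iteration. The initial block $\sum_{l=1}^{n_0}|\varphi(\tif^{-l}(\tix))-\varphi(\tif^{-l}(\tix^u))|$ contributes a finite ($(\tix,\tix^u)$-dependent) constant. Beyond time $n_0$, the points lie in a common unit-size unstable segment and Lemma \ref{lem:bounded-dist}(1) applied to $\varphi$ bounds the tail uniformly in $n$. This gives the required constant $C_1$. For $\tilde y\in\tW^c_{r(\tix)}(\tix)$, the Pesin estimate (\ref{e.pesincentermanifold}) yields $d(\tif^{-l}(\tix),\tif^{-l}(\tilde y))\leq\gamma(\tix)e^{-l\lambda}d(\tix,\tilde y)$ for every $l\geq 0$; combined with the Hölder regularity of $\varphi$ (say with exponent $\alpha$) one obtains
\[
\sum_{l=1}^{n}|\varphi(\tif^{-l}(\tix))-\varphi(\tif^{-l}(\tilde y))| \;\leq\; K\,\gamma(\tix)^{\alpha}d(\tix,\tilde y)^{\alpha}\sum_{l\geq 1}e^{-\alpha l\lambda},
\]
which is summable, giving the desired constant $C_2$.

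The step I expect to require a little care is the center case: Lemma \ref{lem:bounded-dist}(1) for the Pesin center manifold is stated under the assumption that the base point lies in a Lusin set $\cL$, while the statement of the present lemma is for an arbitrary $\tilde\mu$-regular $\tix$. I circumvent this by not invoking Lemma \ref{lem:bounded-dist} directly in that case, but instead re-running the argument in its proof pointwise, relying on the intrinsic Pesin estimate (\ref{e.pesincentermanifold}) which already provides exponential backward contraction with a constant $\gamma(\tix)$ defined for every regular $\tix$. This is the only nontrivial point; the remainder of the proof is a straightforward comparison of geometric series.
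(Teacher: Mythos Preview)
Your proposal is correct and follows essentially the same route as the paper: both arguments reduce to a term-by-term comparison of the defining series via the bounded distortion estimate $\lambda^c_{\tix^u}(k)\asymp_C\lambda^c_{\tix}(k)$ for the H\"older cocycle $\varphi=\log\|D\tif|_{E^c}\|$, and for the center case both circumvent the Lusin-set hypothesis of Lemma~\ref{lem:bounded-dist} by rerunning its proof pointwise using the Pesin contraction estimate~\eqref{e.pesincentermanifold} with the $\tix$-dependent constant $\gamma(\tix)$. Your handling of a general $\tix^u\in\tilde\cW^u(\tix)$ via an initial backward iterate $n_0$ is exactly what the paper means when it says the constant depends on $d(\tix,\tix^u)$.
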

\begin{proof}
As $\tix\mapsto\log\|D\tif(\tix)|_{E^c}\|$ is Hölder continuous it follows from Lemma~\ref{lem:bounded-dist} that 
\[
\frac{\lambda^c_{\tix}(k)}{\lambda^c_{\tix^u}(k)}\asymp_{C}1,
\]
for some constant $C>1$ (which depends only on $f$ and $d(\tix,\tix^u)$). Since $E^c$ is one-dimensional we can therefore write 
\begin{align*}
        \|v\|^2_{L,\tix^u}&=\sum_{k\leq 0}\|D\tif^k(\tix^u)v\|^2e^{-2\lambda k}\nonumber\\
    &=\|v\|^2\sum_{k\leq 0}\lambda^c_{\tix^u}(k)^2e^{-2\lambda k}\nonumber\\
    &\asymp_{C^2}\|v\|^2\sum_{k\leq 0}\lambda^c_{\tix}(k)^2e^{-2\lambda k}=\|v\|_{L,\tix},
\end{align*}
which implies the result. The proof for $\tilde{y}\in\w^c_{r(\tix)}(\tix)$ is identical. Indeed, the argument for the distortion Lemma~\ref{lem:bounded-dist} shows that 
\[
\frac{\lambda^c_{\tix}(-n)}{\lambda^c_{\tilde{y}}(-n)}\asymp_C 1,
\]
for every $n>0$, where $C=C(\tix)>1$. It follows as before that $\|v\|^2_{L,\tilde{y}}\asymp_{C^2}\|v\|_{L,\tix}^2$.   
\end{proof}

\subsubsection{Distortion estimates}

We consider now a Lusin set $\cL\subset\TTf$ of measure $\tilde{\mu}(\cL)>1-\delta$.

\begin{notation}
 Given $\tix\in\TTf$ and $n\in\N$, we define $\hat{\lambda}^c_{\tix}(n) \eqdef \frac{\|D\tif^n(\tix)v\|_{L,\tix_n}}{\|v\|_{L,\tix}}$, for  $v\in E^c(\tix)\setminus\{0\}$.
 It is easy to check that
 \begin{equation}
     \label{eq:cocycle}
     \hat{\lambda}^c_{\tix}(m+n) = \hat{\lambda}^c_{\tilde{f}^n(\tilde{x})}(m) \hat{\lambda}^c_{\tix}(n). 
 \end{equation}
\end{notation}
\begin{remark}
\label{rem:lyapunov}
Observe from Lemma~\ref{lem:crescelyapunovcresce} the following estimate: 
\[
e^{n\lambda}\leq\hat{\lambda}^c_{\tix}(n)\leq e^{n\sigma},
\]
for every $n\in\Z$ and for a constant $0<\lambda<L^c(\mu)$, which is fixed, and for $\sigma=\sigma(f)$ the constant given in Lemma~\ref{lem:crescelyapunovcresce}. This estimation only reflects the fact that when measuring growth with the Lyapunov norm we see immediate expansion along $E^c$. This will be of crucial importance later when we establish our quasi-isometric and synchronization estimates for stopping times. 
\end{remark}

We begin with a distortion result along Pesin center manifolds. 

\begin{lemma}
    \label{lem:distorlyp}
There exists a constant $C=C(\cL)>1$ such that, given $n>0$, if $\tix_n\in\cL$ and $\tilde{y}_n\in\tW^c_{r(\tix)}(\tix_n)$, then 
\[
\frac{\hat{\lambda}^c_{\tix}(n)}{\hat{\lambda}^c_{\tilde{y}}(n)}\asymp_C 1.
\]
\end{lemma}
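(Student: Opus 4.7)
The starting point is to write the Lyapunov norm explicitly. From the proof of Lemma~\ref{lem:lyapunovriemmann}, for any $v \in E^c(\tilde{z})$ one has
\[
\|v\|^2_{L,\tilde{z}} = \|v\|^2 \cdot S(\tilde{z}), \qquad S(\tilde{z}) \eqdef \sum_{m \geq 0} \lambda^c_{\tilde{z}}(-m)^2 \, e^{2\lambda m}.
\]
Consequently the Lyapunov cocycle factors as $\hat{\lambda}^c_{\tilde{z}}(n) = \lambda^c_{\tilde{z}}(n)\sqrt{S(\tif^n(\tilde{z}))/S(\tilde{z})}$, and the quantity to estimate decomposes as
\[
\frac{\hat{\lambda}^c_{\tix}(n)}{\hat{\lambda}^c_{\tilde{y}}(n)} = \frac{\lambda^c_{\tix}(n)}{\lambda^c_{\tilde{y}}(n)} \cdot \sqrt{\frac{S(\tix_n)}{S(\tilde{y}_n)}} \cdot \sqrt{\frac{S(\tilde{y})}{S(\tix)}}.
\]
The first factor is $\asymp_C 1$ by Lemma~\ref{lem:distorlusin}; the whole task is to control the two factors involving $S$ by uniform constants depending only on $\cL$.

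For both factors it suffices to show that the termwise ratios of the defining series are uniformly comparable. First, applying Lemma~\ref{lem:bounded-dist}(1) to the Hölder function $\varphi = \log\|Df|_{E^c}\|$ at the base point $\tix_n \in \cL$, with $\tilde{y}_n \in \tW^c_{r(\tix_n)}(\tix_n)$, produces a constant $C'=C'(\cL)$ with
\[
\frac{\lambda^c_{\tix_n}(-k)}{\lambda^c_{\tilde{y}_n}(-k)} \asymp_{C'} 1 \qquad\text{for every } k \geq 0.
\]
Term-by-term comparison in the series defining $S(\tix_n)$ and $S(\tilde{y}_n)$ gives $S(\tix_n)/S(\tilde{y}_n) \asymp_{(C')^2} 1$.

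For the backward ratio $S(\tilde{y})/S(\tix)$ the point is that, although neither $\tix$ nor $\tilde{y}$ is assumed to lie in $\cL$, their backward orbits can be compared through the orbit of $\tix_n$ using the cocycle identity
\[
\lambda^c_{\tix}(-m) = \lambda^c_{\tix_n}(-m-n) \cdot \lambda^c_{\tix}(n),
\]
and analogously for $\tilde{y}$. Taking the ratio, the first factor is $\asymp_{C'} 1$ (by the previous paragraph, applied with $k=m+n$) and the second is $\asymp_C 1$ by Lemma~\ref{lem:distorlusin}, so $\lambda^c_{\tix}(-m)/\lambda^c_{\tilde{y}}(-m) \asymp_{CC'} 1$ uniformly in $m \geq 0$. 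Another term-by-term comparison yields $S(\tix)/S(\tilde{y}) \asymp 1$. Multiplying the three factors in the displayed identity above completes the proof with a constant depending only on $\cL$ and $f$.

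The argument is mostly bookkeeping once one observes the clean identity $\|v\|_{L,\tilde{z}}^2 = \|v\|^2 S(\tilde{z})$; the only mildly delicate point is the second $S$-ratio, where we do not have direct control along the backward orbits of $\tix$ and $\tilde{y}$ and must route the estimate through $\tix_n \in \cL$ by the cocycle identity. Crucially, all bounds extracted from Lemma~\ref{lem:bounded-dist}(1) are independent of the time index, which is what makes the termwise comparison succeed uniformly in $n$ and $m$.
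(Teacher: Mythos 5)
Your proof is correct and follows essentially the same route as the paper: both arguments reduce to term-by-term comparison of the series defining the Lyapunov norm, using the backward distortion control along the Pesin center manifold of $\tix_n\in\cL$ (your factors involving $S(\tix_n)/S(\tilde y_n)$ and $\lambda^c_{\tix}(n)/\lambda^c_{\tilde y}(n)$ together reproduce the paper's comparison of $\sum_{k\le 0}\lambda^c_{\tix}(n+k)^2e^{-2\lambda k}$ with the corresponding sum for $\tilde y$, and your factor $S(\tilde y)/S(\tix)$ is exactly the paper's appeal to $\|v\|_{L,\tilde y}/\|v\|_{L,\tix}\asymp_C 1$ from the proof of Lemma~\ref{lem:saturalyapunov}). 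Your routing of the backward ratio at $\tix,\tilde y$ through $\tix_n$ via the cocycle identity is a slightly more explicit justification of the same estimate.
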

\begin{proof}
Notice that we are under the same assumptions of Lemma~\ref{lem:distorlusin}. Let $C=C(\cL)$ be the constant given by that lemma. Then, given any non-zero vector $v\in E^c(\tix)$ we have that
\begin{align*}
        \hat{\lambda}^c_{\tix}(n)^2&=\frac{\|v\|^2}{\|v\|^2_{L,\tix}}\sum_{k\leq 0}\lambda^c_{\tix}(n+k)^2e^{-2\lambda k}\nonumber\\
    &\asymp_{C^2}\frac{\|v\|^2}{\|v\|^2_{L,\tix}}\sum_{k\leq 0}\lambda^c_{\tilde{y}}(n+k)^2e^{-2\lambda k}.\nonumber\\
    &=\frac{\|v\|^2_{L,\tilde{y}}}{\|v\|^2_{L,\tix}}\hat{\lambda}^c_{\tilde{y}}(n)^2.\nonumber
\end{align*}
The proof of Lemma~\ref{lem:saturalyapunov} shows that $\|v\|_{L,\tilde{y}}/\|v\|_{L,\tix}\asymp_{C} 1$. We deduce that 
\[
\hat{\lambda}^c_{\tix}(n)^2\asymp_{C^4}\hat{\lambda}^c_{\tilde{y}}(n)^2,
\]
which gives the desired result. 
\end{proof}

We turn now to the case of two points on the same fiber of $\TTf$. In this case we need to assume that both points belong to the Lusin set. Indeed, for the next lemma we demand another property of the Lusin set. 

\begin{lemma}
    \label{lem:distlusinnasfibras}
    Given a Lusin set $\cL\subset\TTf$ there exists a constant $C=C(\cL)>1$ such that whenever $\tix,\tilde{y}\in\cL$ with $\tilde{y}\in\Sigma(\tix)$ we have that 
    \[
    \frac{\hat{\lambda}^c_{\tix}(n)}{\hat{\lambda}^c_{\tilde{y}}(n)}\asymp_C1,\:\:\:\textrm{for all}\:\:\:n>0.
    \]
\end{lemma}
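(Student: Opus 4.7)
The plan is to exploit the fact that two points $\tix,\tilde{y}$ on the same fiber have identical \emph{forward} orbits, so many of the terms defining $\hat{\lambda}^c_{\tix}(n)$ and $\hat{\lambda}^c_{\tilde{y}}(n)$ will be common; the only remaining discrepancy is a ratio of Lyapunov norms at the base points, which can be made uniform by the Lusin set assumption. More precisely, since $\tilde{y}\in\Sigma(\tix)$ we have $y_k=x_k$ for all $k\geq 0$, and by Lemma~\ref{lem:centralcte} we have $E^c(\tix)=E^c(\tilde{y})$, so we may fix a nonzero $v\in E^c(\tix)=E^c(\tilde{y})$ and use it simultaneously for both cocycles. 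Consequently $D\tif^{\ell}(\tix)v=D\tif^{\ell}(\tilde{y})v=Df^{\ell}(x_0)v$ for every $\ell\geq 0$.

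Next, I will unfold the Lyapunov norm at $\tif^n(\tix)$ by writing
\[
\|D\tif^n(\tix)v\|_{L,\tif^n(\tix)}^2=\sum_{k\leq 0}\|D\tif^{k+n}(\tix)v\|^2 e^{-2\lambda k}= e^{2\lambda n}\left(\|v\|_{L,\tix}^2+S_n(\tix)\right),
\]
where the tail $S_n(\tix)\eqdef\sum_{\ell=1}^{n}\|D\tif^{\ell}(\tix)v\|^2 e^{-2\lambda\ell}$ arises from the change of index $\ell=k+n$ and splitting the sum at $\ell=0$. Doing the same at $\tilde{y}$ and using that $D\tif^{\ell}(\tix)v=D\tif^{\ell}(\tilde{y})v$ for $\ell\geq 1$, one sees that $S_n(\tix)=S_n(\tilde{y})\eqdef S_n$. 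Thus
\[
\hat{\lambda}^c_{\tix}(n)^2=e^{2\lambda n}\left(1+\frac{S_n}{\|v\|_{L,\tix}^2}\right),\qquad \hat{\lambda}^c_{\tilde{y}}(n)^2=e^{2\lambda n}\left(1+\frac{S_n}{\|v\|_{L,\tilde{y}}^2}\right).
\]

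Here the Lusin set enters. Choosing $\cL$ so that the measurable function $\xi$ of Lemma~\ref{lem:lyapunovriemmann} is continuous on $\cL$, we put $\xi_0\eqdef\max_{\cL}\xi<+\infty$ and, since both $\tix$ and $\tilde{y}$ lie in $\cL$, we get $\|v\|\leq\|v\|_{L,\tix},\|v\|_{L,\tilde{y}}\leq\xi_0\|v\|$. In particular $\|v\|_{L,\tix}^2/\|v\|_{L,\tilde{y}}^2\in[\xi_0^{-2},\xi_0^2]$. An elementary calculation then shows that for any $a,b\geq 0$ with $b\in[\xi_0^{-2}a,\xi_0^2 a]$ we have $(1+a)/(1+b)\in[\xi_0^{-2},\xi_0^2]$, which, applied to $a=S_n/\|v\|_{L,\tix}^2$ and $b=S_n/\|v\|_{L,\tilde{y}}^2$, yields
\[
\frac{\hat{\lambda}^c_{\tix}(n)^2}{\hat{\lambda}^c_{\tilde{y}}(n)^2}\in[\xi_0^{-2},\xi_0^2],
\]
so the lemma holds with $C=\xi_0$.

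The only minor subtlety — not really an obstacle — is the initial choice of the Lusin set: one must demand that $\cL$ be chosen so that $\xi$ (and hence $\|v\|_{L,\tix}/\|v\|$) is uniformly bounded on $\cL$, which is the ``additional property'' alluded to just before the statement. Once this is built into $\cL$, the symmetry $S_n(\tix)=S_n(\tilde{y})$ coming from the deterministic forward dynamics makes the argument essentially a one-line manipulation.
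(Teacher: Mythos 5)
Your proof is correct and follows essentially the same route as the paper's: unfold the Lyapunov norm at time $n$ into $e^{2\lambda n}\left(\|v\|^2_{L,\cdot}+S_n\right)$ and control the base Lyapunov norms via the boundedness of $\xi$ on $\cL$. The only (favorable) difference is that you observe that the forward tails satisfy $S_n(\tix)=S_n(\tilde{y})$ exactly, because forward orbits and $E^c$ are deterministic along a fiber, whereas the paper invokes the basic distortion lemma to compare the corresponding terms only up to a uniform constant; your version is slightly sharper but the substance is identical.
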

\begin{proof}
    The function $\xi$ of Lemma~\ref{lem:lyapunovriemmann} is uniformly bounded in $\cL$. Also, given any tangent vector $v\in E^c(\tix)$, there exists a unique (up to orientation) vector $\hat{v}\in E^c(\tilde{y})$ such that $\|v\|=\|\hat{v}\|$. 

    Now, write as in the proof of Lemma~\ref{lem:crescelyapunovcresce} that
    \[
    \|D\tif^n(\tix)v\|^2_{L,\tix_n}=e^{2\lambda n}\|v\|^2_{L,\tix}+\sum_{j=1}^n\|D\tif^j(\tix)v\|^2e^{-2\lambda(j-n)},
    \]
    for all $v\in E^c(\tix)$ and for all $n>0$. Obviously, the same formula holds with $\tix$ and $v$ replaced by $\tilde{y}$ and $\hat{v}$. Since $\tix,\tilde{y}\in\cL$ we have that 
    \[
    \|v\|^2_{L,\tix}\asymp_C\|v\|=\|\hat{v}\|\asymp_C\|\hat{v}\|^2_{L,\tilde{y}}
    \]
    The basic distortion lemma gives that $\|D\tilde{f}^j(\tix)v\|\asymp_C\|D\tif^j(\tilde{y})\hat{v}\|$. Up to enlarging the constant $C$ we get $\|D\tif^n(\tix)v\|_{L,\tix}\asymp_C\|D\tif^n(\tilde{y})\hat{v}\|_{L,\tilde{y}}$.  Therefore, up to enlarging $C$ once more, we have
    \[
    \hat{\lambda}^c_{\tix}(n)\asymp_C\frac{\|v\|_{L,\tix}}{\|\hat{v}\|_{L,\tilde{y}}}{\hat{\lambda}^c_{\tilde{y}}(n)}\asymp_C\hat{\lambda}^c_{\tilde{y}}(n). \qedhere
    \]
\end{proof}

The following distortion result concerns points in the same unstable dynamical ball. The proof follows from the same calculations we have done in this section, so we refrain from repeating the details.

\begin{lemma}
    \label{lem:distodynball}
There exists a constant $C=C(f)>1$ depending only on $f$ such that for every $\tilde{\mu}$ generic point $\tix\in\TTf$ and every $\tilde{a}\in\w^u_1(\tilde{x})$, if $\tilde{a}_n\in\w^u_1(\tix_n)$, for some $n\in\N$ then 
\[
\frac{\hat{\lambda}^c_{\tilde{a}}(j)}{\hat{\lambda}^c_{\tilde{x}}(j)}\asymp_C 1,\:\:\:\textrm{for every}\:\:j=1,\dots,n.
\]
\end{lemma}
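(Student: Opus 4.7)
Since $\dim E^c=1$ and, by Lemma~\ref{lem:centralcte}, $E^c$ descends to a (H\"older) continuous subbundle of $\TT$, the function $\varphi(\tix)\eqdef\log\|Df(\pi(\tix))|_{E^c}\|$ is H\"older continuous on $\TTf$, and for every $m\in\Z$ and every nonzero $v\in E^c(\tix)$ one has $\|D\tif^m(\tix)v\|=\lambda^c_{\tix}(m)\|v\|$. Substituting $\ell=k+j$ in the defining series of the Lyapunov norm yields
\[
\hat{\lambda}^c_{\tix}(j)^2=\frac{\sum_{k\leq 0}\lambda^c_{\tix}(k+j)^2\,e^{-2\lambda k}}{\sum_{k\leq 0}\lambda^c_{\tix}(k)^2\,e^{-2\lambda k}},
\]
and the same identity holds with $\tix$ replaced by $\tilde{a}$. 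Hence it suffices to produce a constant $C_0=C_0(f)>1$ such that $\lambda^c_{\tix}(m)\asymp_{C_0}\lambda^c_{\tilde{a}}(m)$ for every $m\leq j$, for then the numerators and denominators above are comparable up to a multiplicative factor $C_0^2$, and the lemma follows with $C=C_0^2$.

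For $0\leq m\leq j\leq n$, the hypothesis $\tilde{a}_n\in\w^u_1(\tix_n)$ combined with the forward expansion along unstable manifolds implies that $\tilde{a}_i\in\w^u_1(\tix_i)$ for every $0\leq i\leq n$ (the unstable distance is monotone increasing in $i$ and bounded above by~$1$ at time~$n$). Applying Lemma~\ref{lem:bounded-dist}(2) to the H\"older function $\varphi$ along the orbit segment $\{\tix_i,\tilde{a}_i\}_{i=0}^{m-1}$ gives
\[
\bigl|\log\lambda^c_{\tix}(m)-\log\lambda^c_{\tilde{a}}(m)\bigr|=\left|\sum_{i=0}^{m-1}\bigl(\varphi(\tix_i)-\varphi(\tilde{a}_i)\bigr)\right|\leq C_1(f).
\]
For $m=-\ell<0$, the fact that $\tilde{a}\in\w^u(\tix)$ forces $d(\tif^{-i}(\tix),\tif^{-i}(\tilde{a}))\to 0$ exponentially fast as $i\to+\infty$, so Lemma~\ref{lem:bounded-dist}(1) applied to $\varphi$ produces
\[
\bigl|\log\lambda^c_{\tix}(-\ell)-\log\lambda^c_{\tilde{a}}(-\ell)\bigr|\leq C_2(f),
\]
uniformly in $\ell\geq 0$. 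Setting $C_0\eqdef e^{\max\{C_1,C_2\}}$ gives the desired comparison $\lambda^c_{\tix}(m)\asymp_{C_0}\lambda^c_{\tilde{a}}(m)$ for every $m\leq j$, and plugging back into the ratio formulas above concludes the proof.

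The computation is essentially a replay of the ones done in the proof of Lemma~\ref{lem:saturalyapunov} and Lemma~\ref{lem:distorlyp}. The only point that requires attention, and that explains why the constant here depends only on $f$ rather than on a Lusin set as in Lemmas~\ref{lem:distorlyp} and~\ref{lem:distlusinnasfibras}, is that both base points lie on the \emph{same} unstable manifold: the forward proximity along $\w^u$ up to time~$n$ is given for free by the hypothesis, while the backward contraction of $\w^u$ is uniform, so neither H\"older distortion input depends on any Pesin regular set.
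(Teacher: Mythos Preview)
Your proof is correct and follows precisely the approach the paper indicates: it says ``the proof follows from the same calculations we have done in this section, so we refrain from repeating the details,'' and what you have written is exactly those calculations, mirroring the manipulations in Lemmas~\ref{lem:saturalyapunov} and~\ref{lem:distorlyp} while using the uniform unstable distortion of Lemma~\ref{lem:bounded-dist} in place of the Pesin-set-dependent center estimates. Your closing remark correctly identifies why the constant here depends only on $f$.
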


\section{Normal forms and leaf-wise quotient measures}
\label{sec:normal}

This section collects a number of tools we use in our argument. To motivate these tools, recall that our goal is to prove that, under the hypotheses of Theorem \ref{teo:main.rigidez}, a given u-Gibbs measure is absolutely continuous with respect to $\operatorname{Leb}_{\TT}$. Since, by definition, a u-Gibbs measure is (in some sense) homogeneous along unstable leaves, we claim that it is enough to show that the measure is also homogeneous along center leaves. This claim is made precise by the following result, which we prove in Section \ref{sec:leafwise}.

\begin{lemma}
	\label{lem:temqueserlebesgue}
There is a family of locally finite Radon measures $\{\hat{\nu}^c_{\tix}\}_{\tix \in \mathbb{T}^2_f}$ on $\mathbb{R}$ such that, if $\hat{\nu}^c_{\tix}$ is equivalent to Lebesgue for $\tilde{\mu}$-almost every $\tix$, then $\mu$ is absolutely continuous with respect to Lebesgue.
\end{lemma}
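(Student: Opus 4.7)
The plan is to construct $\hat{\nu}^c_{\tix}$ as a normalized push-forward, under normal form coordinates on center Pesin manifolds, of the conditional measures of $\tm$ along a center-subordinate measurable partition; then to deduce from the equivalence $\hat{\nu}^c_{\tix}\sim\Leb_\R$ together with the $u$-Gibbs property that $\mu$ has the SRB property in the sense of Definition~\ref{def:srb}; and finally to invoke Lemma~\ref{lem:srbendo} to conclude $\mu\ll\Leb_{\TT}$.

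For the construction, I would fix a $cu$-subordinate measurable partition $\cP^{cu}$ of $\TTf$ (which exists by the Ledrappier--Strelcyn type construction cited before Definition~\ref{def:subord}) and refine it to a partition $\cP^{c}$ whose atoms are relatively open arcs inside the Pesin center manifolds $\tW^c(\tix)$. Disintegrating $\tm$ along $\cP^{c}$ produces conditional measures supported on pieces of $\tW^c(\tix)$; composing with the normal form chart $\phi_{\tix}:\tW^c(\tix)\to\R$ furnished by Section~\ref{sec:normal} yields a measure on an open subset of $\R$. I then take $\hat{\nu}^c_{\tix}$ to be the locally finite Radon measure on $\R$ obtained by extending this push-forward using equivariance of $\phi_{\tix}$ under the center dynamics, and normalizing so that the result is independent of the refinement up to a multiplicative constant. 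The careful verification that this family is well-defined will be carried out in Section~\ref{sec:leafwise}.

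Now assume $\hat{\nu}^c_{\tix}\sim\Leb_\R$ for $\tm$-almost every $\tix$. Since each $\phi_{\tix}$ is a $C^1$ diffeomorphism with nowhere vanishing derivative, this translates into the conditionals of $\tm$ along $\cP^{c}$ being absolutely continuous with respect to arc length on $\tW^c(\tix)$. On the other hand, the $u$-Gibbs hypothesis gives that the conditionals of $\tm$ along any $u$-subordinate partition are absolutely continuous with respect to $\Leb^u$. Restricting to a Lusin set on which the Pesin center manifolds have uniform size and the distortion estimates of Section~\ref{sec:pesin} are uniform, the Pesin center-unstable manifold $\tW^{cu}(\tix)$ acquires a local product structure $\tW^c\times\w^u$ in which both fiber families of $\tm^{\cP^{cu}}_{\tix}$ are absolutely continuous with respect to Lebesgue. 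A Fubini-type argument, together with the absolute continuity of unstable holonomy between nearby center transversals, then shows that $\pi_{*}\tm^{\cP^{cu}}_{\tix}\ll\Leb_{\TT}$, which is precisely the SRB property. Lemma~\ref{lem:srbendo} then gives $\mu\ll\Leb_{\TT}$.

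The main obstacle I expect lies in the coherent construction of the family $\{\hat{\nu}^c_{\tix}\}$: the measures must be defined canonically up to a global scaling and must not depend on the auxiliary partition, which is exactly why the normal forms of Section~\ref{sec:normal} are essential, since without an affine identification of each center leaf with $\R$ the notion of ``Lebesgue on a center leaf'' is chart-dependent and the hypothesis $\hat{\nu}^c_{\tix}\sim\Leb_\R$ would be meaningless. A secondary difficulty is the absolute continuity of unstable holonomy between Pesin center transversals in the non-uniformly hyperbolic setting, which forces the Fubini step to be performed on Lusin sets where the estimates of Lemmas~\ref{lem:distorlusin} and \ref{lem:distorlyp} remain uniform.
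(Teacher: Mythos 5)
Your overall skeleton matches the paper's: build $\hat{\nu}^c_{\tix}$ from normal forms and a measurable partition, show that equivalence to $\Leb_\R$ forces the SRB property of Definition~\ref{def:srb}, and close with Lemma~\ref{lem:srbendo}. But the object you construct is not the one the paper uses, and the difference is not cosmetic. The paper's $\hat{\nu}^c_{\tix}$ is a \emph{quotient} (transverse) measure: one disintegrates the $cu$-conditional $\tm^{cu}_{n,\tix}$ along the unstable plaques $\cP^u_n(\tilde y)=\w^u(\tilde y)\cap\cP^{cu}_n(\tix)$ and pushes the resulting quotient onto the center transversal, i.e. $\hat{\nu}^c_{\tix}(B)=\nu^{cu}_{n,\tix}(I\times B)/\nu^{cu}_{n,\tix}(I\times I_1)$ in the chart $\Phi_{\tix}$. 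You instead take the \emph{conditional} measures of $\tm$ along a partition into center arcs. These two families are different measures in general (they happen to be mutually absolutely continuous here only because the $u$-conditionals have positive density, by the $u$-Gibbs property), and the quotient version is the one that enjoys the basic moves of Lemma~\ref{lem:basicmoves} — in particular the invariance under sliding the base point along $\w^u$, which is the ``starting invariance'' that the whole Eskin--Mirzakhani scheme exploits. A family of center conditionals would not satisfy that invariance a priori, so even though the lemma is phrased existentially, your family would leave the rest of the proof of Theorem~\ref{teo:main.rigidez} without its main object.

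Concerning the implication itself, your Fubini step has a soft spot. With the quotient measure, the paper gets absolute continuity of $\nu^{cu}_{n,\tix}$ for free from the identity $\nu^{cu}_{n,\tix}(B)=\int\nu^u_{n,s}(B)\,d\nu^c_{n,\tix}(s)$ combined with Lemma~\ref{lem:nu-horiz-eh-leb}, which says $\nu^u_{n,s}=\gamma^u_n(s)\dLeb$ with density constant along each plaque (this constancy is a consequence of $u$-Gibbs together with the affine unstable normal forms, not of $u$-Gibbs alone). With center conditionals you must additionally argue that absolute continuity of the conditionals in the center direction forces absolute continuity of the transverse quotient $\nu^c_{n,\tix}$; this does follow from the product structure above, but it is precisely the step you wave at with ``a Fubini-type argument''. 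On the other hand, your worry about absolute continuity of unstable holonomy between Pesin center transversals is a non-issue: everything takes place inside a single center-unstable leaf, where Theorem~\ref{teo:formanormal2} provides a $C^1$ chart straightening the unstable plaques, so the holonomies are $C^1$ and no Pesin-theoretic absolute continuity theorem is needed. If you switch your construction to the quotient measure and record the constancy of the unstable densities, your argument becomes the paper's.
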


This essentially translate our main problem into a one-dimensional question and it is the main goal of this section. In order to prove this lemma, we introduce a crucial tool: normal forms. 

All the arguments in this section follow closely \cite[Sections 6--7]{ALOS}, so we only give a brief account for the sake of completeness.

\subsection{One-dimensional normal forms}
The result below, which comes from \cite{KalininKatok}, provides us with invariant affine structures along Pesin center manifolds and unstable manifolds. 

\begin{proposition}
\label{prop:formanormal}
For $\tilde{\mu}$-almost every point $\tilde{x}\in \mathbb{T}^2_f$, there are diffeomorphisms $\mathcal{R}^u_{\tix}: \w^u(\tix) \to \R$ and $\cR^c_{\tix}:\tW^c(\tix)\to\R$, which are both as regular as $f$, such that 
\begin{enumerate}
	\item $\mathcal{R}^{*}_{\tix}(\tix)=0$ and $D\mathcal{R}^{*}_{\tix}(\tix)v^{*}=1$, $* \in \{c,u\}$, where $v^{*}\in E^{*}(\tix)$ is a unity vector.
	\item Let $\Lambda^{*}_{\tix}:\R\to\R$ denote the linear map $\Lambda^{*}_{\tix}(t)=\lambda^{*}_{\tix}t$. Considering $\Phi^{*}_{\tix}=(\mathcal{R}^{*}_{\tix})^{-1}$, we have
	\[
	\mathcal{R}^{*}_{\tilde{f}(\tix)}\circ\tilde{f}\circ\Phi^{*}_{\tix}=\Lambda^{*}_{\tix}.
	\]
        \item $\mathcal{R}^u_{\tix}$ depends continuously on $\tix$, and $\mathcal{R}^c_{\tix}$ depends continuously on $\tix$ on a Pesin set or along the fiber $\Sigma(\tix)$.
        \item For $\tilde{y} \in \w^u(\tix)$ or $\tilde{y}\in \tW^c(\tix)$ we have that
            $$\mathcal{R}^{*}_{\tix, \tilde{y}} \eqdef \mathcal{R}^{*}_{\tilde{y}} \circ (\mathcal{R}^{*}_{\tix})^ {-1}$$
        is affine and satisfies $\mathcal{R}^{*}_{\tix, \tilde{y}}(t) = \rho^{*}_{\tilde{y}}(\tilde{x})t +  \mathcal{R}^{*}_{\tilde{y}}(\tix)$, where $\rho^{*}_{\tilde{y}}$ is a continuous function.
\end{enumerate}
\end{proposition}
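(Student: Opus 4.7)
\medskip

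\noindent\textbf{Proof plan.} The result is essentially the Kalinin--Katok construction of invariant affine structures along one-dimensional strong stable/unstable bundles, adapted to the non-invertible partially hyperbolic setting and to Pesin center manifolds. The plan is to build $\mathcal{R}^u_{\tix}$ and $\mathcal{R}^c_{\tix}$ as renormalized limits of local charts pulled back along the dynamics, so that the intertwining with $\Lambda^*_{\tix}$ is automatic, and then to read off the remaining properties from the construction. I will set things up in a unified way, treating $*\in\{u,c\}$ simultaneously whenever possible.

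First, for $\tilde{\mu}$-a.e. $\tix$ and for each $n\geq 0$, fix a $C^r$ chart $\phi_{-n}$ identifying a neighborhood of $\tif^{-n}(\tix)$ inside $\w^u(\tif^{-n}(\tix))$ (resp.\ $\tW^c(\tif^{-n}(\tix))$) with an interval in $\R$, normalized so that $\phi_{-n}(\tif^{-n}(\tix))=0$ and its differential sends a unit vector in $E^*$ to $1$. Define
\[
\mathcal{R}^{*}_{\tix,n}(\tilde{y})\eqdef\frac{1}{\lambda^{*}_{\tif^{-n}(\tix)}(n)}\,\phi_{-n}(\tif^{-n}(\tilde{y})).
\]
For the unstable direction, the map $\tif^{-1}$ restricted to $\w^u$ is a uniform contraction, so using the $C^{1+\alpha}$ distortion control along $\w^u$ from Lemma~\ref{lem:bounded-dist} together with the standard Sternberg argument in one dimension, one shows that $\mathcal{R}^u_{\tix,n}$ converges in the $C^r$ topology, on compact pieces of $\w^u(\tix)$, to a $C^r$ diffeomorphism $\mathcal{R}^u_{\tix}\colon\w^u(\tix)\to\R$; the normalization at $\tix$ is immediate from the choice of $\phi_{-n}$. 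For the center direction, the key observation is that Lemma~\ref{lem:crescelyapunovcresce} gives uniform backward contraction rate $e^{-\lambda}$ in the Lyapunov norm along $\tW^c$; combined with the distortion estimate of Lemma~\ref{lem:distorlyp}, this yields the analogous convergence and defines $\mathcal{R}^c_{\tix}\colon\tW^c(\tix)\to\R$. The relation $\mathcal{R}^{*}_{\tix,n+1}(\tilde{y})=\lambda^{*}_{\tix}{}^{-1}\mathcal{R}^{*}_{\tif(\tix),n}(\tif(\tilde{y}))$ holds at each finite stage of the construction and passes to the limit, giving item (2).

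For item (4), observe that both sides of the putative identity $\mathcal{R}^{*}_{\tix,\tilde{y}}\circ\Lambda^{*}_{\tix}=\Lambda^{*}_{\tilde{y}}\circ\mathcal{R}^{*}_{\tix,\tilde{y}}$ hold by construction, so $\mathcal{R}^{*}_{\tix,\tilde{y}}$ is a $C^r$ diffeomorphism of $\R$ conjugating multiplication by $\lambda^*_{\tix}$ to multiplication by $\lambda^*_{\tilde{y}}$; a one-dimensional uniqueness argument (differentiating the conjugacy relation at the origin and iterating) forces $\mathcal{R}^*_{\tix,\tilde{y}}$ to be affine with slope $\rho^{*}_{\tilde{y}}(\tix)=D\mathcal{R}^{*}_{\tilde{y}}(\tix)$ (in the appropriate local unit vector), and the explicit intercept comes from the normalization at $\tix$. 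For item (3), continuity of $\mathcal{R}^u_{\tix}$ in $\tix$ follows from continuity of $\tix\mapsto\w^u(\tix)$, $E^u(\tix)$, and $\lambda^u_{\tix}$, together with the uniform rate of convergence in the construction. Continuity of $\mathcal{R}^c_{\tix}$ on a Pesin set is obtained similarly, once one restricts to where the measurable constants (Lyapunov norm, Pesin manifold size $r(\tix)$, distortion constants) are uniform. Continuity along a fiber $\Sigma(\tix)$ uses that $E^c$ and the ambient center leaf $\w^c(x)$ depend only on $x=\pi(\tix)$ (Lemma~\ref{lem:centralcte}, Proposition~\ref{prop:notannulus}), combined with Lemma~\ref{lem:distlusinnasfibras} to compare the rescaling factors.

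The main obstacle I expect is Step~4 in the center direction, namely the continuity along fibers $\Sigma(\tix)$ of Lusin points. The global Pesin center manifolds $\tW^c(\tix)$ lie in the same ambient center leaf, but different past orbits produce different Lyapunov norms and hence different renormalizations in the defining limit. Lemma~\ref{lem:distlusinnasfibras} is precisely the tool that compensates for this, provided both base points are taken in the Lusin set, which explains why the statement only asks for continuity along $\Sigma(\tix)$ on a Pesin set. Beyond this point, the argument is a routine adaptation of \cite{KalininKatok} to the inverse-limit setting, using the Lyapunov-norm framework developed in Section~\ref{sec:pesin} as the substitute for uniform hyperbolicity along the center.
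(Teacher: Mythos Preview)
Your construction via renormalized charts is essentially equivalent to the paper's explicit integral formula: the paper defines $\rho^{*}_{\tix}(\tilde{y})=\lim_{n\to\infty}\lambda^{*}_{\tilde{y}}(-n)/\lambda^{*}_{\tix}(-n)$ and sets $\mathcal{R}^{*}_{\tix}(\tilde{y})=\int_{\tix}^{\tilde{y}}\rho^{*}_{\tix}(\tilde{z})\,d\tilde{z}$, which is simply the derivative of your limit map. So the strategies agree, but two points in your write-up need correction.

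\textbf{Item (4): the conjugacy you wrote is false.} From item (2) one gets
\[
\mathcal{R}^{*}_{\tif(\tix),\tif(\tilde{y})}\circ\Lambda^{*}_{\tix}=\Lambda^{*}_{\tilde{y}}\circ\mathcal{R}^{*}_{\tix,\tilde{y}},
\]
with the transition map at the \emph{image} pair on the left, not $\mathcal{R}^{*}_{\tix,\tilde{y}}$ itself. Your identity cannot hold: plugging $t=0$ into $h(\lambda^{*}_{\tix}t)=\lambda^{*}_{\tilde{y}}h(t)$ forces $h(0)=0$, but $\mathcal{R}^{*}_{\tix,\tilde{y}}(0)=\mathcal{R}^{*}_{\tilde{y}}(\tix)\neq 0$ generically; and even granting $h(0)=0$, differentiating at $0$ would force $\lambda^{*}_{\tix}=\lambda^{*}_{\tilde{y}}$, which is false. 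The paper obtains affineness not from a self-conjugacy but directly from the cocycle relation $\rho^{*}_{\tix}(\tilde{z})=\rho^{*}_{\tilde{y}}(\tilde{z})\,\rho^{*}_{\tix}(\tilde{y})$, which gives $\mathcal{R}^{*}_{\tilde{y}}(\tilde{z})=\rho^{*}_{\tilde{y}}(\tix)\bigl(\mathcal{R}^{*}_{\tix}(\tilde{z})-\mathcal{R}^{*}_{\tix}(\tilde{y})\bigr)$ by a one-line change of variables in the integral. If you prefer to stay with the renormalized-chart viewpoint, the correct route is to iterate the \emph{orbitwise} relation above backwards and show the second derivative of $\mathcal{R}^{*}_{\tix,\tilde{y}}$ vanishes; but the density formulation makes this step trivial.

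\textbf{Fiber continuity is simpler than you suggest.} You invoke Lemma~\ref{lem:distlusinnasfibras}, which compares Lyapunov norms and requires both points in a Lusin set. The statement, however, asserts continuity of $\mathcal{R}^c_{\tix}$ along the \emph{whole} fiber $\Sigma(\tix)$, and the paper gets this without any Lusin restriction: if $\tilde{y}\in\Sigma(\tix)$ and $d(\tix,\tilde{y})=2^{-k}$, then $x_{-i}=y_{-i}$ for $0\le i<k$, hence $\lambda^c_{\tix_{-i}}=\lambda^c_{\tilde{y}_{-i}}$ for those $i$ (recall $E^c$ depends only on the projection). Thus the first $k$ factors in the product defining $\rho^c$ coincide exactly, and continuity follows from convergence of the tail. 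No Lyapunov-norm machinery is needed here.
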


\begin{proof}
We follow the arguments of \cite{KalininKatok} using the differential structure of $f$ pulled back to the inverse limit. The proof is for ${*} = c$, with the case of the unstable direction being simpler. We define $\rho^c_{\tilde{x}}: \tW^c(\tix) \to (0, \infty)$ as

\begin{equation}
	\label{eq:rho}
	\rho^c_{\tilde{x}}(\tilde{y}) \eqdef \lim_{n \to \infty} \dfrac{\left\Vert \restr{D\tilde{f}^{-n}(\tilde{y})}{E^c(\tilde{y})}\right\Vert}{\left\Vert \restr{D\tilde{f}^{-n}(\tilde{x})}{E^c(\tilde{x})}\right\Vert} = \prod_{i=0}^\infty \dfrac{\lambda^c_{y_{-i}}}{\lambda^c_{x_{-i}}},
\end{equation}
where $\lambda^c_{\tilde{x}} = \Vert Df(x)|_{E^c_{\tilde{x}}} \Vert$ is our notation for the norm of the derivative restricted to the center direction. As in \cite{KalininKatok}, $\rho^c_{\tilde{x}}(\cdot)$ is well defined and $\alpha$-H\"older continuous on each Pesin manifold $\tW^c(\tix)$ if $f$ is $C^{1+\alpha}$, or $C^{k-1}$ if $f$ is $C^k$, $k \geq 2$. Additionally, for $\tilde{y}, \tilde{z} \in \tW^c(\tix)$,
\begin{equation}
    \label{eq:1cancela}
    \rho^c_{\tilde{x}}(\tilde{z}) = \rho^c_{\tilde{y}}(\tilde{z}) \rho^c_{\tilde{x}}(\tilde{y}).
\end{equation}

Consider $\mathcal{R}^c_{\tilde{x}}: \tW^c(\tix) \to \mathbb{R}$ defined as
\begin{equation}
	\label{eq:R}
	\mathcal{R}^c_{\tilde{x}}(\tilde{y}) \eqdef \int_{\tix}^{\tilde{y}} \rho^c_{\tilde{x}}(\tilde{z}) d\tilde{z}.
\end{equation}

All its properties follow as in \cite{KalininKatok}. To extend this function to the one-dimensional center leaf $\w^c(\tix)$, given $\tilde{y} \in \w^c(\tix)$, we take $n \in \mathbb{N}$ such that $\tif^{-n}(\tilde{y}) \in \tW^c(\tif^{-n}(\tix))$. Then we define
$$\mathcal{R}^c_{\tilde{x}}(\tilde{y}) = D\tilde{f}^n \circ \mathcal{R}^c_{\tilde{f}^{-n}(\tix)} \circ \tilde{f}^{-n}(\tilde{y}).$$

$\mathcal{R}^c_{\tilde{x}}(\cdot)$ is as regular as $f$. Its restriction to a fixed-size ball in the Pesin center leaf $\tW^c(\tix)$ depends continuously on $\tix$ on a Pesin set. Additionally, if $\tilde{y} \in \Sigma(\tilde{x})$, we have that $x_{-i} = y_{-i}$ for all $i \leq k$, with $k \in \mathbb{N}$, and $k \to \infty$ as $\tilde{d}(\tix, \tilde{y}) \to 0$. This implies that $\tilde{x} \mapsto \mathcal{R}^c_{\tilde{x}}(\cdot)$ is continuous on each fiber $\Sigma$.
\end{proof}

\begin{remark}
    \label{rem:R-dist}
    $\mathcal{R}^u_{\tix}$ defines in $\w^u(\tix)$ a distance $d^u_{\mathcal{R}}(\tix, \tilde{y}) = \mathcal{R}^u_{\tix} (\tilde{y})$ ($\tilde{y} \in \w^u(\tix)$) such that, for all $C > 0$ there is $N > 0$ such that $d^u(\tilde{x}, \tilde{y}) < C$ implies
    $$\dfrac{1}{N} d^u_{\mathcal{R}}(\tix, \tilde{y}) \leq d^u(\tilde{x}, \tilde{y}) \leq N d^{u}_{\mathcal{R}}(\tix, \tilde{y}),$$
    where $d^u$ is the Riemannian distance along the leaf. In a similar way, $\cR^c_{\tix}$ defines a measurably varying distance along the Pesin global manifold $\tW^c(\tix)$ which is uniformly comparable to the Riemannian length for $\tix$ in a Lusin set $\cL$.
\end{remark}

\subsection{Two dimensional normal forms on Pesin center-unstable manifolds}

Now we define a new two-dimensional structure for the center unstable Pesin manifold $\tW^{cu}(\tix)$. This structure is not affine in general, but it is affine for points in the same center leaf, as we made precise in Theorem \ref{teo:formanormal2}. This result follows \cite{ALOS} and use Proposition \ref{prop:formanormal} and the fact that $E^u(\tilde{y})$ is $C^1$ inside $\w^{cu}(\tix)$.

Consider $\tix \in \mathbb{T}^2_f$, $\tilde{y} = \Phi^c_{\tix}(s) \in W^{c}(\tix)$ and
\begin{equation}
    \label{eq:beta}
    \beta_x(s) \eqdef \rho^u_{\tilde{y}}(\tix) =  \rho^u_{\Phi^c_{\tix}(s)}(\tix).
\end{equation}

\begin{theorem}
    \label{teo:formanormal2}
    For $\tilde{\mu}$-almost every point $\tilde{x}\in \mathbb{T}^2_f$, the function $\Phi_{\tix}: \R^2 \to \tW^{cu}_{\tix}$, defined as
    \begin{equation}
        \label{eq:Phi}
        \Phi_{\tix}(t,s) \eqdef \Phi^u_{\tilde{y}}(\rho^u_{\tilde{y}}(\tix) t)= \Phi^u_{\Phi^c_{\tix}(s)}(\beta_{\tix}(s) t),
    \end{equation}
    where $\tilde{y} = \Phi^c_{\tix}(s)$, is a $C^1$ diffeomorphism. Moreover, for all $t,s \in \R$
    \begin{enumerate}
    \item $\tilde{f} \circ \Phi_{\tix}(t,s) = \Phi_{\tilde{f}(\tix)}(\lambda^u_{\tix}t,\lambda^c_{\tix}s)$;
        \item $\Phi_{\tix}(\R \times \{s\}) = \w^u(\tilde{y})$;
        \item $\Phi_{\tix}(\{0\} \times \R) = \tW^c(\tilde{x})$.
    \end{enumerate}
        Additionally, by defining $\mathcal{R}_{\tix} \eqdef (\Phi_{\tix})^{-1}$, we have that the map
        $$\mathcal{R}_{\tilde{x},\tilde{y}} \eqdef \mathcal{R}_{\tilde{y}} \circ \Phi_{\tilde{x}}: \R^2 \to \R^2$$ has the following properties
    \begin{enumerate}
    \setcounter{enumi}{3}
        \item It can be written as $\mathcal{R}_{\tilde{x},\tilde{y}}(t,s) = (h^1_{\tilde{x},\tilde{y}}(t,s), h^2_{\tilde{x},\tilde{y}}(s))$ (the second coordinate does not depend on $t$);
        \item if $\tilde{y} \in \tW^c(\tilde{x})$, then the first coordinate does not depend on $s$ and we have that
        \begin{align*}
            h^1_{\tilde{x},\tilde{y}}(t) &= \rho^u_{\tilde{y}}(\tilde{x})t,\\
            h^2_{\tilde{x},\tilde{y}}(s) &= \rho^c_{\tilde{y}}(\tilde{x})s + \mathcal{R}^c_{\tilde{y}}(\tilde{x});
        \end{align*}
        \item if $\tilde{x}' \in \w^u(\tilde{x})$, then 
        \begin{align*}
            h^1_{\tilde{x},\tilde{x}'}(t,s) &= \rho^u_{\tilde{x}}(\tilde{x}')t + a(\tilde{x}, \tilde{x}', s),\\
            h^2_{\tilde{x},\tilde{x}'}(s) &= \rho^c_{\tilde{x}}(\tilde{x}')s,
        \end{align*}
        where $s\mapsto a(\tilde{x}, \tilde{x}', s)$ is a $C^1$ function.
    \end{enumerate}
\end{theorem}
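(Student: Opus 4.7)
My plan is to verify each claim in turn, exploiting the one-dimensional normal forms of Proposition~\ref{prop:formanormal} together with the smoothness of the unstable lamination inside the center-unstable leaf $\w^{cu}(\tix)$. First, I would establish that $\Phi_{\tix}$ is a $C^1$ diffeomorphism: the components $\Phi^u_{\tilde{y}}$ and $\Phi^c_{\tix}$ are smooth by Proposition~\ref{prop:formanormal}, and the assembly $\Phi_{\tix}(t,s) = \Phi^u_{\Phi^c_{\tix}(s)}(\beta_{\tix}(s) t)$ is $C^1$ in $(t,s)$ provided the assignment $s \mapsto \Phi^u_{\Phi^c_{\tix}(s)}$ is $C^1$. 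This requires the unstable lamination to be $C^1$ inside $\w^{cu}(\tix)$, which follows from invariant manifold theory applied to $\tif$ restricted to the smooth leaf $\w^{cu}(\tix)$, using the $C^2$ hypothesis on $f$. Properties (2) and (3) are then immediate: setting $t=0$ gives $\Phi_{\tix}(0,s) = \Phi^u_{\Phi^c_{\tix}(s)}(0) = \Phi^c_{\tix}(s)$, and fixing $s$ gives $\Phi_{\tix}(\R \times \{s\}) = \w^u(\Phi^c_{\tix}(s))$.

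For the equivariance property (1), I would compute both sides directly. Writing $\tilde{y} = \Phi^c_{\tix}(s)$, the left-hand side becomes $\tif \circ \Phi^u_{\tilde{y}}(\beta_{\tix}(s) t) = \Phi^u_{\tif(\tilde{y})}(\lambda^u_{\tilde{y}} \beta_{\tix}(s) t)$ by the equivariance of $\Phi^u$, and $\tif(\tilde{y}) = \Phi^c_{\tif(\tix)}(\lambda^c_{\tix} s)$ by the equivariance of $\Phi^c$. The right-hand side is $\Phi^u_{\Phi^c_{\tif(\tix)}(\lambda^c_{\tix} s)}(\beta_{\tif(\tix)}(\lambda^c_{\tix} s) \lambda^u_{\tix} t)$. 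Equality reduces to the cocycle identity $\lambda^u_{\tilde{y}} \beta_{\tix}(s) = \beta_{\tif(\tix)}(\lambda^c_{\tix} s) \lambda^u_{\tix}$, which follows from the telescoping product formula for $\rho^u$ analogous to \eqref{eq:rho}.

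For the triangular form (4), the key observation is that the $s$-coordinate identifies which unstable leaf in $\w^{cu}$ the point lies in, and this identification is canonical (independent of the base point). Since $\Phi_{\tix}(\R \times \{s\}) = \w^u(\Phi^c_{\tix}(s))$ is a single unstable leaf, its image under $\cR_{\tilde{y}}$ lies in a single horizontal line, so the second coordinate $h^2_{\tix,\tilde{y}}$ depends only on $s$. Properties (5) and (6) then follow from direct computation combining this with the affine behavior of $\cR^c$ and $\cR^u$ on a single center or unstable leaf: when $\tilde{y} \in \tW^c(\tix)$, the center transition is the affine map of Proposition~\ref{prop:formanormal}(4), and the unstable normalization collapses via the ratio $\beta_{\tilde{y}}(s')/\beta_{\tix}(s)$ to give a pure dilation in $t$; when $\tix' \in \w^u(\tix)$, the roles switch and the $s$-dependent term $a(\tix,\tix',s)$ appears, encoding how the unstable coordinate shifts as one slides off the common leaf $\w^u(\tix)$, with $C^1$ regularity inherited from that of the lamination.

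The step I expect to be the main obstacle is justifying the $C^1$ regularity of $\Phi_{\tix}$, because the one-dimensional normal forms $\Phi^u$ and $\Phi^c$ are \emph{a priori} only smooth inside individual leaves, and gluing them into a two-dimensional diffeomorphism hinges on the $C^1$ dependence of the unstable foliation on its transverse variable inside $\w^{cu}(\tix)$. A secondary subtlety is the correct identification of $a(\tix,\tix',s)$ in (6), requiring a careful bookkeeping of the normalizing factor $\beta$ when the reference points are not on the same center leaf; here the computation parallels \cite{ALOS}, and the only novelty is that we perform it on the fiber bundle $\TTf$ rather than on a smooth three-manifold.
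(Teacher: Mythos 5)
Your proposal is correct and follows essentially the same route as the paper, which itself only sketches the argument by citing \cite{ALOS}: the two ingredients you isolate --- the $C^1$ regularity of the unstable lamination inside $\w^{cu}(\tix)$ (making $(t,s)\mapsto\Phi^u_{\Phi^c_{\tix}(s)}(t)$ a $C^1$ diffeomorphism) and the reparametrization identity $\beta_{\tif(\tix)}(\lambda^c_{\tix}s)=\bigl(\lambda^u_{\Phi^c_{\tix}(s)}/\lambda^u_{\tix}\bigr)\beta_{\tix}(s)$ to which equivariance reduces --- are exactly Lemmas 6.6 and 6.7 of \cite{ALOS} invoked in the paper's proof, and the remaining properties are the same direct computations. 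The only point the paper adds that you do not mention is that, without uniform center expansion, the backward contraction constants are uniform only on Lusin sets, so $\tix\mapsto\beta_{\tix}$ (and hence $\cR_{\tix}$) is continuous only on such sets; this does not affect the statement for a fixed $\tix$.
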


\begin{proof}
We follow closely the argument in \cite{ALOS}, pointing out only the main steps. The core argument is to show that $\Phi_{\tix}$ is a $C^1$ diffeomorphism satisfying (1). This has two main parts. The first (see Lemma 6.7 of \cite{ALOS}) is to show that $\beta_{\tix}$ is a $C^1$ function satisfying the reparametrization rule
\[
\beta_{\tif(x)}(\lambda^c_{\tix}s)=\frac{\lambda^u_{\Phi^c_{\tix}(s)}}{\lambda^u_{\tix}}\beta_{\tix}(s).
\]
For this, the backwards exponential contraction under the action of $\tif$ comes into play. The only difference from \cite{ALOS} is that as this contraction has uniform constants only inside a Lusin set, thus we obtain that the assignment $\tix\mapsto\beta_{\tix}$ is only continuous on a Lusin set. 

The second step (see Lemma 6.6 of \cite{ALOS}) is the proof that $(t,s)\mapsto\Phi^u_{\Phi^c_{\tix}(s)}(t)$ is a $C^1$ diffeomorphisms onto $\tW^{cu}(\tix)$. Again, the only difference from \cite{ALOS} is that the backward contraction has uniform constants only inside a Lusin set. The proof of all other announced properties, although long, is identical. 
\end{proof}

\subsection{Leaf-wise quotient measures }
\label{sec:leafwise}

Following previous works \cite{BenoistQuintI, BRH, Katz, ALOS}, we construct a family of locally finite Borel measures $\hat{\nu}^c_{\tix}$ on the real line which are one of the main objects of our paper. This construction starts with the center-unstable manifolds of the lifted map $\tilde{f}$.

Consider $\mathcal{P}^{cu}_0$ a measurable partition of $\mathbb{T}^2_f$ subordinated to $\tW^{cu}$, $\mathcal{P}^{cu}_n \eqdef \tilde{f}^n(\mathcal{P}^{cu}_0)$ and $\mathcal{Q}^{cu}_n({\tix}) \eqdef \mathcal{R}_{\tix}(\mathcal{P}^{cu}_n({\tix})) \subseteq \R^2$. We have, by Rokhlin disintegration theorem, that there are conditional probability measures $\{ \tilde{\mu}^{cu}_{n, \tix} \}_{\tix \in \mathbb{T}^2_f}$ supported at the atoms of $\mathcal{P}^{cu}_n$. We also have that $$\nu^{cu}_{n, \tix} \eqdef (\mathcal{R}_{\tilde{x}})_*\tilde{\mu}^{cu}_{n, \tix} = \tilde{\mu}^{cu}_{n, \tix} \circ \Phi_{\tix}$$ is a measure supported at $\mathcal{Q}^{cu}_n(\tix)$.

Note that, since the ergodic system $(\tif,\tm)$ is non-uniformly expanding for a $\tm$ typical point $\tix$, $\mathcal{P}^{cu}_n(\tix) \subseteq \mathcal{P}^{cu}_m(\tix)$ if $m > n$. Additionally, for any Borel set $A \subseteq \mathbb{T}^2_f$,
$$\tilde{\mu}^{cu}_{n, \tix}(A \cap \mathcal{P}^{cu}_{n}(\tix)) = \dfrac{\tilde{\mu}^{cu}_{m, \tix}(A \cap \mathcal{P}^{cu}_{n}(\tix))}{\tilde{\mu}^{cu}_{m, \tix}(\mathcal{P}^{cu}_{n}(\tix))} .$$

Consider now, for every $\tilde{y} \in \mathcal{P}^{cu}_n(\tix)$, 
$$\mathcal{P}^u_n(\tilde{y}) \eqdef \w^u(\tilde{y}) \cap \mathcal{P}^{cu}_n(\tix),$$
which gives a measurable partition subordinated to $\w^u$ satisfying $\mathcal{P}^{u}_n = \tilde{f}^n(\mathcal{P}^{u}_0)$. Again, by Rokhlin disintegration theorem, we can decompose any measure $\tilde{\mu}^{cu}_{n, \tix}$ supported in $\mathcal{P}^{cu}_n(\tix)$ in a system of conditional measures $\{ \tilde{\mu}^{u}_{n, \tilde{y}} \}$ supported at the atoms $\mathcal{P}^{u}_n(\tilde{y})$. We also have a quotient measure $\tilde{\mu}^{c}_{n, \tilde{x}}$ on the partition whose atoms are given by $\cP^c_n(\tix)\eqdef \tW^c(\tix)\cap\cP^{cu}_n(\tix)$, which is defined by the equation $$\tilde{\mu}^{c}_{n, \tilde{x}}(A) = \tilde{\mu}^{cu}_{n, \tilde{x}}\left(\bigcup\limits_{a \in A} \cP^u_n(a)\right),$$ for all $A \subseteq \mathcal{P}^{c}_n.$

For any Borelian $B \subseteq \mathcal{P}^{cu}_n(\tix)$ we have
$$\tilde{\mu}^{cu}_{n, \tilde{x}}(B) = \int \tilde{\mu}^{u}_{n, \tilde{y}}(B) d\tilde{\mu}^{c}_{n, \tilde{x}}(\tilde{y}).$$

As we did for the two dimensional conditional measure $\tilde{\mu}^{cu}_{n, \tilde{x}}$, we also push $\tilde{\mu}^{u}_{n, \tilde{x}}$ forward to $\R^2$ as
$$\nu^{u}_{n, s} = \nu^{u}_{n, \tilde{y}} \eqdef (\mathcal{R}_{\tilde{x}})_*\tilde{\mu}^{u}_{n, \tilde{y}} = \tilde{\mu}^{u}_{n, \tilde{y}} \circ \Phi_{\tix},$$
a measure supported at $(\R \times \{s\}) \cap \mathcal{Q}^{cu}_n(\tix)$, where $\tilde{y} = \Phi^c_{\tix}(s) = \Phi_{\tix}(0, s)$.

Since $\mathcal{R}_{\tilde{x}}$ takes conditional measures on $\tW^{cu}(\tilde{y})$ to conditional measures with respect to the corresponding partition on $\R^2$, we have that
$$\nu^{cu}_{n, \tilde{x}}(B) = \int \nu^{u}_{n, s}(B) d\nu^{c}_{n, \tilde{x}}(s),$$
where $d\nu^{c}_{n, \tilde{x}}(s) \eqdef d(\mathcal{R}_{\tilde{x}})_*\tilde{\mu}^{c}_{n, \tilde{x}}(0,s)$ and $B \subset \mathcal{Q}^{cu}_n(\tix)$ is a Borelian.

As in \cite{ALOS} Lemma 7.6, we have the following.

\begin{lemma}
    \label{lem:nu-horiz-eh-leb}
    For $\tilde{\mu}$-a. e. $\tilde{x} \in \TTf$, $n \in \N$ and $\nu^{cu}_{n, \tilde{x}}$-a.e. $(t,s) \in \mathcal{Q}^{cu}_n({\tix})$, we have that
    $$d\nu^{u}_{n, s} = \gamma^u_n(s) dLeb_{(\R \times \{s\}) \cap \mathcal{Q}^{cu}_n(\tix)},$$
    where the density $\gamma^u_n(s)$ does not depend on $t$.
\end{lemma}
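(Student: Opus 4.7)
The lemma is a direct computation that exploits two ingredients: (i) by Lemma~\ref{lem:densidadeuniforme}, the $u$-Gibbs property gives $\tilde{\mu}^u_{n,\tilde{y}}$ an explicit density $\rho^u_{\tilde{y}}$ with respect to arc length on $\w^u(\tilde{y})$; and (ii) by the definition \eqref{eq:R} of the unstable normal form $\mathcal{R}^u_{\tilde{y}}$, its derivative along $\w^u(\tilde{y})$ is precisely this same function $\rho^u_{\tilde{y}}$. Under the change of coordinates $\mathcal{R}_{\tix}|_{\w^u(\tilde{y})}$ these two factors cancel exactly, leaving Lebesgue times a factor that depends only on $s$.

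\emph{Step 1: identify the density.} Fix $\tix$ in a full-measure set and $n\in\N$. Since $\mathcal{P}^u_n$ refines $\mathcal{P}^{cu}_n$ and is $u$-subordinate, the transitivity of Rokhlin's disintegration theorem ensures that the conditional $\tilde{\mu}^u_{n,\tilde{y}}$ (appearing in the leaf-wise decomposition of $\tilde{\mu}^{cu}_{n,\tix}$) agrees with the conditional of $\tilde{\mu}$ itself against $\mathcal{P}^u_n$ for $\tilde{\mu}^c_{n,\tix}$-a.e.\ $\tilde{y}\in\mathcal{P}^c_n(\tix)$. Hence by Lemma~\ref{lem:densidadeuniforme}, there is a measurable positive constant $C_n(\tilde{y})$ such that, on $\mathcal{P}^u_n(\tilde{y})\subseteq\w^u(\tilde{y})$,
\[
d\tilde{\mu}^u_{n,\tilde{y}}(\tilde{z}) \;=\; C_n(\tilde{y})\,\rho^u_{\tilde{y}}(\tilde{z})\,d\ell_{\tilde{y}}(\tilde{z}),
\]
where $d\ell_{\tilde{y}}$ denotes arc length on $\w^u(\tilde{y})$.

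\emph{Step 2: compute the pushforward.} Writing $\tilde{y}=\Phi^c_{\tix}(s)$, Theorem~\ref{teo:formanormal2} together with \eqref{eq:Phi} says that the restriction of $\mathcal{R}_{\tix}$ to $\w^u(\tilde{y})$ is
\[
\tilde{z}\;\longmapsto\;\Bigl(\tfrac{1}{\beta_{\tix}(s)}\,\mathcal{R}^u_{\tilde{y}}(\tilde{z}),\;s\Bigr),
\]
and by \eqref{eq:R} we have $d\mathcal{R}^u_{\tilde{y}}/d\ell_{\tilde{y}}=\rho^u_{\tilde{y}}$. Thus the Jacobian of this restriction (from arc length to Lebesgue on $\R\times\{s\}$) equals $\rho^u_{\tilde{y}}(\tilde{z})/\beta_{\tix}(s)$. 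Applying the standard change-of-variables formula and denoting by $\tilde{z}(t)$ the pre-image of $(t,s)$, we obtain
\[
d\nu^u_{n,s}(t) \;=\; \frac{C_n(\tilde{y})\,\rho^u_{\tilde{y}}(\tilde{z}(t))}{\rho^u_{\tilde{y}}(\tilde{z}(t))/\beta_{\tix}(s)}\,dt \;=\; C_n(\tilde{y})\,\beta_{\tix}(s)\,dt,
\]
the two occurrences of $\rho^u_{\tilde{y}}$ cancelling exactly. Consequently $\gamma^u_n(s)\eqdef C_n(\Phi^c_{\tix}(s))\,\beta_{\tix}(s)$ is indeed a function of $s$ alone, as required.

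The only delicate point is the identification of conditionals in Step~1, which is a standard application of the transitivity of Rokhlin disintegration together with the $u$-subordination of $\mathcal{P}^u_n$. Everything else is the one-line change-of-variables above, and it works out precisely because of the deliberate choice made in \eqref{eq:R} to use $\rho^u_{\tilde{y}}$ as the integrand defining $\mathcal{R}^u_{\tilde{y}}$; this is exactly the content of ``normal forms linearize $u$-Gibbs densities'' in our setting.
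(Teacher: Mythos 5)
Your proof is correct and is precisely the argument behind [ALOS, Lemma 7.6], which the paper invokes here without reproducing: the $u$-Gibbs density of $\tilde{\mu}^u_{n,\tilde{y}}$ along $\w^u(\tilde{y})$ and the arc-length derivative of the unstable normal form $\mathcal{R}^u_{\tilde{y}}$ are the same cocycle, so they cancel under the change of variables $\mathcal{R}_{\tix}|_{\w^u(\tilde{y})}$, leaving $C_n(\tilde{y})\,\beta_{\tix}(s)$ times Lebesgue on $\R\times\{s\}$. One notational caution: the limit in the proof of Lemma~\ref{lem:densidadeuniforme} and the product/limit in \eqref{eq:rho} are written with mutually reciprocal conventions, but the identification you rely on --- conditional density equals the integrand $\rho^u_{\tilde{y}}$ defining $\mathcal{R}^u_{\tilde{y}}$ --- is the intended and correct one.
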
    

Consider $I_1 = [-1, 1] \subseteq \R$ and $I = [a,b]$ with $a < 0 < b$. For $\tilde{\mu}$-a. e. $\tilde{x} \in \TTf$ and $B \subseteq \R$ a Borel subset, then there is $n_0 = n_0(x, I, I_1, B) \in \N$ such that $n \geq n_0$ implies $I \times (B \cup I_1) \subseteq \mathcal{Q}^{cu}_n(\tix)$.

\begin{lemma}
    \label{lem:nu-converge}
    For $\tilde{\mu}$-a. e. $\tilde{x} \in \TTf$ and $B \subseteq \R$ a Borel bounded subset, there is $n_0 \in \N$ such that $m > n \geq n_0$ implies
    $$\dfrac{\nu^{cu}_{n, \tilde{x}}(I \times B)}{\nu^{cu}_{n, \tilde{x}}(I \times I_1)} = \dfrac{\nu^{cu}_{m, \tilde{x}}(I \times B)}{\nu^{cu}_{m, \tilde{x}}(I \times I_1)}.$$
\end{lemma}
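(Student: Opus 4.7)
The plan is to observe that the statement amounts to tracking how the conditional measures $\tilde{\mu}^{cu}_{n,\tix}$ (and hence their push-forwards $\nu^{cu}_{n,\tix}$) behave under the filtration $\mathcal{P}^{cu}_n = \tilde{f}^n(\mathcal{P}^{cu}_0)$. The point is that these measures form a consistent family up to normalization, so the ratio of their values on fixed sets inside the atom is eventually independent of $n$.

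First, I would fix $\tix$ in a full measure set and choose $n_0 = n_0(\tix, I, I_1, B)$ so that $I\times(B\cup I_1)\subseteq \mathcal{Q}^{cu}_{n_0}(\tix)$; since $\tif$ is non-uniformly expanding along $\tW^{cu}$ for a $\tm$-typical $\tix$, we have $\mathcal{P}^{cu}_n(\tix)\subseteq \mathcal{P}^{cu}_m(\tix)$ for $m>n\geq n_0$, and therefore also $\mathcal{Q}^{cu}_n(\tix)\subseteq \mathcal{Q}^{cu}_m(\tix)$ in $\R^2$ via the chart $\mathcal{R}_{\tix}$. In particular, $I\times B$ and $I\times I_1$ are contained in every $\mathcal{Q}^{cu}_n(\tix)$ for $n\geq n_0$.

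Second, I would recall the compatibility relation for the conditional measures already displayed before the lemma: for any Borel $A\subseteq\TTf$,
\[
\tilde{\mu}^{cu}_{n,\tix}(A\cap \mathcal{P}^{cu}_n(\tix)) = \frac{\tilde{\mu}^{cu}_{m,\tix}(A\cap\mathcal{P}^{cu}_n(\tix))}{\tilde{\mu}^{cu}_{m,\tix}(\mathcal{P}^{cu}_n(\tix))}.
\]
This is just the fact that Rokhlin conditionals are compatible when one partition refines another. Pushing forward by the normal form chart $\mathcal{R}_{\tix}$, for any Borel $C\subseteq\mathcal{Q}^{cu}_n(\tix)$,
\[
\nu^{cu}_{n,\tix}(C) = \frac{\nu^{cu}_{m,\tix}(C)}{\nu^{cu}_{m,\tix}(\mathcal{Q}^{cu}_n(\tix))}.
\]

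Third, I would apply this identity with $C = I\times B$ and then with $C = I\times I_1$, both of which are legitimate choices for $n\geq n_0$ by the first step. The common normalization constant $\nu^{cu}_{m,\tix}(\mathcal{Q}^{cu}_n(\tix))$ then cancels when taking the quotient, yielding
\[
\frac{\nu^{cu}_{n,\tix}(I\times B)}{\nu^{cu}_{n,\tix}(I\times I_1)} = \frac{\nu^{cu}_{m,\tix}(I\times B)}{\nu^{cu}_{m,\tix}(I\times I_1)},
\]
which is precisely the conclusion. The only subtlety I anticipate is making sure the denominator $\nu^{cu}_{n,\tix}(I\times I_1)$ is nonzero, but this follows because $0$ is an interior point of $I_1$ and $\nu^{cu}_{n,\tix}$ is supported at an atom containing $(0,0)=\mathcal{R}_{\tix}(\tix)$; indeed Lemma~\ref{lem:nu-horiz-eh-leb} gives positive absolutely continuous unstable conditionals, so the positivity of $\nu^{cu}_{n,\tix}(I\times I_1)$ is automatic. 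No other serious obstacle is expected, as the whole argument is essentially the consistency of nested Rokhlin disintegrations transported via the (measurable) charts $\mathcal{R}_{\tix}$.
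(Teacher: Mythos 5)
Your proof is correct and follows exactly the route the paper intends: the compatibility relation for nested Rokhlin conditionals displayed just before the lemma, transported by $\mathcal{R}_{\tix}$, applied to $I\times B$ and $I\times I_1$ with the common normalization $\nu^{cu}_{m,\tix}(\mathcal{Q}^{cu}_n(\tix))$ cancelling in the quotient. Your remark on the positivity of the denominator via Lemma~\ref{lem:nu-horiz-eh-leb} is a welcome extra precision.
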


\subsubsection{Proof of Lemma~\ref{lem:temqueserlebesgue}} Lemma~\ref{lem:nu-converge} allows us to define the \textit{leaf-wise quotient measures}, by considering $n \in \N$ sufficiently large, 
$$\hat{\nu}^c_{\tix}(B) \eqdef \dfrac{\nu^{cu}_{n, \tilde{x}}(I \times B)}{\nu^{cu}_{n, \tilde{x}}(I \times I_1)} = \dfrac{\tilde{\mu}^{cu}_{n, \tilde{x}}(\Phi_{\tix}(I \times B))}{\tilde{\mu}^{cu}_{n, \tilde{x}}(\Phi_{\tix}(I \times I_1))},$$
which gives us indeed a measure on $\R$. Notice that $\hat{\nu}^c_{\tix}$ coincide with $\nu^c_{n,x}$ on $Q^c_{n}(\tix)$ (up to normalization). Thus, if $\hat{\nu}^c_{\tix}$ is absolutely continuous so is $\nu^c_{n,\tix}$. By Lemma~\ref{lem:nu-horiz-eh-leb} this implies that $\nu^{cu}_{n,\tix}$ is absolutely continuous with respect to the Lebesgue measure of $\R^2$. Since $\Phi_{\tix}$ is $C^1$ this proves that $\tilde{\mu}^{cu}_{n,\tix}$ is absolutely continuous with respect to the Lebesgue measure of the Pesin center manifold $W^{cu}(\tix)$. Thus, Lemma~\ref{lem:temqueserlebesgue} follows from Lemma~\ref{lem:srbendo}. \qed

\subsection{Basic moves of leaf-wise measures} 
\label{sec:normal-basicmoves}

The lemma bellow describes how leaf-wise quotient measures change when we move the base point in three different ways. These properties play a crucial role in the argument. The proof is a direct consequence of: the definition of leaf-wise quotient measures, the properties of the normal forms given in Proposition \ref{prop:formanormal} and Theorem \ref{teo:formanormal2}, and Lemma \ref{lem:nu-horiz-eh-leb}. It follows exactly as in \cite{ALOS}, so we omit it here. We say that two locally finite Radon measures on $\R$ are proportional, and we denote it by $\mu\propto\nu$, if $\mu=c\nu$, for some $c>0$.

\begin{lemma}
	\label{lem:basicmoves}
For $\tilde{\mu}$ almost every $\tix \in \TTf$, the following holds:
\begin{enumerate}
	\item $\hat{\nu}^c_{\tilde{f}(\tix)} \propto (\Lambda^c_{\tix})_*\hat{\nu}^c_{\tix}$, where $\Lambda^c_{\tix}: \R \to \R$ is given by $\Lambda^c_{\tix}(s) = \lambda^c_{\tix}s$;
	\item if $\tilde{y} \in \tW^c(\tix)$, then 
	\[
	\hat{\nu}^c_{\tilde{y}} \propto (h^2_{\tilde{x}, \tilde{y}})_*\hat{\nu}^c_{\tix};
	\]
	\item  if $\tilde{x}^u\in\w^u(\tix)$, then 
	\[
	\hat{\nu}^c_{\tilde{x}^u} \propto (h^2_{\tilde{x}, \tilde{x}^u})_*\hat{\nu}^c_{\tix}.
	\]
\end{enumerate} 
\end{lemma}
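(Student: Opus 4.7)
My plan is to treat the three identities in a uniform way, since they all reduce to the same scheme: (i) relate the two-dimensional leaf-wise measures $\nu^{cu}_{n,\tix}$ and $\nu^{cu}_{n,\tilde{z}}$, where $\tilde{z}$ is $\tif(\tix)$ in case (1), a point of $\tW^c(\tix)$ in case (2), or a point of $\w^u(\tix)$ in case (3); (ii) compute the change of normal form coordinates linking them by invoking Proposition \ref{prop:formanormal} and Theorem \ref{teo:formanormal2}; (iii) pass to the quotient defining $\hat{\nu}^c$, so that the ``horizontal'' factors get absorbed into the normalization. The key enabling ingredient throughout is Lemma \ref{lem:nu-horiz-eh-leb}, which tells us that $\nu^u_{n,s}$ is absolutely continuous with respect to Lebesgue with density depending only on $s$; this is precisely what makes horizontal rescalings and translations contribute only a positive multiplicative constant.

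For item (1), I would first observe $\tif(\cP^{cu}_n(\tix))=\cP^{cu}_{n+1}(\tif(\tix))$ and use the $\tif$-invariance of $\tm$ to deduce that $\tif_*\tilde\mu^{cu}_{n,\tix}=\tilde\mu^{cu}_{n+1,\tif(\tix)}$. By Theorem \ref{teo:formanormal2}(1), in normal form coordinates $\tif$ is the linear map $L(t,s)=(\lambda^u_{\tix}t,\lambda^c_{\tix}s)$, hence $\nu^{cu}_{n+1,\tif(\tix)}=L_*\nu^{cu}_{n,\tix}$. Substituting into the definition of $\hat\nu^c_{\tif(\tix)}$ and applying Fubini together with Lemma \ref{lem:nu-horiz-eh-leb}, the integration over the interval $I$ in the first coordinate factors as a positive constant independent of $B$, while the second coordinate is exactly transformed by $s\mapsto\lambda^c_{\tix}s=\Lambda^c_{\tix}(s)$.

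For items (2) and (3) I would exploit that $\tilde{y}$ (respectively $\tix^u$) lies in $\tW^{cu}(\tix)$, so $\cP^{cu}_n(\tix)=\cP^{cu}_n(\tilde{y})$ and consequently $\tilde\mu^{cu}_{n,\tix}=\tilde\mu^{cu}_{n,\tilde{y}}$ as measures on the common atom. By Theorem \ref{teo:formanormal2}(4)--(6) the change of normal form coordinates $\cR_{\tix,\tilde{y}}$ has the skew form $(t,s)\mapsto(h^1(t,s),h^2(s))$, with $h^2$ an affine function of $s$ alone. In case (2) the map $h^1$ is additionally independent of $s$ and affine; in case (3) one has $h^1(t,s)=\rho^u_{\tix}(\tix^u)t+a(\tix,\tix^u,s)$, which is affine in $t$ with an $s$-dependent drift. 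In both cases, Lemma \ref{lem:nu-horiz-eh-leb} ensures that the horizontal contribution to the defining quotient is a positive function of $s$ that either cancels against the denominator (case 2) or can be absorbed into it after enlarging $I$ slightly so as to contain the translate by $a(\tix,\tix^u,\cdot)$ over the relevant range of $s$ (case 3). What remains is exactly the push-forward $(h^2_{\tix,\tilde{y}})_*\hat\nu^c_{\tix}$.

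The step I expect to be the main obstacle is not algebraic but a bookkeeping point: one must justify carefully that rescaling or translating the reference interval $I$ in the horizontal coordinate alters $\hat\nu^c_{\tix}$ only by a positive multiplicative constant, uniformly in $n$. The cleanest route, mimicking \cite[Lemma 7.7]{ALOS}, is to establish once and for all, as a direct corollary of Lemma \ref{lem:nu-horiz-eh-leb} and Lemma \ref{lem:nu-converge}, that $\hat\nu^c_{\tix}(B)$ is independent (up to a positive constant depending only on the chosen $I$) of the interval $I=[a,b]$ used in its definition. With that in hand, the three verifications collapse into explicit computations with the affine second coordinates $h^2$ read off from Theorem \ref{teo:formanormal2}. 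No genuinely new difficulty arises in the non-invertible setting, since Pesin manifolds, normal forms, and the partitions $\cP^{cu}_n$ have already been constructed intrinsically on the inverse limit $\TTf$, and fiber-preserving changes of base point transport the affine structure verbatim.
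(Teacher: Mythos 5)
Your proposal is correct and follows essentially the same route as the paper, which omits the details and defers to \cite{ALOS}: the identities are derived from the equivariance/skew-product form of the normal-form coordinate changes in Proposition~\ref{prop:formanormal} and Theorem~\ref{teo:formanormal2}, with Lemma~\ref{lem:nu-horiz-eh-leb} guaranteeing that the horizontal (unstable) factor contributes only a positive constant that cancels in the defining quotient. Your observation that the only delicate point is the independence of $\hat\nu^c_{\tix}$ from the reference interval $I$ (up to a positive constant, for $n$ large enough that the relevant slices lie in $\mathcal{Q}^{cu}_n(\tix)$) is exactly the bookkeeping handled in \cite{ALOS}.
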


\section{Uniform transversality along fibers}
\label{sec:transv}

From now on we start the proof of Theorem~\ref{teo:main.rigidez}. In this section our setting is slightly more flexible: we consider $f:\TT\to\TT$ a partially hyperbolic, {strongly transitive} endomorphism and $\mu$ an ergodic invariant probability measure (not necessarily a $u$-Gibbs measure). We consider $\tif:\TTf\to\TTf$ and $\tm$ the lifts of $f$ and $\mu$ to the inverse limit space. Let $\{\tm_x\}_{x\in\TT}$ denote the conditional measures of $\tm$ with respect to the fibers of $\TTf$. 

\subsection{The angle function}
\label{sec:transv-bad}

We can measure angles between vectors in $T_{\tix}\TTf$ and $T_{\tilde{y}}\TTf$ using the canonical projection $\pi:\TTf\to\TT$. In particular, this allows us to measure angles between different unstable directions along the fiber $\Sigma(\tix)$. 

For $\tix\in\TTf$ and any given $\tilde{y}\in\Sigma(\tix)$ we denote
\[
\alpha(\tix,\tilde{y})\eqdef\angle(E^u(\tix),E^u(\tilde{y})).
\]
Notice that the angle function is continuous in the following sense: if $\tilde{x}_n\to\tix$ and $\tilde{y}_n\to\tilde{y}$, with $\tilde{y}_n\in\Sigma(\tix_n)$ for every $n\in\N$, then $\lim\alpha(\tix_n,\tilde{y}_n)=\alpha(\tix,\tilde{y})$.

We consider the compact set $\cP(\tix)=\{\tilde{y}\in\Sigma(\tix);\alpha(\tix,\tilde{y})=0\}$ and its complement $\cN(\tix)=\Sigma(\tix)\setminus\cP(\tix)$. The invariance of the unstable bundle implies that  
\begin{equation}
	\label{eq:P-N-invartiant}
\Sigma(\tix_m)\cap\tilde{f}^m(\cP(\tix))=\cP(\tix_m) \:\:\:\textrm{ and }\:\:\:\:\Sigma(\tix_m)\cap\tilde{f}^m(\cN(\tix))=\cN(\tix_m),
\end{equation}
for every $m\in\Z$.
\begin{remark}
    Notice that $f$ is special if, and only if, $\cN(\tix)=\emptyset$ for every $\tix\in\TTf$.
\end{remark}

Our assumptions that $f$ is a {strongly transitive} transformation of $\TT$ implies the following.  

\begin{lemma}
	\label{l.especialcontagia}
Assume that $f$ is not special. Then, for every fiber $\Sigma(\tix)$, the set $\cN(\tix)$ is not empty.
\end{lemma}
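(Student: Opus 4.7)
The plan is to use strong transitivity, via Corollary~\ref{cor:perorbitascdensas}, to transport non-specialness from a single fiber to every fiber. Since $f$ is not special there exist $\tilde{z}\in\TTf$ and $\tilde{w}\in\Sigma(\tilde{z})$ with $\alpha(\tilde{z},\tilde{w})>0$. Fixing an arbitrary target $\tix\in\TTf$, the goal is to produce two points in $\Sigma(\tix)$ whose unstable directions differ, which is more than enough to force $\cN(\tix)\neq\emptyset$. First, I would invoke the corollary to obtain a sequence $\tilde{a}_k\to\tilde{z}$ together with integers $n_k>0$ satisfying $\tif^{n_k}(\tilde{a}_k)\in\Sigma(\tix)$.

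The second step is to lift the second coordinate of the pair $(\tilde{z},\tilde{w})$, namely to construct $\tilde{b}_k\in\Sigma(\tilde{a}_k)$ with $\tilde{b}_k\to\tilde{w}$. Writing $\tilde{w}=(w_i)_{i\in\Z}$ and $y_k=\pi(\tilde{a}_k)\to w_0$, I would set $(\tilde{b}_k)_i=f^i(y_k)$ for $i\geq 0$, which automatically converges to $w_i$. For negative indices, the local diffeomorphism property of $f$ supplies, for each $i\geq 1$, a neighborhood of $w_{-i}$ on which $f$ has a continuous local inverse branch; iteratively applying these branches to $y_k$ produces preimages $(\tilde{b}_k)_{-i}\to w_{-i}$ as $k\to\infty$, and a standard diagonal argument assembles this coordinate-wise convergence into convergence $\tilde{b}_k\to\tilde{w}$ in the cylinder topology of $\TTf$.

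Once the lift is in hand, the continuity of $\alpha$ along fibers (noted right after its definition) gives $\alpha(\tilde{a}_k,\tilde{b}_k)\to\alpha(\tilde{z},\tilde{w})>0$, so $\tilde{b}_k\in\cN(\tilde{a}_k)$ for all $k$ large enough. The invariance relation~(\ref{eq:P-N-invartiant}) applied with $m=n_k$ then yields $\tif^{n_k}(\tilde{b}_k)\in\cN(\tif^{n_k}(\tilde{a}_k))$. Both points $\tilde{p}\eqdef\tif^{n_k}(\tilde{a}_k)$ and $\tilde{q}\eqdef\tif^{n_k}(\tilde{b}_k)$ lie in $\Sigma(\tix)$ and satisfy $E^u(\tilde{p})\neq E^u(\tilde{q})$, so at most one of them can have unstable direction equal to $E^u(\tix)$; the other is an element of $\cN(\tix)$.

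The only step where I expect to have to be careful is the construction of the lifted sequence $\tilde{b}_k$, which amounts to producing continuous local sections of $\pi\colon\TTf\to\TT$ compatible with the cylinder topology. This is really where the proof genuinely uses the local diffeomorphism hypothesis on $f$, as opposed to mere continuity of the self-covering. Everything else in the argument — the continuity of $\alpha$, the $Df$-invariance of $E^u$ encoded in~(\ref{eq:P-N-invartiant}), and the density of preorbits — is used in a routine way to finish.
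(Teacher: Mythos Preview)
Your proof is correct and uses the same ingredients as the paper---density of preorbits of fibers (Corollary~\ref{cor:perorbitascdensas}), continuity of $\alpha$, and $Df$-invariance of $E^u$---but is the contrapositive of the paper's argument: the paper assumes $\cN(\tix)=\emptyset$ for one fiber and propagates this to every fiber (forcing $f$ to be special), whereas you start from a fiber where $\cN\neq\emptyset$ and transport this forward to the target fiber. The paper's route is slightly slicker since it avoids your final ``at least one of $\tilde{p},\tilde{q}$ differs from $E^u(\tix)$'' step; on the other hand, you are more explicit about the construction of the lifted sequence $\tilde{b}_k\in\Sigma(\tilde{a}_k)$ converging to $\tilde{w}$, which the paper simply asserts by invoking the corollary (implicitly using the fiber-bundle structure of $\TTf\to\TT$).
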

\begin{proof}

Assume that for some $\tix\in\TTf$ the set $\cN(\tix)$ is empty. Take any fiber $\Sigma(\tilde{a})\subset\TTf$ and any point $\tilde{b}\in\Sigma(\tilde{a})$. Then, by Corollary~\ref{cor:perorbitascdensas} there exist sequences of points such that $\tilde{a}_k\to \tilde{a}$ and $\tilde{b}_k\to \tilde{b}$, with $\tb_k\in\Sigma(\ta_k)$, and, for each $k$, both points $\tilde{a}_k$ and $\tilde{b}_k$ belong to $\tif^{-k}(\Sigma(\tix))$. By our assumption, this implies that
\[
\alpha(\tif^k(\tilde{a}_k),\tif^k(\tilde{b}_k))=0,\:\:\:\textrm{for all}\:\:k\in\N.
\]
Since the derivative of $f$ is a linear isomorphism this implies that 
\[
\alpha(\tilde{a}_k,\tilde{b}_k)=0,\:\:\:\textrm{for all}\:\:k\in\N.
\]
By continuity of the assignment $(\tix,\tilde{y})\mapsto\alpha(\tix,\tilde{y})$ we deduce that 
\[
\alpha(\tilde{a},\tilde{b})=0.
\]
This proves that $\cN(\tilde{a})=\emptyset$, which implies that $f$ must be special and completes the proof.
\end{proof}

By pushing a bit further the above argument we can obtain an even stronger conclusion. 

\begin{lemma}
\label{l.contracaomaravilhosa}
Assume that $f$ is not special. Then, the set $\cN(\tix)$ is open and dense in the fiber $\Sigma(\tix)$, for every $\tix\in\Sigma(\tix).$
\end{lemma}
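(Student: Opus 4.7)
The plan is to exploit the symbolic structure of the fibers of $\TTf$: basic neighborhoods of a point $\tilde{b}\in\Sigma(\tix)$ are precisely images of an entire fiber under an iterate of $\tilde{f}$. This will let us transport the non-emptiness of $\cN$ provided by Lemma~\ref{l.especialcontagia} from a faraway fiber into any prescribed open set of $\Sigma(\tix)$.

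First, openness of $\cN(\tix)$ in $\Sigma(\tix)$ is essentially for free: the map $\tilde{y}\mapsto\alpha(\tix,\tilde{y})$ is continuous on the fiber (as noted just before the statement), hence $\cP(\tix)=\alpha(\tix,\cdot)^{-1}(0)$ is closed in $\Sigma(\tix)$. All work goes into density.

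For density, fix $\tb\in\Sigma(\tix)$ and a basic open neighborhood of $\tb$ in $\Sigma(\tix)$ of the form
\[
U_N=\{\tilde{y}\in\Sigma(\tix)\;:\;y_i=b_i\text{ for }-N\leq i\leq 0\},\qquad N\in\N.
\]
Set $\ts\eqdef \tif^{-N}(\tb)$, so that $\pi(\ts)=b_{-N}$. The key observation is that the map $\tif^N:\Sigma(\ts)\to U_N$ is a bijection: given $\tilde{v}\in\Sigma(\ts)$ one checks directly from the cocycle identity $v_{i+N}=f^{i+N}(b_{-N})=b_i$ for $-N\leq i\leq 0$, so $\tif^N(\tilde{v})\in U_N$; conversely, for $\tilde{y}\in U_N$ the point $\tif^{-N}(\tilde{y})$ has $0$-th coordinate $y_{-N}=b_{-N}=\pi(\ts)$, so it lies in $\Sigma(\ts)$.

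Now, since $f$ is not special, Lemma~\ref{l.especialcontagia} applied to the fiber through $\ts$ gives some $\tilde{v}\in\cN(\ts)$, i.e.\ $E^u(\tilde{v})\neq E^u(\ts)$. Define $\tilde{y}\eqdef\tif^N(\tilde{v})\in U_N$. By invariance of the unstable bundle,
\[
E^u(\tilde{y})=Df^N(b_{-N})E^u(\tilde{v})\quad\text{and}\quad E^u(\tb)=Df^N(b_{-N})E^u(\ts),
\]
and since $Df^N(b_{-N})$ is a linear isomorphism, $E^u(\tilde{y})\neq E^u(\tb)$. If $\tb\in\cP(\tix)$ then $E^u(\tb)=E^u(\tix)$, so $\tilde{y}\in U_N\cap\cN(\tix)$; if instead $\tb\in\cN(\tix)$ then already $\tb\in U_N\cap\cN(\tix)$. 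Either way $U_N$ meets $\cN(\tix)$, establishing density.

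The proof is essentially an unwinding of symbols; I expect no real obstacle, since all the mechanism (non-specialness transported via $\tif^N$, and the isomorphism property of $Df^N$) was already set up in the proof of Lemma~\ref{l.especialcontagia}. The only subtle point to get right is the bookkeeping for the indices showing that $\tif^N$ carries $\Sigma(\ts)$ exactly onto $U_N$; this is why we pass to the fiber through $\tif^{-N}(\tb)$ rather than any other nearby fiber.
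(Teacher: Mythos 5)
Your proof is correct and rests on the same mechanism as the paper's: a cylinder neighborhood in a fiber is exactly the forward image under $\tif^N$ of an entire fiber, so Lemma~\ref{l.especialcontagia} applied to that faraway fiber, together with the $Df$-invariance of $E^u$, places a point of $\cN(\tix)$ in the prescribed neighborhood. The paper phrases this contrapositively (a cylinder inside $\cP(\tix)$ would pull back to a whole fiber contained in $\cP$, contradicting Lemma~\ref{l.especialcontagia}), but the two arguments are the same up to this reformulation.
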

\begin{proof}
Openness is direct from the definition and the continuity of the function $(\tix,\tilde{y})\mapsto \alpha(\tix,\tilde{y})$. Assume the lemma is false. Then, for some $\tix\in\TTf$ the set $\cP(\tix)$ has non-empty interior. In particular, there exists a small cylinder $I\subset\cP(\tix)$. By backward expansion along fibers there must exist an integer $n>0$ such that 
\[
\Sigma(\tilde{f}^{-n}(\tix))\subset\tilde{f}^{-n}(I)\subset\tilde{f}^{-n}(\cP(\tix)).
\] 
By equation \eqref{eq:P-N-invartiant} we deduce that $\cP(\tilde{f}^{-n}(\tix))=\Sigma(\tilde{f}^{-n}(\tix))$, which contradicts Lemma~\ref{l.especialcontagia}.
\end{proof}

The above result implies the dichotomies presented in \cite{CostaMicena2022, MicenaTahzibi2016}, but for the strongly transitive case.
    
\subsection{The non-transversality set}

When $f$ is not special, the strongly transitive assumption implies that in every fiber there are infinitely many different unstable directions. In our case, this is made explicit by Lemma~\ref{l.contracaomaravilhosa}. We are now going to consider how the measure $\tm$ ``sees'' these different unstable directions. The natural strategy then is to consider the \emph{``bad''} set of points for which the measure do not ``see'' any transversality at all.  

\begin{definition}
The \emph{non-transversality set} is the set $\tilde{\mathbf{B}}=\{\tix\in\TTf : \tilde{\mu}_x(\cN(\tix))=0\}$.
\end{definition}

We have that $\tilde{\mathbf{B}}$ is $\tilde{f}$-invariant. Therefore, by ergodicity, $\tilde{\mu}(\tilde{\mathbf{B}})\in\{0,1\}$. Notice that $\tilde{\mathbf{B}}$ depends on the measure, although, for simplicity, our notation does not reflect that.

\begin{lemma}
	\label{l.fakenewslemma}
Assume that $f$ is not special and that $\tilde{\mu}(\tilde{\mathbf{B}})=1$. Then $\operatorname{supp}(\tilde{\mu})$ has empty interior. 
\end{lemma}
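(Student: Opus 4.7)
The plan is to argue by contradiction. Suppose $\operatorname{supp}(\tilde{\mu})$ has non-empty interior and pick an open cylinder $V = [A_{-k}, \ldots, A_0]$ contained in it. The first step is a ``backward expansion of fibers'' computation, in the spirit of the proof of Lemma~\ref{l.contracaomaravilhosa}: for $n \geq k$ every constraint defining $V$ transfers to a coordinate $\geq 1$, which is determined by $y_0$, so
\[
\tif^{-n}(V) \;=\; \pi^{-1}(W_n), \qquad W_n \eqdef \bigcap_{i=-k}^{0} f^{-(n+i)}(A_i).
\]
By $\tif$-invariance of $\operatorname{supp}(\tilde{\mu})$ we obtain $\pi^{-1}(W_n) \subset \operatorname{supp}(\tilde{\mu})$: the whole fiber $\Sigma(x)$ lies in the support for every $x \in W_n$, and $\mu(W_n) = \tilde{\mu}(\pi^{-1}(W_n)) = \tilde{\mu}(V) > 0$.

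The second and crucial step is to upgrade this topological containment of fibers into a measure-theoretic identity: I claim that for $\mu$-a.e.\ $x$,
\[
\operatorname{supp}(\tilde{\mu}) \cap \Sigma(x) \;=\; \operatorname{supp}(\tilde{\mu}_x),
\]
where $\{\tilde{\mu}_x\}$ is the disintegration of $\tilde{\mu}$ with respect to the fiber partition (Remark~\ref{rem:desintegranafibra}). This identity fails for general disintegrations, but here it follows from the ultrametric nature of $\TTf$ (Remark~\ref{lem:ultramen}): $d(\tix, \tilde{y}) < 1$ already forces $x_0 = y_0$, so any convergent sequence in $\TTf$ is eventually contained in a single fiber. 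Writing $\operatorname{supp}(\tilde{\mu}) = \overline{\bigcup_{y \in E} \operatorname{supp}(\tilde{\mu}_y)}$ with $E$ the full-$\mu$-measure set where the conditionals are defined, any $\tix \in \operatorname{supp}(\tilde{\mu}) \cap \Sigma(x)$ is approximated by points in this union; the fiber-preservation of convergence forces the approximating points to lie in $\operatorname{supp}(\tilde{\mu}_x)$ eventually, whence $\tix \in \operatorname{supp}(\tilde{\mu}_x)$ by closedness.

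Combining the two steps yields $\operatorname{supp}(\tilde{\mu}_x) = \Sigma(x)$ for $\mu$-a.e.\ $x \in W_n$. On the other hand, the hypothesis $\tilde{\mu}(\tilde{\mathbf{B}}) = 1$ gives $\tilde{\mu}_x(\cN(\tix)) = 0$ for $\tilde{\mu}$-a.e.\ $\tix$, so picking any $\tix \in \operatorname{supp}(\tilde{\mu}_x)$ one has $\operatorname{supp}(\tilde{\mu}_x) \subset \cP(\tix)$. Since $f$ is not special, Lemma~\ref{l.contracaomaravilhosa} tells us that $\cN(\tix)$ is open and dense in $\Sigma(x)$, so $\cP(\tix)$ has empty interior in $\Sigma(x)$. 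This is incompatible with $\operatorname{supp}(\tilde{\mu}_x) = \Sigma(x)$, and yields the desired contradiction.

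The main obstacle is precisely the identification of the conditional support with the slice of the global support. This is not a general fact about disintegrations---simple product examples where a measurable section has dense graph show that $\operatorname{supp}(\tilde{\mu}_x)$ can be a strict subset of $\operatorname{supp}(\tilde{\mu}) \cap \Sigma(x)$---but the ultrametric structure of the natural extension rules out such pathologies, because topological accumulation in $\TTf$ cannot cross between distinct fibers. Once this identification is in hand, the clash with the non-specialness of $f$ via Lemma~\ref{l.contracaomaravilhosa} is immediate.
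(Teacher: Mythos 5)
Your Step 1 is correct, and your final deduction from $\operatorname{supp}(\tilde{\mu}_x)=\Sigma(x)$ via Lemma~\ref{l.contracaomaravilhosa} would work. The gap is Step 2: the identity $\operatorname{supp}(\tilde{\mu})\cap\Sigma(x)=\operatorname{supp}(\tilde{\mu}_x)$ is not established. Your justification rests on the claim that convergence in $\TTf$ cannot cross fibers, and that claim is false for the topology that is actually in play. The topology on $\TTf$ is the one generated by the open cylinders $[A_{-k},\dots,A_0]$ with $A_i$ open in $\TT$; in that topology $\TTf$ is locally a product $U\times\Sigma$ with $\Sigma$ a Cantor set, each fiber $\Sigma(x)$ is compact and nowhere dense (in particular not open), and a sequence $\tilde{z}_n\to\tix$ typically has $\pi(\tilde{z}_n)\to x$ with $\pi(\tilde{z}_n)\neq x$ for all $n$. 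The displayed formula $d=2^{-n(\tix,\tilde{y})}$, read literally as you do, is an ultrametric on all of $\TTf$, but it cannot generate the cylinder topology (it would make $\TTf$ totally disconnected, contradicting the fiber-bundle structure and the path components homeomorphic to $\R^2$); the ultrametric structure is only meaningful, and only used in the paper, on the individual fibers $\Sigma(\tix)$ (that is where Lemma~\ref{lem:miller} is applied). Consequently the dense-measurable-section pathology you yourself mention is exactly the local model $U\times\Sigma$ here and is not ruled out: a priori $\operatorname{supp}(\tilde{\mu})\cap\Sigma(x)$ could be all of $\Sigma(x)$ while each $\operatorname{supp}(\tilde{\mu}_x)$ is tiny, and your argument would then prove nothing.

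The paper's proof is designed precisely to avoid needing that identity. To show a point $\tilde{y}\in\cN(\tix)$ is outside the support one must exhibit a $\tilde{\mu}$-null set that is open in the cylinder topology, hence transversally thick: by continuity of the angle function one finds, for every $\tilde{z}$ in a whole $cu$-neighborhood of $\tilde{y}$, a cylinder $I(\tilde{z})\subset\cN(\tilde{z})$; the union $\bigcup_{\tilde{z}}I(\tilde{z})$ is then a genuine neighborhood of $\tilde{y}$ in $\TTf$, and Rokhlin disintegration together with $\tilde{\mu}(\tilde{\mathbf{B}})=1$ shows it has measure zero. Density of $\cN$ in each fiber (Lemma~\ref{l.contracaomaravilhosa}) then yields empty interior. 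If you want to keep your architecture, you must replace Step 2 by this kind of transversal thickening; as written, the proof does not go through.
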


\begin{proof}
By Lemma~\ref{l.contracaomaravilhosa}, it suffices to show that for every $\tix\in\TTf
$ no point $\tilde{y}\in\cN(\tix)$ lies in $\operatorname{supp}(\tilde{\mu})$. Take $\tilde{y}\in\cN(\tix)$. By continuity, there exists a neighborhood $U$ of $x=\pi(\tix)$ such that for every $\tilde{z}\in \pi^{-1}(U)\cap\tilde{\cW}^{cu}(\tix)$ there exists a cylinder $I(\tilde{z})\subset\cN(\tilde{z})$. In particular, $\cup_{\tilde{z}\in \pi^{-1}(U)\cap\tilde{\cW}^{cu}(\tix)}I(\tilde{z})$ is a neighborhood of $\tilde{y}$. Since we assumed $\tilde{\mu}(\tilde{\mathbf{B}})=1$, we have that for almost every $\tilde{z}$ it holds $\tilde{\mu}_{\tilde{z}}(I(\tilde{z}))=0$. By Rokhlin's theorem we deduce that 
\[
\tilde{\mu}\left(\bigcup_{\tilde{z}\in \pi^{-1}(U)\cap\tilde{\cW}^{cu}(\tix)}I(\tilde{z})\right)=0,
\] 
which implies that $\tilde{y}\notin\operatorname{supp}(\tilde{\mu})$.
\end{proof}

\subsection{A 0 or 1 law}
\label{sec:0-ou-1}

Let us assume now that $\mu\in\cM^{\operatorname{erg}}_f(\TT)$ is a fully supported ergodic invariant measure. We deduce from Lemma~\ref{l.fakenewslemma} that for $\mu$-almost every $x\in\TT$ it holds $\tilde{\mu}_x(\cN(\tix))>0$. That is, under the full support assumption, the measure ``sees'' \emph{some} transversality at each fiber. However, in our setting this is a \emph{0 or 1 law}: once \emph{some} transversality is seen at almost every fiber we immediately have that the measure actually sees transversality \emph{almost everywhere} in almost every fiber. More precisely, the goal of this section is to prove the following.

\begin{lemma}
\label{lem:zeroum}
For $\mu$-almost every $x\in\TT$ it holds $\tilde{\mu}_x(\cN(\tix))=1$.
\end{lemma}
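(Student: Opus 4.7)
The plan is to argue by contradiction. Set $\psi(\tix) := \tilde\mu_x(\cP(\tix))$ and $\tilde{\mathbf{H}} := \{\tix \in \TTf : \psi(\tix) > 0\}$. We already know $\psi < 1$ holds $\tilde\mu$-a.s.\ (equivalently $\tilde\mu(\tilde{\mathbf{B}})=0$), so it suffices to show $\tilde\mu(\tilde{\mathbf{H}})=0$.

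First, $\tilde{\mathbf{H}}$ is essentially $\tif$-invariant. Disintegrating $\tilde\mu$ along $\pi$ and using $f_*\mu = \mu$ yields
\[
\tilde\mu_y \;=\; \sum_{x' \in f^{-1}(y)} J_{x'}(y)\,\tif_*\tilde\mu_{x'},
\]
where $\sum_{x'} J_{x'}(y) = 1$ and $J_x(f(x)) > 0$ for $\mu$-a.e.\ $x$. Combined with the identity $\tif^{-1}(\cP(\tif(\tix))) \cap \Sigma(x) = \cP(\tix)$ (which follows from the $Df$-invariance of $E^u$), this gives $\psi(\tif(\tix)) \geq J_x(f(x))\,\psi(\tix)$, so $\tif(\tilde{\mathbf{H}}) \subseteq \tilde{\mathbf{H}}$ modulo $\tilde\mu$-null sets. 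Ergodicity of $(\tif,\tilde\mu)$ then gives $\tilde\mu(\tilde{\mathbf{H}}) \in \{0,1\}$.

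The heart of the argument exploits the symbolic Cantor structure of the fibers. For $\tilde{y} \in \Sigma(x)$ and $k\in\N$, let $I_k(\tilde{y}) := \{\tilde{z} \in \Sigma(x) : z_{-j}=y_{-j} \text{ for } 0 \leq j \leq k\}$; the map $\tif^{-k}$ provides a homeomorphism $I_k(\tilde{y}) \to \Sigma(y_{-k})$. Iterating the disintegration formula gives
\[
\tilde\mu_x|_{I_k(\tilde{y})} \;=\; \Bigl(\prod_{j=0}^{k-1} J_{y_{-j-1}}(y_{-j})\Bigr)\,\tif^k_*\tilde\mu_{y_{-k}},
\]
and the $Df$-invariance of $E^u$ shows $\tif^{-k}(\cP(\tilde{y}) \cap I_k(\tilde{y})) = \cP(\tif^{-k}(\tilde{y}))$. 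Consequently the relative density $\varphi_k(\tilde{y}) := \tilde\mu_x(\cP(\tilde{y})\cap I_k(\tilde{y}))/\tilde\mu_x(I_k(\tilde{y}))$ equals exactly $\psi(\tif^{-k}(\tilde{y}))$.

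Since the cylinders generate the Borel $\sigma$-algebra of $(\Sigma(x), \tilde\mu_x)$, the reverse martingale convergence theorem applied to each of the (at most countably many) equivalence classes of positive $\tilde\mu_x$-measure shows that $\varphi_k(\tilde{y}) \to 1$ for $\tilde\mu_x$-a.e.\ $\tilde{y}$ whose class has positive measure; equivalently, $\psi(\tif^{-k}(\tilde{y})) \to 1$ for $\tilde\mu$-a.e.\ $\tilde{y} \in \tilde{\mathbf{H}}$. Assuming for contradiction that $\tilde\mu(\tilde{\mathbf{H}})=1$, this inclusion yields $\tilde{\mathbf{H}} \subseteq \liminf_k \tif^k(\{\psi > 1-\varepsilon\})$ modulo null sets for every $\varepsilon > 0$; Fatou's lemma together with $\tif$-invariance of $\tilde\mu$ then forces $\tilde\mu(\{\psi > 1-\varepsilon\}) = 1$ for every $\varepsilon > 0$, and letting $\varepsilon \to 0$ gives $\tilde\mu(\tilde{\mathbf{B}}) = \tilde\mu(\{\psi = 1\}) = 1$, contradicting $\tilde\mu(\tilde{\mathbf{B}})=0$. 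The main technical points to verify carefully are the transport identity $\tif^{-k}(\cP(\tilde{y}) \cap I_k(\tilde{y})) = \cP(\tif^{-k}(\tilde{y}))$ and the uniform application of the density theorem to the countable family of positive-measure equivalence classes on each fiber.
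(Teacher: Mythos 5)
Your proof is correct and rests on the same two pillars as the paper's: the transport identity $\tif^{-k}\big(\cP(\tilde y)\cap I_k(\tilde y)\big)=\cP(\tif^{-k}(\tilde y))$ together with the compatibility of the fiber conditional measures under $\tif^{-k}$, and the density/martingale convergence theorem for the cylinder filtration on the ultrametric fibers (the paper quotes this as a Lebesgue density theorem for compact ultrametric spaces; note also that your cylinder filtration is increasing, so what you need is forward, not reverse, martingale convergence). The only real difference is in the endgame: you close with the ergodic $0$--$1$ law for $\tilde{\mathbf H}$ plus Fatou, whereas the paper fixes a Lusin set on which $\psi\leq 1-c$ and a density point of $\cP(\tix)$ whose backward orbit recurs to that set, reaching the same contradiction with $\tilde\mu(\tilde{\mathbf B})=0$.
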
 

In our case we can take advantage of the fractal structure of the fibers and give a point-density proof of the 0 or 1 law that is much simpler then the proof in \cite{ALOS}, which is based on a martingale convergence argument.

\begin{lemma}{\cite[Proposition 2.10]{miller2008existence}}
	\label{lem:miller}
Suppose that $(X,d)$ is a compact ultrametric space, $\mu$ is a probability measure on $X$, and $B$ is a Borel set. Then,
\[
\lim_{\varepsilon\to 0}\frac{\mu(B\cap B_{\varepsilon}(x))}{\mu(B_{\varepsilon}(x))}=1,
\]
for $\mu$ a.e. $x\in B$.
\end{lemma}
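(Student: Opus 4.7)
The plan is to reduce Lemma~\ref{lem:miller} to a routine application of Doob's martingale convergence theorem, the ultrametric inequality playing the crucial role of turning $\varepsilon$-balls into measurable partitions.

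First I would exploit the fundamental structural consequence of the ultrametric inequality: for every $\varepsilon > 0$, the collection $\{B_\varepsilon(x) : x \in X\}$ is a genuine \emph{partition} of $X$, not merely a cover, because any two $\varepsilon$-balls sharing a point must coincide (if $z \in B_\varepsilon(x) \cap B_\varepsilon(y)$ and $w \in B_\varepsilon(y)$, then $d(x,w) \leq \max\{d(x,z),d(z,y),d(y,w)\} < \varepsilon$, so $B_\varepsilon(y) \subseteq B_\varepsilon(x)$ and vice versa). Since $X$ is compact, this partition is at most countable. Moreover, for $\varepsilon' < \varepsilon$, the $\varepsilon'$-partition refines the $\varepsilon$-partition, again by the ultrametric inequality.

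Next I would fix an arbitrary sequence $\varepsilon_n \downarrow 0$, denote by $\mathcal{P}_n$ the corresponding countable measurable partition of $X$ into balls of radius $\varepsilon_n$, and let $\mathcal{F}_n = \sigma(\mathcal{P}_n)$. By the refinement property, $(\mathcal{F}_n)_{n \geq 1}$ is a filtration. The process
\[
M_n(x) \eqdef E_\mu[\mathbf{1}_B \mid \mathcal{F}_n](x) = \frac{\mu(B \cap B_{\varepsilon_n}(x))}{\mu(B_{\varepsilon_n}(x))}
\]
(well-defined on the full-measure set where $\mu(B_{\varepsilon_n}(x)) > 0$) is then a uniformly bounded martingale. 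Since $\varepsilon_n \to 0$ and the $\mathcal{P}_n$-atoms containing a point $x$ form a neighborhood basis of $x$, and since $X$ is second countable, every open set is a countable union of atoms of some $\mathcal{F}_n$, so $\mathcal{F}_\infty \eqdef \bigvee_n \mathcal{F}_n$ coincides with the Borel $\sigma$-algebra modulo $\mu$. Doob's convergence theorem then gives $M_n(x) \to E_\mu[\mathbf{1}_B \mid \mathcal{F}_\infty](x) = \mathbf{1}_B(x)$ for $\mu$-a.e.\ $x$, and in particular $M_n(x) \to 1$ for $\mu$-a.e.\ $x \in B$.

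Finally I would upgrade convergence along the discrete sequence $\varepsilon_n$ to the continuous limit $\varepsilon \to 0$ via an interleaving trick: given any two sequences $\varepsilon_n, \delta_n \downarrow 0$, interlace them into a single sequence $\eta_n \downarrow 0$ and apply the above martingale argument to the filtration associated with $\eta_n$. Because the a.e.\ limit is $\mathbf{1}_B(x)$ regardless of the sequence, the ratio $\mu(B \cap B_\varepsilon(x))/\mu(B_\varepsilon(x))$ has the same a.e.\ limit along every sequence tending to zero, which is equivalent to the existence of the full limit as $\varepsilon \to 0$. The main obstacle is essentially bookkeeping: checking that $\mathcal{F}_\infty$ is truly generated modulo $\mu$ by open sets and that the interleaving argument produces an a.e.\ sequence-independent limit without shrinking the full-measure set; everything substantive is a consequence of the powerful but trivial ultrametric dichotomy that nearby balls are either equal or disjoint.
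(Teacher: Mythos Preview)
The paper does not give its own proof of this lemma; it simply cites \cite{miller2008existence}, so there is no in-paper argument to compare against. Your martingale approach is the standard one and is essentially correct, but the final step contains a genuine gap.

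The interleaving trick does not do what you need. You show that for any \emph{pair} of sequences $(\varepsilon_n),(\delta_n)\downarrow 0$ there is a full-measure set on which the ratio converges along both; but that full-measure set depends on the pair. Passing from ``every sequence gives the same a.e.\ limit'' to ``the continuous limit exists a.e.'' requires a \emph{single} full-measure set working for \emph{all} sequences simultaneously, and since there are uncountably many sequences you cannot simply intersect. You flag this as ``bookkeeping'', but as written the argument is incomplete.

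The fix is a structural fact about compact ultrametric spaces that you have almost already proved: the family of partitions $\{\mathcal P_\varepsilon:\varepsilon>0\}$ is countable. Indeed, each $\mathcal P_\varepsilon$ is finite by compactness, the map $\varepsilon\mapsto\mathcal P_\varepsilon$ is monotone in the refinement order, and a finite partition that refines another finite partition of the same cardinality must equal it; hence $\mathcal P_\varepsilon$ changes only when $|\mathcal P_\varepsilon|$ jumps, which happens at most countably often. Choose $\varepsilon_n\downarrow 0$ hitting every distinct partition; then for \emph{every} $\varepsilon>0$ the ball $B_\varepsilon(x)$ equals $B_{\varepsilon_n}(x)$ for some $n=n(\varepsilon)$ with $n\to\infty$ as $\varepsilon\to 0$, and the continuous limit follows immediately from your sequential martingale convergence on a single full-measure set.
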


Recall that, by Remark~\ref{lem:ultramen}, each fiber $\Sigma(\tix)$ is a compact ultrametric space. In particular, the above lemma applies to the measure $\tm_x$. 

\subsubsection{Proof of Lemma~\ref{lem:zeroum}}
\label{sec:proof-0-ou-1}

Assume by contradiction the existence of a $\tilde{\mu}$ positive measure set $\tilde{A}$ such that $\tilde{\mu}_x(\cP(\tix))>0$ for every $\tix\in\tilde{A}$. Consider the measurable function 
\[
\tix\mapsto\tm_x(\cN(\tix)),
\] 
which is positive a.e. since $\tm(B)=0$. By Lusin's Theorem, for every $\delta>0$ there exists $c>0$ such that $\tm_x(\cN(\tix))\geq c$ if $\tix\in \tilde{K}$, for some compact set $\tilde{K}$ with $\tm_x(\tilde{K})>1-\delta$. Take $\delta$ small enough such that $\tm(\tilde{K}\cap \tilde{A})>0$ and take $\tix\in\tilde{K}\cap\tilde{A}.$ By Rokhlin's theorem and ergodicity, we can assume the existence of a point $\tilde{y}\in\cP(\tix)$ so that $\tilde{f}^{-n}(\tilde{y})\in\tilde{K}$ for infinitely many values of $n$. By Lemma~\ref{lem:miller} we can further assume that $\tilde{y}$ is a density point of $\cP(\tix)$ with respect to $\tm_x$. 

Observe that, in the metric given in Lemma~\ref{lem:miller}, balls are cylinders. Therefore, there exists some cylinder $\tilde{y}\in I(\tilde{y})\subset\Sigma(\tix)$  such that
\[
\frac{\tm_x(I(\tilde{y})\cap\cP(\tix))}{\tm_x(I(\tilde{y}))}>1-c.
\]
By equation \eqref{eq:P-N-invartiant}, we can take $n$ large enough such that the following properties hold:
\begin{itemize}
	\item $\tilde{f}^{-n}(I(\tilde{y}))\supseteq\Sigma(\tilde{y}_{-n})$;
	\item $\cP(\tilde{f}^{-n}(\tilde{y}))=\tilde{f}^{-n}(\cP(\tilde{y}) \cap I(\tilde{y}))\cap\Sigma(\tilde{y}_{-n})$;
	\item  $\tilde{y}_{-n}\eqdef\tif^{-n}(\tilde{y})\in\tilde{K}$.
\end{itemize}
The first two properties above imply that
\[
\tm_{\tilde{f}^{-n}(\tix)}(\cP(\tilde{y}_{-n})) = \frac{\tm_{\tilde{f}^{-n}(\tix)}(\tilde{f}^{-n}(I(\tilde{y})\cap\cP(\tix)))}{\tm_{\tilde{f}^{-n}(\tix)}(\tilde{f}^{-n}(I(\tilde{y})))} = \frac{\tm_x(I(\tilde{y}\cap\cP(\tix)))}{\tm_x(I(\tilde{y}))}>1-c,
\]
which, in turn, implies that $\tm_{\tilde{f}^{-n}(\tilde{y})}(\cN(\tilde{f}^{-n}(\tilde{y})))<c$. By the definition of $\tilde{K}$, this contradicts the third property above and completes the proof.\qed 

\subsection{An Egorov-type estimate of transversality}
We can now state our uniform transversality estimate along fibers. The proof is the same as the argument of Lemma 11.2 of \cite{ALOS}, so we refrain from repeating the details. 

\begin{proposition}
\label{prop:transversalidade}
Given a constant $\gamma>0$ and $K\subset\TT$ a compact set with $\mu(K)>1-\gamma$, there exist a constant $\beta=\beta(\gamma)$ and a measurable set $K'\subset K$, with $\tm(K')>1-2\sqrt{\gamma}$, such that for every $\tix\in K'$, there exists a point $\tilde{y}$ inside the fiber $\Sigma(\tix)$ so that $\alpha(\tix,\tilde{y})>\beta$. 
\end{proposition}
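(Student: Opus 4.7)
My plan is to apply an Egorov-type argument to a function measuring how much transversality the measure sees along each fiber. For each $\beta > 0$ and $\tix \in \TTf$ with $x = \pi(\tix)$, define
\[
N_\beta(\tix) \eqdef \{\tilde{y} \in \Sigma(\tix) : \alpha(\tix, \tilde{y}) > \beta\}, \qquad g_\beta(\tix) \eqdef \tm_x(N_\beta(\tix)).
\]
The first thing to check is that $\tix \mapsto g_\beta(\tix)$ is $\tm$-measurable. This follows from the continuity of the angle function $(\tix,\tilde{y})\mapsto\alpha(\tix,\tilde{y})$ along fibers, together with standard properties of the disintegration $\{\tm_x\}$ from Remark~\ref{rem:desintegranafibra} (approximating the indicator of the open set $N_\beta(\tix)$ by continuous functions and using Fubini against the disintegration).

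Next, I would observe that $\cN(\tix) = \bigcup_{\beta > 0} N_\beta(\tix)$ with $N_\beta(\tix)$ increasing as $\beta \downarrow 0$, so by monotone convergence
\[
g_\beta(\tix) \;\uparrow\; \tm_x(\cN(\tix)) \quad \text{as } \beta \downarrow 0.
\]
By the $0$-$1$ law of Lemma~\ref{lem:zeroum}, $\tm_x(\cN(\tix)) = 1$ for $\mu$-a.e.\ $x$, which, via the disintegration against the projection $\pi$, gives $g_\beta(\tix) \to 1$ for $\tm$-a.e.\ $\tix \in \TTf$.

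Now the Egorov step: applying Egorov's theorem to the monotone sequence $(g_{1/n})_{n\in\N}$, for any $\eta > 0$ there is a measurable set $\widetilde{E}_\eta \subset \TTf$ with $\tm(\widetilde{E}_\eta) > 1 - \eta$ on which the convergence is uniform. In particular, there exists $\beta = \beta(\eta) > 0$ such that $g_\beta(\tix) > 1/2$ for every $\tix \in \widetilde{E}_\eta$; for such $\tix$, the set $N_\beta(\tix)$ has positive $\tm_x$-measure and is therefore non-empty, yielding some $\tilde{y} \in \Sigma(\tix)$ with $\alpha(\tix, \tilde{y}) > \beta$.

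Finally, I would set $\eta = \sqrt{\gamma}$ and define $K' \eqdef \widetilde{E}_{\sqrt{\gamma}} \cap \pi^{-1}(K)$. Since $\tm(\pi^{-1}(K)) = \mu(K) > 1 - \gamma$, we estimate
\[
\tm(K') \;\geq\; 1 - \tm(\widetilde{E}_{\sqrt{\gamma}}^c) - \tm(\pi^{-1}(K)^c) \;>\; 1 - \sqrt{\gamma} - \gamma \;\geq\; 1 - 2\sqrt{\gamma},
\]
where the last inequality uses $\gamma \leq \sqrt{\gamma}$, valid for $\gamma \in (0,1)$ (the case $\gamma \geq 1$ being trivial). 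With $\beta = \beta(\sqrt{\gamma})$, this set $K'$ has all the required properties. The main technical subtlety I anticipate is the measurability of $g_\beta$ in $\tix$ (as opposed to just in $x$), since $N_\beta(\tix)$ genuinely depends on the past orbit $\tix$ and not only on its projection; however, this is handled routinely once one unravels the disintegration along fibers.
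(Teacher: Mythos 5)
Your argument is correct and is essentially the intended one: the paper defers to the Egorov-type argument of \cite[Lemma 11.2]{ALOS}, and your scheme --- disintegrate along fibers, use the $0$--$1$ law (Lemma~\ref{lem:zeroum}) to get $\tm_x(\cN(\tix))=1$ for $\tm$-a.e.\ $\tix$, exhaust $\cN(\tix)$ by the open sets $N_\beta(\tix)$, and apply Egorov (or simply monotone convergence of the measures of the sets $\{g_{1/n}>1/2\}$) to fix a single $\beta=\beta(\gamma)$ on a set of large measure --- is exactly that argument, with the final bookkeeping $1-\sqrt{\gamma}-\gamma\geq 1-2\sqrt{\gamma}$ handled correctly. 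As a minor remark, since you actually obtain $\tm_x(N_\beta(\tix))>1/2$ rather than mere non-emptiness, your proof also yields the slightly stronger form implicitly used later in Lemma~\ref{lem:matching}, where the transversal point $\tilde{y}$ is required to lie in the good set as well.
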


The above lemma is used in Section \ref{sec:matching} in the proof of Lemma \ref{lem:matching}, where we prove that we have ``abundance'' of the configurations we need to run our argument.

\section{Y-configurations: coupling and synchronization}
\label{sec:proof-main-lemma}

\subsection{Stopping times}
\label{sec:stopping-times}

Remember that the notation $\hat{\lambda}^c_{\tilde{x}}(n)$ stands for the Lyapunov norm of the n-th derivative restricted to the direction $E^c$, that is
$$\hat{\lambda}^c_{\tilde{x}}(n) \eqdef \Vert D\tif^n(\tix)|_{E^c} \Vert_{L,\tix_n}.$$

Consider $\tix \in \TTf$ and $\tix^u \in \w^u(\tix)$. Given a small number $\varepsilon>0$, we define the \emph{first stopping time} as the function $\tau:\Z\to\Z$ defined by
\begin{equation}
	\label{eq:tau}
 \tau(n)=\tau(\tix,\tix^u,n,\varepsilon) = \inf \left\{ k\in\N; \; \hat{\lambda}^c_{\tix^u}(k) \frac{\lambda^c_{\tix_{-n}}(n)}{\lambda^u_{\tix_{-n}}(n)} \geq \varepsilon \right\}. 	
\end{equation}
Given the value of $\tau(n)=\tau(\tix,\tix^u,n,\varepsilon)$ we define
\begin{equation}
    \label{eq:t}
t(n)=t(\tix,\tix^u,n,\varepsilon)=\inf \left\{k\in\N;\frac{\hat{\lambda}^c_{\tix}(n)}{\hat{\lambda}^c_{\tix^u}(\tau(n))} \geq 1 \right\}.    
\end{equation}
The function $t:\Z\to\Z$ is called the \emph{second stopping time}. We refer to the pair $(,\tau,t)$ as the \emph{stopping times}. These functions can be computed for $\tilde{\mu}$-almost every $\tix$ and for all $\tix^u \in \w^u(\tix)$ by Lemma \ref{lem:saturalyapunov}.

\begin{remark}
	\label{rem:black}
It follows from Remark~\ref{rem:lyapunov} that $\tau=\tau(\tix,\tix^u,n,\eps)$ and $t=t(\tix,\tix^u,\ell,\eps)$ it holds 
\[
\hat{\lambda}^c_{\tix^u}(\tau) \frac{\lambda^c_{\tix_{-n}}(n)}{\lambda^u_{\tix_{-n}}(n)}\asymp_{C}\eps,
\]
and also
\[
\hat{\lambda}^c_{\tix}(t)\asymp_{C}\hat{\lambda}^c_{\tix^u}(\tau),
\]
for a constant $C=C(f,\mu)$ depending only on the map $f$ and the measure $\mu$.
\end{remark}

These functions satisfy the following quasi-isometric estimate.

\begin{lemma}
\label{lem:quasiisometric}
There are constants $A >0$, $\Theta > 1$ (depending only on $(f,\mu)$) such that, for $\tilde{\mu}$-almost every $\tix \in \TTf$ and for all $\tix^u \in \w^u(\tix)$, and $m,n \in \N$, we have
\begin{itemize}
	\item $\Theta^{-1}|m - n|-A\leq|\tau(m)-\tau(n)|\leq \Theta|m - n|+A$,
	\item $\Theta^{-1}|m - n|-A\leq|t(m)-t(n)|\leq \Theta|m - n|+A$.
\end{itemize}
\end{lemma}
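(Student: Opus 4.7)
The plan is to combine three ingredients: (i) uniform domination of $E^c$ by $E^u$, (ii) the two-sided multiplicative bounds on $\hat{\lambda}^c$ from Lemma~\ref{lem:crescelyapunovcresce}, and (iii) the cocycle identity \eqref{eq:cocycle}. First, by Lemma~\ref{lem:defideph} and compactness of $\TTf$, there exist constants $0<\theta_1\le\theta_2<1$ such that $\theta_1\le \lambda^c_{\tilde z}/\lambda^u_{\tilde z}\le\theta_2$ for every $\tilde z$. Writing $R(n)\eqdef \lambda^c_{\tix_{-n}}(n)/\lambda^u_{\tix_{-n}}(n)$ and using the chain rule,
\[
\theta_1^{m-n}\le \frac{R(m)}{R(n)}\le \theta_2^{m-n}\qquad(m>n).
\]
Second, Lemma~\ref{lem:crescelyapunovcresce} yields a universal $\sigma=\sigma(f)$ such that $e^{k\lambda}\le \hat{\lambda}^c_{\tilde z}(k)\le e^{k\sigma}$ for every $k>0$ and $\tilde{\mu}$-a.e.\ $\tilde z$; in particular a single step multiplies $\hat{\lambda}^c$ by at most $e^\sigma$. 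Note also that $R$ is strictly decreasing, so $\tau$ is non-decreasing in $n$.

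Turning to $\tau$, the minimality in \eqref{eq:tau} gives $\hat{\lambda}^c_{\tix^u}(\tau(n))\ge\varepsilon/R(n)$ and $\hat{\lambda}^c_{\tix^u}(\tau(n)-1)<\varepsilon/R(n)$, so the one-step bound implies
\[
\varepsilon/R(n)\le \hat{\lambda}^c_{\tix^u}(\tau(n))<e^{\sigma}\varepsilon/R(n).
\]
For $m>n$, the cocycle identity yields $\hat{\lambda}^c_{\tilde f^{\tau(n)}(\tix^u)}(\tau(m)-\tau(n))= \hat{\lambda}^c_{\tix^u}(\tau(m))/\hat{\lambda}^c_{\tix^u}(\tau(n))$, which by the previous display is sandwiched between $e^{-\sigma}(R(n)/R(m))$ and $e^{\sigma}(R(n)/R(m))$, hence between $e^{-\sigma}\theta_2^{-(m-n)}$ and $e^{\sigma}\theta_1^{-(m-n)}$. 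Feeding these into the Lyapunov growth bounds and taking logarithms gives
\[
\frac{|\log\theta_2|}{\sigma}(m-n)-1\le \tau(m)-\tau(n)\le \frac{|\log\theta_1|}{\lambda}(m-n)+\frac{\sigma}{\lambda},
\]
which is the desired inequality. The case $m<n$ follows by symmetry; finitely many values of $n$ for which $\tau(n)=0$ are absorbed into $A$. For $t$, the strategy is identical: the minimality in \eqref{eq:t} gives $\hat{\lambda}^c_{\tix^u}(\tau(n))\le \hat{\lambda}^c_{\tix}(t(n))<e^{\sigma}\hat{\lambda}^c_{\tix^u}(\tau(n))$, so combining with the previous squeeze yields $\hat{\lambda}^c_{\tix}(t(n))\asymp \varepsilon/R(n)$ up to a factor in $[1,e^{2\sigma}]$, and repeating the cocycle-plus-Lyapunov argument on $\hat{\lambda}^c_{\tix}(t(\cdot))$ produces the quasi-isometric inequality for $t$ with constants of the same flavor.

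The main obstacle is not a deep one: the argument is bookkeeping with uniform constants. The delicate feature is that the exponents $\lambda$ and $\sigma$ in Lemma~\ref{lem:crescelyapunovcresce} are distinct, which is precisely what forces the conclusion to take the quasi-isometric form $\Theta^{-1}|m-n|-A\le|\tau(m)-\tau(n)|\le \Theta|m-n|+A$ with $\Theta>1$ rather than an exact affine comparability. All the remaining uniform ingredients—domination bounding $\lambda^c/\lambda^u$ away from $0$ and $1$, the cocycle and growth bounds for $\hat{\lambda}^c$, and the single-step control passing from $\tau(n)-1$ to $\tau(n)$—are standard and hold on a set of full $\tilde{\mu}$-measure.
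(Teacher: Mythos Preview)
Your proposal is correct and follows essentially the same approach as the paper: both arguments use the uniform domination bounds on $\lambda^c/\lambda^u$, the squeeze $\varepsilon\le \hat{\lambda}^c_{\tix^u}(\tau(n))R(n)<e^{\sigma}\varepsilon$ coming from the minimality in \eqref{eq:tau} together with the one-step bound, and then the cocycle identity combined with Lemma~\ref{lem:crescelyapunovcresce} to read off the quasi-isometric estimate. The only cosmetic difference is that for $t$ the paper chains through the already-established bound on $\tau$, whereas you observe directly that $\hat{\lambda}^c_{\tix}(t(n))\asymp \varepsilon/R(n)$ (with a slightly worse constant $e^{2\sigma}$) and rerun the same argument; both routes are equivalent.
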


\begin{proof}
    For a $\tilde{\mu}$-regular point $\tix \in \TTf$, $\tix^u \in \w^u(\tix)$ and $\varepsilon > 0$, consider $\tau(n) = \tau(\tix, \tix^u, n, \varepsilon)$ and $t(n) = t(\tix, \tix^u, n, \varepsilon)$. From \eqref{eq:tau} we have

    $$\hat{\lambda}^c_{\tix^u}(\tau(n)) \frac{\lambda^c_{\tix_{-n}}(n)}{\lambda^u_{\tix_{-n}}(n)} \geq \varepsilon \mbox{, and } \hat{\lambda}^c_{\tix^u}(\tau(n)-1) \frac{\lambda^c_{\tix_{-n}}(n)}{\lambda^u_{\tix_{-n}}(n)} < \varepsilon.$$

    Using equation \eqref{eq:cocycle} and Lemma \ref{lem:crescelyapunovcresce}, the above inequalities imply that
    \begin{equation}
        \label{eq:tau-bounded1}
        \varepsilon \leq \hat{\lambda}^c_{\tix^u}(\tau(n)) \frac{\lambda^c_{\tix_{-n}}(n)}{\lambda^u_{\tix_{-n}}(n)} = \hat{\lambda}^c_{\tilde{f}^{\tau(n) - 1}(\tix^u)}(1) \hat{\lambda}^c_{\tix^u}(\tau(n) - 1) \frac{\lambda^c_{\tix_{-n}}(n)}{\lambda^u_{\tix_{-n}}(n)} < e^\sigma \varepsilon,
    \end{equation}
    for all $n \in \mathbb{N}$.

    By compactness and Lemma~\ref{lem:defideph}, one easily deduces that there are $\chi_1^d < \chi_2^d < 0$ such that $$e^{\chi_1^dn} < d^n_{\tix} \eqdef \frac{\lambda^c_{\tix_{-n}}(n)}{\lambda^u_{\tix_{-n}}(n)} < e^{\chi_2^dn}.$$
    
    By equation \eqref{eq:tau-bounded1} we have, given $m \in \mathbb{N}$, $m \geq n$, that
    $$\varepsilon e^{\chi_1^d(m-n)} \leq \hat{\lambda}^c_{\tix^u}(\tau(n)) \underbrace{d^{m-n}_{\tilde{f}^{n}(\tix)}d^{n}_{\tix}}_{d^{m}_{\tix}} < e^{\chi_2^d(m-n)} e^\sigma \varepsilon.$$
    
    Thus,
    \begin{equation}
        \label{eq:tau-bounded2}
        \varepsilon e^{\chi_1^d(m-n)} e^{\lambda(\tau(m) - \tau(n))} \leq \hat{\lambda}^c_{\tix^u}(\tau(m)) d^{n+m}_{\tix} < e^{\chi_2^d(m-n)} e^{\sigma(\tau(m) - \tau(n) + 1)} \varepsilon
    \end{equation}
    is given by using Lemma \ref{lem:crescelyapunovcresce} and the fact that
    $$\hat{\lambda}^c_{\tilde{f}^{\tau(n)}(\tix^u)}(\tau(m) - \tau(n)) = \dfrac{\hat{\lambda}^c_{\tix^u}(\tau(m))}{\hat{\lambda}^c_{\tix^u}(\tau(n))}.$$

    The left hand inequality on equation \eqref{eq:tau-bounded1} and the right hand side of equation \eqref{eq:tau-bounded2} imply that
    $$\varepsilon \leq e^{\chi_2^d(m-n)} e^{\sigma(\tau(m) - \tau(n) + 1)} \varepsilon,$$
    and, by using the remaining inequalities on said equations, we have also
    $$\varepsilon e^{\chi_1^d(m-n)} e^{\lambda(\tau(m) - \tau(n))} < e^\sigma \varepsilon.$$
    We may assume without loss of generality that $\eps$ is small enough so that $e^{\sigma}\eps<1$. Then, by taking the logarithm on the last two inequalities, we have
    \begin{align*}
        \chi_1^d(m-n) + \lambda(\tau(m) - \tau(n) - 1) &< 0 <  \chi_2^d(m-n) + \sigma(\tau(m) - \tau(n) + 1)\\
        \implies \dfrac{\chi_1^d}{\lambda} (m-n) - 1 &< - \tau(m) + \tau(n) < \dfrac{\chi_2^d}{\sigma} (m-n) + 1,
    \end{align*}
    which gives us the bound for $\tau$.

    A similar argument gives us the bound for $t$. Indeed, the definition of $t(n)$ implies that
    \begin{equation}
        \label{eq:t-bounded}
        1 \leq \dfrac{\hat{\lambda}^c_{\tix}(t(n))}{\hat{\lambda}^c_{\tix^u}(\tau(n))} < e^\sigma.
    \end{equation}
    for all $n \in \mathbb{N}$.
    
    Lemma \ref{lem:crescelyapunovcresce} gives us that
    $$e^{\lambda (t(m) - t(n)) - \sigma (\tau(m) - \tau(n))} < \dfrac{\hat{\lambda}^c_{\tilde{f}^{t(n)}(\tix)}(t(m) - t(n))}{\hat{\lambda}^c_{\tilde{f}^{\tau(n)}(\tix^u)}(\tau(m) - \tau(n))} < e^{\sigma (t(m) - t(n)) - \lambda (\tau(m) - \tau(n))},$$
    which in turn implies
    $$e^{\lambda (t(m) - t(n)) - \sigma (\tau(m) - \tau(n))} < \dfrac{\hat{\lambda}^c_{\tix}(t(m))}{\hat{\lambda}^c_{\tix^u}(\tau(m))} < e^{\sigma (t(m) - t(n) + 1) - \lambda (\tau(m) - \tau(n))},$$
    by equations \eqref{eq:cocycle} and \eqref{eq:t-bounded}.

    The inequalities from both the above equation and \eqref{eq:t-bounded} imply that
    $$e^{\lambda (t(m) - t(n)) - \sigma (\tau(m) - \tau(n) + 1)} < 1 < e^{\sigma (t(m) - t(n) + 1) - \lambda (\tau(m) - \tau(n))},$$
    and then, by taking the logarithm, we have
    \begin{align*}
        \lambda (t(m) - t(n)) - \sigma (\tau(m) - \tau(n) + 1) &< 0 < \sigma (t(m) - t(n) + 1) - \lambda (\tau(m) - \tau(n))\\
        \implies - \dfrac{\sigma}{\lambda} (\tau(m) - \tau(n) + 1) &< - t(m) + t(n) <  1 - \dfrac{\lambda}{\sigma} (\tau(m) - \tau(n)),
    \end{align*}
    which, using the bound for $\tau$, gives us the bound for $t$.
\end{proof}

Lemma~\ref{lem:quasiisometric} is a key point where our argument deviates from \cite{ALOS}. Here, we need to work with Lyapunov norms, so that the quasi-isometric estimates for stopping times \emph{do not depend} on a Lusin set. This is a crucial ingredient that we use in the Fubini-like argument of Eskin--Lindenstrauss (Lemma \ref{lem:claim64}).  

\subsection{Y-configurations}
\label{sec:Y-config}

We now introduce a dynamical configuration that will be a central tool in our argument. 

\begin{definition}[$Y$-configurations]
Given points $\tix\in\TTf$, $\tix^u\in\w^u_1(\tix)$ and numerical parameters $\ell\in\N$, $\eps>0$ and $T>1$, a \textit{$(\ell,T)$ $Y$-configuration} is the data of five points $(\tix_{-\ell},\tix,\tix^u,\tix^u_m,\tix_{\hat{m}})$, with the condition that the integers $m,\hat{m}$ satisfy
\[
|m-\tau(\tix,\tix^u,\ell,\eps)|<T\:\:\:\textrm{and}\:\:\:|\hat{m}-t(\tix,\tix^u,\ell,\eps)|<T.
\]
\end{definition}

The integers $m,\hat{m}$ are to be thought as the two upper (time) lengths of the configuration, while $\ell$ is the lower (time) length. Ideally, one would like have
$m=\tau(\tix,\tix^u,\ell,\eps)$ and $\hat{m}=t(\tix,\tix^u,\ell,\eps)$. However, to fully accomplish Eskin--Mirzakhani's strategy in our context we need the matching argument, because given two configurations the stopping times need not to be equal. To overcome this difficulty we use the synchronization idea of Eskin--Lindenstrauss: up to a time length bounded by $T>0$, we can find $m,\hat{m}$ such that both configurations are in \emph{good matching} (see Lemma~\ref{lem:matching}).

To emphasize the most relevant data in the definition, we denote a $Y$-configuration by $Y(\tix,\tix^u,\ell,\eps)$.

\begin{figure}[ht]
    \begin{tikzpicture}
    
    \coordinate (x) at (-1.5,.18); 
    \coordinate (xu) at (1.5,-.21);
    \coordinate (x-l) at (-3,-2);
    \coordinate (xm) at (-2,2); 
    \coordinate (xum) at (2,2);
    
    \draw[gray!80, thick,->,>=stealth, densely dotted] (x) -- (xm) node[pos=0.5,left]{$\tilde{f}^{\hat{m}}$};
    \draw[gray!80, thick,->,>=stealth, densely dotted] (xu) -- (xum) node[pos=0.5,right]{$\tilde{f}^{m}$};
    \draw[gray!80, thick,->,>=stealth, densely dotted] (x-l) -- (x) node[pos=0.5,right]{$\tilde{f}^{\ell}$};;
    \draw[red!80!black, thick] (-3,0) .. controls (-1.5,.25) and (-1.5,.25) .. (0,0) .. controls (1.5,-.25) and (1.5,-.25) .. (3,0);
    
    \filldraw (x) circle (1pt) node[below]{$\tix$};
    \filldraw (xu) circle (1pt) node[below]{$\tix^u$};
    \filldraw (x-l) circle (1pt) node[right]{$\tix_{-\ell}$};
    \filldraw (xm) circle (1pt) node[right]{$\tix_{\hat{m}}$};
    \filldraw (xum) circle (1pt) node[left]{$\tix^u_{m}$};
    \draw (3,0) node[right]{$\w^u_{1}(\tix)$};
    
    \end{tikzpicture}
    \caption{\label{fig:yconfig} The five points in the $Y$ configuration and the dynamical relation between them.}
\end{figure}
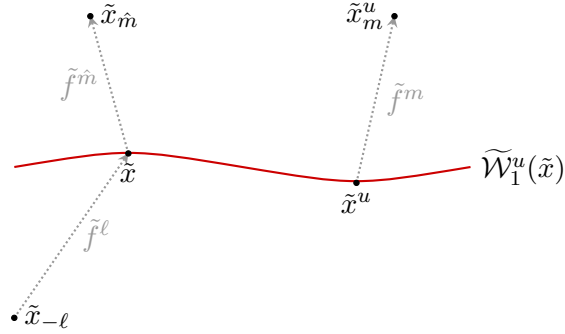

\begin{definition}[Quadrilaterals]
 Given $\beta>1$ and $\ell\in\N$, we say that four points $\tix,\tix^u,\tilde{y},\tilde{y}^u$ generate a $(\beta,\ell)$-quadrilateral if $\tix_{-\ell}\in\Sigma(\tilde{y}_{-\ell})$, $\tix^u\in\w^u(\tix)$ and, moreover:
\begin{enumerate}
    \item $\alpha(\tix_{-\ell},\tilde{y}_{-\ell})>\beta^{-1}$,
    \item $\beta^{-1}<d^u(\tix,\tix^u)<1$,
    \item $\tilde{y}^u=H^{cs}_{\tix,\tilde{y}}(\tix^u)$.
\end{enumerate}
\end{definition}

\subsubsection{Center deviation along coupled configurations}

When the points $\tix,\tix^u,\tilde{y},\tilde{y}^u$ generate a quadrilateral, we say that the associated configurations $Y(\tix,\tix^u,\ell,\eps)$ and $Y(\tilde{y},\tilde{y}^u,\ell,\eps)$ are \emph{coupled}. This notion is devised so that one has a very precise control (up to uniform constants) on the displacement along central leaves after iteration by the stopping times. 

The result below follows a similar strategy as Lemma 9.2 in \cite{ALOS}, but we need to be much more careful in our argument due to the lack of uniform expansion on the center. Notice that we also need to require more assumptions regarding which points belong to $\cL$.  In particular, a subtle point is that we discuss a property of configurations whose upper time lengths $m$, $\hat{m}$ are the first and second stopping times, respectively.

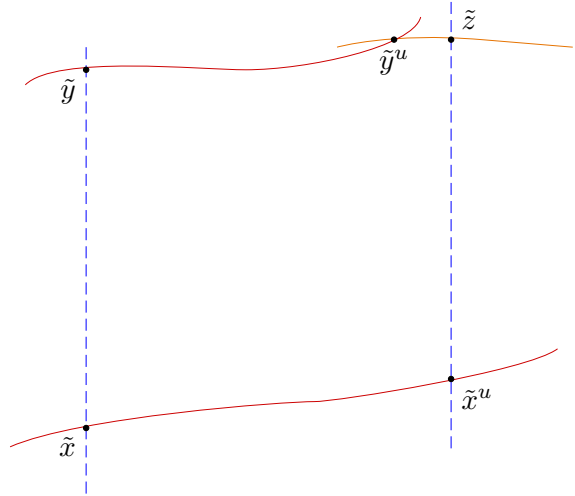
\begin{figure}[ht]
    \centering
    \begin{tikzpicture}[y=1cm, x=1cm]
    
    \path[draw=blue,line cap=butt,line join=miter,line width=0.0cm,miter limit=4.0,dash pattern=on 0.16cm off 0.08cm] (2.2, 8.4) -- (2.2, 2.4);
    \path[draw=blue,line cap=butt,line join=miter,line width=0.0cm,miter limit=4.0,dash pattern=on 0.16cm off 0.08cm] (7.0, 9.0) -- (7.0, 3.0);
    \path[draw=orange!90!black,line cap=butt,line join=miter,line width=0.0cm] (5.5, 8.4)..  controls (5.5, 8.4) and (6.2, 8.6) .. (7.4, 8.5).. controls (8.6, 8.4) and (8.6, 8.4) .. (8.6, 8.4);
    \path[draw=red!80!black,line cap=butt,line join=miter,line width=0.0cm] (1.2, 3.1).. controls (2.1, 3.5) and (4.8, 3.7) .. (5.2, 3.7).. controls (5.5, 3.7) and (8.0, 4.1) .. (8.4, 4.4);
    \path[draw=red!80!black,line cap=butt,line join=miter,line width=0.0cm] (1.4, 7.9).. controls (1.8, 8.3) and (3.7, 8.1) .. (4.3, 8.1).. controls (5.1, 8.1) and (6.5, 8.4) .. (6.6, 8.8);
    
    \filldraw (2.2,8.1) circle (1pt) node[below left]{$\tilde{y}$};
    \filldraw (6.25,8.5) circle (1pt) node[below]{$\tilde{y}^u$};
    \filldraw (2.2,3.35) circle (1pt) node[below left]{$\tilde{x}$};
    \filldraw (7,4) circle (1pt) node[below right]{$\tilde{x}^u$};
    \filldraw (7,8.5) circle (1pt) node[above right]{$\tilde{z}$};
  
    \end{tikzpicture}

    \caption{Coupled configurations on the inverse limit space $\TTf$.}
    \label{fig:acopladas2}
\end{figure}

\begin{lemma}
    \label{lem:driftestimate}
Given $\cL\subset\TTf$ a compact Lusin set, $\beta>1$ there exist $\hat{\beta}=\hat{\beta}(\beta,\cL)>1$ and $\ell_0=\ell_0(\cL,\beta,f)$ such that, if $\ell\geq\ell_0(\cL)$ and $\eps>0$ are such that there exists some length $\ell$ $Y$-configuration 
$Y(\tix,\tix^u,\ell,\eps)$ which is coupled with another $Y$-configuration $Y(\tilde{y},\tilde{y}^u,\ell,\eps)$ so that $\tix,\tix^u,\tilde{y},\tilde{y}^u$ generate a $(\beta,\ell)$ 
quadrilateral and such that $\tix^u,\tix^u_\tau\in\cL$, where $\tau=\tau(\tix,\tix^u,\ell,\eps)$ is the first stopping time, then 
\[
\eps\hat{\beta}^{-1}\leq|\cR^c_{\tix^u_\tau}(\tilde{y}^u_\tau)|\leq\eps\hat{\beta}.
\]
\end{lemma}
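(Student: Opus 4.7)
The strategy is to use the $\tif$-invariance of the normal form $\cR^c$ (Proposition~\ref{prop:formanormal}(2)) to reduce the estimate to a computation at the bottom of the configuration (time $-\ell$), where the geometry of the quadrilateral directly controls the size of the displacement. Since the center lamination $\w^c$ descends to a continuous $f$-invariant foliation $\cW^c$ of $\TT$ (Proposition~\ref{prop:notannulus}), there is a canonical lift $\tilde{z}\in \tW^c(\tix^u_{-\ell})$ with $\pi(\tilde{z})=\pi(\tilde{y}^u_{-\ell})$; by dynamical invariance $\pi(\tif^{\ell+m}(\tilde{z}))=\pi(\tilde{y}^u_m)$, and we interpret $\cR^c_{\tix^u_m}(\tilde{y}^u_m)$ as $\cR^c_{\tix^u_m}(\tif^{\ell+m}(\tilde{z}))$. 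Iterating the invariance rule yields
\[
|\cR^c_{\tix^u_m}(\tilde{y}^u_m)|\;=\;\lambda^c_{\tix^u_{-\ell}}(\ell+m)\cdot|\cR^c_{\tix^u_{-\ell}}(\tilde{z})|,
\]
so the plan is to estimate the two factors on the right separately.

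For the initial displacement, I work on $\TT$ near the common projection $q=\pi(\tix_{-\ell})=\pi(\tilde{y}_{-\ell})$. The local unstable curves $\cW^u_{\loc}(\tix_{-\ell})$ and $\cW^u_{\loc}(\tilde{y}_{-\ell})$ are two $C^1$ curves through $q$ whose tangent directions form an angle $\alpha\in(\beta^{-1},\pi/2)$. The invariance of $\cR^u$ together with Remark~\ref{rem:R-dist} gives $d^u(q,\pi(\tix^u_{-\ell}))\asymp_{\beta,f}\lambda^u_{\tix_{-\ell}}(\ell)^{-1}$. For $\ell\geq\ell_0(\cL,\beta,f)$ this length is small enough that both unstable curves and the center foliation linearize in a single chart at $q$; elementary planar trigonometry with $\alpha\geq\beta^{-1}$ yields
\[
d^c(\pi(\tix^u_{-\ell}),\pi(\tilde{y}^u_{-\ell}))\;\asymp_{\beta,f}\;\lambda^u_{\tix_{-\ell}}(\ell)^{-1},
\]
and Remark~\ref{rem:R-dist} converts this into $|\cR^c_{\tix^u_{-\ell}}(\tilde{z})|\asymp_{\beta,f}\lambda^u_{\tix_{-\ell}}(\ell)^{-1}$. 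For the cocycle factor, I split $\lambda^c_{\tix^u_{-\ell}}(\ell+m)=\lambda^c_{\tix^u_{-\ell}}(\ell)\cdot\lambda^c_{\tix^u}(m)$: Lemma~\ref{lem:bounded-dist}(1) applied to the H\"older function $\log\lambda^c_{(\cdot)}$ along the common backward unstable orbit of $\tix,\tix^u$ gives $\lambda^c_{\tix^u_{-\ell}}(\ell)\asymp_f\lambda^c_{\tix_{-\ell}}(\ell)$, while the hypothesis $\tix^u,\tix^u_m\in\cL$ together with Lemma~\ref{lem:lyapunovriemmann} gives $\lambda^c_{\tix^u}(m)\asymp_{\cL}\hat{\lambda}^c_{\tix^u}(m)$. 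Combining,
\[
|\cR^c_{\tix^u_m}(\tilde{y}^u_m)|\;\asymp_{\cL,\beta,f}\;\hat{\lambda}^c_{\tix^u}(m)\cdot\frac{\lambda^c_{\tix_{-\ell}}(\ell)}{\lambda^u_{\tix_{-\ell}}(\ell)}.
\]
The defining inequalities of $\tau(\ell)$ (cf.\ equation~\eqref{eq:tau-bounded1}) give $\hat{\lambda}^c_{\tix^u}(\tau(\ell))\lambda^c_{\tix_{-\ell}}(\ell)/\lambda^u_{\tix_{-\ell}}(\ell)\in[\eps,e^{\sigma}\eps]$, and the slack $|m-\tau(\ell)|<T$ combined with Lemma~\ref{lem:crescelyapunovcresce} upgrades this to $\hat{\lambda}^c_{\tix^u}(m)\asymp_T\hat{\lambda}^c_{\tix^u}(\tau(\ell))$. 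The last two asymptotics yield the result with $\hat{\beta}=\hat{\beta}(\beta,\cL,T)$.

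The main obstacle is the geometric step controlling the initial displacement, because one must phrase a planar transversality estimate intrinsically on $\TTf$ through the auxiliary lift $\tilde{z}$; this is where the identification of center leaves on $\TTf$ and on $\TT$ from Proposition~\ref{prop:notannulus} is essential. Unlike the Anosov setting of \cite{ALOS}, no uniform center expansion is available, and the dependence of $\hat{\beta}$ on $\cL$ is unavoidable: it enters precisely through the Lyapunov--Riemannian comparison at the endpoints $\tix^u$ and $\tix^u_m$. The threshold $\ell_0(\cL,\beta,f)$ is dictated by the scale at which both unstable curves and the center foliation at $q$ can be replaced by their tangent lines uniformly in $\beta$.
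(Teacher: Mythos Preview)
Your approach is essentially the same as the paper's: estimate the center displacement at time $-\ell$ from the quadrilateral geometry, propagate forward via the linear action of normal forms, compare Riemannian and Lyapunov center norms on the Lusin set, and conclude using the defining inequalities of $\tau(\ell)$ together with the slack $|m-\tau(\ell)|<T$.

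There is, however, a genuine gap in your conversion step at time $-\ell$. You invoke Remark~\ref{rem:R-dist} to pass from $d^c(\pi(\tix^u_{-\ell}),\pi(\tilde{y}^u_{-\ell}))$ to $|\cR^c_{\tix^u_{-\ell}}(\tilde{z})|$ with a constant depending only on $\beta$ and $f$. But Remark~\ref{rem:R-dist} says explicitly that the center comparison $d^c\asymp d^c_{\cR}$ is uniform only for base points in a Lusin set, and you have no control placing $\tix^u_{-\ell}$ in $\cL$ (the hypotheses give $\tix^u,\tix^u_m\in\cL$, not $\tix^u_{-\ell}$). Relatedly, you do not verify that the lift $\tilde{z}$ actually lies in the Pesin manifold $\tW^c(\tix^u_{-\ell})$, which is where $\cR^c_{\tix^u_{-\ell}}$ is a priori defined; working with the global center leaf $\w^c$ from Proposition~\ref{prop:notannulus} does not by itself give this.

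The paper sidesteps exactly this issue by splitting the forward propagation in two: first it uses distortion (Lemma~\ref{lem:bounded-dist}) to transport the center displacement from time $-\ell$ to time $0$, obtaining $d^c(\tix^u,\tilde{y}^u)\asymp_C\lambda^c_{\tix_{-\ell}}(\ell)/\lambda^u_{\tix_{-\ell}}(\ell)$, and simultaneously checks via domination that the segment $[\tix^u,\tilde{y}^u]$ lies in $\tW^c_{\loc}(\tix^u)$; only then, at the base point $\tix^u\in\cL$, does it convert $d^c$ to $|\cR^c_{\tix^u}(\tilde{y}^u)|$ with a constant depending on $\cL$. After that, the linear rule $|\cR^c_{\tix^u_m}(\tilde{y}^u_m)|=\lambda^c_{\tix^u}(m)\,|\cR^c_{\tix^u}(\tilde{y}^u)|$ finishes as in your argument. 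Your plan is easily repaired by moving the $d^c\leftrightarrow\cR^c$ comparison to time $0$ in the same way.
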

\begin{proof}
From the definition of a quadrilateral we know that $d^u(\tix,\tix^u)<1$. By the distortion estimate (Lemma \ref{lem:distodynball}), if we denote by $J$ the segment contained in $\w^u(\tix)$ with end points $\tix$ and $\tix^u$, we have, for every $\tilde{z}\in J$, that
\begin{equation}
    \label{eq:distunstable}
    \lambda^u_{\tilde{z}_{-\ell}}(j)\asymp_C \lambda^u_{\tilde{x}_{-\ell}}(j),
\end{equation}
for every $j=1,...,\ell$, where $C=C(f)>1$ is a constant depending only on $f$. One deduces that $d^u(\tix_{-\ell},\tix^u_{-\ell})\asymp_C\left(\lambda^u_{\tilde{x}_{-\ell}}(\ell)\right)^{-1}$. Thus, the unstable segment $f^{-\ell}(J)$ has length $\asymp_C\left(\lambda^u_{\tilde{x}_{-\ell}}(\ell)\right)^{-1}$. Since $\alpha(\tix_{-\ell},\tilde{y}_{-\ell})>\beta^{-1}$, a simple geometrical argument, using the continuity of the unstable bundle $E^u$ on the space $\TTf$ yields $d^c(\tix^u_{-\ell},\tilde{y}^u_{-\ell})\asymp_C\left(\lambda^u_{\tilde{x}_{-\ell}}(\ell)\right)^{-1}$, for every $\ell\geq\ell_0(\beta)$ where the new constant $C=C(f,\beta)$ now depends on $\beta$. 

We claim that 
\[
|\cR^c_{\tix^u}(\tilde{y}^u)|\asymp_C\frac{\lambda^c_{\tix_{-\ell}}(\ell)}{\lambda^u_{\tilde{x}_{-\ell}}(\ell)},
\]
for some constant $C=C(\cL,f,\beta)$. To prove this claim, first notice that the segment of center manifold joining the points $\tix^u$ and $\tilde{y}^u$ is contained inside the local Pesin manifold $\tW^c_{\loc}(\tix^u)$. Indeed, from the distortion Lemma~\ref{lem:bounded-dist} we deduce that 
\[
\operatorname{length}\left(\tif^{-\ell}(\tW^c_{\loc}(\tix^u))\right)\asymp_C\lambda^c_{\tix^u_{-\ell}}(\ell)^{-1}\asymp_C\lambda^c_{\tix_{-\ell}}(\ell)^{-1},
\]
for some constant $C=C(\cL,\tif,\beta)$, depending also on $\cL$. Since $d^c(\tix^u_{-\ell},\tilde{y}^u_{-\ell})\asymp_C\left(\lambda^u_{\tilde{x}_{-\ell}}(\ell)\right)^{-1}$, the domination of the partial hyperbolic splitting implies that $d^c(\tix^u_{-\ell},\tilde{y}^u_{-\ell})$ is exponentially smaller than the length of $\tif^{-\ell}(\tW^c_{\loc}(\tix^u))$. Therefore, for $\ell$ sufficiently large (depending on the constant $C(\cL,\beta,f))$, we have that the entire center segment with endpoints $\tix^u$ and $\tilde{y}^u$ is contained in the local Pesin manifold $\tW^c_{\loc}(\tix^u)$. The distortion Lemma~\ref{lem:bounded-dist} then yields
\[
d^c(\tix^u,\tilde{y}^u)\asymp_C\lambda^c_{\tix^u_{-\ell}}(\ell)d^c(\tix^u_{-\ell},\tilde{y}^u_{-\ell})\asymp_C\lambda^c_{\tix{-\ell}}(\ell)\left(\lambda^u_{\tilde{x}_{-\ell}}(\ell)\right)^{-1}.
\]
Now the claim follows, for on the Lusin set $\cL$ the quantity $\cR^c_{\tix^u}(\tilde{y}^u)$ is uniformly comparable to $d^c(\tix^u,\tilde{y}^u)$.

Since the dynamics acts as a linear expansion on normal forms, this gives us that 
\[
|\cR^c_{\tix^u_\tau}(\tilde{y}^u_\tau)|\asymp_C\frac{\lambda^c_{\tix^u}(\tau)\times\lambda^c_{\tix_{-\ell}}(\ell)}{\lambda^u_{\tix_{-\ell}}(\ell)}.
\]
Since $\tix^u_\tau\in\cL$, we have that $\lambda^c_{\tix^u}(\tau)$ and $\hat{\lambda}^c_{\tix^u}(\tau)$ are comparable by a constant depending on $\cL$. The lemma now follows from Remark~\ref{rem:black}.
\end{proof}

Our argument above also yields the following distortion estimate. 

\begin{lemma}
    \label{lem:distquadrilateros}
Under the same assumptions as in Lemma~\ref{lem:driftestimate}, there exists a constant $C=C(\cL,\beta,f)>1$ such that, for every $\eps>0$ sufficiently small,
\begin{enumerate}
    \item $\dfrac{\hat{\lambda}^c_{\tix^u}(\tau)}{\hat{\lambda}^c_{\tilde{y}^u}(\tau)}\asymp_C 1$ and
    \item   $\dfrac{\lambda^c_{\tix^u}(\tau(\ell))}{\lambda^c_{\tilde{y}^u}(\tau(\ell))}\asymp_C 1$.
\end{enumerate}
\end{lemma}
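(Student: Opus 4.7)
The plan is to reduce both statements to the distortion estimates from Lemmas~\ref{lem:distorlyp} and \ref{lem:distorlusin}, leveraging the geometric information already extracted in the proof of Lemma~\ref{lem:driftestimate}. Recall from that proof two crucial facts: for $\ell$ sufficiently large (depending on $\cL,\beta,f$), the center arc joining $\tix^u$ and $\tilde{y}^u$ lies entirely inside the local Pesin center manifold $\tW^c_{\loc}(\tix^u)$; and the Riemannian center distance $d^c(\tix^u_m,\tilde{y}^u_m)$ is comparable to $\eps$, with a comparability constant depending on $\cL,\beta,T,f$ (using Remark~\ref{rem:R-dist} to pass from the normal form estimate of Lemma~\ref{lem:driftestimate} to the Riemannian one).

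To prove item (1), I would first choose $\eps$ sufficiently small so that $d^c(\tix^u_m,\tilde{y}^u_m)<r_0$, where $r_0\eqdef \min_{\cL}r>0$ is the minimum of the Pesin radius function $r$ on the Lusin set $\cL$. Combined with the hypothesis $\tix^u_m\in\cL$, this yields $\tilde{y}^u_m\in\tW^c_{r(\tix^u_m)}(\tix^u_m)$, so Lemma~\ref{lem:distorlyp} applied with $n=m$ gives directly $\hat{\lambda}^c_{\tix^u}(m)/\hat{\lambda}^c_{\tilde{y}^u}(m)\asymp_{C(\cL)}1$.

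For item (2), let $j$ be any integer with $|j-\tau(\ell)|\leq T$, so that $|j-m|\leq 2T$. If $j\leq m$, Lemma~\ref{lem:distorlusin} applied at $n=m$ with its index $\ell$ taken equal to $j$ directly yields the desired bound. Otherwise, I would use the cocycle identity
\[
\frac{\lambda^c_{\tix^u}(j)}{\lambda^c_{\tilde{y}^u}(j)}=\frac{\lambda^c_{\tix^u}(m)}{\lambda^c_{\tilde{y}^u}(m)}\cdot\frac{\lambda^c_{\tif^m(\tix^u)}(j-m)}{\lambda^c_{\tif^m(\tilde{y}^u)}(j-m)},
\]
whose first factor is controlled by Lemma~\ref{lem:distorlusin}, and whose second factor is bounded by $\|D\tif^{\pm 1}\|_\infty^{2T}$ since $|j-m|\leq 2T$ and $f$ is a $C^1$ local diffeomorphism of a compact manifold. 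The main obstacle, and the reason we must take $\eps$ small, is ensuring that $\tilde{y}^u_m$ stays inside the \emph{local} Pesin manifold at $\tix^u_m$, whose radius $r(\tix^u_m)$ varies only measurably; it is precisely the hypothesis $\tix^u_m\in\cL$ that supplies the required uniform lower bound on this radius, after which the drift estimate from Lemma~\ref{lem:driftestimate} allows us to absorb the whole argument into a constant depending only on $\cL$, $T$, $\beta$ and $f$.
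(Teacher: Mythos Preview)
Your proposal is correct and follows essentially the same route as the paper. For item (1) both you and the paper take $\eps$ small enough that $\tilde{y}^u_m$ lands inside $\tW^c_{r(\tix^u_m)}(\tix^u_m)$ (the paper phrases this as $\hat{\beta}\eps<\inf_{\cL}r$) and then invoke Lemma~\ref{lem:distorlyp}; for item (2) the paper deduces the bound at time $m$ by converting the Lyapunov-norm estimate of item (1) to the background norm via the $\cL$-comparability, whereas you apply Lemma~\ref{lem:distorlusin} directly and then extend by at most $2T$ iterates using the cocycle identity --- a minor, equally valid variation.
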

\begin{proof}
    The first estimate follows since $\hat{\beta}\eps<\inf\{r(\tix);\tix\in\cL\}$, if $\eps$ is small enough, and one can apply Lemma~\ref{lem:distorlyp}. For the second estimate, we can change from the Lyapunov norm at time $m$ to the background norm, since they are comparable by a constant which depends on $\cL$, and the conclusion follows.
\end{proof}

In the next subsection we are going to prove a fundamental property about coupled configurations: their stopping times cannot differ too much. 

\subsection{Synchronization of stopping times}

The following result plays a substantial role in our application of the factorization method, for it allows us to use the same stopping time for both Y-configurations once they are coupled. Before stating the result we need a definition.

\begin{definition}
Given $Y(\tix,\tix^u,\eps,\ell)$ a $Y$-configuration, the \emph{unstable dynamical ball} associated with the configuration is the set 
\[
J(\tix^u)\eqdef\tif^{-\tau}(\w^u_1(\tix^u_{\tau})),
\]
where $\tau=\tau(\tix,\tix^u,\eps,\ell)$ is the stopping time. 
\end{definition}

\begin{proposition}[Synchronization]
    \label{prop:sync}
Given $\cL\subset\TTf$ a compact Lusin set and $\beta>1$, there exists $T=T(\cL,\beta,f,\mu)>1$ such that if $(\tix,\tix^u,\tilde{y},\tilde{y}^u)$ is a $(\beta,\ell)$-quadrilateral such that, $\tix,\tix^u,\tix^u_{\tau(\ell)},\tilde{y}\in\cL$, where $\tau(\ell)=\tau(\tix,\tix^u,\ell,\eps)$, then for every $\ta\in J(\tix^u)$ and every $\tb\in J(\tilde{y}^u)$ we have that 
\begin{align}
	\left|\tau(\tix,\ta,\eps,\ell)-\tau(\tilde{y},\tb,\eps,\ell)\right|&<T\nonumber \mbox{ and}\\
	\left|t(\tix,\ta,\eps,\ell)-t(\tilde{y},\tb,\eps,\ell)\right|&<T\nonumber.
\end{align}	
\end{proposition}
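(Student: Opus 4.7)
Let $\tau = \tau(\tix,\tix^u,\eps,\ell)$ and $\tau' = \tau(\tilde{y},\tilde{y}^u,\eps,\ell)$ be the two ``pivot'' stopping times. The plan is to first show that $\tau$ and $\tau'$ differ by a uniformly bounded amount, then transfer this control to arbitrary $\ta\in J(\tix^u)$ and $\tb\in J(\tilde{y}^u)$ by the distortion estimate on the unstable dynamical ball (Lemma~\ref{lem:distodynball}), and finally deduce the $t$-estimate by the same scheme. A key preliminary is that the quadrilateral condition $\tix_{-\ell}\in\Sigma(\tilde{y}_{-\ell})$ forces $x_k=y_k$ for all $k\geq -\ell$: since $E^c$ depends only on the base point in $\TT$ (Lemma~\ref{lem:centralcte}), the products $\lambda^c_{\tix_{-\ell}}(\ell)$ and $\lambda^c_{\tilde{y}_{-\ell}}(\ell)$ coincide exactly, while $E^u(\tix_{-\ell})$ and $E^u(\tilde{y}_{-\ell})$, although distinct, both lie in the unstable cone at $x_{-\ell}$, so the standard cone-field distortion argument yields $\lambda^u_{\tix_{-\ell}}(\ell)\asymp_K\lambda^u_{\tilde{y}_{-\ell}}(\ell)$ with $K=K(f)$. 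Writing $d_\ell,d'_\ell$ for the ratios appearing in~\eqref{eq:tau}, this gives $d_\ell\asymp_K d'_\ell$.

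To compare $\tau$ and $\tau'$: since $\tix^u_\tau\in\cL$, Lemma~\ref{lem:distquadrilateros}(1) applies at $m=\tau$ giving $\hat{\lambda}^c_{\tix^u}(\tau)\asymp_{C_1}\hat{\lambda}^c_{\tilde{y}^u}(\tau)$ with $C_1=C_1(\cL,\beta)$. The definitions of $\tau,\tau'$ together with Lemma~\ref{lem:crescelyapunovcresce} force $\hat{\lambda}^c_{\tix^u}(\tau)d_\ell$ and $\hat{\lambda}^c_{\tilde{y}^u}(\tau')d'_\ell$ to both lie in $[\eps,e^\sigma\eps]$; combining with the preceding comparisons yields that $\hat{\lambda}^c_{\tilde{y}^u}(\tau)/\hat{\lambda}^c_{\tilde{y}^u}(\tau')$ is bounded by a uniform constant, and the uniform exponential growth (Lemma~\ref{lem:crescelyapunovcresce} again) then bounds $|\tau-\tau'|\leq\Delta_1(\cL,\beta)$. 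For arbitrary $\ta\in J(\tix^u)$, since $\ta_\tau\in\w^u_1(\tix^u_\tau)$, backward contraction of $\w^u$ gives $\ta_j\in\w^u_1(\tix^u_j)$ for $j\leq\tau$, so Lemma~\ref{lem:distodynball} supplies $\hat{\lambda}^c_{\ta}(j)\asymp_{C_f}\hat{\lambda}^c_{\tix^u}(j)$, and the same stopping-time-to-stopping-time growth argument yields $|\tau(\tix,\ta,\eps,\ell)-\tau|\leq\Delta_2(f)$. The analogous bound holds for $\tb$ via Lemma~\ref{lem:distodynball} applied to $\tilde{y}^u$, and the triangle inequality produces the $\tau$-bound in the proposition.

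For the bound on $t$, the crucial extra input is that $x_0=y_0$ (from the common forward orbit), so $\tix,\tilde{y}\in\Sigma(x_0)$; combined with $\tix,\tilde{y}\in\cL$, Lemma~\ref{lem:distlusinnasfibras} gives $\hat{\lambda}^c_{\tix}(k)\asymp\hat{\lambda}^c_{\tilde{y}}(k)$ uniformly in $k\geq 0$. Chaining the distortion estimates already established---from $\ta$ to $\tix^u$, from $\tix^u$ to $\tilde{y}^u$, then from $\tilde{y}^u$ to $\tb$, together with the bounded-difference corrections between the stopping times---shows that the two thresholds $\hat{\lambda}^c_{\ta}(\tau(\tix,\ta))$ and $\hat{\lambda}^c_{\tb}(\tau(\tilde{y},\tb))$ are also uniformly comparable. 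The exponential growth of $\hat{\lambda}^c$ then converts this into a uniform bound on $|t(\tix,\ta,\eps,\ell)-t(\tilde{y},\tb,\eps,\ell)|$. The hardest part of the whole argument is that non-uniform expansion along the center restricts the distortion from Lemma~\ref{lem:distquadrilateros}(1) to the single pivot time $m=\tau$ where $\tix^u_\tau\in\cL$ is assumed; every comparison at a different time must be routed back to this pivot via the uniform cocycle bounds of Lemma~\ref{lem:crescelyapunovcresce}, and it is this careful bookkeeping---rather than any single deep ingredient---that makes the synchronization work.
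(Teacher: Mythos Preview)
Your proposal is correct and follows essentially the same route as the paper. The paper first isolates the pivot comparison $|\tau(\tix,\tix^u,\eps,\ell)-\tau(\tilde{y},\tilde{y}^u,\eps,\ell)|<T$ as a separate lemma (Lemma~\ref{lem:fatorizacao}), whose proof likewise invokes Lemma~\ref{lem:distquadrilateros} for the $\hat{\lambda}^c_{\tix^u}(\tau)/\hat{\lambda}^c_{\tilde{y}^u}(\tau)$ factor and the fiber distortion for $d_\ell/d'_\ell$; it then transfers to $\ta\in J(\tix^u)$, $\tb\in J(\tilde{y}^u)$ by the same dynamical-ball distortion you use (packaged there as Lemma~\ref{lem:distormagic}) and concludes by the triangle inequality, exactly as you do.
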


Although the proof has some technicalities the idea is rather simple: at one hand, by Lemma~\ref{lem:distquadrilateros} we have a bounded distortion estimate for the center derivative \emph{at the first stopping time $\tau(\tix,\tix^u,\ell,\eps)$}. On the other hand, the exponentially small quantities $d^{\ell}_{\tix}$ and $d^{\ell}_{\tilde{y}}$ are comparable since they come from a Hölder cocycle and $\tix$ and $\tilde{y}$ lie on the same fiber (hence become exponentially close in the future), by Lemma~\ref{lem:bounded-dist}. Since the first stopping time is the time required to increase from $d^{\ell}_{\tix}$ to $\eps$ and since the second stopping time is the time required to increase from $\hat{\lambda}^c_{\tix}$ to $\hat{\lambda}^c_{\tix}(\tau)$, Proposition~\ref{prop:sync} must hold.    

\subsubsection{Proof of Proposition~\ref{prop:sync}}
We begin by comparing the stopping times at $\tix^u$ and $\tilde{y}^u$. This is the key point in our implementation of the factorization method: we need only to assume good properties (continuity inside the Lusin set) for the $Y$-configuration on $\tix$. In our case, the idea is that we are able to ``transfer'' some of these properties to the $Y$-configuration on $\tilde{y}$.  

\begin{lemma}
\label{lem:fatorizacao}
Given $\cL\subset\TTf$ a compact Lusin set and $\beta>1$, there exists $T_0=T_0(\cL,\beta,f,\mu)>1$ such that, if $(\tix,\tix^u,\tilde{y},\tilde{y}^u)$ is a $(\beta,\ell)$-quadrilateral satisfying that $\tix,\tix^u,\tix^u_{\tau(\ell)}\in\cL$, where $\tau(\ell)=\tau(\tix,\tix^u,\ell,\eps)$, then we have that 
\begin{align}
    \left|\tau(\tix,\tix^u,\eps,\ell)-\tau(\tilde{y},\tilde{y}^u,\eps,\ell)\right|&<T_0\nonumber \mbox{ and}\\
    \left|t(\tix,\tix^u,\eps,\ell)-t(\tilde{y},\tilde{y}^u,\eps,\ell)\right|&<T_0\nonumber.
\end{align}
\end{lemma}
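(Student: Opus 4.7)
The plan is to reduce both $|\tau_x-\tau_y|$ and $|t_x-t_y|$ (writing $\tau_x = \tau(\tix,\tix^u,\eps,\ell)$, $\tau_y = \tau(\tilde{y},\tilde{y}^u,\eps,\ell)$, etc.) to multiplicative comparisons of the quantities appearing in their defining infima, and then to absorb those multiplicative gaps into an additive one via the uniform estimate $e^{\lambda}\leq\hat\lambda^c_{\tilde{x}}(1)\leq e^{\sigma}$ of Lemma~\ref{lem:crescelyapunovcresce}.

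For $\tau$, observe that $\pi(\tix)=\pi(\tilde{y})$ (forced by the quadrilateral condition $\tix_{-\ell}\in\Sigma(\tilde{y}_{-\ell})$) makes the forward orbits of $\tix_{-\ell}$ and $\tilde{y}_{-\ell}$ coincide, so Lemma~\ref{lem:centralcte} (fiber-constancy of $E^c$) yields $\lambda^c_{\tix_{-\ell}}(\ell)=\lambda^c_{\tilde{y}_{-\ell}}(\ell)$. Symmetrizing the geometric computation inside the proof of Lemma~\ref{lem:driftestimate}, both $\lambda^u_{\tix_{-\ell}}(\ell)^{-1}$ and $\lambda^u_{\tilde{y}_{-\ell}}(\ell)^{-1}$ are comparable, up to a factor $C(\beta,f)$, to the common length $d^c(\tix^u_{-\ell},\tilde{y}^u_{-\ell})$, so $\lambda^u_{\tix_{-\ell}}(\ell)\asymp\lambda^u_{\tilde{y}_{-\ell}}(\ell)$. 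Combined with $\hat\lambda^c_{\tix^u}(m)\asymp_{C(\cL,\beta)}\hat\lambda^c_{\tilde{y}^u}(m)$ for $m$ in a window of size $T$ around $\tau_x$ (Lemma~\ref{lem:distquadrilateros}(1), extended by Lemma~\ref{lem:crescelyapunovcresce}), the defining inequalities of $\tau_x$ and $\tau_y$ cross the threshold $\varepsilon$ at times whose input expressions differ only by a bounded multiplicative factor. The growth estimate forces $|\tau_x-\tau_y|\leq T_1(\cL,\beta)$.

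For $t$, the targets $\hat\lambda^c_{\tix^u}(\tau_x)$ and $\hat\lambda^c_{\tilde{y}^u}(\tau_y)$ are comparable by the previous step together with Lemma~\ref{lem:distquadrilateros}. It then suffices to show $\hat\lambda^c_{\tix}(n)\asymp\hat\lambda^c_{\tilde{y}}(n)$ uniformly. Writing $\hat\lambda^c_{\tilde{x}}(n)=\lambda^c_{\tilde{x}}(n)\,\xi_c(\tilde{x}_n)/\xi_c(\tilde{x})$ with $\xi_c(\tilde{x}):=\|v\|_{L,\tilde{x}}/\|v\|$ for a unit $v\in E^c(\tilde{x})$, the identity $\lambda^c_{\tix}(n)=\lambda^c_{\tilde{y}}(n)$ reduces the problem to bounding $\xi_c(\tilde{y})/\xi_c(\tix)$ and $\xi_c(\tix_n)/\xi_c(\tilde{y}_n)$. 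The time-zero bound comes from the chain
\[
\xi_c(\tix)\,\asymp_{C(f)}\,\xi_c(\tix^u)\,\asymp_{C(\cL,\beta)}\,\xi_c(\tilde{y}^u)\,\asymp_{C(f)}\,\xi_c(\tilde{y}),
\]
whose outer links apply Lemma~\ref{lem:saturalyapunov} along unstable leaves (with $d^u$ bounded thanks to the quadrilateral), and whose middle link applies Lemma~\ref{lem:saturalyapunov} along $\tW^c_{\loc}(\tix^u)$, which contains $\tilde{y}^u$ once $\ell$ is sufficiently large, exactly as in the proof of Lemma~\ref{lem:driftestimate}. The time-$n$ bound is then automatic from the cocycle identity
\[
\bigl(\xi_c(\tix_n)/\xi_c(\tilde{y}_n)\bigr)^2=\frac{\xi_c(\tix)^2+R_n}{\xi_c(\tilde{y})^2+R_n},\qquad R_n=\sum_{j=1}^{n}\lambda^c_{\tix}(j)^2 e^{-2\lambda j},
\]
which uses $\lambda^c_{\tix}(j)=\lambda^c_{\tilde{y}}(j)$ for all $j\geq -\ell$. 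One more application of Lemma~\ref{lem:crescelyapunovcresce} converts the bounded multiplicative gap into $|t_x-t_y|\leq T_2(\cL,\beta)$, and $T:=\max(T_1,T_2)$ finishes the proof.

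The main obstacle is controlling $\xi_c(\tilde{y})$ (equivalently, Pesin-type data at $\tilde{y}$) without assuming $\tilde{y}\in\cL$: the geometry of the $(\beta,\ell)$-quadrilateral is exactly what permits transporting $\cL$-regularity from $\tix^u$ across the local Pesin center manifold $\tW^c_{\loc}(\tix^u)$ to $\tilde{y}^u$, and then along an unstable leaf to $\tilde{y}$.
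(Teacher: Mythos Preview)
Your argument is correct and follows the paper's overall scheme (bound the multiplicative defect between the two defining expressions, then convert to an additive bound via Lemma~\ref{lem:crescelyapunovcresce}), but the $t$-part takes a genuinely different route worth highlighting.

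For $\tau$, both you and the paper compare $d^\ell_{\tix}\hat\lambda^c_{\tix^u}(\cdot)$ with $d^\ell_{\tilde y}\hat\lambda^c_{\tilde y^u}(\cdot)$. The paper handles $d^\ell_{\tix}/d^\ell_{\tilde y}$ by direct H\"older distortion of the cocycle along the fiber $\Sigma(\tix)\ni\tilde y$; your geometric detour through the common center displacement $d^c(\tix^u_{-\ell},\tilde y^u_{-\ell})$ reaches the same bound. One imprecision: invoking Lemma~\ref{lem:distquadrilateros}(1) ``for $m$ in a window of size $T$ around $\tau_x$'' is circular, since $T$ is what you are determining. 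The clean version is that Lemma~\ref{lem:distorlyp} with $n=\tau_x$ (using $\tix^u_{\tau_x}\in\cL$ and $\tilde y^u_{\tau_x}\in\tW^c_{\loc}(\tix^u_{\tau_x})$) already gives $\hat\lambda^c_{\tix^u}(m)\asymp_{C(\cL)}\hat\lambda^c_{\tilde y^u}(m)$ for \emph{every} $m\leq\tau_x$, which is exactly what is needed when $\tau_y<\tau_x$.

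For $t$, the paper controls $\hat\lambda^c_{\tix}(n)/\hat\lambda^c_{\tilde y}(n)$ via Lemma~\ref{lem:distlusinnasfibras}, whose hypothesis requires \emph{both} $\tix,\tilde y\in\cL$; indeed the paper's proof explicitly writes ``as we assumed that $\tilde y\in\cL$'', an assumption present in Proposition~\ref{prop:sync} but missing from the statement of Lemma~\ref{lem:fatorizacao}. Your chain $\xi_c(\tix)\asymp\xi_c(\tix^u)\asymp\xi_c(\tilde y^u)\asymp\xi_c(\tilde y)$ through Lemma~\ref{lem:saturalyapunov}, combined with the identity
\[
\frac{\xi_c(\tix_n)^2}{\xi_c(\tilde y_n)^2}=\frac{\xi_c(\tix)^2+R_n}{\xi_c(\tilde y)^2+R_n},\qquad R_n=\sum_{j=1}^{n}\lambda^c_{\tix}(j)^2e^{-2\lambda j},
\]
avoids this entirely: regularity is transported from $\tix^u\in\cL$ across $\tW^c_{\loc}(\tix^u)$ to $\tilde y^u$ and then along $\w^u(\tilde y)$, rather than assumed at $\tilde y$. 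This also supplies finiteness of the Lyapunov norm at $\tilde y^u$, which is what the paper needs $\tilde y\in\cL$ for in the reversibility step. Your argument therefore establishes the lemma exactly as stated, without the tacit extra hypothesis.
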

\begin{proof}
Let us denote  $\tau=\tau(\tix,\tix^u,\eps,\ell)$ and $\hat{\tau}=\tau(\tilde{y},\tilde{y}^u,\eps,\ell)$. Assume first that $\tau>\hat{\tau}$. 
Then, from the definition we get
\[
d^\ell_{\tilde{y}}\hat{\lambda}^c_{\tilde{y}^u}(\hat{\tau})\geq\eps.
\]
On the other hand, we can write
$$
    \frac{d^\ell_{\tix}\hat{\lambda}^c_{\tix^u}(\hat{\tau}+k)}{d^\ell_{\tilde{y}}\hat{\lambda}^c_{\tilde{y}^u}(\hat{\tau})} =\frac{d^\ell_{\tix}}{d^\ell_{\tilde{y}}}\times\frac{\hat{\lambda}^c_{\tix^u}(\hat{\tau})}{\hat{\lambda}^c_{\tilde{y}^u}(\hat{\tau})}\times\hat{\lambda}^c_{\tix^u_{\hat{\tau}}}(k)\nonumber \geq Ce^{k\lambda}\nonumber,
$$
for some constant $C=C(\cL,\beta,f)$. Indeed, for the first factor on the right-hand side of the equality we can apply the distortion lemma for the Hölder cocycle $\ta\mapsto d^{\ell}_{\ta}$ and points on the same fiber and lower bound it by a constant depending only on $f$. The second factor is bounded by a constant depending on $\cL$ and $\beta$ due to Lemma~\ref{lem:distquadrilateros}. The third factor is lower bounded by the exponential due to Lemma~\ref{lem:crescelyapunovcresce}.  

Combining these two estimates we deduce that, whenever $k$ is large enough so that $Ce^{k\lambda}\geq 1$, we are forced to have ${d^\ell_{\tix}\hat{\lambda}^c_{\tix^u}(\hat{\tau}+k)}\geq\eps$, which proves that $\tau$ cannot be larger than $\hat{\tau}+k$.   

Observe that, in the above argument, the roles of $\tix$, $\tilde{y}$ and $\tix^u$, $\tilde{y}^u$
are reversible. In fact, as we assumed that $\tilde{y}\in\cL$, we have that $\tilde{y}$ is, in particular, a $\mu$-generic point and so is every $\tilde{y}^u\in\w^u(\tilde{y})$.  Thus, Lemma~\ref{lem:crescelyapunovcresce} applies to $\tilde{y}^u$ as well. As a consequence, we can perform the same argument if we assume $\hat{\tau}>\tau$, and thus $|\tau-\hat{\tau}|$ is bounded by a constant depending on $\cL,\beta,f$. 

We now treat the function $t$. We define $t$ and $\hat{t}$ in a similar fashion. Assume $t>\hat{t}$. We have, from the definition, that
\[
\frac{\hat{\lambda}^c_{\tilde{y}^u}(\hat{\tau})}{\hat{\lambda}^c_{\tilde{y}}(\hat{t})}\geq 1.
\]
On the other hand, we can bound
\begin{align}
    \frac{\hat{\lambda}^c_{\tilde{x}}(\hat{t}+k)}{\hat{\lambda}^c_{\tilde{x}^u}(\tau)}\times\frac{\hat{\lambda}^c_{\tilde{y}^u}(\hat{\tau})}{\hat{\lambda}^c_{\tilde{y}}(\hat{t})}&=\frac{\hat{\lambda}^c_{\tilde{y}^u}(\tau)}{\hat{\lambda}^c_{\tilde{x}^u}(\tau)}\times\frac{\hat{\lambda}^c_{\tilde{x}}(\hat{t})}{\hat{\lambda}^c_{\tilde{y}}(\hat{t})}\times\hat{\lambda}^c_{\tilde{y}^u}(\hat{\tau}-\tau)\times\hat{\lambda}^c_{\tix_{\hat{t}}}(k)\nonumber\\
    &\geq Ce^{k\lambda}\nonumber,
\end{align}
for some constant $C=C(\cL,\beta,f)$. Indeed, the first two factors are bounded by the distortion estimates for the Lyapunov norm, while for the third factor we consider the lower bound in Lemma~\ref{lem:lyapunovriemmann} and the fact that $|\hat{\tau}-\tau|$ is bounded by a constant depending on $\cL,\beta,f$. The case $\hat{t}>t$ is treated in the same way as before.
\end{proof}

We now consider the oscillations of the first stopping time function inside dynamical balls. More precisely, we have the following

\begin{lemma}
	\label{lem:syncdynball}
There exists a constant $C=C(f,\mu)$ depending only on the map $f$ and the measure $\mu$ such that for $\tm$ almost every $\tix\in\TTf$, for every $\tix^u\in\widetilde{\cW}^u_2(\tix)$, given $\ta\in J(\tix^u)$ it holds
\[
\left|\tau(\tix,\tix^u,\eps,\ell)-\tau(\tix,\ta,\eps,\ell)\right|\leq C.
\]
\end{lemma}
\begin{proof}
We first describe the choice of the constant $C=C(f,\mu)$. Consider the constant $0<\lambda<L^c(\mu)$, used in Lemma~\ref{lem:crescelyapunovcresce} and in Remark~\ref{rem:lyapunov}. Given this choice (which is already fixed) we consider $C_1(f,\mu)$ the constant given in Remark~\ref{rem:lyapunov}. Now, let $C_2=C_2(f,\mu)$ be large enough so that if $k>C_2(f,\mu)$ then
\[
e^{k\lambda}C_1(f,\mu)^{-1}>1.
\]
We shall prove that the Lemma holds with $C=C_2$. Indeed, let us denote $\tau\eqdef\tau(\tix,\tix^u,\eps,\ell)$ and $\tau_a\eqdef\tau(\tix,\ta,\eps,\ell)$. 

Assume that $\tau_a>\tau$. Then, by Remarks~\ref{rem:lyapunov} and \ref{rem:black} if $k>C_2(f\mu)$ then 
\begin{align*}
	d^{\ell}_{\tix}\hat{\lambda}^c_{\ta}(\tau+k)&=d^{\ell}_{\tix}\times\frac{\hat{\lambda}^c_{\ta}(\tau)}{\hat{\lambda}^c_{\tix^u}(\tau)}\times\hat{\lambda}^c_{\ta_{\tau}}(k)\times\hat{\lambda}^c_{\tix^u}(\tau)\\
	&\geq C_1(f,\mu)^{-1}\eps\hat{\lambda}^c_{\ta_{\tau}}(k)\\
	&\geq C_1(f,\mu)^{-1}\eps e^{k\lambda}\\
	&>\eps.
\end{align*}
This proves that $\tau_a-\tau\leq C_2(f,\mu)$. If $\tau>\tau_a$ one reverses the roles of $\tix^u$ and $\ta$ in the above argument to conclude that $\tau-\tau_a\leq C_2(f,\mu)$. This establishes our claim and completes the proof.  
\end{proof}

\begin{proof}[Proof of Proposition~\ref{prop:sync}]
First notice that it suffices to give the proof for the first stopping time for the estimate on the second stopping times follows using Remark~\ref{rem:black}. Now, we apply Lemmas~\ref{lem:syncdynball} and \ref{lem:fatorizacao} obtaining
\begin{align*}
\left|\tau(\tix,\ta,\eps,\ell)-\tau(\tix,\tb,\eps,\ell)\right|&\leq\left|\tau(\tix,\ta,\eps,\ell)-\tau(\tix,\tix^u,\eps,\ell)\right|+\left|\tau(\tix,\tix^u,\eps,\ell)-\tau(\tilde{y},\tilde{y}^u,\eps,\ell)\right|\\
&+\left|\tau(\tilde{y},\tilde{y}^u,\eps,\ell)-\tau(\tix,\tb,\eps,\ell)\right|\\
&\leq 2C(f,\mu)+T_0(\cL,\beta),
\end{align*}
concluding.
 \end{proof}

\subsection{The geometry of unstable dynamical balls}

We now combine several applications of the fundamental distortion Lemma~\ref{lem:bounded-dist}

\begin{lemma}
	\label{lem:distormagic}
	Given $\cL\subset\TTf$ a compact Lusin set and $\beta>1$, there exists a constant $L=L(\cL,\beta,T)>1$ such that, for every $\eps>0$ small enough, whenever $\ell\in\N$ is large enough and $\eps$ is small enough, if $(\tix,\tix^u,\tilde{y},\tilde{y}^u)$ is a $(\beta,\ell)$ quadrilateral such that $\tix,\tix^u,\tix^u_{\tau(\ell)}\in\cL$, we have for every $a\in J(\tix^u)$ and $b\in J(\tilde{y}^u)$ that
	\[
	\lambda^{*}_a(j)\asymp_L{\lambda^{*}_b(j)},
	\]
	for every $0\leq j\leq \max\{\tau(\tix,\tix^u,\eps,\ell),\tau(\tilde{y},\tilde{y}^u,\eps,\ell)\}$ and $* \in \{c,u\}$.
\end{lemma}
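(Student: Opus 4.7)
The plan is to split the comparison $\lambda^{*}_a(j)\asymp_L\lambda^{*}_b(j)$ into three simpler sub-comparisons: (i) $\lambda^{*}_a(j)\asymp\lambda^{*}_{\tix^u}(j)$, (ii) $\lambda^{*}_{\tix^u}(j)\asymp\lambda^{*}_{\tilde{y}^u}(j)$, and (iii) $\lambda^{*}_{\tilde{y}^u}(j)\asymp\lambda^{*}_b(j)$. Throughout, I use Lemma~\ref{lem:fatorizacao} to pass between $\tau\eqdef\tau(\tix,\tix^u,\eps,\ell)$ and $\hat\tau\eqdef\tau(\tilde{y},\tilde{y}^u,\eps,\ell)$, setting $m=\max(\tau,\hat\tau)$; the gap $|m-\tau|\leq T$ contributes only a multiplicative factor bounded by $e^{T\sigma}$ via Lemma~\ref{lem:crescelyapunovcresce} together with the uniform boundedness of $Df$.

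For (i) and (iii), the definition of the unstable dynamical ball $J(\tix^u)$ ensures that $a_j\in\w^u_1(\tix^u_j)$ for every $0\leq j\leq\tau$ (and similarly $b_j\in\w^u_1(\tilde{y}^u_j)$ for $j\leq\hat\tau$), placing the two orbits in the same forward unstable dynamical ball. When $*=u$, Lemma~\ref{lem:distor-unstable} yields the comparison directly with a constant depending only on $f$. When $*=c$, the function $\tiz\mapsto\log\lambda^c_{\tiz}$ is H\"older continuous on $\TTf$---recall that by Lemma~\ref{lem:centralcte} the bundle $E^c$ descends to a H\"older bundle on $\TT$---so Lemma~\ref{lem:bounded-dist}(2) applies. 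Extending the time range from $\tau$ (or $\hat\tau$) up to $m$ costs only a further uniform factor, as discussed above.

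The core step is (ii), the comparison of $\lambda^{*}_{\tix^u}(j)$ with $\lambda^{*}_{\tilde{y}^u}(j)$ for $0\leq j\leq m$. Lemma~\ref{lem:driftestimate} gives $|\cR^c_{\tix^u_m}(\tilde{y}^u_m)|\leq\hat\beta\eps$, and since $\tix^u_\tau\in\cL$, the slow exponential variation of the Pesin data along an orbit of length $|m-\tau|\leq T$ still yields a uniform lower bound $r(\tix^u_m)\geq r_0 e^{-\eps T}>0$; hence for $\eps$ small enough one has $\tilde{y}^u_m\in\tW^c_{r(\tix^u_m)}(\tix^u_m)$. The Pesin backward contraction~\eqref{e.pesincentermanifold}, paired with the slow variation of $\gamma$ and $r$, propagates this inclusion backwards: $\tilde{y}^u_j\in\tW^c_{r(\tix^u_j)}(\tix^u_j)$ for every $0\leq j\leq m$. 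Applying Lemma~\ref{lem:bounded-dist}(1) to the H\"older cocycles $\log\lambda^{*}_{\cdot}$, $*\in\{c,u\}$, along the backward orbit starting at $\tix^u_m$, we obtain
\[
\Biggl|\log\frac{\lambda^{*}_{\tix^u}(m)/\lambda^{*}_{\tix^u}(j)}{\lambda^{*}_{\tilde{y}^u}(m)/\lambda^{*}_{\tilde{y}^u}(j)}\Biggr|\leq C(\cL,\beta,T).
\]
It remains to close the argument with a boundary comparison $\lambda^{*}_{\tix^u}(m)\asymp\lambda^{*}_{\tilde{y}^u}(m)$: for $*=c$ this is Lemma~\ref{lem:distquadrilateros}(2) (after absorbing $|m-\tau|\leq T$ into the constant), while for $*=u$ it follows by summing the H\"older estimate for $\log\lambda^u_{\cdot}$ along the two orbits, noting that the center distances $d^c(\tix^u_j,\tilde{y}^u_j)\asymp\lambda^c_{\tix^u}(j)\,d^\ell_{\tix}$ grow geometrically and sum to something dominated by their value at $j=m$, which is of order $\eps$.

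The main obstacle I anticipate is precisely the propagation $\tilde{y}^u_j\in\tW^c_{r(\tix^u_j)}(\tix^u_j)$ for \emph{every} $0\leq j\leq m$, since $m\to\infty$ with $\ell$ while the measurable function $r$ may become very small along intermediate iterates that escape the Lusin set. This is handled by the standard Pesin trick: the backward exponential contraction at rate $\lambda$ dominates the slow exponential variation (at rate $\eps<\lambda$) of $r$ along orbits, so the inclusion persists uniformly. Once this is secured, the remainder of the argument is careful bookkeeping of multiplicative constants depending on $\cL$, $\beta$, $T$, and $f$, which produces the desired constant $L=L(\cL,\beta,T)$.
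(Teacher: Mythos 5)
Your argument is correct and is, in substance, the argument the paper itself omits (the text only points to \cite[Proposition 10.5]{ALOS}): the three-way splitting through $\tix^u$ and $\tilde{y}^u$, the dynamical-ball distortion for the two outer comparisons, the drift estimate plus Pesin backward contraction for the middle one, and the synchronization window $|\tau-\hat\tau|<T$ absorbed into the constant are exactly the right ingredients, and the constant you produce depends only on $\cL,\beta,T,f$ as required.

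Two remarks. First, the ``main obstacle'' you anticipate --- propagating $\tilde{y}^u_j\in\tW^c_{r(\tix^u_j)}(\tix^u_j)$ for every intermediate $j$ --- is not actually needed: Lemma~\ref{lem:distorlusin} and Lemma~\ref{lem:bounded-dist}(1) are stated so that only the \emph{terminal} inclusion $\tilde{y}^u_m\in\tW^c_{r(\tix^u_m)}(\tix^u_m)$ (with $\tix^u_m$ within $T$ iterates of $\cL$) is required; the backward contraction \eqref{e.pesincentermanifold} from time $m$ then controls all intermediate Riemannian distances directly, which is in the end what your ``Pesin trick'' does. Second, the one place where your write-up is genuinely thinner than it should be is the case $*=u$ of step (ii): $\tix^u_l$ and $\tilde{y}^u_l$ lie on \emph{different} center-unstable leaves of $\TTf$ (they have different pasts --- that is the whole point of the transversality hypothesis), so the leafwise $C^1$ regularity of $E^u$ does not apply, and ``Hölder continuity of $\log\lambda^u$'' in the ultrametric of $\TTf$ is vacuous for such pairs. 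What one actually uses is that both $E^u(\tix^u_{-\ell})$ and $E^u(\tilde{y}^u_{-\ell})$ lie in the unstable cone over nearby base points of $\TT$, so forward cone contraction aligns them exponentially, giving $\angle\bigl(E^u(\tix^u_l),E^u(\tilde{y}^u_l)\bigr)\lesssim\theta^{l+\ell}+\sum_{k}\theta^{l+\ell-k}\,d_{\TT}(x^u_{-\ell+k},y^u_{-\ell+k})^{\alpha}$; summing this over $l\leq m$ together with the base-point estimate yields the uniform bound on $\prod_l\lambda^u_{\tix^u_l}/\lambda^u_{\tilde{y}^u_l}$. This is standard, but it is an additional ingredient beyond Hölder continuity of the cocycle, and it should be stated. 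With that point made explicit, the proof is complete.
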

\begin{proof}
Consider the point $\tilde{z}^u\in\Sigma(\tix^u)$ such that $\tilde{y}^u$ and $\tilde{z}^u$ belong to the same center manifold on $\TTf$. From Lemma~\ref{lem:distquadrilateros} it follows that for $\eps>0$ small enough (depending on the Lusin set $\cL$) we have that $y^u\in W^c_{\loc}(x^u)$. Therefore, in the equality
\[
\frac{\lambda^*_{\tb}(j)}{\lambda^*_{\ta}(j)}=\frac{\lambda^*_{\tb}(j)}{\lambda^*_{\tilde{y}^u}(j)}\times\frac{\lambda^*_{\tilde{y}^u}(j)}{\lambda^*_{\tilde{z}^u}(j)}\times\frac{\lambda^*_{\tilde{z}^u}(j)}{\lambda^*_{\tilde{x}^u}(j)}\times\frac{\lambda^*_{\tilde{x}^u}(j)}{\lambda^*_{\tilde{a}}(j)}
\] 
we can bound the second term by a constant $C(\cL)$, due to Lemma~\ref{lem:bounded-dist}, while the third term is bounded by a constant $C(f)$, also by Lemma~\ref{lem:bounded-dist}. By definition of the stopping time, by Lemmas~\ref{lem:distor-unstable},\ref{lem:distorlusin} and Lemma~\ref{lem:syncdynball}  the first and the fourth term are bounded by a constant $C(\cL)$. 
\end{proof}

Lemma~\ref{lem:distormagic}, together with the distortion estimates given on Lemma \ref{lem:distor-unstable}, implies the following result. The proof follows as in \cite[Corollary 10.6]{ALOS}.

\begin{corollary}
    \label{cor:tamanhocomparavel}
    Given $\cL\subset\TTf$ a compact Lusin set and $\beta>1$ there exists a constant $L=L(\cL,\beta,T)>1$ such that whenever $\ell\in\N$ is large enough and $(\tix,\tix^u,\tilde{y},\tilde{y}^u)$ is a $(\beta,\ell)$ quadrilateral such that $\tix,\tix^u,\tix^u_{\tau(\ell)}\in\cL$ we have that 
    \[
    \frac{\left|f^j(J(\tix^u))\right|}{\left|f^j(J(\tilde{y}^u))\right|}\asymp_L 1,
    \]
    for every $0\leq j\leq \max\{\tau(\tix,\tix^u,\eps,\ell),\tau(\tilde{y},\tilde{y}^u,\eps,\ell)\}+T$.
\end{corollary}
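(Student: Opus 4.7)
The plan is to reduce the ratio of Lebesgue lengths to a ratio of unstable expansion factors, and then import the synchronization and distortion machinery built up in the previous subsections. Set $\tau=\tau(\tix,\tix^u,\eps,\ell)$ and $\hat{\tau}=\tau(\tilde y,\tilde y^u,\eps,\ell)$; since the hypotheses coincide with those of Lemma~\ref{lem:fatorizacao}, these two stopping times differ by at most the constant $T=T(\cL,\beta)$.

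First, I would use the very definition $J(\tix^u)=\tif^{-\tau}(\w^u_1(\tix^u_\tau))$ together with Lemma~\ref{lem:distor-unstable}: every $\tilde z\in J(\tix^u)$ satisfies $\tif^\tau(\tilde z)\in \w^u_1(\tif^\tau(\tix^u))$, hence $\lambda^u_{\tilde z}(i)\asymp_{C(f)}\lambda^u_{\tix^u}(i)$ for every $0\le i\le\tau$. Integrating along $J(\tix^u)$ and comparing with the (essentially unit) length of $\tif^\tau(J(\tix^u))$ yields $|J(\tix^u)|\asymp (\lambda^u_{\tix^u}(\tau))^{-1}$ and, for every $0\le j\le\tau$,
\[
|\tif^j(J(\tix^u))|\asymp_{C(f)^2}\frac{\lambda^u_{\tix^u}(j)}{\lambda^u_{\tix^u}(\tau)}.
\]
The same formula, with $\tix^u,\tau$ replaced by $\tilde y^u,\hat{\tau}$, holds for $J(\tilde y^u)$.

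Second, I would take the quotient of the two formulae, which gives
\[
\frac{|\tif^j(J(\tix^u))|}{|\tif^j(J(\tilde y^u))|}\asymp \frac{\lambda^u_{\tix^u}(j)}{\lambda^u_{\tilde y^u}(j)}\cdot\frac{\lambda^u_{\tilde y^u}(\hat{\tau})}{\lambda^u_{\tix^u}(\tau)}.
\]
The first factor is $\asymp_L 1$ by Lemma~\ref{lem:distormagic} applied with $\tilde a=\tix^u\in J(\tix^u)$, $\tilde b=\tilde y^u\in J(\tilde y^u)$ and $*=u$. For the second factor, the uniform upper bound $\|D\tif\|\le e^\sigma$ allows me to replace $\hat{\tau}$ by $\tau$ in $\lambda^u_{\tilde y^u}(\hat{\tau})$ at the cost of a multiplicative factor $e^{\sigma T}$, after which Lemma~\ref{lem:distormagic} at time $\tau$ compares $\lambda^u_{\tilde y^u}(\tau)$ with $\lambda^u_{\tix^u}(\tau)$. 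Collecting constants produces the desired $L=L(\cL,\beta,T)$.

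The one small subtlety is that the corollary requests the estimate on $j\le\max\{\tau,\hat{\tau}\}+T$, whereas Lemma~\ref{lem:distormagic} is stated only on $j\le\max\{\tau,\hat{\tau}\}$. This is not a real obstacle: on the additional strip of length at most $T$, the uniform bounds $1\le \lambda^u_{\tif^{j_0}(\tilde z)}(k)\le e^{\sigma T}$ (for $k\le T$) contribute only a fixed multiplicative factor to each of the two lengths, and the same factor cancels in the quotient up to another uniform constant. Thus the proof is essentially a careful bookkeeping of constants in the two basic identities above, and no deeper input is needed beyond Lemmas~\ref{lem:distor-unstable}, \ref{lem:fatorizacao}, and~\ref{lem:distormagic}.
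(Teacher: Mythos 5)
Your argument is correct and follows exactly the route the paper intends: the paper derives this corollary from Lemma~\ref{lem:distormagic} together with the distortion estimate of Lemma~\ref{lem:distor-unstable} (citing the analogous Corollary 10.6 of \cite{ALOS}), which is precisely your reduction of the length ratio to the ratio of unstable cocycle factors, combined with the synchronization bound $|\tau-\hat{\tau}|<T$. Your handling of the extra strip $j\le\max\{\tau,\hat{\tau}\}+T$ via the uniform bound on $\|D\tif\|$ is the right bookkeeping and introduces no gap.
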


\section{Y-configurations: the matching argument}
\label{sec:matching}

In this section we are going to prove the main technical result needed to establish Theorem~\ref{teo:main.rigidez}.  The goal is to find pairs of coupled $Y$ configurations $Y(\tix,\tix^u,\ell,\eps)$ and $Y(\tilde{y},\tilde{y}^u,\ell,\eps)$ such that both dynamical balls $J(\tix^u)$ and $J(\tilde{y}^u)$ intersect some Lusin set. 

The main idea of this section (which comes from \cite{EskinMirzakhani}) is to transfer information from the the first Y-configuration $Y(\tix,\tix^u,\eps,\ell)$ to the second one, $Y(\tilde{y},\tilde{y}^u,\eps,\ell)$, so that we have more flexibility to find ``good points'' in $J(\tilde{y}^u)$. 

\subsection{The Fubini argument of Eskin--Lindenstrauss}

Consider $\delta>0$ and $K\subset\TTf$ a compact subset of measure $\mu(K)>1-\delta$.  The result below is a simple consequence of Birkhoff's ergodic theorem and Egorov's theorem of measure theory. See Lemma 11.4 of \cite{ALOS} for a proof.

\begin{lemma}
	\label{lem:lema114}
There exists a compact set $K^\circ\subset K$ still of measure $\mu(K^\circ)>1-\delta$ and an integer $T=T(\delta)>1$ such that, whenever $n\geq T$, one has
\begin{equation}
	\label{eq:rec}
	\card\{j\in[0,n];\tif^j(\tix)\in K\}>(1-\delta)n,
\end{equation}
for every $\tix\in K^\circ$. 	
\end{lemma}
 
We let $\xi^u$ be a measurable partition of $\TTf$ subordinate to the lamination $\w^u$.  Given a pair $(\tix,\tix^u)$ such that $\tix^u\in\xi^u(\tix)$, we define the subset of integers
\[
E(\tix,\tix^u)\eqdef\{\ell\in\N;\tif^{\tau(\ell)}(\tix^u)\in K^\circ,\:\tif^{t(\ell)}(\tix)\in K^\circ\}.
\]

Given $\tix\in\TTf$ and an integer $\ell\in\N$, let us consider the set 
\begin{equation}
	\label{eq.q}
Q^u(\tix,\ell)\eqdef\{\tix^u\in\xi^u(\tix);\tix^u\in K^\circ,\:\ell\in E(\tix,\tix^u)\}.
\end{equation}  

Considering a continuous function $\eta:[0,1]\to[0,\infty)$ with $\eta(0)=0$ and an integer $\ell\in\N$, we define the set 
\[
K(\ell)\eqdef\{\tix\in K^\circ;\:\mu^u_{\tix}\left(Q^u(\tix,\ell)\right)>1-\eta(\delta)\}.
\]
This is the set of points $\tix\in\TTf$ for which one has ``a large set to choose $\tix^u\in\xi^u(\tix)$ from'' so that the length-$\ell$ $Y$-configuration $Y(\tix,\tix^u,\ell,\eps)$ has all its four upper points being ``highly recurrent'' to the compact set $K$. This high recurrence is used in the end of Lemma~\ref{lem:matching} to obtain simultaneous returns to $K$. 

The following result has the same proof as Proposition 11.6 in \cite{ALOS}, so we refrain from repeating the details. 

\begin{lemma}
    \label{lem:claim64}
    There exists a continuous function $\eta:[0,1]\to[0,\infty)$, with $\eta(0)=0$, (which depends only on $f$) so that the set $D(\delta,K)\eqdef\{\ell\in\N;\tm(K(\ell))>1-\eta(\delta)\}$ satisfies 
    \[
    \card D(\delta,K)\cap [0,n]>(1-\eta(\delta))n,
    \]
    for every $n>T(\delta)$. 
\end{lemma}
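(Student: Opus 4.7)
The plan is the Fubini-type averaging argument of Eskin--Lindenstrauss, in three steps, exploiting crucially the fact that Lemma~\ref{lem:quasiisometric} provides quasi-isometric bounds on $\tau,t$ whose constants $\Theta,A$ depend only on $(f,\mu)$ and not on any Lusin set.

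The first step is a pointwise estimate: for each pair $(\tix,\tix^u)$ with both points lying in $K^\circ$, I claim
\[
\#([0,n]\cap E(\tix,\tix^u))\geq(1-C\delta)(n+1),
\]
for $n$ above some threshold, with $C=C(\Theta,A)$ depending only on $f$. Indeed, by Lemma~\ref{lem:quasiisometric} both $\tau,t$ are monotone, each integer value is attained by at most $\Theta A+1$ distinct $\ell$, and $\tau([0,n])\cup t([0,n])\subset[0,\Theta n+A]$. Applying Birkhoff to $\mathbf{1}_{K^\circ}$ (which, up to passing to a slightly smaller Birkhoff-recurrent subset that we still call $K^\circ$, gives density of returns to $K^\circ$ from points in $K^\circ$) bounds $\#\{m\in[0,\Theta n+A]:\tif^m(\tix^u)\notin K^\circ\}\leq\delta(\Theta n+A)$, and similarly for $\tix$ in place of $\tix^u$. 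Pulling these bad $m$'s back through $\tau$ and $t$ multiplies the count by at most $\Theta A+1$, so for $n$ large enough at most $C\delta(n+1)$ indices $\ell\in[0,n]$ fail one of the two conditions defining $E(\tix,\tix^u)$, as claimed.

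Next, I integrate the pointwise bound against the Rokhlin disintegration $\tm=\int\mu^u_\tix\,d\tm$. Setting $g_\ell(\tix)\eqdef\mu^u_\tix(Q^u(\tix,\ell))$ and using Fubini,
\[
\sum_{\ell=0}^{n}\int g_\ell\,d\tm=\int_{\TTf}\int_{\xi^u(\tix)}\sum_{\ell=0}^n\mathbf{1}_{Q^u(\tix,\ell)}(\tix^u)\,d\mu^u_\tix(\tix^u)\,d\tm(\tix).
\]
On the set $\{\tix,\tix^u\in K^\circ\}$, whose $d\mu^u_\tix\,d\tm$-mass is at least $\int_{K^\circ}\mu^u_\tix(K^\circ)\,d\tm\geq 1-2\delta$, the inner sum is at least $(1-C\delta)(n+1)$ by the first step; elsewhere the sum is non-negative. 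Hence $\sum_{\ell=0}^n\int g_\ell\,d\tm\geq(1-C'\delta)(n+1)$ for some new constant $C'$.

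Finally, two Markov inequalities yield the set $D(\delta,K)$. Since $g_\ell\leq 1$, the fraction of $\ell\in\{0,\ldots,n\}$ for which $\int g_\ell\,d\tm<1-\sqrt{C'\delta}$ is at most $\sqrt{C'\delta}$. For each of the remaining $\ell$, Markov applied to $1-g_\ell$ gives $\tm(\{g_\ell\leq 1-\delta^{1/4}\})\lesssim\delta^{1/4}$. Subtracting $\tm((K^\circ)^c)\leq\delta$ to enforce the condition $\tix\in K^\circ$ in the definition of $K(\ell)$, and taking $\eta(\delta)\eqdef C''\delta^{1/4}$ for a sufficiently large $C''=C''(f)$, I conclude $\tm(K(\ell))>1-\eta(\delta)$ for at least $(1-\eta(\delta))(n+1)$ indices $\ell\in[0,n]$, which is the statement of the lemma. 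The main difficulty is in the first step: the pointwise error $C\delta$ must be independent of $K$, since otherwise the function $\eta$ would silently depend on $K$ through these constants and fail to tend to $0$ uniformly as $\delta\to 0$. This uniformity is precisely why the Lyapunov-norm formalism was developed earlier and why Lemma~\ref{lem:quasiisometric} is stated with constants depending only on $(f,\mu)$.
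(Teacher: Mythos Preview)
Your proposal is correct and follows the same Fubini--Markov scheme that the paper defers to \cite[Proposition~11.6]{ALOS}: a pointwise density estimate on $E(\tix,\tix^u)$ via the quasi-isometry of $\tau,t$ (Lemma~\ref{lem:quasiisometric}), integration against $d\mu^u_{\tix}\,d\tm$, and two nested Markov inequalities to extract $\eta(\delta)\asymp\delta^{1/4}$. Your closing remark pinpoints exactly why the paper develops the Lyapunov-norm machinery: the constants $\Theta,A$ must be independent of $K$ and $\cL$, which is precisely the content of the sentence following Lemma~\ref{lem:quasiisometric} in the paper.

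One small imprecision worth tightening: the claim $\tau([0,n])\subset[0,\Theta n+A]$ implicitly uses $\tau(0)=0$ (which does hold since $d^0_{\tix}=1\geq\eps$), and the passage ``up to passing to a slightly smaller Birkhoff-recurrent subset that we still call $K^\circ$'' should really introduce a set $K^{\circ\circ}\subset K^\circ$ of points $(1-\delta)$-recurrent to $K^\circ$ (with $\tm(K^{\circ\circ})>1-2\delta$), since $E(\tix,\tix^u)$ demands returns to $K^\circ$, not to $K$. Neither point affects the structure of the argument.
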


\subsection{The Eskin--Mirzakhnani combinatorial lemma}

The (combinatorial) result below is largely inspired by Lemma 12.8 of \cite{EskinMirzakhani} and, though elementary, it is one of the key ideas of our matching argument. 

\begin{lemma}
    \label{lem:eskinmirza}
    Let $\Delta>1$,  $m\in\N$, and $\eta_1,\eta_2\in(0,1)$ be numerical constants satisfying the following assumptions:
    \begin{itemize}
        \item $1-\eta_1>0.5$ and
        \item $\Delta m\eta_2<0.5$.
    \end{itemize}
    Assume that $(X,\nu,\cB)$ and $(Y,\hat{\nu},\hat{\cB})$ are probability spaces, each endowed with a finite collection $\{J_1,...,J_k\}\subset\cB $ and $\{\hat{J}_1,\dots,\hat{J}_k\}\subset\hat{\cB}$ of measurable sets so that the following three properties are satisfied:
    \begin{enumerate}
        \item $X=J_1 \cup \dots \cup J_k$ and $Y=\hat{J}_1\cup\dots\cup\hat{J}_k$,
        \item If $y\in Y$ then $\card\{i\in[1,k];y\in \hat{J}_i\}\leq m$,
        \item $\nu(J_i)\asymp_{\Delta}\hat{\nu}(\hat{J}_k)$.
    \end{enumerate}
    Suppose that there exist measurable sets $Q \subset X$ and $\hat{Q} \subset Y$ so that
    \[
    \nu(Q)>1-\eta_1\:\:\:\textrm{and}\:\:\:\hat{\nu}(\hat{Q})>1-\eta_2.
    \]
    Then, there exists $i\in[1,k]$ so that $J_i\cap Q\neq\emptyset$ and $\hat{J}_i\cap\hat{Q}\neq\emptyset$. 
\end{lemma}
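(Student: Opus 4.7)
The plan is to argue by contradiction using a double-counting scheme that transfers the smallness of $Y\setminus\hat Q$ across to the $X$ side via hypothesis (3) and the multiplicity bound (2).

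Suppose, for contradiction, that for every index $i\in[1,k]$, either $J_i\cap Q=\emptyset$ or $\hat J_i\cap\hat Q=\emptyset$. Define
\[
A\eqdef\{i\in[1,k]:J_i\subseteq X\setminus Q\},\qquad B\eqdef\{i\in[1,k]:\hat J_i\subseteq Y\setminus\hat Q\}.
\]
By the contradiction hypothesis, $A\cup B=\{1,\ldots,k\}$.

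The first step is to observe that $Q$ is actually covered by the $J_i$'s with $i\in B$. Indeed, if $x\in Q$, then by (1) there is some $i$ with $x\in J_i$; this $i$ cannot lie in $A$ (since $J_i\subseteq X\setminus Q$ would contradict $x\in Q$), so $i\in B$. Hence
\[
\nu\Bigl(\bigcup_{i\in B}J_i\Bigr)\geq\nu(Q)>1-\eta_1.
\]

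The second step is to bound $\nu(\bigcup_{i\in B}J_i)$ from above using the $Y$ side. By definition of $B$, $\bigcup_{i\in B}\hat J_i\subseteq Y\setminus\hat Q$, so
\[
\hat\nu\Bigl(\bigcup_{i\in B}\hat J_i\Bigr)\leq\eta_2.
\]
The multiplicity bound (2) gives $\sum_{i\in B}\mathbf 1_{\hat J_i}\leq m\cdot\mathbf 1_{\bigcup_{i\in B}\hat J_i}$ pointwise on $Y$, which upon integration yields
\[
\sum_{i\in B}\hat\nu(\hat J_i)\leq m\,\hat\nu\Bigl(\bigcup_{i\in B}\hat J_i\Bigr)\leq m\eta_2.
\]
Applying hypothesis (3) termwise and then subadditivity of $\nu$ gives
\[
\nu\Bigl(\bigcup_{i\in B}J_i\Bigr)\leq\sum_{i\in B}\nu(J_i)\leq\Delta\sum_{i\in B}\hat\nu(\hat J_i)\leq\Delta m\eta_2.
\]

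Combining the two estimates yields $1-\eta_1<\Delta m\eta_2$, i.e.\ $\eta_1+\Delta m\eta_2>1$. But the numerical assumptions $1-\eta_1>0.5$ and $\Delta m\eta_2<0.5$ give $\eta_1+\Delta m\eta_2<1$, a contradiction. Hence there must exist some $i$ with $J_i\cap Q\neq\emptyset$ and $\hat J_i\cap\hat Q\neq\emptyset$. There is no serious obstacle here; the only thing to be careful about is the direction of the comparison in step~(3) (we use $\hat\nu(\hat J_i)\geq\Delta^{-1}\nu(J_i)$, i.e.\ the upper bound $\nu(J_i)\leq\Delta\hat\nu(\hat J_i)$), and the fact that the multiplicity bound is needed on the $Y$-side precisely to pass from $\hat\nu(\bigcup_{i\in B}\hat J_i)$ to $\sum_{i\in B}\hat\nu(\hat J_i)$.
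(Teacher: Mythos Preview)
Your proof is correct and follows essentially the same contradiction argument as the paper: your sets $A$ and $B$ are precisely the complements of the paper's index sets $I(X)=\{i:J_i\cap Q\neq\emptyset\}$ and $I(Y)=\{i:\hat J_i\cap\hat Q\neq\emptyset\}$, and the chain of inequalities (cover $Q$, apply comparability (3), apply the multiplicity bound (2), land in $Y\setminus\hat Q$) is identical. The only cosmetic difference is that the paper sums first over $I(X)$ and then enlarges to $I(Y)^c=B$, whereas you go straight to $B$; both yield $1-\eta_1<\Delta m\eta_2$, contradicting the numerical hypotheses.
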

\begin{proof}
    Let $I(X)\eqdef\{i\in [1,k]; J_i\cap Q\neq\emptyset\}$  and $I(Y)\eqdef\{i\in[1,k]; \hat{J}_i\cap\hat{Q}\neq\emptyset\}$.  Assume that $I(X)\cap I(Y)=\emptyset$. Then, 
    \begin{align*}
        0.5&<1-\eta_1<\nu(Q)\leq\sum_{i\in I(X)}\nu(Q\cap J_i)\leq\sum_{i\in I(X)}\nu(J_i)\nonumber\\
        &\leq\Delta\sum_{i\in I(X)}\hat{\nu}(\hat{J}_i)\leq\Delta\sum_{i\notin I(Y)}\hat{\nu}(\hat{J}_i)\nonumber\\
        &\leq\Delta m\hat{\nu}\big(\cup_{i\notin I(Y)}\hat{\nu}(\hat{J}_i)\big)\leq\Delta m\hat{\nu}(Y\setminus\hat{Q})\nonumber\\
        &<\Delta m\eta_2<0.5,
    \end{align*}
which is a contradiction.
\end{proof}
\subsection{Good matching of $Y$-configurations}

We consider, given $\delta>0$, a compact Lusin set $\cL\subset\TTf$ with $\tm(\cL)>1-\delta$. Given numbers $\beta>1$, $T>1$ and a compact subset $K$ of measure $\tm(K)>1-\delta$, we say that a pair $Y(\tix,\tix^u,\eps,\ell)$ and $Y(\tilde{y},\tilde{y}^u,\eps,\ell)$ of $(\ell,T)$-$Y$-configurations are in \emph{good matching} relatively to $(K,\cL,T,\beta)$ if:
\begin{enumerate}
    \item $(\tix,\tix^u,\tilde{y},\tilde{y}^u)$ is a $(\beta,\ell)$ quadrilateral,
    \item $\tix,\tilde{y}\in K\cap\cL$,
    \item there exist $\tilde{a}\in J(\tix^u)\cap K\cap\cL$, $\tilde{b}\in J(\tilde{y}^u)\cap K\cap\cL$ and $m\in(\tau(\ell)-T,\tau(\ell)-T)$ such that $\tilde{a}_m,\tilde{b}_m\in K\cap\cL$, and
    \item there exists $\widehat{m}\in(t(\ell)-T,t(\ell)+T)$ such that $\tix_{\widehat{m}},\tilde{y}_{\widehat{m}}\in K\cap\cL$. 
\end{enumerate}

The main result of this section which is also our main ingredient to prove Theorem~\ref{teo:main.rigidez} is the following. For the statement we recall the following definition: we say that a subset $D\subset\N$ of integers has \emph{positive density} if 
\[
\liminf_{n\to\infty}\frac{1}{n}\card(D\cap[1,n])>0.
\]
In particular, notice that every positive density subset $D\subset\N$ is an infinity set.
\begin{lemma}
    \label{lem:matching}
    There exist constants $\delta>0$, $\beta>1$, $T>1$, a Lusin set $\cL$ with measure $\tm(\cL)>1-\delta$ such that: for every compact set $K$ with $\tm(K)>1-2\delta$ there exists a subset $D\subset\N$ with positive density such that  for every $\ell\in D$ large enough, and every $\eps>0$ small enough, one can find a pair $Y(\tix,\tix^u,\eps,\ell)$ and $Y(\tilde{y},\tilde{y}^u,\eps,\ell)$ of $(\ell,T)$-$Y$-configurations which are in good matching relatively to $(K,\cL,T,\beta)$. 
\end{lemma}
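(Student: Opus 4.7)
The plan is to combine transversality in fibers (Proposition~\ref{prop:transversalidade}), the Fubini-type recurrence estimate (Lemma~\ref{lem:claim64}), synchronization of stopping times (Proposition~\ref{prop:sync}), and the Eskin--Mirzakhani combinatorial lemma (Lemma~\ref{lem:eskinmirza}). I would first fix $\delta > 0$ small and a compact Lusin set $\cL$ with $\tm(\cL) > 1 - \delta$ on which the distortion and normal-form estimates hold uniformly. Proposition~\ref{prop:transversalidade} then yields a constant $\beta > 1$ and, for any $K$ with $\tm(K) > 1 - \delta$, a subset $K' \subset K \cap \cL$ with $\tm(K') > 1 - 2\sqrt{\delta}$ such that every $\tix \in K'$ admits some $\tilde{y} \in \Sigma(\tix)$ with $\alpha(\tix, \tilde{y}) > \beta^{-1}$. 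I would set $T_0 \eqdef T(\cL, \beta)$ from Proposition~\ref{prop:sync}, and invoke Lemma~\ref{lem:claim64} to produce $D \subset \N$ of positive density with $\tm(K(\ell)) > 1 - \eta(\delta)$ for $\ell \in D$.

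Next, for $\ell \in D$ large and $\eps > 0$ small, I would pick $\tix \in K' \cap K(\ell)$; this intersection has positive $\tm$-measure when $\delta$ is small. Because the conditional measures $\tm_{\pi(\tix)}$ on fibers charge $K$ with mass $\geq 1 - \delta$ on average, a Chebyshev argument in the fiber lets me also require the transverse point $\tilde{y}$ to lie in $K$. I would then select $\tix^u \in Q^u(\tix, \ell)$ with $\beta^{-1} < d^u(\tix, \tix^u) < 1$---possible because $Q^u(\tix, \ell)$ (by the definition of $K(\ell)$) and the annulus both carry positive conditional unstable mass---which automatically yields $\tix^u, \tix^u_{\tau(\ell)}, \tix_{t(\ell)} \in K$. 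Setting $\tilde{y}^u \eqdef H^{cs}_{\tix, \tilde{y}}(\tix^u)$ produces a $(\beta, \ell)$-quadrilateral, and Proposition~\ref{prop:sync} guarantees that all relevant stopping times on the $\tilde{y}$-side lie within $T_0$ of the corresponding ones on the $\tix$-side, for every $\ta \in J(\tix^u)$ and $\tb \in J(\tilde{y}^u)$.

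The remaining task is to produce the common times $\widehat{m}$ and $m$. For $\widehat{m}$, both $\tix$ and $\tilde{y}$ lie in $K^\circ$, so the density property~\eqref{eq:rec} implies that, for $T$ large enough, any window $[t(\ell) - T, t(\ell) + T]$ contains $K$-returns at density $\geq 1 - 2\delta$ on each side; a pigeonhole argument then gives a common $\widehat{m}$. For $m$, I would apply Lemma~\ref{lem:eskinmirza} with $X = J(\tix^u)$ and $Y = J(\tilde{y}^u)$ endowed with (the restrictions of) the $u$-Gibbs conditional measures, and with the families
\[
J_m \eqdef \{\ta \in J(\tix^u) : \ta_m \in K\}, \qquad \hat{J}_m \eqdef \{\tb \in J(\tilde{y}^u) : \tb_m \in K\},
\]
indexed by $m$ in the window. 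Coverage (up to absorbing a small bad set into an extra piece) would follow from Birkhoff recurrence; the multiplicity bound is automatic since any orbit visits $K$ at most $2T$ times in the window; and the comparability $\tm^u_\tix(J_m) \asymp \tm^u_{\tilde{y}}(\hat{J}_m)$ follows by combining Lemma~\ref{lem:densidadeuniforme} (uniform density inside the Lusin set) with Corollary~\ref{cor:tamanhocomparavel} (comparable lengths of the iterates $\tif^m(J(\tix^u))$ and $\tif^m(J(\tilde{y}^u))$). Lemma~\ref{lem:eskinmirza} then delivers the common $m$, completing the construction.

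The hardest part will be the verification of the hypotheses of Lemma~\ref{lem:eskinmirza}, and especially the measure comparability, which requires combining the $u$-Gibbs density continuity with the Lusin-set distortion control that is only available inside compact sets of large measure. The final value of $T$ will have to dominate both $T_0$ and the window size needed for the pigeonhole argument on $\widehat{m}$, and $\delta$ will have to be chosen small enough that all the intersected large-measure sets have positive overlap and that the $1 - 2\delta$ bound in the window argument exceeds $1/2$.
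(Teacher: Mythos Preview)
Your outline has the right list of ingredients but misapplies the Eskin--Mirzakhani combinatorial lemma and, as a consequence, skips the actual hard step.

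The genuine difficulty is this: once you fix $\tix^u\in Q^u(\tix,\ell)$ and set $\tilde{y}^u=H^{cs}_{\tix,\tilde{y}}(\tix^u)$, you know nothing about recurrence of points in the tiny dynamical ball $J(\tilde{y}^u)$. Even if you arrange $\tilde{y}\in K(\ell)$ so that $Q^u(\tilde{y},\ell)$ has large conditional measure in $\w^u_1(\tilde{y})$, the interval $J(\tilde{y}^u)$ could miss $Q^u(\tilde{y},\ell)$ entirely. Your proposed application of Lemma~\ref{lem:eskinmirza}, with the index set running over \emph{times} $m$ and the families $J_m=\{\ta\in J(\tix^u):\ta_m\in K\}$, $\hat{J}_m=\{\tb\in J(\tilde{y}^u):\tb_m\in K\}$, does not address this: the comparability $\nu(J_m)\asymp_\Delta\hat{\nu}(\hat{J}_m)$ simply does not follow from Lemma~\ref{lem:densidadeuniforme} and Corollary~\ref{cor:tamanhocomparavel}. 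Those results compare $|J(\tix^u)|$ with $|J(\tilde{y}^u)|$ (and their iterates), not the measures of \emph{preimages of $K$} inside them, which can be completely unrelated.

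In the paper the combinatorial lemma is used one level up: the index set runs over a finite cover of $Q^u_1(\tix,\ell)\cap\bigl(\overline{\w^u_1(\tix)\setminus\w^u_{\beta^{-1}}(\tix)}\bigr)$ by dynamical balls $J_i=J(\tix^u_i)$, and $\hat{J}_i=J(\tilde{y}^u_i)$ is the transferred ball on the $\tilde{y}$-side. Here Corollary~\ref{cor:tamanhocomparavel} genuinely gives $\nu(J_i)\asymp_\Delta\hat{\nu}(\hat{J}_i)$, and the lemma produces some $i$ with $J_i\cap Q^u_1(\tix,\ell)\neq\emptyset$ and $\hat{J}_i\cap Q^u_2(\tilde{y},\ell)\neq\emptyset$. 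This is what forces $J(\tilde{y}^u)$ to contain a point $\tb$ with good recurrence. A crucial point you also miss is the \emph{two-scale} structure: the constants $\Delta,m$ in Lemma~\ref{lem:eskinmirza} depend on a first Lusin set $\cL_1$, so one must work with a second, much larger Lusin set $\cL_2\supset\cL_1$ (with $\delta_2\ll\delta_1$) to make $\Delta m\eta_2<0.5$. Only after this step is the search for common return times $m,\hat{m}$ the easy pigeonhole argument you describe.
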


The proof of Lemma~\ref{lem:matching} is rather involved. For the sake of clarity we shall present it in several steps. This result is parallel to Proposition 11.8 in \cite{ALOS}. The main difference in our case is that we consider \emph{non-uniform expansion along the center direction}, and thus we need to be much more careful with the choice of constants.

Let us give now a brief sketch of the argument: we choose $\cL_1\subset\cL_2$ two Lusin sets along with constants $\delta_2<<\delta_1$ so that $\tm(\cL_i)>1-\delta_i$, $i=1,2$. With the set $\cL_1$ we apply Proposition~\ref{prop:transversalidade} and obtain a constant $\beta_1>1$. With this constant we apply the synchronization of stopping times (Proposition~\ref{prop:sync}) and find a constant $T_1>1$. Once these parameters are fixed we can produce many coupled $(\ell,T_1)-Y-$ configurations. However, the definition of \emph{good matching} requires the six points $\tix,\tilde{y},\ta,\tb,\ta_m,\tb_m,\tix_{\hat{m}}$ and $\tilde{y}_{\hat{m}}$ lying inside the Lusin set. So, our strategy is to use the Lusin set $\cL_1$ to give uniform estimations allowing us to search for a configuration in good matching relatively to the \emph{larger} Lusin set $\cL_2$. 

The rest of this section is devoted to the formal proof of Lemma~\ref{lem:matching}.

\subsubsection{First step: choice of $\delta_1$}
As we announced before, we choose $\cL_1$ a Lusin set along with a constant $\delta_1$ so that $\tm(\cL_1)>1-\delta_1$. We shall first choose and fix $\delta_1$. 

Consider the function $\eta:[0,1]\to[0,\infty)$ given in Lemma~\ref{lem:claim64}, and let us denote $\eta_1\eqdef\eta(\delta_1)$. We consider $\gamma=\delta_1$ in Proposition~\ref{prop:transversalidade} and let $\beta_1=\beta(\delta_1)$ to be the constant given by that proposition.  We consider the constants $T_1\eqdef T(\cL_1,\beta_1)$ be the constant given in the Synchronization Proposition~\ref{prop:sync}. We take $\delta_1$ sufficiently small so that $1-\eta_1>0.6$ and
\begin{equation}
	\label{eq:etaum}
\frac{\eta_1}{1-\eta_1}<0.5.
\end{equation}
With this choice of $\delta_1$ we fix a Lusin set $\cL_1$ with measure $\tm(\cL_1)>1-\delta_1$. 
\subsubsection{Second step: choice of $\delta_2$} 
We shall now choose $\delta_2<<\delta_1$. For this we will need a series of lemmas giving numbers depending on the value of $\delta_1$, previously fixed. 
\begin{lemma}[Overlap control of unstable dynamical balls]
\label{lem:overlapcontrol}
There exists a constant $m=m(\cL_1)$ with the following property. Assume that $(\tix,\tix^u,\tilde{y},\tilde{y}^u)$ is a $(\beta_1,\ell)$-quadrilateral, with $\tix,\tilde{y}\in\cL_1$. Given a compact set $Q\subset\widetilde{\cW}^u_1(\tix)\cap\cL_1$ with $\tm(Q)>1-\eta_1$, given any finite cover $X=\cup_{i=1}^kJ_i$ of 
\[
Q\cap\left(\widetilde{\cW}^u_1(\tix)\setminus\widetilde{\cW}^u_{1/\beta_1}(\tix)\right),
\]
by unstable dynamical balls $J_i=J^u(\tix^u_i)$ such that for every $\tilde{z}\in\widetilde{\cW}^u_1(\tix)$
\[
\card\{i;\tilde{z}\in J_i\}\leq 2,
\]
defining $\tilde{y}^u_i=H^{cs}_{\tix,\tilde{y}}(\tix^u_i)$ and $\hat{J}_i\eqdef J^u(\tilde{y}^u_i)$, then for every $\tilde{b}\in\widetilde{\cW}^u_1(\tilde{y})$
\[
\card\{i;\tilde{b}\in\hat{J}_i\}\leq m.
\]
\end{lemma}
\begin{proof}
Take a point $\tb\in\widetilde{\cW}^u(\tilde{y})$ and denote by 
\[
\hat{J}_{i_1},\dots,\hat{J}_{i_m}
\]
all the intervals within the collection $\{\hat{J}_1,\dots,\hat{J}_k\}$ that contains the point $\tb$. Consider $J\eqdef\cup_{j=1}^mJ_{i_j}$ and $\hat{J}\eqdef\cup_{j=1}^m\hat{J}_{i_j}$. We shall bound from above the length of $\hat{J}$ and from below the length of $J$. The estimate on $m$ will then follow from the Hölder continuity of center holonomies.

For simplicity, we also denote
\[
\tau_{i_j}\eqdef\tau(\tix,\tix^u_{i_j},\eps,\ell)\:\:\textrm{and}\:\:\:\hat{\tau}_{i_j}\eqdef\tau(\tilde{y},\tilde{y}^u_{i_j},\eps,\ell).
\]
Let $\mathcal{T}\eqdef\{\tau_{i_1},\dots,\tau_{i_m},\hat{\tau}_{i_1},\dots,\hat{\tau}_{i_m}\}$ and define $\tau^*\eqdef\min\mathcal{T}$. For the bound on $\hat{J}$ one observe that $\tif^{\tau^*}(\tb)\in\tif^{\tau^*}(\hat{J}_{i_j})$, for every $j=1,\dots,m$. Since $|\tif^{\tau^*}(\hat{J}_{i_j})|\leq 2$ it follows that $|\hat{J}|\leq 4$. 

For the bound on $J$, since no more than two intervals $J_{i_j}$ overlap at a time we can estimate
\[
\left|\tif^{\tau^*}(J_{i_j})\right|=\left|\bigcup_{j=1}^m\tif^{\tau^*}(J_{i_j})\right|\geq\frac{1}{2}\sum_{j=1}^m\left|\tif^{\tau^*}(J_{i_j})\right|
\]
However, one has, by definition of a dynamical ball, that $\left|\tif^{\tau_{i_j}}(J_{i_j})\right|=2$. Also, by Proposition~\ref{prop:sync} we have that $|\tau_{i_j}-\tau(\tilde{y},\tb,\eps,\ell)|<T(\cL_1,\beta_1)$ and also $|\hat{\tau}_{i_j}-\tau(\tilde{y},\tb,\eps,\ell)|<T(\cL_1,\beta_1)$, for every $j=1,...,m$. Thus, by the triangle inequality we have that for every $\tau\in\mathcal{T}$ it holds
\[
\tau-\tau^*<2T(\cL_1,\beta_1).
\] 
These two facts together imply that 
\[
\left|\tif^{\tau^*}(J_{i_j})\right|\geq 2e^{-\xi(f)T_1},
\]
where $\xi(f)=\max\{\|Df(x)v\|/\|v\|;x\in\TT,v\in T_x\TT\}$ and $T_1=T_1(\cL_1,\beta_1)$ is a constant depend on the Lusin set $\cL_1$. Thus,
\[
\left|\tif^{\tau^*}(J_{i_j})\right|\geq 2me^{-\xi(f)T_1}.
\]
Since $J$ is a segment of unstable manifold, there must exist a pair of points $\ta,\ta^{\prime}$ on $J$ such that $d^u(\ta,\ta^{\prime})>me^{-\xi(f)T_1}$. Since every point of $J$ is at a distance of at most 1 from some $\tix^u_{i_j}$ we conclude that there must exist a pair of points $x^u_{i_j}$ and $x^u_{i_r}$ on $\cW^u(\tix_{\tau^*})$ such that 
\[
d^u(x^u_{i_j},x^u_{i_r})>me^{-\xi(f)T_1}-2.
\]  
By the Hölder continuity of the center bundle, we have that 
\[
d^u(x^u_{i_j},x^u_{i_r})\leq C(f)d^u(y^u_{i_j},y^u_{i_r})^{\theta(f)}\leq C(f)4^{\theta(f)}.
\]
We deduce that
\[
m\leq e^{\xi(f)T_1}\left(C(f)4^{\theta(f)}+2\right),
\]
with $T_1=T_1(\cL_1,\beta_1)$. This ends the proof.
\end{proof}

\begin{remark}
In our argument, the constant $m(\cL_1)$ appearing in the lemma above depends on our \emph{first} choice of Lusin set $\cL_1$ which we use to have uniform estimates allowing us to put all eight points of the main picture \ref{fig:EMscheme} in a larger Lusin set $\cL_2$, while in \cite{ALOS} the constant is uniform (depending only on the dynamical system).
\end{remark}

In the same setting as above, we need to determine another constant which compares the lengths of the unstable dynamical balls $J_i,\hat{J}_i$. 

\begin{lemma}
\label{lem:lengthcontrol}
There exists a constant $\Delta=\Delta(\cL_1)$ with the following property. Assume that $(\tix,\tix^u,\tilde{y},\tilde{y}^u)$ is a $(\beta_1,\ell)$-quadrilateral, with $\tix,\tilde{y}\in\cL_1$. Given a compact set $Q\subset\widetilde{\cW}^u_1(\tix)\cap\cL_1$ with $\tm(Q)>1-\eta_1$, given any finite cover $X=\cup_{i=1}^kJ_i$ of 
\[
Q\cap\left(\widetilde{\cW}^u_1(\tix)\setminus\widetilde{\cW}^u_{1/\beta_1}(\tix)\right),
\]
by unstable dynamical balls $J_i=J^u(\tix^u_i)$ such that for every $\tilde{z}\in\widetilde{\cW}^u_1(\tix)$
\[
\card\{i;\tilde{z}\in J_i\}\leq 2,
\]
defining $\tilde{y}^u_i=H^{cs}_{\tix,\tilde{y}}(\tix^u_i)$ and $\hat{J}_i\eqdef J^u(\tilde{y}^u_i)$, if $X=\cup_{i=1}^kJ_i$ and $Y=\cup_{i=1}^k\hat{J}_i$ the normalized probabilities 
\[
\nu\eqdef\frac{\tm^u_{\tix}|_X}{\tm^u_{\tix}(X)}\:\:\:\textrm{and}\:\:\:\:\hat{\nu}\eqdef\frac{\tm^u_{\tilde{y}}|_Y}{\tm^u_{\tilde{y}}(Y)}
\] 
satisfy
\[
\nu(J_i)\asymp_{\Delta}\hat{\nu}(\hat{J}_i),
\]
for every $i=1,...,k$.
\end{lemma}
\begin{proof}
At one hand, it follows from Corollary~\ref{cor:tamanhocomparavel} that $|J_i|\asymp_C|\hat{J}_i|$, for some constant $C=C(\cL_1,\beta)$. From Lemma~\ref{lem:densidadeuniforme} we may enlarge this constant (but still depending only on $\cL_1$) so that $\tm^u_{\tix}(J_i)\asymp_{C}\tm^u_{\tilde{y}}(\hat{J}_i)$.  	
 Lemma~\ref{lem:overlapcontrol} allows us to estimate 
\[
\left|\bigcup_{i=1}^k\hat{J}_i\right|\geq\frac{1}{m}\sum_{i=1}^k|\hat{J}_i|\geq\frac{1}{Cm}\sum_{i=1}^k|J_i|\geq\frac{1}{Cm}\left|\bigcup_{i=1}^k J_i\right|,
\]
and a similar reasoning gives an upper bound which shows that $|X|$ and $|Y|$ are comparable up to some constant $\hat{C}=\hat{C}(\cL_1,\beta)$. Using Lemma~\ref{lem:densidadeuniforme}, we may enlarge $\hat{C}$ (but still depending only on $\cL_1$) if necessary in order to have that $\tm^u_{\tix}(X)\asymp_{\hat{C}}\tm^u_{\tilde{y}}(Y)$. Putting all these estimates together, one deduces that the probabilities $\nu=\tm^u_{\tix}|_X/\tm^u_{\tix}(X)$ and $\hat{\nu}=\tm^u_{\tilde{y}}|_Y/\tm^u_{\tilde{y}}(Y)$ satisfy  
\[
\nu(J_i)\asymp_{\Delta}\hat{\nu}(\hat{J}_i),
\]
for every $i=1,...,k$ and for some constant $\Delta=\Delta(\cL_1)$, as claimed.
\end{proof}

\begin{remark}
	\label{rem:malin}
The argument above also gives, as a by product, that $\tm^u_{\tilde{y}}(Y)\geq \zeta>0$, for some uniform constant $\zeta=\zeta(\cL_1)$.
\end{remark}

We are now in position to give our choice of the constant $\delta_2$. Let $\zeta(\cL_1)$ be the constant given in Remark~\ref{rem:malin}. Since $\eta_2=\eta(\delta_2)\to 0$ as $\delta_2\to 0$ we may choose $\delta_2$ small enough so that 
\begin{equation}
\label{eq:deltadois}
\frac{\eta_2\Delta m}{\zeta(\cL_1)}<0.5\:\:\:
\end{equation}
Once $\delta_2$ is fixed by \eqref{eq:deltadois}, we choose a larger Lusin set $\cL_2\supset\cL_1$ with measure $\tm(\cL_2)>1-\delta_2$.
\subsubsection{Third step: final choice of constants and sets}
We take $\delta=\delta_2/10$ and $\cL\supset\cL_2$ a Lusin set with $\tm(\cL)>1-\delta$. Let $\beta=\beta_1$. To complete the constants and sets announced in Lemma~\ref{lem:matching} it only remains to define $T>1$. Let $T=T(\delta_1)$ be chosen large enough, so that, given any compact set $K$ with measure $\tm(K)>1-\delta_1$, the set of points $\tix\in\TTf$ for which \eqref{eq:rec} holds has measure larger than  $1-\delta_1$. Notice that the existence of $T$ depending only on $\delta_1$ is ensured by Lemma~\ref{lem:lema114}. Enlarging this choice, if necessary, we may further assume that $T\geq T(\cL_1,\beta_1)$, where $T(\cL_1,\beta_1)$ is the constant given by the synchronization Proposition~\ref{prop:sync}.

We claim that Lemma~\ref{lem:matching} holds with these choices.
\subsubsection{Fourth setp: finding the configurations}
Let $K\subset\TTf$ be a compact set with measure $\tm(K)>1-2\delta$. 

Let $K_i=(K\cap\cL_i)^{\circ}$ be the set of points which are $(\delta_1,T)$-recurrent to $K\cap\cL_i$, in the sense that equation \eqref{eq:rec} holds with $\delta=\delta_1$, $K=K\cap\cL_i$ and $T$. By Lemma~\ref{lem:lema114} we know that $\tm(K_i)>1-3\delta_i$. We also let $Q^u_i(\tix,\ell)$ denote the corresponding set, as defined in \eqref{eq.q}, with $K=K_i$. Notice that $Q^u_1(\tix,\ell)\subset Q^u_2(\tix,\ell)$.


    Then, Lemma~\ref{lem:claim64} gives $D_1, D_2\subset\N$ sets of positive integer such that 
    \[
    \ell\in D_i\implies \tm\big(K_i(\ell)\big)>1-\eta_i,
    \]
    and for every $L>T$ one has 
    \[
    \operatorname{card} D_i\cap [0,L]>(1-\eta_i)L.
    \]
Since $1-\eta_1>0.6$, we have that $D\eqdef D_1\cap D_2$ has positive density. In particular, $\operatorname{card}(D)=\infty$. Let $\ell\in D$ and consider $K(\ell)=K_1(\ell)\cap K_2(\ell)$. 
    
    Fix $A=\tif^{-\ell}(K(\ell))\cap\cL_1$. Notice that
    \[
    \tm(A)>1 - \eta_1 - \eta_2 - \delta_1 >1-3\eta_1,
    \]
    since $\eta_2<\eta_1$.
    
    By applying Proposition~\ref{prop:transversalidade} we find $A'\subset A$ such that $\tm(A')>1-2\sqrt{3\eta_1}$. Take a point $\tix_{-\ell}\in A'$. Then, there exists $\tilde{y}_{-\ell}\in A\cap \Sigma(\tix_{-\ell})$ such that $\alpha(\tix_{-\ell},\tilde{y}_{-\ell})>\beta$. Thus, we have $\tix,\tilde{y}\in K(\ell)$ and 
    \[
    \tm\left(Q^u_i(\tix,\ell)\right)>1-\eta_i.
    \]

    We now claim that there exists $\tix^u\in Q^u_1(\tix,\ell)\setminus\w^u_{\beta^{-1}}(\tix)$ such that, defining $\tilde{y}^u=H^{cs}_{\tix,\tilde{y}}(\tix^u)$, one has that 
    \[
    J(\tilde{y}^u)\cap Q^u_2(\tilde{y},\ell)\neq\emptyset.
    \]
    To prove this claim, first take a compact set $Q\subset Q^u(\tix,\ell)$ with $\mu^u_{\tix}(Q)>1-\eta_1$. We now take a finite cover of the compact set $Q\cap\left(\overline{\w^u_1(\tix)\setminus\w^u_{\beta^{-1}}(\tilde{y})}\right)$ by unstable dynamical balls $\{J_i=J(\tix^u_i)\}_{i=1}^k$, with $\tix^u_i\in Q$, for every $i=1,\dots,k$. We may assume, up to taking a subcover, that no more than two intervals $J_i$ overlap at the same point. We define $X=\cup_{i=1}^kJ_i$. Observe that 
    \[
    \frac{\tm^u_{\tix}(X\setminus Q)}{\tm^u_{\tix}(X)}\leq\frac{\eta_1}{1-\eta_1}<0.5,
    \]
    due to \eqref{eq:etaum}.

    For each $i$ consider $\tilde{y}^u_i=H^{cs}_{\tix,\tilde{y}}(\tix^u_i)$ and $\hat{J}_i\eqdef J(\tilde{y}^u_i)$. We define $Y=\cup_{i=1}^k\hat{J}_i$. From Lemma~\ref{lem:lengthcontrol}, if we define probabilities $\nu=\tm^u_{\tix}|_X/\tm^u_{\tix}(X)$ and $\hat{\nu}=\tm^u_{\tilde{y}}|_Y/\tm^u_{\tilde{y}}(Y)$ we have that 
    \[
    \nu(J_i)\asymp_{\Delta}\hat{\nu}(\hat{J}_i),
    \]
    for every $i=1,...,k$ and for some constant $\Delta=\Delta(\cL_1)$. By our choice of $\delta_2$ we have that
     \[
     \hat{\nu}(Y\setminus Q^u_2(\tilde{y},\ell))=\frac{\tm^u_{\tilde{y}}(Y\setminus Q^u_2(\tilde{y},\ell))}{\tm^u_{\tilde{y}}(Y)}<\frac{\eta_2}{\zeta(\cL_1)},
     \]
     where $\zeta(\cL_1)$ is given in Remark~\ref{rem:malin}. Thus, by \eqref{eq:deltadois} our claim now follows directly from Lemma~\ref{lem:eskinmirza}.
\subsubsection{Ending the proof}
    We define $\ta=\tix^u$ and $\tb\in J(\tilde{y}^u)\cap Q^u_2(\tilde{y},\ell)$ given by the claim. Thus we have that 
    \[
    \tif^{\tau_{\ta}}(\ta),\tif^{\tau_{\tb}}(\tb), \tif^t_{\tix}(\tix),\tif^{t_{\tilde{y}}}(\tilde{y})\in K_2,
    \]
    where $\tau_a,\tau_b,t_{\tix},t_{\tilde{y}}$ are the stopping times corresponding to the $Y$ configurations $Y(\tix,\ta,\eps,\ell)$ and $Y(\tilde{y},\tb,\eps,\ell)$. In particular, these four points are $(\delta_1,T_1)$-recurrent to $K$. By the synchronization of stopping times given in Proposition~\ref{prop:sync}, we have that $|\tau_{\ta}-\tau_{\tb}|<T_1$ and $|t_{\tix}-t_{\tilde{y}}|<T_1$. Let $\tau\eqdef\min\{\tau_{\ta},\tau_{\tb}\}$. Since $\delta_1<0.3$, for $\star=\ta,\tb$, the set 
    \[
    \{m\in[\tau,\tau+5T_1];\tif^m(\star)\in K\}
    \]
    has density larger than $0.7$ in the interval $[\tau,\tau+5T_1]$. Thus, there exists $m\in[\tau,\tau+5T_1]$ such that 
    \[
    \tif^m(\ta),\tif^m(\tb)\in K.
    \]
   With a similar reasoning one deduces the existence of an integer $\hat{m}$ such that $\tif^{\hat{m}}(\tix),\tif^{\hat{m}}(\tilde{y})\in K$. This completes the proof of Lemma~\ref{lem:matching}. \qed

\section{Invariance of leaf-wise quotient measures by affine maps}
\label{sec:proof-main-lemma2}

We are going to complete the proof of Theorem~\ref{teo:main.rigidez} in this section by demonstrating that there exists a positive measure set of points in $\TTf$ for which the associated leaf-wise quotient measure is invariant by sufficiently many small affine maps. Once we show this, we apply a result of Kalinin and Katok to conclude.  

\subsection{Small affine maps}

We say that a collection $\cA\subset\operatorname{Aff}(\R)$ of affine maps of the real line is a collection of $M$-\emph{small affine maps} if there exists a constant $M>1$ such that for every $n\in\N$ there exist $0<\eps<1/n$ and $\psi\in\cA$ such that
\begin{enumerate}
        \item $M^{-1} < \vert \psi'(0) \vert < M$; 
        \item $\varepsilon M^{-1} < \vert \psi(0) \vert < \varepsilon M$;
\end{enumerate}

The proof of the result below can be found in \cite{ALOS}, Section 8. See also \cite{BRH}, Section 7 or the original reference \cite{KalininKatok}.

\begin{lemma}[Kalinin--Katok]
    \label{lem:kksmall}
Assume that there exist $M>1$ and $\delta>0$ and a set $G\subset\TTf$ with $\tm(G)\geq\delta$ such that for every $\tix\in G$ there exists a collection $\cA(\tix)\subset\operatorname{Aff}(\R)$ of $M$-small affine maps satisfying
\[
\hat{\nu}^c_{\tix}\propto\psi_*\hat{\nu}^c_{\tix},\:\:\:\textrm{for every}\:\:\:\psi\in\cA(\tix).
\]
Then, $\mu$ is an SRB measure. 
\end{lemma}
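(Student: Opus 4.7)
The plan is to reduce this lemma to a one-dimensional measure rigidity statement for the leaf-wise quotient measures, propagate the resulting invariance to $\tilde{\mu}$-almost every $\tix$ using ergodicity and the evolution rules of Lemma~\ref{lem:basicmoves}, and then conclude via Lemma~\ref{lem:temqueserlebesgue}. The argument is classical and I would follow the templates of \cite{KalininKatok,BRH}, particularly the version adapted to this type of setting in Section~8 of \cite{ALOS}.

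The first and main step is the purely one-dimensional claim: if $\nu$ is a non-zero locally finite Radon measure on $\R$ and $\cA\subset\operatorname{Aff}(\R)$ is a collection of $M$-small affine maps with $\psi_*\nu\propto\nu$ for every $\psi\in\cA$, then $\nu$ is a scalar multiple of Lebesgue. The set $H_\nu\eqdef\{\psi\in\operatorname{Aff}(\R):\psi_*\nu\propto\nu\}$ is a closed subgroup of $\operatorname{Aff}(\R)$, so it contains $\cA$ together with all finite compositions and inverses. From $M$-smallness I extract a sequence $\psi_n\in\cA$ with $\psi_n(t)=a_n t+b_n$, $M^{-1}<|a_n|<M$ and $0\neq b_n\to 0$. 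Passing to a subsequence with $a_n\to a$, the compositions $\psi_n^{-1}\circ\psi_m$ lie in $H_\nu$, have slope $a_m/a_n\to 1$ and translation $(b_m-b_n)/a_n\to 0$; by picking many indices with $b_m\neq b_n$ the translation parts are non-trivial. Hence $H_\nu$ accumulates at the identity along the translation direction, so it contains a non-trivial one-parameter subgroup of translations, and then all translations of $\R$. It follows that $\nu$ is translation-invariant up to a multiplicative scalar, and a non-zero locally finite translation-invariant Radon measure on $\R$ must be a positive multiple of Lebesgue.

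The second step is to propagate this to a full-measure set. Define
\[
\cE\eqdef\{\tix\in\TTf:\hat{\nu}^c_{\tix}\propto\Leb_{\R}\}.
\]
By Lemma~\ref{lem:basicmoves}(1), $\hat{\nu}^c_{\tif(\tix)}\propto(\Lambda^c_{\tix})_*\hat{\nu}^c_{\tix}$ with $\Lambda^c_{\tix}$ linear, and the push-forward of Lebesgue under a non-degenerate linear map is again proportional to Lebesgue; hence $\cE$ is $\tif$-invariant modulo null sets. By the previous step, $G\subset\cE$, so $\tm(\cE)\geq\delta>0$; by ergodicity of $\tm$, $\tm(\cE)=1$. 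Thus $\hat{\nu}^c_{\tix}$ is equivalent to Lebesgue for $\tm$-almost every $\tix$, and Lemma~\ref{lem:temqueserlebesgue} yields that $\mu$ is absolutely continuous with respect to $\Leb_{\TT}$.

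The main obstacle is the one-dimensional step: one must verify that the $M$-smallness hypothesis really forces the accumulation of $H_\nu$ at the identity to be along translations rather than dilations or mixed affine directions. The lower bound $M^{-1}<|\psi'(0)|$ is exactly what ensures slopes are bounded away from $0$ (so the compositions used cannot degenerate into a dilational-direction limit), and the ability to pick $\eps<1/n$ freely is what gives enough freedom to select translation parts with the right differences. Both technical points — together with the bookkeeping of proportionality constants along iterations — are carried out in detail in \cite[Section 8]{ALOS}, and I would import them directly rather than repeating the computations.
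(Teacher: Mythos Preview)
The paper gives no proof of its own here; it defers to \cite[Section~8]{ALOS} (and \cite{BRH,KalininKatok}). Your overall plan---reduce to a property of $\hat\nu^c_{\tix}$, propagate by ergodicity, and conclude via Lemma~\ref{lem:temqueserlebesgue}---is the right shape, but your ``purely one-dimensional'' first step is false as stated, and this is a genuine gap rather than a detail to be filled in.

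Take any $p\neq 0$ and $\nu=\delta_p$. Then $H_\nu=\{\psi\in\operatorname{Aff}(\R):\psi(p)=p\}$ is the full group of dilations fixing $p$, and the maps $\psi_n(t)=(1-\tfrac{1}{2np})\,t+\tfrac{1}{2n}$ lie in $H_\nu$ and form an $M$-small collection for any $M>1$; yet $\delta_p$ is not equivalent to Lebesgue. Your accumulation argument breaks exactly here: if every $\psi_n$ fixes $p$ then $b_n=p(1-a_n)$, so $(a_n-a_m)/(b_n-b_m)=-1/p$ is constant and the direction of $\psi_m^{-1}\psi_n$ at the identity is the dilation-at-$p$ direction, never the translation direction. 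The bound $M^{-1}<|\psi'(0)|$ does nothing to prevent this.

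What actually eliminates this case is the dynamics, and this is why the argument in \cite{ALOS,BRH,KalininKatok} does not separate the two steps as you do. If the identity component of $H_{\hat\nu^c_{\tix}}$ were the dilation group fixing some point $p(\tix)$, then the basic move $\hat\nu^c_{\tif(\tix)}\propto(\Lambda^c_{\tix})_*\hat\nu^c_{\tix}$ forces $p(\tif(\tix))=\lambda^c_{\tix}\,p(\tix)$. Positivity of $L^c(\mu)$ together with recurrence to a Lusin set on which $\tix\mapsto p(\tix)$ is continuous then gives $p\equiv 0$, so every $\psi\in\cA(\tix)$ would fix $0$, contradicting $|\psi(0)|>\eps M^{-1}>0$. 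Only after this dynamical elimination does one know the identity component is the translation subgroup and can finish as you indicate. So the subgroup classification and the $\tif$-equivariance must be interleaved; the conclusion cannot be packaged as a self-contained one-dimensional lemma.
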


By Lemma~\ref{lem:srbendo} we know that every SRB measure is absolutely continuous with respect to $\operatorname{Leb}_{\TT}$.

Thus, to prove Theorem~\ref{teo:main.rigidez} we need to show that the assumptions of Lemma~\ref{lem:kksmall} are satisfied. Consider $\delta>0,\beta>1$ and $T>1$ the constants given in Lemma~\ref{lem:matching}. Let also $\cL$ be the Lusin set with measure $\tm(\cL)>1-\delta$, also given by Lemma~\ref{lem:matching}.

\begin{lemma}
    \label{lem:main-lemma} 
There exists $M=M(\delta,\beta,T)>1$ such that, for every $\eps>0$ sufficiently small, there exists a measurable set $G(\eps,M)\subset\cL$, with $\tm\left(G(\eps,M)\right)\geq \delta$ such that, for every $\tix\in G(\eps,M)$, there exists:
\begin{itemize}
    \item $\tilde{y}\in W_{\loc}^c(\tix)$, with $\cR^c_{\tix}(\tilde{y}))\asymp_M\eps$, and
    \item a linear map $B:\R\to\R$, with slope $B'(0)\asymp_M1$,
\end{itemize}
such that $\hat{\nu}^c_{\tilde{y}}\propto B_*\hat{\nu}^c_{\tix}$. 
\end{lemma}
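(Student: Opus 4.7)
The plan is to implement the Eskin--Mirzakhani scheme described in the introduction, using the matching argument of Lemma~\ref{lem:matching}. First, choose a compact set $K \subset \cL$ with $\tm(K) > 1 - \delta/2$ on which the assignment $\tix \mapsto \hat{\nu}^c_{\tix}$, restricted to a fixed compact interval in the normal-form chart, varies continuously in the weak-$*$ topology (possible by Lusin's theorem applied to the measurable family constructed in Section~\ref{sec:leafwise}). For every $\ell$ in the density set $D$, taken sufficiently large, and every $\varepsilon > 0$ sufficiently small, Lemma~\ref{lem:matching} produces a pair of coupled $Y$-configurations $Y(\tix,\tix^u,\varepsilon,\ell)$ and $Y(\tilde{y},\tilde{y}^u,\varepsilon,\ell)$ in good matching relative to $(K,T,\beta)$, with matching points $\ta \in J(\tix^u)$ and $\tb \in J(\tilde{y}^u)$ and integers $m \in (\tau(\ell)-T,\tau(\ell)+T)$, $\hat{m} \in (t(\ell)-T,t(\ell)+T)$ such that the four upper points $\tif^m(\ta), \tif^m(\tb), \tif^{\hat{m}}(\tix), \tif^{\hat{m}}(\tilde{y})$ all lie in $K$.

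Next, I will track how $\hat{\nu}^c$ transforms along both branches of each $Y$ by iterating Lemma~\ref{lem:basicmoves}. Composing the moves $\tix \leadsto \tix^u \leadsto \ta \leadsto \tif^m(\ta)$ on one side and $\tix \leadsto \tif^{\hat{m}}(\tix)$ on the other yields
\[
\hat{\nu}^c_{\tif^m(\ta)} \propto \widetilde{B}_\ell \hat{\nu}^c_{\tif^{\hat{m}}(\tix)}, \qquad \widetilde{B}_\ell'(0) \,=\, \frac{\lambda^c_{\ta}(m)\,\rho^c_{\tix}(\tix^u)\,\rho^c_{\tix^u}(\ta)}{\lambda^c_{\tix}(\hat{m})},
\]
with an analogous formula on the primed side. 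The crucial verification is that $|\widetilde{B}_\ell'(0)|$ is bounded above and below by some constant $M = M(\delta,\beta,T)$ independent of $\ell$. The $\rho^c$-factors are bounded because $\tix^u \in \w^u_1(\tix)$ and $\ta \in J(\tix^u)$ keep the uniform convergence in \eqref{eq:rho}; the Lyapunov ratio is controlled by passing to the adapted norm via Lemma~\ref{lem:lyapunovriemmann} on $\cL$, invoking the stopping-time identity $\hat{\lambda}^c_{\tix}(t(\ell))/\hat{\lambda}^c_{\tix^u}(\tau(\ell)) \in [1, e^\sigma]$ that follows from \eqref{eq:t} and \eqref{eq:tau}, and absorbing the $T$-shifts $|m-\tau(\ell)|, |\hat{m}-t(\ell)| < T$ via Lemma~\ref{lem:crescelyapunovcresce}. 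On the primed side we additionally use the synchronization Proposition~\ref{prop:sync} together with Lemmas~\ref{lem:distquadrilateros} and \ref{lem:distormagic} to transfer the bounds to the primed configuration.

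Then I will exploit the continuity of $\hat{\nu}^c$ on $K$ to tie the two sides together. Since $\tix_{-\ell} \in \Sigma(\tilde{y}_{-\ell})$, the points $\tif^{\hat{m}}(\tix)$ and $\tif^{\hat{m}}(\tilde{y})$ lie on the same fiber of $\TTf$, hence $d_{\TTf}(\tif^{\hat{m}}(\tix), \tif^{\hat{m}}(\tilde{y})) \leq 2^{-\hat{m}} \to 0$ as $\ell \to \infty$, and both belong to the Lusin set $K$. Weak-$*$ continuity yields $\hat{\nu}^c_{\tif^{\hat{m}}(\tilde{y})} = \hat{\nu}^c_{\tif^{\hat{m}}(\tix)} + o_\ell(1)$ on any compact interval, and combining with the two affine relations from the previous paragraph gives
\[
\hat{\nu}^c_{\tif^m(\tb)} \propto \widetilde{B}_\ell^{\mathrm{tot}}\, \hat{\nu}^c_{\tif^m(\ta)} + o_\ell(1),
\]
where $\widetilde{B}_\ell^{\mathrm{tot}}$ is affine with slope $\asymp_M 1$. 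Lemma~\ref{lem:driftestimate} applied to the quadrilateral shows that the cs-holonomy translate of $\tif^m(\tb)$ onto $\w^c_{\loc}(\tif^m(\ta))$ sits at normal-form distance $\asymp_M \varepsilon$ from $\tif^m(\ta)$, and Corollary~\ref{cor:tamanhocomparavel} bounds the residual unstable displacement. Extracting a subsequence $\ell_n \to \infty$ along which $\tif^m(\ta) \to q_\infty$ and $\tif^m(\tb) \to q_\infty'$ in the compact set $K$, we obtain $q_\infty' \in \w^c_{\loc}(q_\infty)$ with $|\mathcal{R}^c_{q_\infty}(q_\infty')| \asymp_M \varepsilon$, and the relation $\hat{\nu}^c_{q_\infty'} \propto B\, \hat{\nu}^c_{q_\infty}$ with $B$ affine of slope $\asymp_M 1$. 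Setting $\tix := q_\infty$, $\tilde{y} := q_\infty'$ and averaging over the positive density of admissible $\ell \in D$ and over the positive-measure set of valid base points (a Fubini-type argument in the spirit of Lemma~\ref{lem:claim64}) establishes $\tm(G(\varepsilon,M)) \geq \delta$.

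The main obstacle is the careful propagation of non-uniform constants throughout the scheme. In the uniformly expanding setting of \cite{ALOS} the ratios and distortions involved are globally bounded; here they are controlled only on Lusin sets, so the computation of $\widetilde{B}_\ell'(0)$ must delicately intertwine Pesin-theoretic bounds with dynamical distortion. This is precisely the reason Lemma~\ref{lem:matching} is stated using two nested Lusin sets $\cL_1 \subset \cL_2$: the inner set supplies the uniform constants governing the slope and stopping-time estimates, while the outer set supplies the weak-$*$ continuity needed to pass to the limit.
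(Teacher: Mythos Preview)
Your outline follows the Eskin--Mirzakhani scheme correctly up to the point where you produce the limit points $q_\infty, q_\infty'$ and the linear relation between their quotient measures. The tracking of the slope $\widetilde{B}_\ell'(0)$ via Lemma~\ref{lem:basicmoves}, the comparison of Lyapunov and Riemannian norms on $\cL$, and the use of continuity on $K$ to merge the two branches are all sound and match what the paper does (the paper packages the slope bound as Lemma~\ref{lem:lemadom}).

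However, there is a genuine gap in your final step. Having produced a \emph{single} point $q_\infty$ with the desired property, you assert that ``averaging over the positive density of admissible $\ell\in D$ and over the positive-measure set of valid base points'' yields $\tm(G(\varepsilon,M))\geq\delta$. This does not follow. The limit point $q_\infty$ is obtained by extracting a convergent subsequence inside the compact set $K$; varying $\ell$ or the base quadrilateral simply changes which subsequence you extract, and there is no mechanism forcing the collection of all such limit points to have positive $\tm$-measure. A countable family of limit procedures produces at most a countable (hence null) set.

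The paper closes this gap by an essential change in the logical structure: the argument is run by \emph{contradiction}. One assumes $\tm(G(\varepsilon))<\delta$ and then takes the compact set $K$ of Lemma~\ref{lem:matching} to lie inside $\cL\cap\big(\TTf\setminus G(\varepsilon)\big)$. The entire scheme then produces a limit point $b\in K$ which, together with the auxiliary point $w\in\tW^c(b)$ obtained by unstable holonomy from $a$, witnesses the defining property of $G(\varepsilon)$; this contradicts $K\cap G(\varepsilon)=\emptyset$. Thus a \emph{single} good limit point suffices, and no averaging argument is needed. Your direct approach cannot be repaired without this inversion: the freedom in Lemma~\ref{lem:matching} to choose $K$ arbitrarily (subject to $\tm(K)>1-\delta$) is precisely what makes the contradiction work.
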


\subsubsection{Proof of Theorem~\ref{teo:main.rigidez}}
We claim that Lemma~\ref{lem:main-lemma} implies Theorem~\ref{teo:main.rigidez}. Indeed, define 
\[
G=\{\tix\in\TTf \; : \;\tix\in G(1/N,M)\:\textrm{for infinitely many}\:N\in\N\}.
\]
Notice that $\tm(G)\geq\delta$. We need to show that for every $\tix\in G$ there exists a collection of small affine maps which preserves, up to a factor, the measure $\hat{\nu}^c_{\tix}$. To see this, fix some $\eps$ and suppose $\tix\in G(\eps,M)$. Using Lemma~\ref{lem:basicmoves} we obtain an affine map $h^2_{\tix,\tilde{y}}:\R\to\R$ such that $\hat{\nu}^c_{\tilde{y}}\propto (h^2_{\tix,\tilde{y}})_*\hat{\nu}^c_{\tix}$, and $h^2_{\tix,\tilde{y}}(s)=\rho_{\tix}(\tilde{y})s+\cR^c_{\tix}(\tilde{y})$. Since $G(\eps,M)\subset\cL$, by Lemma~\ref{lem:main-lemma}, we may assume that 
\[
\rho^c_{\tix}(\tilde{y})\asymp_M 1.
\]
Thus, by defining $\psi=B\circ(h^2_{\tix,\tilde{y}})^{-1}$ we have that $\psi$ satisfies items (1) and (2) of the definition of small affine maps for some constant $\hat{M}=\hat{M}(M)$. This shows that for every $\tix\in G$ there exists a collection of $\hat{M}$-small affine maps preserving $\hat{\nu}^c_{\tix}$, up to a factor. Theorem~\ref{teo:main.rigidez} now follows from Lemma~\ref{lem:kksmall}.\qed
\subsection{The drift argument: proof of Lemma~\ref{lem:main-lemma}}
We now proceed to show how to deduce Lemma~\ref{lem:main-lemma} from Lemma~\ref{lem:matching}. The first step is to determine the constant $M=M(\delta,T,\beta)>1$. We first consider the movement of quotient measures along a ``$\cL$-good'' $Y$-configuration, with particular attention to the factor relating the measures. 

\begin{lemma}
    \label{lem:lemadom}
There exists $M=M(\cL,T)>1$ such that, given $\eps>0$ and $\ell\in\N$, if there exists $(\ell,T)$ $Y$-configuration $Y(\tix,\ta,\eps,\ell)$ satisfying 
\[
\tix,\ta,\ta_m,\tix_{\hat{m}}\in\cL,
\]
then there exist $C>1$, with $C\asymp_M1$, and linear map $B:\R\to\R$, with slope $B'(0)\asymp_M1$, such that 
\[
\hat{\nu}^c_{\ta_m}=CB_*\hat{\nu}^c_{\tix_{\hat{m}}}.
\]
\end{lemma}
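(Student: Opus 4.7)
The plan is to decompose the transport of leaf-wise quotient measures from $\tix_{\hat{m}}$ to $\ta_m$ into three elementary moves from Lemma~\ref{lem:basicmoves} and then estimate the resulting slope and proportionality constant. First, I would iterate basic move (1) backwards $\hat{m}$ times to pass from $\hat{\nu}^c_{\tix_{\hat{m}}}$ to $\hat{\nu}^c_{\tix}$ via a linear rescaling of slope $(\lambda^c_{\tix}(\hat{m}))^{-1}$. Next, since $\ta\in\w^u(\tix)$, basic move (3) combined with the normal form formula of Theorem~\ref{teo:formanormal2}(6) transports $\hat{\nu}^c_{\tix}$ to $\hat{\nu}^c_{\ta}$ through the purely linear map $h^2_{\tix,\ta}(s)=\rho^c_{\tix}(\ta)\,s$. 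Finally, iterating basic move (1) forward $m$ times rescales by $\lambda^c_{\ta}(m)$ to arrive at $\hat{\nu}^c_{\ta_m}$. Composing these three transports will yield an equation of the form $\hat{\nu}^c_{\ta_m}=C\,B_*\hat{\nu}^c_{\tix_{\hat{m}}}$ with $B$ linear of slope
\[
B'(0) \;=\; \frac{\lambda^c_{\ta}(m)\,\rho^c_{\tix}(\ta)}{\lambda^c_{\tix}(\hat{m})}.
\]

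The bulk of the work will then be to show that $B'(0)\asymp_M 1$. For the factor $\rho^c_{\tix}(\ta)$: since $\ta\in\w^u(\tix)$, I would apply Lemma~\ref{lem:bounded-dist} to the H\"older cocycle $\log\lambda^c$ along backward orbits on the same unstable leaf to bound the telescoping series defining $\rho^c_{\tix}(\ta)$, obtaining $\rho^c_{\tix}(\ta)\asymp 1$ with a constant depending only on $f$. For the Lyapunov factors, using the four assumptions $\tix,\tix_{\hat{m}},\ta,\ta_m\in\cL$ and the boundedness of $\xi$ (Lemma~\ref{lem:lyapunovriemmann}) on the Lusin set, I would compare the background norms $\lambda^c_{\tix}(\hat{m})$ and $\lambda^c_{\ta}(m)$ with their Lyapunov counterparts $\hat{\lambda}^c_{\tix}(\hat{m})$ and $\hat{\lambda}^c_{\ta}(m)$ up to constants depending on $\cL$. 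The hypothesis $|m-\tau(\ell)|,|\hat{m}-t(\ell)|<T$ together with Lemma~\ref{lem:crescelyapunovcresce} then lets one replace $m$ and $\hat{m}$ by $\tau(\ell)$ and $t(\ell)$ at the cost of a factor depending only on $T$, after which the definition of $t(\ell)$ supplies $\hat{\lambda}^c_{\tix}(t(\ell))/\hat{\lambda}^c_{\ta}(\tau(\ell))\asymp 1$ directly and closes the estimate.

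For the proportionality constant $C$, I would exploit the normalization $\hat{\nu}^c_{\tix}(I_1)=1$ built into the definition of leaf-wise quotient measures, which forces
\[
C \;=\; \frac{1}{\hat{\nu}^c_{\tix_{\hat{m}}}(B^{-1}(I_1))}.
\]
Since $B^{-1}(I_1)$ is symmetric about $0$ of length $\asymp_M 1$, it remains to show $\hat{\nu}^c_{\tix_{\hat{m}}}(J)\asymp_M 1$ uniformly for $J$ in a fixed compact family of intervals centered at $0$ containing $I_1$ in its interior. This requires one further refinement of $\cL$: Lusin's theorem applied to the measurable assignment $\tix\mapsto\hat{\nu}^c_{\tix}$ yields vague continuity on a compact subset of full measure, whence uniform two-sided bounds on $\hat{\nu}^c_{\tix}(J)$ for $J$ in the said family.

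The main technical obstacle will be the bookkeeping between the Lyapunov norm $\hat{\lambda}^c$, which provides quasi-isometric control of stopping times independently of any Lusin set (Lemma~\ref{lem:quasiisometric}), and the background norm $\lambda^c$, which is what naturally appears in the basic moves and in the final formula for $B'(0)$. Requiring all four of $\tix,\tix_{\hat{m}},\ta,\ta_m$ to lie in $\cL$ is precisely what lets us pass between these two norms uniformly at both the starting and terminal points of each flight of the $Y$-configuration; dropping any one of these four points from the hypothesis would break either the Lyapunov-to-background comparison in the derivative ratios or the Lusin continuity needed to bound $C$.
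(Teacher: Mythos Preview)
Your proposal is correct and follows essentially the same route as the paper: decompose the transport $\tix_{\hat m}\to\tix\to\ta\to\ta_m$ via the three basic moves of Lemma~\ref{lem:basicmoves}, obtain the explicit slope $B'(0)=\lambda^c_{\ta}(m)\,\rho^c_{\tix}(\ta)\,(\lambda^c_{\tix}(\hat m))^{-1}$, and bound it by passing to Lyapunov norms on $\cL$, invoking Lemma~\ref{lem:crescelyapunovcresce} to absorb the $T$-error in $m,\hat m$, and using the defining inequality of $t(\ell)$; the constant $C$ is handled exactly as you describe via Lusin continuity of $\tix\mapsto\hat\nu^c_{\tix}$. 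Your treatment of $\rho^c_{\tix}(\ta)$ is in fact slightly sharper than the paper's, since the bound via Lemma~\ref{lem:bounded-dist} along the common unstable leaf indeed depends only on $f$ (and $d^u(\tix,\ta)\le 1$), not on $\cL$.
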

\begin{proof}
It follows from the basic moves of leaf-wise quotient measures Lemma~\ref{lem:basicmoves} that $\hat{\nu}^c_{\ta_m}=CB_*\hat{\nu}^c_{\tix_{\hat{m}}}$, where $B$ is the linear map with slope
\[
B'(0)=\rho^c_{\tix}(\ta)\lambda^c_{\tix}(\hat{m})\left(\lambda^c_{\ta}(m)\right)^{-1},
\]
and the constant $C$ is given by
\[
C=\left(\hat{\nu}^c_{\tix_{\hat{m}}}(B^{-1}([-1,1]))\right)^{-1}.
\]
We claim that these quantities are uniformly bounded. Indeed, by the definition of the stopping times and from Lemma~\ref{lem:crescelyapunovcresce} we have that 
\[
\hat{\lambda}^c_{\tix}(t(\ell))\left(\hat{\lambda}^c_{\ta}(\tau(\ell))\right)^{-1}
\]
is uniformly bounded. Since the Lyapunov norm and the background norm are uniformly comparable inside the Lusin set, this implies that $B'(0)/\rho^c_{\tix}(\ta)$ is bounded by a constant depending on $\cL$ and $T$. Since $\ta,\tix\in\cL$, we also have that $\rho^c_{\tix}(\ta)$ is uniformly bounded (with constant depending on $\cL$). A similar reasoning applies to $C$.
\end{proof}

We can now fix $M$. We assume that the constant given by Lemma~\ref{lem:lemadom} is large enough so that it satisfies also the following: if $\tix\in\cL$, then for every $\tilde{a}\in\w^u_{\sigma^T}(\tix)$ one has $\rho^c_{\tix}(\ta)\asymp_M1$, where $\sigma=\max\{\lambda^u_{\tix};\tix\in\TTf\}$. With this choice, we continue to have $M=M(\cL,T)$. We are now in position to complete our proof.

\subsubsection{Proof of Lemma~\ref{lem:main-lemma}}
Let $M$ be the constant determined above and consider $G(\eps)\eqdef G(\eps,M^{3})$ the (measurable) set of points $\tix\in\cL$ such that for every $\tix\in G(\eps)$ one can find $\tilde{y}\in \tW^c_{\loc}(\tix)$ and a linear map $B:\R\to\R$ with slope $B'(0)\asymp_{M^3}1$ such that $\cR^c_{\tix}(\tilde{y})\asymp_{M^3}\eps$ and $\hat{\nu}^c_{\tilde{y}}\propto B_*\hat{\nu}^c_{\tix}$. Assume, by contradiction, that $\tm(G(\eps))<\delta$. Then, by the regularity of the measure, there must exist a compact set 
\[
K\subset\cL\cap\left(\TTf\setminus G(\eps)\right)
\] 
with measure $\tm(K)>1-2\delta.$ We find a contradiction by proving that $K\cap G(\eps)\neq\emptyset$.

For that, we fix some large $\ell\in D$ and consider a pair $Y(\tix,\tix^u,\eps,\ell)$ and $Y(\tilde{y},\tilde{y}^u,\eps,\ell)$ of $(\ell,T)$-$Y$-configurations which are in good matching relatively to $(K,\cL,T,\beta)$, as given by Lemma~\ref{lem:matching}. See Figure~\ref{fig:proof-main-lemma}.

\begin{figure}[ht]
    \centering
\begin{tikzpicture}[y=1cm, x=1cm, yscale=0.65,xscale=0.65, inner sep=0pt, outer sep=0pt]
   
  \path[draw=blue,line cap=butt,line join=miter,line width=0.0cm,miter 
  limit=4.0,dash pattern=on 0.16cm off 0.08cm] (3.0, 6.0) -- (3.0, -0.0);
  \path[draw=blue,line cap=butt,line join=miter,line width=0.0cm,miter 
  limit=4.0,dash pattern=on 0.16cm off 0.08cm] (7.8, 6.6) -- (7.8, 0.6);
  \path[draw=orange!90!black,line cap=butt,line join=miter,line width=0.0cm] (6.3, 6.0).. 
  controls (6.3, 6.0) and (7.0, 6.2) .. (8.2, 6.1).. controls (9.4, 6.0) and 
  (9.4, 6.0) .. (9.4, 6.0);
  \path[draw=red,line cap=butt,line join=miter,line width=0.0cm] (2.0, 0.7).. 
  controls (2.9, 1.1) and (5.6, 1.3) .. (6.0, 1.3).. controls (6.3, 1.3) and 
  (8.8, 1.7) .. (9.2, 2.0);
  \path[draw=red,line cap=butt,line join=miter,line width=0.0cm,miter limit=4.0]
   (2.2, 5.5).. controls (2.6, 5.8) and (4.5, 5.7) .. (5.1, 5.7).. controls 
  (5.9, 5.7) and (7.3, 6.0) .. (7.4, 6.4);

  \draw[->, gray, dash pattern=on 0.16cm off 0.08cm] (8.3,6.5) to[bend right] node[midway, right=0.2cm]{$\tilde{f}^{m}$} (10.5,10.5);
  \draw[->, gray, dash pattern=on 0.16cm off 0.08cm] (2.7,6.5) to[bend left] node[midway, left=0.2cm]{$\tilde{f}^{\hat{m}}$} (1.2,10.5);
  
  \filldraw (3.0, 5.7) circle (1pt) node[below left=0.1cm]{$\tilde{y}$};
  \filldraw (3.0, 0.96) circle (1pt) node[below left=0.1cm]{$\tilde{x}$};
  \filldraw (7.05, 6.1) circle (1pt) node[below=0.1cm]{$\tilde{y}^u$};
  \filldraw (6.2, 5.83) circle (1pt) node[above left=0.1cm]{$\tb$};
  \filldraw (7.8, 1.6) circle (1pt) node[below right=0.1cm]{$\tilde{x}^u$};
  \filldraw (7.8, 6.1) circle (1pt) node[below right=0.1cm]{$\tilde{z}$};
  \filldraw (8.4, 1.73) circle (1pt) node[above right=0.1cm]{$\ta$};

  \path[draw=blue,line cap=butt,line join=miter,line width=0.0cm,miter 
  limit=4.0,dash pattern=on 0.16cm off 0.08cm] (10.9, 12.6) -- (10.9, 10.8);
  \path[draw=orange!90!black,line cap=butt,line join=miter,line width=0.0cm,miter 
  limit=4.0] (7.8, 12.8).. controls (7.8, 12.8) and (9.0, 12.3) .. (9.5, 12.3) 
  -- (11.6, 12.2);
  \path[draw=red,line cap=butt,line join=miter,line width=0.0cm,miter limit=4.0]
   (9.6, 11.2).. controls (10.2, 11.3) and (11.9, 11.4) .. (12.1, 11.4).. 
  controls (12.4, 11.4) and (14.0, 11.5) .. (14.2, 11.6);
  \path[draw=red,line cap=butt,line join=miter,line width=0.0cm,miter limit=4.0]
   (5.3, 12.5).. controls (5.7, 12.6) and (7.1, 12.6) .. (7.6, 12.6).. controls 
  (8.2, 12.6) and (9.2, 12.6) .. (9.3, 12.8);

  \filldraw (8.25, 12.6) circle (1pt) node[below=0.05cm]{$\tilde{y}^u_m$};
  \filldraw (6.2, 12.6) circle (1pt) node[above=0.05cm]{$\tb_m$};
  \filldraw (10.9, 11.3) circle (1pt) node[below right=0.05cm]{$\tilde{x}^u_m$};
  \filldraw (10.9, 12.2) circle (1pt) node[above right=0.05cm]{$\tilde{z}_m$};
  \filldraw (13.1, 11.45) circle (1pt) node[above=0.05cm]{$\ta_m$};

  \draw[orange!90!black] (10, 12.55) node{$\varepsilon$};

  \path[draw=blue,line cap=butt,line join=miter,line width=0.0cm,miter 
  limit=4.0,dash pattern=on 0.16cm off 0.08cm] (1.3, 12.5) -- (1.3, 11.2);

  \filldraw (1.3, 11.5) circle (1pt) node[below left=0.05cm]{$\tilde{x}_{\hat{m}}$};
  \filldraw (1.3, 12.3) circle (1pt) node[above left=0.05cm]{$\tilde{y}_{\hat{m}}$};
  
  \draw[->, gray] (9,13) to node[midway, right=0.2cm]{$\lim\limits_{n \to \infty}$} (9,14);

  \path[draw=orange!90!black,line cap=butt,line join=miter,line width=0.0cm,miter 
  limit=4.0] (7.0, 15.2).. controls (7.0, 15.2) and (8.2, 14.9) .. (8.8, 
  14.8).. controls (9.2, 14.6) and (10.9, 14.5) .. (10.9, 14.5);
  \path[draw=red,line cap=butt,line join=miter,line width=0.0cm,miter limit=4.0]
   (8.9, 14.5).. controls (9.5, 14.6) and (11.2, 14.7) .. (11.4, 14.7).. 
  controls (11.7, 14.7) and (13.3, 14.8) .. (13.5, 14.9);
  \path[draw=red,line cap=butt,line join=miter,line width=0.0cm,miter limit=4.0]
   (4.6, 14.9).. controls (5.0, 15.0) and (6.4, 15.0) .. (6.9, 15.0).. 
  controls (7.5, 15.0) and (8.5, 15.1) .. (8.6, 15.2);
  
  \filldraw (7.65, 15.05) circle (1pt) node[above =0.1cm]{$q$};
  \filldraw (5.5, 15.0) circle (1pt) node[above =0.1cm]{$b$};
  \filldraw (10.00, 14.6) circle (1pt) node[below=0.1cm]{$p$};
  \filldraw (12.4, 14.75) circle (1pt) node[below=0.1cm]{$a$};
  
  \draw[orange!90!black] (9, 15) node{$\varepsilon$};
  
\end{tikzpicture}
    \caption{We control the movements of quotient measures along the approximating configurations in good matching so that the limit point $b$ belongs to $K\cap G(\eps)$.}
    \label{fig:proof-main-lemma}
\end{figure}

Take $a$ to be a limit point of the sequence $\ta_{m}=\tif^m(\ta)$, as $\ell\to\infty$. Observe that, to simplify the notation, we dropped the dependence of the points in $\ell$, but the reader should keep in mind that for each $\ell$ we have a picture like Figure~\ref{fig:proof-main-lemma}, so that $\tix,\ta,\ta_{m}\in\cL\cap K$ as well as $\tilde{y},\tb,\tb_{m}\in\cL\cap K$ and $\tix_{\hat{m}},\tilde{y}_{\hat{m}}\in\cL\cap K$. We may extract subsequences so that all these sequences converge to points in $\cL\cap K$. In particular, let also $b$ be a limit point of $\tb_{m}$ as $\ell\to\infty$. Notice also that $d(\tix_{\hat{m}},\tilde{y}_{\hat{m}})\to 0$, so we can assume both sequences converge to a point $x$. 

By applying Lemma~\ref{lem:lemadom}, we obtain that $\hat{\nu}^c_{\ta_m}=CB_*\hat{\nu}^c_{\tix_{\hat{m}}}$ where the constant $C$ and the slope of the linear map $B$ are bounded independently of $\ell$ and $\eps$. Similarly we obtain that $\hat{\nu}^c_{\tb_m}=\hat{C}\hat{B}_*\hat{\nu}^c_{\tilde{y}_{\hat{m}}}$, where $\hat{C}$ and the slope of the linear map $\hat{B}$ are uniformly bounded. Up to further extraction of subsequences, we may assume that $C,\hat{C},B,\hat{B}$ converge as $\ell\to\infty$. 

On the other hand, continuity inside the Lusin set implies that $\hat{\nu}^c_{\tix},\hat{\nu}^c_{\tilde{y}}\to\hat{\nu}^c_x$. Then we have that
\[
\hat{\nu}^c_a=\alpha\gamma_*\hat{\nu}^c_b,
\]
for some constant $\alpha\asymp_{M^2}1$ and a linear map $\gamma$ with slope $\asymp_{M^2}1$. See Figure~\ref{fig:lim-main-lemma}.

\begin{figure}[ht]
    \centering
    \begin{tikzpicture}
  \path[draw=orange!90!black!80!black,line cap=butt,line join=miter,thick] (20.4, 6.6)..
   controls (20.6, 5.8) and (20.6, 4.0) .. (20.6, 4.0) -- (20.6, 4.0);
  \path[draw=orange!90!black,line cap=butt,line join=miter, thick] (18.4, 6.6)..
   controls (18.6, 5.8) and (18.6, 4.0) .. (18.6, 4.0) -- (18.6, 4.0);
  \path[draw=orange!90!black,line cap=butt,line join=miter, thick] (22.4, 6.6)..
   controls (22.6, 5.8) and (22.6, 4.0) .. (22.6, 4.0) -- (22.6, 4.0);
  \path[draw=red!80!black,line cap=butt,line join=miter,thick,miter limit=4.0]
   (17.7, 4.4).. controls (18.5, 4.6) and (20.8, 4.6) .. (21.1, 4.6).. controls 
  (21.4, 4.6) and (23.6, 4.7) .. (23.9, 4.8);
  \path[draw=red!80!black,line cap=butt,line join=miter,thick,miter limit=4.0]
   (17.5, 6.0).. controls (18.1, 6.1) and (20.2, 6.0) .. (21.1, 6.0).. controls 
  (21.9, 6.0) and (23.6, 6.1) .. (23.8, 6.2);
  
  \path[fill,line width=0.0cm,miter limit=4.0] (20.5, 6.0) circle (0.08cm) node[above right=0.1cm]{$p$};
  \path[fill,line width=0.0cm,miter limit=4.0] (18.5, 6.05) circle (0.08cm) node[above left=0.05cm]{$a$};
  \path[fill,line width=0.0cm,miter limit=4.0] (22.5, 6.05) circle (0.08cm) node[above right=0.05cm]{$w$};
  \path[fill,line width=0.0cm,miter limit=4.0] (20.6, 4.6) circle (0.08cm) node[below left=0.05cm]{$q$};
  \path[fill,line width=0.0cm,miter limit=4.0] (22.6, 4.67) circle (0.08cm) node[below right=0.1cm]{$b$};
  \path[fill,line width=0.0cm,miter limit=4.0] (18.6, 4.55) circle (0.08cm) node[below left=0.05cm]{$v$};
  
  \draw[orange!90!black!90!black] (20.7, 5.3) node{$\varepsilon$};
\end{tikzpicture}
    \caption{The points $a, b, p$ and $q$ to $\TT$ are obtained as the limit of $Y$-configurations in good matching. The points $v$ and $w$ are, respectively, the images of $p$ and $q$ under the unstable holonomy.}
    \label{fig:lim-main-lemma}
\end{figure}
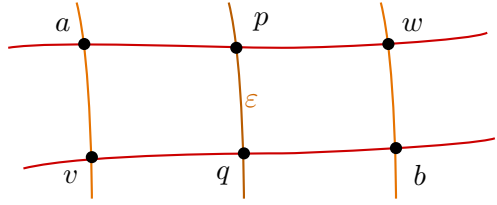

Consider the point $w\in\tW^c(b)$, which is the image of $a$ under the unstable holonomy. Since unstable holonomies are uniformly $C^1$ in restriction to center unstable manifolds, we have that $d^c(w,b)\asymp_M\eps$. Since $b\in\cL$, we have that $\cR^c_{b}(w)\asymp_Md^c(w,b)$. Finally, observe that $w\in\w^u_{\sigma^T}(a)$ and thus, by the basic move and the definition of $M$, we have that 
\[
\hat{\nu}^c_w\propto L_*\hat{\nu}^c_a,
\]
where $L$ is a linear map with slope $\asymp_M 1$. Combining these facts together we see that $\hat{\nu}^c_w\propto\hat{L}_*\hat{\nu}^c_b$, for some linear map $\hat{L}$ with slope $\asymp_{M^3} 1$ and $\cR^c_{b}(w)\asymp_{M^3}\eps$, which shows that $b\in K\cap G(\eps)$, a contradiction. \qed

\bibliographystyle{alpha}
\bibliography{Bib_CS}

\smallskip
\begin{flushleft}

{\scshape Marisa Cantarino}\\
School of Mathematical Sciences, Monash University,\\ Clayton, VIC 3800, Australia\\
email:  \texttt{marisa.cantarino@monash.edu}

	\vspace{0.2cm}
	
	{\scshape Bruno Santiago}\\
	Instituto de Matem\'atica e Estat\'istica, Universidade Federal Fluminense\\
	Rua Prof. Marcos Waldemar de Freitas Reis, S/N -- Bloco H, 4o Andar\\
	Campus do Gragoatá, Niterói, Rio de Janeiro 24210-201, Brasil\\
	email:  \texttt{brunosantiago@id.uff.br}

\end{flushleft}

\end{document}